\DeclareSymbolFont{cyrletters}{OT2}{wncyr}{m}{n}
\DeclareMathSymbol{\Sha}{\mathalpha}{cyrletters}{"58}
\def \ooverline #1#2#3%
\DeclareMathOperator{\Sel}{Sel}
\DeclareMathOperator{\NN}{\mathbf{N}}
\DeclareMathOperator{\hocolim}{hocolim}
\DeclareMathOperator{\Char}{char}
\DeclareMathOperator{\Art}{Art}
\DeclareMathOperator{\Frac}{Frac} 
\DeclareMathOperator{\Gal}{Gal} 
\DeclareMathOperator{\Def}{Def} 
\DeclareMathOperator{\univ}{univ} 
\DeclareMathOperator{\CR}{CR}\DeclareMathOperator{\Mod}{Mod}\DeclareMathOperator{\Ch}{Ch}
\DeclareMathOperator{\Ad}{Ad}
 \DeclareMathOperator{\T}{T}
\DeclareMathOperator{\SL}{\mathsf{SL}}
\DeclareMathOperator{\projdim}{\mathsf{projdim}}
\DeclareMathOperator{\End}{\mathsf{End}} \DeclareMathOperator{\Hom}{\mathsf{Hom}}\DeclareMathOperator{\sHom}{\mathsf{sHom}}  \DeclareMathOperator{\PGL}{PGL}
\DeclareMathOperator{\Ker}{Ker} \DeclareMathOperator{\im}{Im} \DeclareMathOperator{\Coker}{Coker} \DeclareMathOperator{\Id}{Id} \DeclareMathOperator{\holim}{holim}
\DeclareMathOperator{\Tor}{Tor} \DeclareMathOperator{\Ext}{Ext}  \DeclareMathOperator{\depth}{depth}\renewcommand{\H}{\mathrm{H}} 
\DeclareMathOperator{\Fitt}{Fitt}
\DeclareMathOperator{\Bi}{Bi}
\DeclareMathOperator{\Frob}{Frob}
\DeclareMathOperator{\Spec}{Spec} 
\DeclareMathOperator{\gal}{gal} 
\DeclareMathOperator{\ord}{ord}
\DeclareMathOperator{\pr}{pr}
\DeclareMathOperator{\DK}{DK}
\DeclareMathOperator{\Res}{Res}
\DeclareMathOperator{\codim}{codim}
\DeclareMathOperator{\Der}{Der}
 \DeclareMathOperator{\A}{A}   \DeclareMathOperator{\Lie}{Lie}
  \DeclareMathOperator{\bT}{\mathbf{T}}
 \DeclareMathOperator{\GL}{\mathsf{GL}} 
\DeclareMathOperator{\Sp}{\mathsf{Sp}}
\DeclareMathOperator{\SETS}{\mathsf{SETS}}
\DeclareMathOperator{\sSETS}{\mathsf{sSETS}}
\DeclareMathOperator{\hofib}{\mathsf{hofib}}
\DeclareMathOperator{\Tr}{Tr} 
\DeclareMathOperator{\diag}{diag} 
\DeclareMathOperator{\Supp}{Supp}
\DeclareMathOperator{\ur}{unr}
\newcommand{\cO}{\mathcal{O}}
\newcommand{\bG}{\mathbb{G}}
\newcommand{\bH}{\mathbb{H}}
\newcommand{\cM}{\mathcal{M}}
\newcommand{\cS}{\mathcal{S}}
\newcommand{{\bB}}{\bf B}
\newcommand{\cB}{\mathcal{B}}
\newcommand{\bQ}{\mathbb{Q}}
\newcommand{\cK}{\mathcal{K}}
\newcommand{\bbT}{\mathbb{T}}
\newcommand{\bbA}{\mathbb{A}}
\newcommand{\bD}{\mathbb{D}}
\newcommand{\bN}{\mathbb{N}}
\newcommand{\cF}{\mathcal{F}}
\newcommand{\cR}{\mathcal{R}}
\newcommand{\cD}{\mathcal{D}}
\newcommand{\bI}{\mathbf{I}}
\newcommand{\bW}{\mathbf{W}}
\newcommand{\cT}{\mathcal{T}}
\newcommand{\cH}{\mathcal{H}}
\newcommand{\cC}{\mathcal{C}}
\newcommand{\cL}{\mathcal{L}}
\newcommand{\cA}{\mathcal{A}}
\newcommand{{\bfR}}{\bf{R}}
\def\Sel{\mathop{\rm Sel}}
\def\ker{\mathop{\rm Ker}}
\def\S{{\frak S}}
\def\M{{\frak M}}
\def\a{{\frak a}}
\def\p{{\frak p}}
\def\fP{{\frak P}}
\def\g{{\frak g}}
\def\t{{\frak t}}
\def\z{{\frak z}}
\def\b{{\frak b}}
\def\n{{\frak n}}
\def\C{{\Bbb C}}
\def\m{{\frak m}}
\def\bW{\bf W}
\def\Z{{\Bbb Z}}
\def\Q{{\Bbb Q}}
\def\R{{\Bbb R}}
\def\C{{\Bbb C}}
\def\S{{{\Bbb S}}}
\def\G{{{\Bbb G}}}
\def\adots{\mathinner{\mkern2mu\raise1pt\hbox{.}\mkern3mu\raise4pt\hbox{.}\mkern1mu\raise7pt\hbox{.}}}
\newtheorem{thm}{Theorem}
\newtheorem{cor}{Corollary}
\newtheorem{pro}{Proposition}
\newtheorem{lem}{Lemma}
\newtheorem{de}{Definition}
\newtheorem{rem}{Remark}
\newtheorem{conj}{Conjecture}
\title{On the cohomology of $\GL(N)$ and adjoint Selmer groups}
\author{J. Tilouine and E. Urban}
\thanks{  The first author is partially supported by the grants PerCoLaTor 
ANR-14-CE25, Coloss AAPG2019, and NSF grant DMS 1464106. 
The second author is partially funded by the grant DMS 1407239 from the National Science Foundation. }
\date{March 24, 2023}
\begin{document}

\begin{abstract}

We prove -under certain conditions (local-global compatibility and vanishing of modulo $p$ cohomology), 
a generalization of a theorem of Galatius and Venkatesh. 
We consider the case of $\GL(N)$ over a CM field; we construct a Hecke-equivariant injection from the divisible group associated to 
the first fundamental group of a derived deformation ring
to the Selmer group of the twisted dual adjoint motive with divisible coefficients and we identify its cokernel as the first Tate-Shafarevich group of this motive. 
Actually, we also construct similar maps for higher homotopy groups with values in exterior powers of Selmer groups, although with less precise control 
on their kernel and cokernel. By a result of Y. Cai generalizing previous results by Galatius-Venkatesh on the graded cohomology group of a locally symmetric space, 
our maps relate the (non-Eisenstein) localization of the graded cohomology group for a locally symmetric space
 to the exterior algebra of the Selmer group of the Tate dual of the adjoint representation.
We generalize this to Hida families as well. 

\end{abstract}

\maketitle
%\begin{center}  Preliminary Version  \end{center}

\section{Introduction}

For details and references for the statements of this section, see next section.

Let $F$ be a CM field of degree $2d_0$. Let $G=\Res^F_{\Q} \GL_N$ be the $\Q$-reductive group restriction of scalars of
$\GL_N$ from $F$ to $\Q$. Let $\pi$ be a cohomological cuspidal representation of $G(\bbA)$ of level group $U=U_0(\n)$ and 
weight $\lambda$. Let $X$ be the locally symmetric space of $G$ for that level. It is known that $\pi^U$ occurs in $\H^\bullet(X,V_\lambda(\C))$ for all degrees in an interval $[q_m,q_s]$ 
(and only for them), where $q_s-q_m=\ell_0$ is the Harish-Chandra defect of the derived group of $G(\R)$.

%$$\mbox{\rm if}\, N=2n,\quad \ell_0=nr_1+2nr_2-1 , q_0=n^2r_1+(2n^2-n)r_2, d=(2n^2+n)r_1+4n^2r_2-1,$$ 
%resp.
%$$\mbox{\rm if}\, N=2n+1,\quad \ell_0=(n+1)r_1+2nr_2-1 , q_0=(n^2+n)r_1+(2n^2+n)r_2, d=(2n^2+3n+1)r_1+(4n^2+4n)r_2-1.$$

%Let $U$ be a compact open subgroup of $G_f$ and
%$$Y_G(U)=G(\Q)\backslash (X_G\times G_f/U)$$
%be the locally symmetric space of level $U$ associated to $G$.
%Let $\lambda$ be a dominant weight for $G$ and $V_\lambda$ the irreducible algebraic representation of $G$ of highest weight $\lambda$.
%Let $\H^\bullet(Y_G(U),V_\lambda(\C))$ be the graded space of cohomology of the corresponding local system over $Y_G(U)$.
%Let $H^\bullet_{temp}(Y_G(U),V_\lambda(\C))$ be its tempered part (corresponding to tempered automorphic algebraic representations).
%Let $\H^{temp}_\bullet=\H^{d-\bullet}_{temp}(Y_G(U),V_\lambda(\C))$. Let $h_\Z$ be the spherical $\Z$-Hecke algebra outside $Ram(U)$ acting faithfully on 
%$\H^{temp}_\bullet$. 

%Recall that Prasanna-Venkatesh \cite[Sect.3]{PV16} defined a rank $\ell_0$ abelian Lie subalgebra $\a_G\subset Lie(G_\C)$ and an action
%of the graded ring $\bigwedge^\bullet\a_G$ on $\H^{temp}_\bullet$; they conjectured that there is a Hecke equivariant isomorphism 
%$$\H^{temp}_\bullet\cong \H_{q_0}^{temp}\otimes_{\C}\bigwedge^\bullet\a_G$$
%or
%$$\H^{temp}_\bullet\cong \H_{q_0}^{temp}\otimes_{h_\Z\otimes\C}\bigwedge_{h_\Z\otimes \C}^\bullet h_\Z\otimes \a_G$$

Let $p$ be a rational prime relatively prime to $\n$. 
%We fix embeddings $\overline{\Q}\hookrightarrow \overline{\Q}_p$ and $\overline{\Q}\hookrightarrow \C$.
Let $K$ be a sufficiently big $p$-adic field (containing, in particular, the Hecke eigenvalues of $\pi$), 
$\cO$ its valuation ring, $\varpi$ a uniformizing parameter, and $k=\cO/(\varpi)$ its residue field.

Let $\H^\bullet(A)=\H^\bullet(X,V_\lambda(A))$ for $A=\cO$ or $k=\cO/(\varpi)$. 
Note that $\H^\bullet(\cO)$ may have torsion. 
Let $h\subset \End \H^\bullet(\cO)$ be the $\cO$-Hecke algebra generated by the Hecke operators outside $\n$. 
Let $S_p$ be the set of places of $F$ above $p$.
 %Let $\pi$ be a cuspidal representation occuring in $H^{q_0}(\cO)$, 
Let $\theta_\pi\colon h\to \cO$ be the corresponding 
Hecke eigensystem and $\overline{\theta}\colon h\to k$ its reduction modulo $\varpi$. 
We put $\m=\Ker(\overline{\theta})$ the corresponding maximal ideal.
Let $G_F=\Gal(\overline{F}/F)$. By \cite{HLLT16} and \cite{Sch15}, it is known that $\pi$ has an associated Galois representation $\rho_\pi\colon G_F\to \GL_N(\cO)$ 
(unramified outside $\n\cdot p$). We assume

\medskip
$(REI)$ The residual representation $\rho_\pi$ has residual enormous image \cite[Definition 6.2.28]{ACC+18}.
  \medskip
  
Recall that this condition is satisfied when the image of $\overline{\rho}_\pi$ contains 
$(k^\prime)^{\times}\cdot \SL_N(k^\prime)$ where $k^\prime\subset k$ is a subfield of $k$.

Let $H^\bullet_\m$ (resp. $H^\bullet_\m(k)$) be the localization of $\H^\bullet(\cO)$ 
(resp. $\H^\bullet(k)$) at $\m$. We make the assumption introduced by Calegari-Geraghty:
\medskip
$(Van_\m)\quad \H^i_\m(k)=0\ \mbox{\rm for}\, i\notin [q_m,q_s]$
\medskip

% Let $I_F$ be the set of complex embeddings of $F$.
%Therefore, $\ell_0=Nd_0-1$ and $q_0={N(N-1)\over 2}d_0$.
%Let $B$, resp. $T$, its standard Borel, resp. maximal diagonal torus.
Let $\bbT=h_\m$ be the localized Hecke algebra acting faithfully on $\H^\bullet_\m$. These modules are finite
over $\cO$ but may contain torsion.
%Note that in the examples above,$\widehat{G}$ has a standard embedding in $\GL_N$. 
% Let $F$ be a totally real or CM field;  Let $\pi$ be a cuspidal algebraic representation of 
%$\GL_N(\bA_F)$ occuring in $\H^\m_{q_m}$ where $\m$ is non Eisenstein. Let $\overline{\rho}=\overline{\rho}_\pi$ be its residual Galois representation.
For any given quotient $A$ of $\bbT$, we consider the assumption

\medskip
$(\Gal_A)$  There exists a continuous Galois representation 
 $\rho_A\colon G_F)\to \GL_N(A)$
unramified outside $\n p$ and such that for any $v$ prime to $\n p$, 
$\Char(\rho_\m(\Frob_v))=Hecke_v(X)$
where the Hecke polynomial at $v$ is given by
$$Hecke_v(X)=X^N-T_{v,1}X^{N-1}+\ldots +(-1)^i T_{v,i}\bN v^{{i(i-1)\over 2}}X^{N-i}+\ldots +(-1)^N T_{v,N}\bN v^{{N(N-1)\over 2}}.$$
 
This conjecture is proven in \cite{Sch15} for $A=\bbT/I$ where $I$ is a nilpotent ideal 
of exponent bounded in terms of $N$ and $[F\colon \Q]$. 
The exponent of $I$ is bounded by $4$ in \cite{NT16}.
 By \cite{CGH+19}, the ideal can be taken to be $0$ if $p$ splits totally in $F$.

We assume
\medskip
$(MIN)$  The level $\n$ is squarefree and $\rho_\pi$ is $\n$-minimal. 
\medskip

Write the weight $\lambda$ as $\lambda=(\lambda_{\tau,i})$ where $\tau\colon F\to\overline{\Q}$ and $i=1,\ldots,N$.
%Let $\pi=\pi_f\otimes\pi_\infty$ be a cuspidal automorphic representation such that $dim\,\pi_f^{U_0(\n)}=1$ and of cohomological weight $\lambda$. In other words, there are Hecke equivariant embeddings
%$$\pi_f^{U_0(\n)}\subset \H^i(Y,V_\lambda(\C)),\quad i=1,2$$

Let $S_p=S_{F,p}$ be the set of places of $F$ above $p$. 
By \cite{Ca14}, it is known that the local Galois representations $\rho_\pi\vert_{G_{F_w}}$
is crystalline at all places $w\in S_p$ (see Section \ref{sectNotAss}).
We will assume either of the two following local conditions at $p$ :

\medskip
$(FL)$ $p$ is unramified in $F$, $p>N$ and $\lambda_{\tau,1}-\lambda_{\tau,N}<p-N$ for any $\tau\in I_F$. 

Under this assumption, it follows from the crystallinity of $\rho_\pi$ at places of $S_p$ 
that for any $w\in S_p$, $\rho_\pi\vert_{G_{F_w}}$
and $\overline{\rho}_\pi\vert_{G_{F_w}}$ are Fontaine-Laffaille.

\medskip
$(ORD_\pi)$  $F$ contains an imaginary quadratic field in which $p$ splits, $\pi$ 
is unramified and ordinary at all places $v\in S_p$. 
\medskip

%To recall what the latter means, we follow the notations of \cite[Section 5.1, before Th.5.5.1]{ACC+18} (see also \cite[Lemma 2.7.6]{Ge19}).
%We assume $S\cap S_{F,p}=\emptyset$. Then the ordinarity of $\pi$ at $v\in S_p$  means that 
%$u_{\lambda,\varpi_v}^{(i)}\in\cO^\times$ for $i=1,\ldots,N$ where
%the $u_{\lambda,\varpi_{v}}^{(i)}$'s are defined by

%$$Hecke_v(X)=\prod_{i=1}^N(X-\bN v^{i-1} \frac{u_{\lambda,\varpi_{v}}^{(i)}}{u_{\lambda,\varpi_{v}}^{(i-1)}}\cdot
%\prod_{\tau\colon F_v\to\overline{\Q}_p}\tau(\varpi_{v})^{\lambda_{\tau,N-i+1}})$$
%with $u_{\lambda,\varpi_{v}}^{(0)}=1$ by convention.

%For $i=1,\ldots,N$, let $\chi_{\pi,v,i}\colon G_{F_v}\to \cO^\times$  be the character given by 
%$$\chi_{\pi,v,i}\circ \Art_{F_v}(u)=\epsilon^{-(i-1)}(\Art_{F_v}(u))\prod_{\tau}\tau(u)^{\lambda_{\tau,N+1-i}}$$
 %for all $u\in\cO_{F_v}^\times$ , and by
%$$\chi_{\pi,v,i}\circ \Art_{F_v}(\varpi_v)=\bN v^{i-1} \frac{u_{\lambda,\varpi_{v}}^{(i)}}{u_{\lambda,\varpi_{v}}^{(i-1)}}\cdot
%\prod_{\tau\colon F_v\to\overline{\Q}_p}\tau(\varpi_{v})^{\lambda_{\tau,N-i+1}}.$$

Recall that $(ORD_\pi)$ implies that $\rho_\pi\vert_{G_{F_v}}$ takes values in a 
Borel of $\GL_N(\cO)$. More precisely,
there exists $g_v\in\GL_N(\cO)$ such that
$g_v\cdot\rho_\pi\vert_{G_{F_v}}\cdot g_v^{-1}$ is upper triangular. Let $B=TN$ 
be the Levi decomposition of the Borel $B$ of upper triangular matrices with $T$ the subgroup of diagonal matrices. 
We denote by $\underline{\chi}_v\colon G_{F_v}\to T(\cO)$ the homomorphism $g_v\cdot\rho_\pi\vert_{G_{F_v}}\cdot g_v^{-1}$ modulo $N(\cO)$.
We put 
$$\underline{\chi}_{v}=\diag (\chi_{v,i})_{i=1,\ldots,N}$$
and we write $\overline{\chi}_{v,i}$ for the reduction of $\chi_{v,i}$ modulo $\varpi$. 
%Then
% the homomorphism $G_{F_ \widetilde{v}})\to T(\cO)$ occuring on the diagonal of $\overline{\rho}_\pi\vert_{G_{F_{\widetilde{v}}}}$ of $\underline{\chi}_v$ is equal to
%$$\underline{\chi}_v=\underline{\chi}_{\pi,v}$$
With these notations, we consider the so-called $p$-distinguished condition

\medskip
$(DIST)$ At each $v\in S_p$, the characters $\overline{\chi}_{v,i}$ ($i=1,\ldots,N$) are mutually distinct.
\medskip

 %With the notations above, it means that
%for each $v\in S_p$, the residual characters $\overline{\chi}_{v,i}$ modulo $\varpi$
%for $i=1,\ldots, N$ are mutually distinct.
In other words, the assumption of $p$-distinguishability expresses the fact that for all $v\in S_p$, the homomorphism
 $\overline{\underline{\chi}}_v=(\overline{\chi}_{v,i})_{i=1,\ldots,N}$ 
separates the roots of $(\GL_N,B,T)$.
Let $\epsilon\colon G_{F_v}\to\Z_p^\times$ be the $p$-adic cyclotomic character, 
and $\overline{\epsilon}$ its reduction modulo $p$.
We will also need sometimes the assumption of strong $p$-distinguishability:

\medskip
(STDIST) For all $i<j$'s, the quotients $\overline{\chi}_{\widetilde{v},i}\cdot\overline{\chi}_{\widetilde{v},j}^{-1}$ are neither trivial nor equal to $\overline{\epsilon}^{-1}$.
\medskip

%We now recall briefly the deformation theory of Galois representations that plays a central role in this paper. 

Let $\cD^?$ be the classical functor of deformations of $\overline{\rho}_\pi$ which are $\n$-minimal and of type $?$ at all places $v\in S_p$, where $?\in\{FL,ORD\}$. It is pro-prepresentable by a pair $(R^?,\rho^?)$ where $?=\ord,FL$.
See Section \ref{sectCGtheory} below.
 %(see for instance \cite[Prop.2.2.9 and Lemma 2.4.1]{CHT08} for $?=FL$
%and \cite[Section 2.4.4]{CHT08} or \cite[Chapt.6]{Ti96} for $?=\ord$). 
As in \cite{CaGe18}, we introduce allowable Taylor-Wiles sets $Q$ disjoint of the set of places dividing $\n p$ and
consider the functors $\cD^?_Q$ similar to $\cD^?$, where we allow arbitrary ramification at places $v\in Q$.
Similarly, let $R^?_Q$ be the universal deformation ring of $\cD^?_Q$ and $(\bbT_Q,\m_Q)$ be the analogue of $(\bbT,\m)$ with Taylor-Wiles auxiliary level
at $Q$. Assume for any $Q$ :

$(\Gal_Q)$ There exists a Galois representation $\rho_{\m_Q}\colon G_F\to \GL_N(\bbT_Q)$ associated to $\bbT_Q$
.
The rings $R_Q^?$ and $\bbT_Q$ both admit a natural structure of $\cO[\Delta_Q]$-algebra where $\Delta_Q$ is a finite abelian $p$-group defined by the Taylor-Wiles construction.
%of $\prod_{v\in Q}T(k_v)$. By $(\Gal_\m)$, there is a Galois representation $\rho_{\m_Q}\colon \Gal(\overline{F}/F)\to \GL_N(\T_Q)$ associated to $(\T_Q,\m_Q)$.
We will assume the following
$(LLC_Q)$  For any Taylor-Wiles set $Q$,  the conjugacy class $[\rho_{\m_Q}]$ is in $\cD^?_Q(\bbT_Q)$.
 Assuming $(LLC_Q)$, for each Taylor-Wiles set $Q$, there is a canonical
surjective $\cO[\Delta_Q]$-homomorphism $\phi_Q\colon R^?_Q\to \bbT_Q$.
The local-global compatibility conjecture $(LLC_Q)$ for $\rho_{\m_Q}$ has been proven (at least
if $F$ is not imaginary quadratic but contains an imaginary quadratic field in which $p$ splits)  
modulo a nilpotent ideal (see \cite{ACC+18}) of exponent bounded by $[F\colon\Q]$
and $N$. Let $\phi_=\phi_\emptyset$. 

We abbreviate all the assumptions above as $(\cH)$. Thus $(\cH)$ is the conjunction of all the following assumptions :
$(\Gal_Q)$, $LLC_Q)$, $(Van_{\m_Q})$ for all Taylor-Wiles sets, $(REI)$, (MIN), $(FL)$ or $(ORD_\pi)+(DIST)$.

%By Taylor-Wiles-Kisin patching, the $\cO[\Delta_Q]$-homomorphism $\phi_Q$ give rise to morphism $S_\infty\to R_\infty^?\to \T_\infty\subset \End_{S_\infty}(C^\bullet_\infty)$
% (see \cite{CaGe18}). 
%The graded module $\Tor_\bullet^{S_\infty}(R_\infty,\cO)$ has a natural structure of graded algebra
%over $\Tor_0^{S_\infty}(R_\infty,\cO)=R_\infty\otimes_{S_\infty}\cO=R^?$ .
%Let $\H_\bullet^\m=H^{d-\bullet}_\m$, then we have:
Following \cite{GV18}, we introduce a derived ($\n$-minimal and $?$ at $p$) 
deformation problem for $\overline{\rho}_\pi$, which we denote $\cD^{s,?}$.
It is prorepresented by a simplicial proartinian ring $\cR^{?}$. One has $\pi_0(\cR^?)=R^?$ 
and the homotopy groups $\pi_j(\cR^?)$ form a graded $\pi_0(\cR^?)$-algebra. 
By generalizing \cite[Theorem 12.1 and Theorem 14.1]{GV18},  Y. Cai \cite[Theorem 3.32]{Cai21} proved

\begin{thm}  Assume $\zeta_p \notin F$ and $(\cH)$. Then,
$\H_\m^{q_s}$ is finite and free over $R^?\cong \bbT$ and we have an isomorphism 
of graded $\pi_\bullet(\cR^?)$-modules
 
$$\H^{q_s-\bullet}_\m\cong \H^{q_s}_\m\otimes_{R^?} \pi_\bullet(\cR^?)
$$
%in other  words $\H_\bullet^\m$ is a free graded $\Tor_\bullet^{S_\infty}(\T_\infty,\cO)$-module.
\end{thm}

%On the other hand, in \cite{GV18}, the authors introduced derived notions of deformation problems for $\overline{\rho}_ {\m}$ denoted $\cD^{s}$, $\cD^{s,?}$; 
%they are pro-represented by simplicial pro-artinian rings $\cR$, resp. $\cR^{s,?}$.
%Generalizing their main result Th.12.1 and Th.14.1, Y. Cai \cite[Theorem 0.6]{Cai21} proved
 
%\begin{thm}\label{Cai}(\cite{GV18,Cai21}) Assume that $p>N$, $\zeta_p\notin F$, $(\Gal_\m)$, $(LLC)$ and $(Van_\m)$ hold.
%	Under the assumptions $(RLI)$, $(MIN)$, $(FL)$ or $(ORD_\pi)+(DIST)$,
%we have an isomorphism of graded commutative rings $\pi_{\bullet} \cR \cong \Tor_{\bullet}^{S_\infty}(R_\infty, \cO)$, in particular 
 %we have an isomorphism of graded objects
%$$\H_\bullet^\m\cong \H_{q_0}^\m\otimes_{\T}\pi_\bullet(\cR)$$
%\end{thm}
Recall that in \cite{GV18}, it was assumed that $\bbT=\cO$. Here it is not assumed, hence congruences are allowed.
This will be our setting in the sequel. According to (motivated) predictions by Venkatesh, $\pi_\bullet(\cR^?)$ should be related to an exterior algebra
$$\bigwedge^\bullet{\Sel}^?(F,(\Ad\,\rho_{\bbT})^\ast(1).
$$

In \cite{GV18}, the authors indeed establish a natural isomorphism between these objects under the assumption that $\bbT=\cO$, but they mention that it may not be so without this assumption, even in degree one for the relation between $\pi_1(\cR^?)$ and the $?$-Selmer group ${\Sel}^?(F,\Ad\,\rho_{\bbT})^\ast(1))$.
Our first main result concerns the relation between $\pi_1( \cR)$ and the $?$-Selmer group 
of $(\Ad\,\rho_\pi)^*(1)$.  Recall $\pi_0(\cR^?)=R^?$ is the classical $(min,?)$-deformation ring.
Let $\cO_n=\cO/(\varpi^n)$.
We consider the simplicial ring homomorphism 
$$\phi_n\colon \cR^?\to R^?\to \cO_n$$
given by the universal property for the deformation $\rho_n=\rho_\pi \pmod{(\varpi^n)}$.
%Let $M_n=\varpi^{-n}\cO/\cO$; it is a finite $\cO_n$-module. 
%Consider the simplicial ring $\Theta_n=\cO_n\oplus M_n[1]$ concentrated in degrees $0$ and $1$ up to homotopy.
%It is endowed with a simplicial ring homomorphism $\pr_n\colon \Theta_n\to \cO_n$ given by the first projection.
%Let $L_n(\cR)$ be the set of homotopy equivalence classes of simplicial ring homomorphisms $\Phi\colon \cR\to \Theta_n$ such that
%$\pr_n\circ\Phi=\phi_n$. Following \cite[Lemma 15.1]{GV18}, we define (see Section \ref{GVhom}) compatible homomorphisms
%\begin{displaymath}\label{GVdualn} \pi(n,\cR)\colon L_n(\cR)\to \Hom_{R}(\pi_1(\cR),M_n)
%\end{displaymath}

Let ${\Sel}^?(F,\Ad\,\rho_n(1)\otimes_{\bbT} \cO_n)$ be the minimal and 
$?=ord,FL$ Selmer group of $\Ad\,\rho_n(1)\otimes_{\bbT} \cO_n)$.
W construct compatible $\cO$-linear homomorphisms $GV_n=\pi(n,\cR)^\vee$:
$$GV_n\colon \pi_1(\cR^?)\otimes_{R^?} \cO_n\rightarrow {\Sel}^?(F,(\Ad\,\rho_\pi)^\ast(1)\otimes \cO_n)$$
By taking the direct limit we obtain a homomorphism
$$GV\colon \pi_1(\cR^?)\otimes_{R^?} K/\cO\rightarrow {\Sel}^?(F,(\Ad\,\rho_\pi)^\ast(1)\otimes K/\cO)$$
Then our first result (Theorem \ref{gv} and Proposition \ref{Sha-Selmer} in the text) is
\begin{thm} Assume $\zeta_p\notin F$ and $(\cH)$. The natural homomorphism
$$GV\colon \pi_1(\cR^?)\otimes_{R^?} K/\cO\rightarrow {\Sel}^?(F,(\Ad\,\rho_\pi)^\ast(1)\otimes K/\cO)$$
is injective.
The left-hand side module is $\varpi$-divisible of $\cO$-corank $\ell_0$. The cokernel of $GV$ is the Tate-Shafarevitch group $\Sha^1(F,\Ad(\rho_\pi)^\ast(1)\otimes K/\cO)$ and is finite. Moreover, If we are in the ordinary case, let us furthermore assume  $(STDIST)$, then its $\cO$-Fitting ideal is the same as the one of ${\Sel}^?(F,\Ad\,\rho_\pi\otimes K/\cO)$.
\end{thm}
The key step to prove this theorem is Proposition \ref{surj}. A similar result, 
under stronger assumptions,  is proven in the course of the proof of \cite[Theorem 15.2]{GV18}.
However, under our assumptions, the proof given there doesn't seem to work. 
Our approach is different and involves only commutative algebra.
 The last part of the statement is proved in Proposition \ref{Sha-Selmer}.

Actually, under the same assumptions 
%$p>\ell_0=Nd-1$, $\zeta_p\notin F$, $(RLI)$, $(MIN)$, $(FL)$ or $(ORD_\pi)+(DIST)$
, we can also define a graded version $GV^\bullet$ of the Galatius-Venkatesh homomorphism $GV$.
% For every cuspidal representation $\pi^\prime$ occuring in $H^\bullet_\m$, let $\lambda^{\gal}_{\pi^\prime}\colon R^?\to \cO$ be the algebra homomorphism associated to $\rho_{\pi^\prime}$.
%
Let us introduce some notations before stating our result. For a graded algebra $G$, we introduce the truncation
$$\tau_{\leq\ell_0} G=\bigoplus_{j\leq\ell_0}G_j$$
with a partial algebra structure defined only for $g_{j_k}\in G_{j_k}$ such that $j=j_1+j_2\leq \ell_0$.
For a graded algebra $G$, we also introduce its largest graded-commutative quotient $\widetilde{G}$, that is,
its largest (graded) quotient
in which $<a,b>_{j_1,j_2}=(-1)^{j_1j_2} <b,a>_{j_2,j_1}$ for $a\in\widetilde{G}_{j_1}$, $b\in\widetilde{G}_{j_2}$.
For simplicity, we write $R=R^?$, $\cR=\cR^?$ and $\Sel={\Sel}^?$.
We endow $\pi_\bullet(\cR^{\otimes\bullet})=\bigoplus_{j\geq 0}\pi_j(\cR^{\otimes j})$ 
with a structure of graded algebra. For any $R$-module $M$, we write $M_{\cO_m}=M\otimes_R\cO_m$.
 We prove (see Theorem  \ref{Gvi}):

\begin{thm} \label{Gviintro} For every cuspidal representation $\pi^\prime$ occuring in $H^\bullet_\m$, 
for any $m\geq 1$,

(1) there is an homomorphism of $\cO_m$-modules
\begin{displaymath} (HGV_j)\quad GV_m^j\colon \widetilde{\pi}_j(\cR^{\otimes j})_{\cO_m}
\to\bigwedge^j_{\cO_m}\Sel(F,\Ad(\rho_{\pi^\prime})^\ast(1)\otimes \cO_m)^{\oplus j}
\end{displaymath}
(2) It induces a morphism of truncated graded $\cO_m$-algebras
$$(HGV_\bullet)\quad GV_m^\bullet\colon \tau_{\leq\ell_0} \widetilde{\pi}_\bullet(\cR^{\otimes\bullet})_{\cO_m}\to \tau_{\leq\ell_0} \bigwedge^{\bullet}_{\cO_m}\Sel(F,\Ad(\rho_{\pi^\prime})^\ast(1)\otimes \cO_m)^{\oplus \bullet}$$

(3) Moreover, for any $j$, let 
$$ \mu_j\colon \bigwedge^j_{\cO_m}\pi_1(\cR)_{\cO_m}\to\widetilde{\pi}_j(\cR^{\otimes j})_{\cO_m}$$ 
be the tensor shuffle multiplication map, and
$$a_j\colon\Sel(F,\Ad(\rho_{\pi^\prime})^\ast(1)\otimes \cO_m)^{\oplus j}\to 
\Sel(F,\Ad(\rho_{\pi^\prime})^\ast(1)\otimes \cO_m)$$
be the homomorphism induced by the
addition,

 then the composition
$a_j\circ GV_m^j\circ\mu_j$
coincides with $\bigwedge^j GV_m^1$.

(4) For $j=\ell_0$, the cokernel of $GV_m^{\ell_0}\circ\mu_{\ell_0}$ is annihilated by 
$\Fitt_\cO(\Sel(F,Ad(\rho_{\pi^\prime}))$.
\end{thm}
This theorem implies that for any $1\leq j\leq \ell_0$,  the shuffle multiplication $\mu_j$ is injective\footnote{However it would be more interesting to study the injectivity of the homomorphisms $GV_m^j$.}.
\medskip

We have similar results for Hida families in Section \ref{GVhomHida}.
Let $R_h$ be the universal minimal $\Lambda$-ordinary deformation ring prorepresenting the 
minimal $\Lambda$-ordinary deformation problem $\cD_h$ (see Section  \ref{Hidadef}). Let $\cR_h$ 
be the simplical proartinian ring 
prorepresenting the simplicial minimal $\Lambda$-ordinary deformation problem $\cD_h^s$ (see Section \ref{LambdaDef}).

\begin{rem} The notion of $\Lambda$-ordinary deformations is opposed to that of $\lambda$-ordinary deformations 
where the Hodge-Tate weight cocharacter is fixed (say, given by $\lambda+\rho$), 
while it is let to vary in the $\Lambda$-ordinary deformation problem. 
In this section, we write $R_\lambda$, resp. $\cR_\lambda$, for the
deformation rings representing the problem of ordinary deformations with fixed Hodge-Tate weight $\lambda+\rho$.
\end{rem}
The rings $\cR_h$ and $R_h$ are naturally $\Lambda$-algebras, where $\Lambda$ 
is the Hida-Iwasawa algebra (see Section \ref{Lambda}).
We have $\pi_0(\cR_h)=R_h$ ( Section \ref{LambdaDef}), hence we have a surjective $\Lambda$-algebra 
homomorphism $\cR_h\to R_h$.
Note that $\pi_\bullet(\cR_h)$ is a graded algebra over $\pi_0(\cR_h)=R_h$. Let $\m_h$ be the maximal ideal of 
 $\Lambda$-ordinary Hida Hecke algebra acting faithfully on 
$\bH_h^\bullet=e\H^{\bullet}(Y_1(p^\infty),\cO)$ associated to $\m$ and let $\bbT_h=\varprojlim_n\bbT_1(p^n)$ 
be its corresponding $\m_h$-localization.
%It is also naturally a $\Lambda$-algebra.
Assuming $(\Gal_{\bbT_1(p^n)})$ and $(LLC{\bbT_1(p^n)})$ for all $n$'s, we define a Galois representation
$\rho_{\bbT_h}\colon G_F\to \GL_N(\bbT_h)$ which gives rise to a canonical $\Lambda$-algebra 
homomorphism $R_h\to \bbT_h$.
We prove by following the Calegari-Geraghty method (Th. \ref{Hidastrcoh} and \ref{Hidasimpl} in the text) :

\begin{thm}  Assuming $(\Gal_{\bbT_1(p^n)_Q})$ and $(LLC{\bbT_1(p^n)_Q})$ 
for all $n$'s and all Taylor-Wiles sets, and assume $(\cH)$.
%Assume $(Van_\m)$ holds for a classical specialization $\pi$ of $\bbT_h$ of level prime to $p$.
 Then

1) $R_h\to \bbT_h$ is an isomorphism,

2) there is a isomorphism of graded $\bbT_h$-modules
$$\bH_{h, \m_h}^{q_s-\bullet}=\bH^{q_s}_{h,\m_h}\otimes_{\bbT_h}\pi_{\bullet }(\cR_h)
$$
\end{thm}

The new feature in this Hida-theoretic context is that the Hida-theoretic cohomology module  
$\bH_h^\bullet=e\H^{\bullet}(Y_1(p^\infty),\cO)$ 
is conjectured to be concentrated in the maximal degree $q_s$.
This is one form of the non-abelian Leopoldt conjecture (NALC) due to Hida and one of the authors. 
They are discussed in Section \ref{NAL} in the text.
Recall that a simplicial ring is called homotopically discrete if it is weakly equivalent to a classical ring. 
Then, this non-abelian Leopoldt conjecture is equivalent to the following:

\begin{conj} 
The simplicial ring $\cR_h$ is homotopically discrete and for any arithmetic weight 
$\lambda^\prime$, there is an isomorphism of commutative graded rings
$$\Tor_\bullet^\Lambda(R_h,\Lambda/P_{\lambda^\prime})=\pi_\bullet(\cR_{\lambda^\prime}).$$
\end{conj}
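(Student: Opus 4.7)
The plan is to derive both halves of the conjecture (discreteness of $\cR_h$ and the $\Tor$ identification at classical weights) from the structure theorem established in Theorem~\ref{Hidasimpl}, combined with the non-abelian Leopoldt vanishing for the Hida complex and a derived base-change statement.

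\textbf{Step 1 (Discreteness).} Theorem~\ref{Hidasimpl} provides an isomorphism of graded $\T_h$-modules
\[
\bH_{h,\m_h}^{\bullet}\cong \bH_{h,\m_h}^{q_s}\otimes_{\T_h}\pi_{d-\bullet}(\cR_h),
\]
in which the top-degree module $\bH_{h,\m_h}^{q_s}$ is faithful (in fact free) over $\T_h=R_h$. Consequently the vanishing of $\pi_i(\cR_h)$ for $i>0$ is equivalent to the vanishing of $\bH_{h,\m_h}^{j}$ for $j<q_s$; this is precisely the non-abelian Leopoldt conjecture discussed in Section~\ref{NAL}. Granted this vanishing, I would conclude $\pi_i(\cR_h)=0$ for $i>0$, and since $\pi_0(\cR_h)=R_h$, the simplicial ring $\cR_h$ is weakly equivalent to the constant simplicial ring $R_h$, i.e.\ it is discrete.

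\textbf{Step 2 (Derived base change at a classical weight).} I would next establish a natural weak equivalence of simplicial proartinian $\cO$-algebras
\[
\cR_{\lambda'}\simeq \cR_h\underline{\otimes}_\Lambda^{L}\Lambda/P_{\lambda'},
\]
for every arithmetic weight $\lambda'$. This should follow formally from the definition of the simplicial deformation problem $\cD_h^s$ prorepresented by $\cR_h$ in Section~\ref{LambdaDef}: the local conditions at $v\in S\cup S_p$ specialize transparently under the Hida-theoretic characters, using $(DIST)$ to split the $\underline{\chi}_{\pi,v}$-ordinary local functor as a product of characters, so that the pushout along $\Lambda\to\Lambda/P_{\lambda'}$ commutes with taking the universal simplicial deformation. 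The classical shadow is the identity $R_{\lambda'}=R_h\otimes_\Lambda\Lambda/P_{\lambda'}$; its derived enhancement is a compatibility statement for simplicial cotangent complexes. Because $P_{\lambda'}\subset\Lambda$ is generated by a regular sequence, $\Lambda/P_{\lambda'}$ admits its Koszul resolution and the derived tensor product is concretely computable.

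\textbf{Step 3 (Conclusion).} Combining Steps~1 and 2, with $\cR_h\simeq R_h$,
\[
\pi_\bullet(\cR_{\lambda'})\cong \pi_\bullet\bigl(R_h\underline{\otimes}_\Lambda^{L} \Lambda/P_{\lambda'}\bigr)\cong \Tor_\bullet^\Lambda(R_h,\Lambda/P_{\lambda'}),
\]
and the graded commutative ring structure matches on both sides through the standard Koszul multiplication, yielding the desired isomorphism of graded commutative rings.

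\textbf{Main obstacle.} The entire argument rests on the non-abelian Leopoldt vanishing invoked in Step~1: the concentration of the Hida-ordinary cohomology $\bH_{h,\m_h}^{\bullet}$ in the single degree $q_s$. This input is notoriously deep and is the sole genuinely open point; at present, I see no route to it that does not use essentially the same analytic/geometric ingredients as the Leopoldt conjecture itself. Step~2 is a matter of careful simplicial commutative algebra, namely verifying that the ordinary local conditions of Section~\ref{Hidadef} are compatible with the specialization $\Lambda\to\Lambda/P_{\lambda'}$ (and that the Galois deformation functor is homotopy-invariant in the source ring), and Step~3 is then formal.
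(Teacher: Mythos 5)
Your proposal correctly diagnoses the logical status of this statement: in the paper it is a genuine conjecture, not a proved result, and the authors themselves reduce it to the non-abelian Leopoldt concentration statement. In Section~\ref{LambdaDef} this is recorded verbatim as Conjecture~\ref{conjcins}, whose Comment~1 asserts only that ``the equivalence between the two statements follows from Theorem~\ref{Hidastrcoh}''; the paper never claims an unconditional proof. Your Step~1 carries out exactly that equivalence (reading off discreteness of $\cR_h$ from the freeness/faithfulness of $\bH^{q_s}_{h,\m_h}$ over $\T_h$ and the structure isomorphism), and your Step~2 is the derived base change $\cR_{\lambda'}\simeq\cR_h\underline{\otimes}_\Lambda\Lambda/P_{\lambda'}$, which the paper also states without proof in Section~\ref{LambdaDef}. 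Step~3 is then formal, again matching the paper. You are right that the sole open input is the concentration statement $\bH^\bullet_{h,\m_h}=\bH^{q_s}_{h,\m_h}$.

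One precision on citations. The isomorphism you quote in Step~1,
\[
\bH_{h,\m_h}^{\bullet}\cong \bH_{h,\m_h}^{q_s}\otimes_{\T_h}\pi_{d-\bullet}(\cR_h),
\]
is not a single statement of Theorem~\ref{Hidasimpl}; it is the concatenation of Theorem~\ref{Hidastrcoh}(2), which gives $\bH^{q_s-\bullet}_\m\cong\bH^{q_s}_\m\otimes\Tor_\bullet^{S_\infty^\Lambda}(R_\infty^\Lambda,\Lambda)$, with Theorem~\ref{Hidasimpl}, which identifies $\pi_\bullet(\cR_h)\cong\Tor_\bullet^{S_\infty^\Lambda}(R_\infty^\Lambda,\Lambda)$. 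You do address the needed faithfulness of $\bH^{q_s}_{h,\m_h}$ over $\T_h$, which is the key hinge making the equivalence go both ways. Other than that bookkeeping point, the route you take is essentially the one the authors themselves sketch; the genuinely hard content is, as you say, the Leopoldt-type vanishing, and neither your proposal nor the paper offers a proof of it.
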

 
This conjecture implies that the simplicial ring $\cR_h$ is weakly equivalent to $R_h$ and therefore derived
deformations should be useless. However, given a Hida family $\theta\colon \bbT_h\to\bI$ 
of the $\m_h$-localization of the ordinary Hida Hecke algebra, we can define a useful quotient of $\cR_h$
which won't be homotopically discrete.
Let $\Lambda_{\bI}$ be the image of $\Lambda$ in $\bI$ and
$\cR_{\bI}=\cR_h\underline{\otimes}_{\Lambda}\Lambda_{\bI}$. Let ${\Sel}_{\bI}=\Sel(F,(\Ad\rho_{\bI})^\ast(1)\otimes\widehat{\bI})$ where $\widehat{\bI}=\Hom_{\cO}(\bI,K/\cO)$;
 let $M_{\bI}$ be the Pontryagin dual of ${\Sel}_{\bI}$, $\Phi_{\bI}$ the Pontryagin dual of 
$\pi_1(\cR_{\bI})\otimes_{R_{\bI}}\widehat{\bI}$, and $N_{\bI}=\Ker(M_{\bI}\to \Phi_{\bI})$.
We prove (Theorem  \ref{gvhida} in the text):

\begin{thm} \label{thmfam}There is a natural $\bI$-linear injective homomorphism
$$GV_{\bI}\colon \pi_1(\cR_{\bI})\otimes_{R_{\bI}}\widehat{\bI}\hookrightarrow {\Sel}_{\bI}$$
%and we have $GV_{\bI}\circ\pi_{\bI}=GV_{h}$ for $\pi_{\bI}$ the homomorphism induced by the map $\cR_h\to\cR_\bI$.
The $\bI$-modules $M_{\bI}$ and $\Phi_{\bI}$ are of rank $\ell_0$ and $\Phi_{\bI}$ is free.
\end{thm}

Let $\widetilde{\bI}$ be the integral closure of $\bI$ and let $X=\Supp (\widetilde{\bI}/\bI)$.
For $P\notin X$, we have $\bI/P=\widetilde{\bI}/P\widetilde{\bI}$.
For such $P$, the reduction modulo $P$ of the exact sequence 
$$0\to N_{\bI}\to M_{\bI}\to \Phi_{\bI}\to 0$$
is exactly the Pontryagin dual of the exact sequence given by
$$0\to \pi_1(\cR_P)\stackrel{GV_P}{\rightarrow} {\Sel}_P \to \Coker GV_P \to 0$$
where the natural notations are explained before Corollary \ref{int}.
Therefore Theorem \ref{thmam} can be viewed as an interpolation theorem of our Galatius-Venkatesh homomorphisms
for a Hida family. However, it does not have direct relation to the non-abelian Leopoldt conjecture (except that it involves 
a simplicial quotient of a ring which should be homotopically discrete assuming (NALC)).
\medskip

In learning the material used here, we benefited from a seminar on Galatius-Venkatesh paper held in 2019 at Paris 13, 
and of discussions with
Y. Harpaz and A. Vezzani. Special thanks are due to Yichang Cai whose thesis provided much of the framework for our work. 
He also taught us several technical aspects of
Model Categories and Derived Deformation theory, although misunderstandings and mistakes left in the text are ours.

\section{Notations, Assumptions and known results}\label{sectNotAss}

Let $G$ be a connected reductive group over $\Q$. Let $Z$ be its center and $A\subset Z$ be the maximal 
$\Q$-split torus of $Z$.
For any $\Q$-group $H$, we decompose its group of adelic points as $H(\bbA)=H_f\times H_\infty$ into its finite and archimedean parts.
Let $K_\infty$ be a maximal connected compact subgroup of  $G_\infty$.
Let $X_G=G_\infty/C_\infty$ where $C_\infty=A_\infty K_\infty$ is the riemannian space associated to $G$.

Let $d=\dim\,X_G=\dim\,G_\infty-\dim\,C_\infty$; let $\ell_0=\dim\,T_{G_\infty}-\dim\,T_{C_\infty}$ where $T_{G_\infty}$, 
resp. $T_{C_\infty}$ denote a maximal torus of $G_\infty$ resp. $C_\infty$. If $G$ admits a Shimura variety, $\ell_0=0$ and $d$ is even.
Let $q_m=(d-\ell_0)/2$ and $q_s=q_m+\ell_0$. 
For instance, for $F$ a number field and $G=\Res_{F/\Q}\GL(N)$ with $[F\colon \Q]=r_1+2r_2$, we have 

$$\ell_0=(N-\lfloor{N\over 2}\rfloor)r_1+Nr_2-1, \quad q_m=\lfloor{N^2\over 4}\rfloor r_1+{N(N-1)\over 2}r_2,\quad    d=2q_m+\ell_0={N(N+1)\over 2}r_1+N^2r_2-1$$

Let $U$ be a compact open subgroup of $G_f$ and
$$Y_G(U)=G(\Q)\backslash (X_G\times G_f/U)$$
be the locally symmetric space of level $U$ associated to $G$.
Let $\lambda$ be a dominant weight for $G$ and $V_\lambda$ the irreducible algebraic representation of $G$ of highest weight $\lambda$.
Let $\H^\bullet(Y_G(U),V_\lambda(\C))$ be the graded space of cohomology of the corresponding local system over $Y_G(U)$.
Let $\H^\bullet_{temp}(Y_G(U),V_\lambda(\C))$ be its tempered part (corresponding to tempered automorphic algebraic representations).
Let $\H^{temp}_\bullet=\H^{d-\bullet}_{temp}(Y_G(U),V_\lambda(\C))$. 
Let $h_\Z$ be the spherical $\Z$-Hecke algebra outside $Ram(U)$ acting faithfully on 
$\H_{temp}^\bullet$. 

Recall that Prasanna-Venkatesh \cite[Sect.3]{PV16} defined a rank $\ell_0$ abelian Lie subalgebra 
$\a_G\subset Lie(G_\C)$ and an action
of the graded ring $\bigwedge^\bullet\a_G$ on $\H_{temp}^\bullet$; 
they conjectured that there is a Hecke equivariant isomorphism 
$$\H^{temp}_\bullet\cong \H_{q_0}^{temp}\otimes_{\C}\bigwedge^\bullet\a_G$$
or
$$\H^{temp}_\bullet\cong \H_{q_0}^{temp}\otimes_{h_\Z\otimes\C}\bigwedge_{h_\Z\otimes \C}^\bullet h_\Z\otimes \a_G$$

Let $p$ be a prime. We fix embeddings $\overline{\Q}\hookrightarrow \overline{\Q}_p$ and
$\overline{\Q}\hookrightarrow \C$. Let $K$ be a sufficiently big $p$-adic field; $\cO$ its valuation ring, $\varpi$ a 
uniformizing parameter and $k=\cO/(\varpi)$ its residue field.

Let $\H^\bullet(A)=\H^\bullet (Y_G(U),V_\lambda(A))$ for $A=\C, K,\cO$ or $k$.
Note that $\H^\bullet(\cO)$ may have torsion. Let $h$ be the $\cO$-subalgebra 
of $\End_{\cO}\,\H^\bullet(\cO)$ generated
by the Hecke operators outside the finite set $Ram(U)$ of places $v$ where $U_v$ is not hyperspecial.
Let $S_p$ be the set of places of $F$ above $p$. Let $\pi$ be a cuspidal representation occuring in $\H^\bullet(\C)$.
Let $\theta_\pi\colon h\to \cO$ be the corresponding 
Hecke eigensystem and $\overline{\theta}_\pi\colon h\to k$ its reduction modulo $\varpi$.
Let $\m=\Ker\,\overline{\lambda}_\pi$ be the corresponding maximal ideal of $h$. 
Let $G_F=\Gal(\overline{F}/F)$. In what follows, we shall assume that $\pi$ has an associated Galois representation
$\rho_\pi\colon G_F\to{}^LG(\cO)$ (unramified outside $Ram(U)\cup S_p$) satisfying

$(REI)$ The residual representation $\overline{\rho}_\pi$ has enormous image as in \cite[Definition 6.2.28]{ACC+18}.

Let $\H_\m^\bullet$ (resp. $\H_\m^\bullet(k)$) be the localization of  $\H_\m^\bullet$,
 resp.  $\H_\m^\bullet$  at $\m$. Recall that Caraiani and Scholze \cite{CaSch15} 
have proven that if $Y_G$ is a unitary Shimura variety, then $\H^i_\m=0$ 
for $i\neq q_m=q_s={d\over 2}$. In general, we consider the assumption

$(Van_\m)$  $\H^i_\m(k)=0$ for $i\neq [q_m,q_s]$.

When $(REI)$ holds, it is believed that $(Van_\m)$ holds but it seems 
very hard to prove it in general. Nevertheless it is known when $q_m=1$. 
When $q_m\geq 2$, the vanishing $\H^1_\m(k)=0$ can be proven 
for many groups \cite{PrR10}. 
Thus, for those groups such that $q_m=2$, the condition $(Van_\m)$ holds. 

From now on, we furthermore assume that $G=\Res^F_\Q\GL_N$ and $F$ is CM of degree $2d_0$.
 Therefore, $\ell_0=N d_0-1$ and $q_0={N(N-1)\over 2} d_0$. Hence, by \cite{PrR10}, 
$(Van_\m)$ holds for $N=2$ and $d_0\leq 2$.
Let $\bbT=h_\m$ be the localized Hecke algebra, acting faithfully on $\H_\m^\bullet$. 
These modules are finite over $\cO$
but may contain torsion. We assume

(Gal${}_{\bbT}$) There exists a continuous Galois representation $\rho_{\bbT}\colon G_F\to \GL_N(\bbT)$ 
unramified outside $Ram(U)\cup S_p$ and such that for $v\notin Ram(U)\cup S_p$, 
$\Char(\rho_{\bbT}(\Frob_v)$ coincides with the Hecke polynomial
$$P_v(X)=X^N-T_{v,1}X^{N-1}+\ldots+(-1)^i T_{v,i}\NN v^{{i(i-1)\over 2}}X^{N-i}+\ldots 
+(-1)^NT_{v,N}\NN v^{{N(N-1)\over 2}}.$$

This conjecture is proven in \cite{Sch15} if one replaces $\bbT$ by a quotient $\bbT/I$ where $I$ 
is a nilpotent ideal of exponent bounded in terms of $N$ and $d_0$. 
The exponent of $I$ is bounded by $4$ in \cite{NT16}. By \cite{CGH+19}, 
the ideal can be taken to be $1$ if  $p$ splits totally in $F$.

Let $\n$ be a squarefree ideal of $\cO_F$ coprime to $p$. 
We denote by $S$ the set of places of $F$ dividing $\n$. Let $U_0(\n)\subset \GL_N(\widehat{\cO}_F)$ be the Iwahori subgroup of level $\n$ and $Y=Y_0(\n)$ be the Shimura manifold of level $\Gamma_0(\n)$.
We assume

$(MIN)$ $\rho_\pi$ is $\n$-minimal.

Recall that this means that for any place $v\vert\n$, the image $\overline{\rho}_\pi(I_v)$
of the inertia subgroup $I_v$ contains a regular unipotent element. 
More precisely, let $J_N$ be the standard Jordan block of size $N$ and 
$t_v\colon I_v\to \Z_p(1)$ be the $p$-adic tame homomorphism of $F_v$ given by $\tau(p_v^{1/p^n})=\zeta_{p^n}^{t_v(\tau)}\cdot p_v^{1/p^n}$ for all $n$'s.
Then the condition of $\n$-minimality at $v$ is that there exists $g_v\in \GL_N(\cO)$ such that for any $\tau\in I_v$,
$$g_v\cdot\rho_\pi(\tau)\cdot g_v^{-1}=\exp(t_v(\tau)J_N).$$

Write the weight $\lambda$ as $\lambda=(\lambda_{\tau,i})$ where $\tau\colon F\to\overline{\Q}$ and $i=1,\ldots,N$.
%Let $\pi=\pi_f\otimes\pi_\infty$ be a cuspidal automorphic representation such that $dim\,\pi_f^{U_0(\n)}=1$ and of cohomological weight $\lambda$. In other words, there are Hecke equivariant embeddings
%$$\pi_f^{U_0(\n)}\subset \H^i(Y,V_\lambda(\C)),\quad i=1,2$$

Let $S_p=S_{F,p}$ be the set of places of $F$ above $p$. The representation $\rho_\pi\colon G_F\to \GL_N(\cO)$
associated to $\pi$ exists by \cite{HLLT16}.
By \cite{Ca14}, the local Galois representations $\rho_\pi\vert_{G_{F_w}}$
is crystalline at all places $w\in S_p$. Indeed, 
we know that ${\rho}_\pi\vert_{G_{F_w}}$ is potentially semistable.
Since $D_{crys}(\rho_\pi\vert_{G_{F_w}})$ is the submodule of $D_{pst}(\rho_\pi\vert_{G_{F_w}})$ 
on which the monodromy operator $N$
vanishes and the inertia group $I_{F_w}$ acts trivially, \cite[Theorem 1.1]{Ca14} 
shows that $D_{crys}(\rho_\pi\vert_{G_{F_w}})=D_{pst}(\rho_\pi\vert_{G_{F_w}})$,
hence it is free of rank two over $F_w\otimes K$. Thus, $D_{crys}(\rho_\pi)$ is crystalline.
We will assume either of the two following local conditions at $p$ :

\medskip
$(FL)$ $p$ is unramified in $F$, $p>N$ and $\lambda_{\tau,1}-\lambda_{\tau,N}<p-N$ for any $\tau\in I_F$. 

Under this assumption, it follows from the remark above that for any $w\in S_p$, $\rho_\pi\vert_{G_{F_w}}$
and $\overline{\rho}_\pi\vert_{G_{F_w}}$ are Fontaine-Laffaille.

\medskip
$(ORD_\pi)$  $F$ contains an imaginary quadratic field in which $p$ splits, $\pi$ 
is unramified and ordinary at all places $v\in S_p$. 
\medskip

%To recall what the latter means, we follow the notations of \cite[Section 5.1, before Th.5.5.1]{ACC+18} (see also \cite[Lemma 2.7.6]{Ge19}).
%We assume $S\cap S_{F,p}=\emptyset$. Then the ordinarity of $\pi$ at $v\in S_p$  means that 
%$u_{\lambda,\varpi_v}^{(i)}\in\cO^\times$ for $i=1,\ldots,N$ where
%the $u_{\lambda,\varpi_{v}}^{(i)}$'s are defined by

%$$Hecke_v(X)=\prod_{i=1}^N(X-\bN v^{i-1} \frac{u_{\lambda,\varpi_{v}}^{(i)}}{u_{\lambda,\varpi_{v}}^{(i-1)}}\cdot
%\prod_{\tau\colon F_v\to\overline{\Q}_p}\tau(\varpi_{v})^{\lambda_{\tau,N-i+1}})$$
%with $u_{\lambda,\varpi_{v}}^{(0)}=1$ by convention.

%For $i=1,\ldots,N$, let $\chi_{\pi,v,i}\colon G_{F_v}\to \cO^\times$  be the character given by 
%$$\chi_{\pi,v,i}\circ \Art_{F_v}(u)=\epsilon^{-(i-1)}(\Art_{F_v}(u))\prod_{\tau}\tau(u)^{\lambda_{\tau,N+1-i}}$$
 %for all $u\in\cO_{F_v}^\times$ , and by
%$$\chi_{\pi,v,i}\circ \Art_{F_v}(\varpi_v)=\bN v^{i-1} \frac{u_{\lambda,\varpi_{v}}^{(i)}}{u_{\lambda,\varpi_{v}}^{(i-1)}}\cdot
%\prod_{\tau\colon F_v\to\overline{\Q}_p}\tau(\varpi_{v})^{\lambda_{\tau,N-i+1}}.$$

Recall that $(ORD_\pi)$ implies that $\rho_\pi\vert_{G_{F_v}}$ takes values in a 
Borel of $\GL_N(\cO)$. More precisely,
there exists $g_v\in\GL_N(\cO)$ such that
$g_v\cdot\rho_\pi\vert_{G_{F_v}}\cdot g_v^{-1}$ is upper triangular. Let $B=TN$ 
be the Levi decomposition of the Borel $B$ of upper triangular matrices with $T$ the subgroup of diagonal matrices. 
We denote by $\underline{\chi}_v\colon G_{F_v}\to T(\cO)$ 
the homomorphism $g_v\cdot\rho_\pi\vert_{G_{F_v}}\cdot g_v^{-1}$ modulo $N(\cO)$. 
For an explicit formula of $\underline{\chi}_v$, see \cite[Lemma 2.7.6]{Ge19}.
We put 
$$\underline{\chi}_{v}=\diag (\chi_{v,i})_{i=1,\ldots,N}$$
and we write $\overline{\chi}_{v,i}$ for the reduction of $\chi_{v,i}$ modulo $\varpi$. 
%Then
% the homomorphism $G_{F_ \widetilde{v}})\to T(\cO)$ occuring on the diagonal of $\overline{\rho}_\pi\vert_{G_{F_{\widetilde{v}}}}$ of $\underline{\chi}_v$ is equal to
%$$\underline{\chi}_v=\underline{\chi}_{\pi,v}$$
With these notations, we consider the so-called $p$-distinguished condition

\medskip
(DIST) At each $v\in S_p$, the characters $\overline{\chi}_{v,i}$ ($i=1,\ldots,N$) are mutually distinct.
\medskip

 %With the notations above, it means that
%for each $v\in S_p$, the residual characters $\overline{\chi}_{v,i}$ modulo $\varpi$
%for $i=1,\ldots, N$ are mutually distinct.
In other words, the assumption of $p$-distinguishability expresses the fact that for all $v\in S_p$, the homomorphism
 $\overline{\underline{\chi}}_v=(\overline{\chi}_{v,i})_{i=1,\ldots,N}$ 
separates the roots of $(\GL_N,B,T)$.
Let $\epsilon\colon G_{F_v}\to\Z_p^\times$ be the $p$-adic cyclotomic character, 
and $\overline{\epsilon}$ its reduction modulo $p$.
We will also need sometimes the assumption of strong $p$-distinguishability:

\medskip
(STDIST) For all $i<j$'s, the quotients $\overline{\chi}_{\widetilde{v},i}\cdot\overline{\chi}_{\widetilde{v},j}^{-1}$ are neither trivial nor equal to $\overline{\epsilon}^{-1}$.
\medskip

\section{Calegari-Geraghty Theory for $\GL_N$}\label{CGtheory}

\subsection{Local deformation conditions}\label{loccond}
For each finite place $v$ of $F$, let $\overline{F}_v$ be an algebraic closure of $F_v$ and $G_{F_v}$ 
be the local Galois group of $\overline{F}_v$ over $F_v$, identified to a decomposition group at $v$ in $G_F$. 

By definition, for an artinian local $\cO$-algebra $A$ with residue field $k$, let $\cF_{\overline{\rho}}(A)$ (resp. $\cF_v(A)$ ) be the set of conjugacy classes of liftings 
$\rho\colon G_F\to \GL_N(A)$ of $\overline{\rho}_\pi$ (resp. $\rho_v\colon G_{F_v}\to \GL_N(A)$ of $\overline{\rho}_v$).
Let $\cF_v^{\square}$ be the functor of liftings of $\overline{\rho}_v$. Recall that it carries an action of the formal group scheme $\widehat{\PGL_N}$ 
by conjugation and that $\cF_v^{\square}/\widehat{\PGL_N}\cong \cF_v$.

We define below subfunctors $\cF_v^{?,\square}$, 
resp. $\cF_v^{?}$ of local liftings, resp. deformations of $\bar{\rho}_v$, for $?=min,\ord,FL$.

\begin{de} A functor $\cF\colon {}_{\cO}\Art_k\to \SETS$ is called liftable if for any surjection 
$\alpha\colon A_1\to A_0$, the morphism $\cF(A_1)\to\cF(A_0)$ is surjective.
\end{de}

Note that if $\cF_v^{?,\square}$ is formally smooth, $\cF_v^{?}$ is liftable.
We prove this below for $?=min,\ord,FL$.

\subsubsection{minimal case}\label{mincase}

For $v\in S$, let $t_v\colon I_v\to \Z_p^\times$ be the $p$-adic tame character and let 
$q=\cO_{F_v}/(\varpi_v)$; for $\tau\in I_v^{tame}$, and for $\phi_v$ an arithmetic Frobenius,
we have $\phi_v\circ\tau\circ\phi_v^{-1}=\tau^q$.
We consider the problem $\cF_v^{min,\square}\subset \cF_v^{\square}$, resp. $\cF_v^{min}\subset \cF_v$ 
of $v$-minimal liftings, resp. deformations:

$$\cF_v^{min,\square}(A)=\{\rho_v\in\cF_v^{\square}(A); \mbox{\rm for}\, \tau\in I_v,  
\mbox{\rm there exists}\, g_v\in \GL_N(A)\, \mbox{\rm such that for any}\, 
\tau\in I_v,$$
$$
g_v\cdot\rho(\tau)\cdot g_v^{-1}=\exp(t_v(\tau)J_N)\}$$
and 
$$\cF_v^{min}(A)=\cF_v^{min,\square}(A)/\widehat{\PGL_N}(A).$$
Let $\Phi_v=\diag(q^{N-1},\ldots,1)$.

\begin{lem} \label{minlem}  The functor $\cF_v^{min,\square}$ 
is prorepresented by a formally smooth $\cO$-algebra $R_v^{min,\square}$, 
isomorphic to a power series ring in $N^2$ variables.
The functor $\cF_v^{min}$ is liftable. 
\end{lem}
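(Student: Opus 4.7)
The plan is to exhibit an explicit parametrization of $\cF_v^{min,\square}$ as a smooth formal quotient and then deduce liftability of $\cF_v^{min}$ by a standard framing argument.

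First, I would observe that since $t_v$ kills the wild inertia, any $\rho\in\cF_v^{min,\square}(A)$ is trivial on wild inertia (after conjugation) and thus factors through the tame quotient $G_{F_v}^{tame}\cong\langle\tau,\phi_v\mid \phi_v\tau\phi_v^{-1}=\tau^q\rangle$. A minimal framed lift is then the data of a pair $(g_v,\rho(\phi_v))\in\GL_N(A)^2$, where $g_v$ is a lift of $\bar g_v$ (witnessing the minimality) and $\rho(\phi_v)$ lifts $\bar\rho_v(\phi_v)$, subject to the braid relation $\rho(\phi_v)\rho(\tau)\rho(\phi_v)^{-1}=\rho(\tau)^q$. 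Setting $h:=g_v\rho(\phi_v)g_v^{-1}$ and using the identity $\Phi_v J_N\Phi_v^{-1}=qJ_N$ (hence $\Phi_v\exp(J_N)\Phi_v^{-1}=\exp(qJ_N)$), the braid relation is equivalent to $h\in\Phi_v\cdot Z(A)$, where $Z=Z_{\GL_N}(J_N)$ is the centralizer of the regular nilpotent $J_N$. Since $J_N$ is regular, $Z$ is a smooth, commutative, closed subgroup scheme of $\GL_N$ over $\cO$ of relative dimension $N$ (concretely, the group of invertible upper-triangular Toeplitz matrices, isomorphic as a scheme to $\mathbb{G}_m\times\mathbb{A}^{N-1}$).

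Next, two pairs $(g_v,h)$ and $(zg_v,h)$ with $z\in Z(A)$ visibly produce the same $\rho$, and $h$ is unchanged because $Z$ is commutative. I would then argue that $\cF_v^{min,\square}$ is prorepresented by the coordinate ring of the formal scheme
$$\widehat Z\backslash\bigl(\widehat{\GL_N}_{\bar g_v}\times\Phi_v\widehat Z\bigr),$$
with $\widehat Z$ acting by left multiplication on the first factor. Since $Z\hookrightarrow\GL_N$ is smooth and closed, the left quotient $\widehat Z\backslash\widehat{\GL_N}_{\bar g_v}$ is formally smooth of relative dimension $N^2-N$; combined with the $N$-dimensional $\Phi_v\widehat Z$, this gives a formally smooth $\cO$-algebra of relative dimension $N^2$, hence $R_v^{min,\square}\cong\cO[[X_1,\ldots,X_{N^2}]]$. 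Liftability of $\cF_v^{min}$ then follows immediately: given a surjection $A_1\twoheadrightarrow A_0$ in ${}_\cO\Art_k$ and $\rho_0\in\cF_v^{min}(A_0)$, I pick any framed representative $\rho_0^\square\in\cF_v^{min,\square}(A_0)$, lift it to $\rho_1^\square\in\cF_v^{min,\square}(A_1)$ by formal smoothness, and take $[\rho_1^\square]\in\cF_v^{min}(A_1)$.

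The hard part, as I see it, is the careful identification of the moduli as the $\widehat Z$-quotient above: one needs to fix a residual $\bar g_v$ once and for all (absorbing the residual $Z_k$-ambiguity into the quotient) and verify that $\widehat Z$ acts freely on $\widehat{\GL_N}_{\bar g_v}$, so that the quotient is represented by a formally smooth $\cO$-algebra. All of this hinges on the regularity of $J_N$, which simultaneously gives $Z$ the correct dimension $N$, makes it smooth and commutative, and ensures the parametrization of $\cF_v^{min,\square}$ by $(g_v,h)$ modulo $Z$ is free, yielding the clean power-series presentation in $N^2$ variables.
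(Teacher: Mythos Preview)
Your approach is essentially the paper's: parametrize minimal framed lifts by the conjugating element $g_v$ (modulo the centralizer $Z=C_0$ of $J_N$) together with the Frobenius datum $h\in\Phi_v Z$, and use regularity of $J_N$ to get $\dim Z=N$, hence $N^2$ in total. There is, however, a slip in your description of the $Z$-action. Replacing $g_v$ by $zg_v$ changes $h=g_v\rho(\phi_v)g_v^{-1}$ to $zhz^{-1}=\Phi_v\,(\Phi_v^{-1}z\Phi_v)\,c\,z^{-1}$, which is \emph{not} equal to $h$ in general: commutativity of $Z$ only gives $zcz^{-1}=c$, but $\Phi_v$ does not lie in $Z$. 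So the correct action is $z\cdot(g_v,h)=(zg_v,\,zhz^{-1})$, a twisted diagonal action rather than an action on the first factor alone. This does not damage your conclusion: the action is still free (already on the first factor), so the quotient of the formally smooth $\widehat{\GL_N}_{\bar g_v}\times\Phi_v\widehat Z$ by the smooth $N$-dimensional $\widehat Z$ is formally smooth of relative dimension $N^2$, and your power-series presentation stands. The paper phrases the same picture as a fibration $\cF_v^{min,\square}\to\widehat{\GL_N/C_0}$ with fibre $\widehat{C_0}$, which sidesteps the twist.

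For liftability of $\cF_v^{min}$, your framing argument (lift a framed representative by formal smoothness, then forget the frame) is correct and is exactly the remark preceding the lemma that liftability follows from formal smoothness of $\cF_v^{min,\square}$. The paper instead goes further and computes $\cF_v^{min}\cong\widehat C/(\Phi_v-1)\widehat C$ explicitly, which also yields the comment that $\cF_v^{min}$ is pro-representable iff $p\nmid q^i-1$ for $1\le i\le N-1$; your argument is sufficient for the statement but does not recover that finer description.
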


\noindent{\bf Comments:}

1) Note that the proof uses the fact that the matrix $J_N$  is regular nilpotent.

2) This result is also proven in \cite[Lemma 2.4.19 and Cor.2.4.20]{CHT08} and \cite[Introduction]{Bo19}.

3) The functor $\cF_v^{min}$ is pro-representable if and only if $p$ does not divide $q_v^i-1$ ($i=1,\ldots,N-1$).

\begin{proof}
Let $C_0$ be the centralizer of $J_N$ in the standard Borel $B\subset G$.
The formal scheme $\cC$ of centralizers of liftings of $\overline{J}_N$ is isomorphic to the principal homogeneous formal scheme $\widehat{\GL_N/C_0}$,
hence is formally smooth of dimension $N^2-N$.
There is a fibration of functors $\cF_v^{min,\square}\to \cC$ given by $\rho_v\to g_v^{-1}C_0 g_v$.
Therefore, to study the formal smoothness of $\cF_v^{min,\square}$, one can fix $C\subset B$ and study its fiber $\cF_{v,[C]}^{min,\square}$ in $\cF_v^{min,\square}$. In the notations below, we take $C=C_0$.
We therefore consider $\rho_v$ such that for any $\tau\in I_v$, $\rho_v(\tau)=\exp(t_v(\tau)J_N)$.
Let $\Phi_v=\diag(q_v^{N-1},\ldots,1)$. We have $\rho_v(\phi_v)=\Psi_v$ with $\Psi_v=\Phi_v g$ 
with $g\in C_0$. Let $\widehat{C}_0$ be the formal group scheme associated to $C_0$. 
By the considerations above, we see easily that the map $g\mapsto [\rho_g]$ where $\rho_g(\phi_v)=\Phi_v \cdot g$ and 
$\rho_g(\tau)=\exp(t_v(\tau)J_N)$ for $\tau\in I_v$,
is an isomorphism of functors 
$$\widehat{C}\cong \cF_{v,[C]}^{min,\square}.$$ 
This isomorphism is compatible to conjugation by $\widehat{G}$.
It implies that $\cF_{v}^{min,\square}$ is a torsor under the smooth formal group scheme $\widehat{G}$, 
hence is formally smooth of dimension $N^2$.
This proves the formal smoothness of $R_v^{min,\square}\cong\cO[[X_1,\ldots,X_{N^2}]]$.

Any deformation $[\rho_v]\in\cF_v^{min}(A)$ has a representative
$\rho_v$ such that for any $\tau\in I_v$, $\rho_v(\tau)=\exp(t_v(\tau)J_N)$.
For such a representative, $\rho_v(\phi_v)=\Psi_v$ can be written $\Psi_v=\Phi_v g$ 
with $g\in C_0$. If $\rho_v^\prime$ is another such representative, we see that there exists $h\in \widehat{C}(A)$ such that
$$\rho_v^{\prime}(\phi_v)=h\Psi_vh^{-1}=\Phi_v g^{\prime}$$ 
Therefore, $g^\prime=\Phi_v^{-1}h\Phi_v h^{-1}\cdot.g$ for an $h\in \widehat{C}(A)$
and conversely. Conjugation by $\Phi_v$ is an automorphism of $\widehat{C}(A)$. Thus we find that
$$\cF_{v}^{min}\cong \widehat{C}/(\Phi_v-1)\widehat{C}$$
which is isomorphic to the functor
$$A\mapsto A/(q-1)A\times A/(q^2-1)A\times\ldots A/(q^{N-1}-1)A$$
This functor is not pro-representable unless $p$ does not divide the $q^i-1$ (for all $i=1,\ldots,N-1$).
However, it is liftable.
\end{proof}

\subsubsection{The ordinary case}\label{ordcase}

For $v\in S_p$, we consider the problem $\cF_v^{\ord}\subset \cF_v$ of ordinary deformations of $\overline{\rho}_v$ without fixing the Hodge-Tate weights.
This means that $[\rho_v]\in\cF_v(A)$ if and only if for a representative $\rho_v$,
 there exists $g_v\in \GL_N(A)$ such that $g_v\cdot \rho{\vert_{G_{F_v}}}\cdot g_v^{-1}$ takes values in $B(A)$ and that
its reduction $\underline{\chi}_{\rho,v}\colon G_{F_v}\to T(A)$ modulo $N(A)$ is a lifting of $\overline{\underline{\chi}}_v\colon G_{F_v}\to T(k)$.

Let $\mu=(\mu_v)_{v\in S_p}$ where $\mu_v\colon \cO_{F_v}^\times\to T(\cO)$ is a continuous character lifting
the character $\bar{\underline{\chi}}_v\colon \cO_{F_v}^\times\to T(k)$ given by $\bar{\rho}\vert_{I_v}$. 
 We define 
the subfunctor $\cF_v^{\ord,\mu}\subset \cF_v^{\ord}$ of ordinary deformations of weight $\mu$ to be given by $[\rho_v]$'s for which
$\underline{\chi}_{\rho,v}\circ \Art_{F_v}\vert_{\cO_{F_v}^\times}=\mu_v$ where $\mu_v$ is considered as taking values in $T(A)$ via the canonical morphism $T(\cO)\to T(A)$. 

In this paper, besides the sections dealing with Hida theory, we only consider the case
$$\mu_v=\underline{\chi}_{\pi,v}\circ \Art_{F_v}\vert_{\cO_{F_v}^\times}.$$ 

The homomorphism $\mu$
is then
algebraic, given by the Hodge-Tate weights of $\rho_{\pi_v}$.

In the Lemma below, we also consider the subfunctor  $\cF_v^{\det,\ord}\subset  \cF_v^{\ord}$ where the determinant of the deformations is fixed equal to $\det\,\rho_v$.

\begin{lem}  \label{ordlem} For $v\in S_p$, under the condition (STDIST) (and $p>N$) 
the functors $\cF_v^{\det,\ord}$, $\cF_v^{\ord}$, $\cF_v^{\ord,\mu}$ are pro-representable by a complete noetherian local ring $R_v^{\det,\ord}$,
resp.$R_v^{\ord}$, $R_v^{\ord,\mu}$. These problems are unobstructed, hence formally smooth. 
In particular, one has $R_v^{\ord}=R_v^{\det,\ord}[[T_1,\ldots,T_f]]$ (where $f=[F_v\colon\Q_p]$ 
is the degree of $F_v$). 
\end{lem}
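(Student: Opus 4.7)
My plan is to handle pro-representability, formal smoothness, and the explicit power-series decomposition in turn, with (STDIST) used both to reduce the centralizer and to force the obstruction classes to vanish.

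\textbf{Pro-representability.} For each of $\cF_v^{\ord}$, $\cF_v^{\det,\ord}$, $\cF_v^{\ord,\mu}$, I would start from the framed analogue (pro-representable by Schlessinger for any lifting functor of a representation) and quotient by the conjugation action of $\widehat{B}$; recall that the normalizer of $B$ in $\GL_N$ is $B$, so two upper-triangular liftings of $\overline{\rho}_v$ that are conjugate in $\widehat{\GL_N}$ are already conjugate by $\widehat{B}$. Under (STDIST) the characters $\overline{\chi}_{v,1},\ldots,\overline{\chi}_{v,N}$ are pairwise distinct, so the centralizer of $\overline{\rho}_v$ inside $\widehat{T}$ is the scalar subgroup $\widehat{\mathbf{G}}_m$; since moreover $\overline{\rho}_v$ projects to a regular character of $T$, no non-trivial element of $\widehat{U}$ centralizes the image. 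Hence $\widehat{B}/\widehat{\mathbf{G}}_m$ acts freely on framed ordinary lifts, and the quotient is pro-represented by a complete noetherian local $\cO$-algebra.

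\textbf{Formal smoothness.} I would compute the obstruction to lifting from $A_1 \to A_0$ (a square-zero surjection) as a class in $H^2(G_{F_v},\b)$ for $\cF_v^{\ord}$, in $H^2(G_{F_v}, \b^0)$ with $\b^0 = \b \cap \mathfrak{sl}_N$ for $\cF_v^{\det,\ord}$, and in the analogous cohomology group with inertia coefficients cut down for $\cF_v^{\ord,\mu}$. I filter $\b$ as $0\to\u\to\b\to\t\to 0$ and further decompose $\u$ into root subspaces, each transforming under $G_{F_v}$ by a character $\overline{\chi}_{v,i}\overline{\chi}_{v,j}^{-1}$ with $i<j$. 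By local Tate duality the corresponding $H^2$ is dual to $H^0$ of the inverse twisted by the cyclotomic character, and so vanishes as soon as $\overline{\chi}_{v,i}\overline{\chi}_{v,j}^{-1}$ is neither trivial nor equal to $\overline{\epsilon}^{\pm 1}$; this is exactly what (STDIST) provides. The torus contribution from $\t$ parametrises the extra variables that will appear in $R_v^{\ord}$ over $R_v^{\det,\ord}$, and for $\cF_v^{\det,\ord}$ one restricts to $\t^0$, the hypothesis $p>N$ ensuring that the trace splits off cleanly. This yields formal smoothness, whence unobstructedness, for all three functors.

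\textbf{The power-series decomposition.} The determinant gives a morphism of functors from $\cF_v^{\ord}$ to the deformation functor of the character $\det \overline{\rho}_v$, whose fiber is by definition $\cF_v^{\det,\ord}$; scalar twists $\rho \mapsto \rho \otimes \chi$ provide a section. Hence $\cF_v^{\ord}$ decomposes as a product, and $R_v^{\ord}$ is a completed tensor product of $R_v^{\det,\ord}$ with the universal character deformation ring of $\det \overline{\rho}_v$. By local class field theory the latter contributes exactly $f$ independent variables coming from the pro-$p$ completion of $\cO_{F_v}^\times$ (which has $\Z_p$-rank $f = [F_v:\Q_p]$), once the unramified/Frobenius direction is absorbed into $R_v^{\det,\ord}$, so $R_v^{\ord} \cong R_v^{\det,\ord}[[T_1,\ldots,T_f]]$. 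The main obstacle I expect is convention-tracking in the second paragraph: verifying that the asymmetric shape of (STDIST), which rules out $\overline{\epsilon}^{-1}$ on the roots with $i<j$, matches the direction of local duality on each root line, and managing the scalar/trace bookkeeping in the last paragraph to obtain exactly $f$ variables rather than $f+1$.
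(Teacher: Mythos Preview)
Your proposal is correct and follows essentially the same approach as the paper: vanishing of $H^2(\Gamma_v,\b)$ (or $\b^0$) via local Tate duality and (STDIST) for formal smoothness, together with twisting by a $p$-power order character (using $p>N$ to extract an $N$-th root) to pass between $\cF_v^{\ord}$ and $\cF_v^{\det,\ord}$. Your explicit centralizer argument for pro-representability is more detailed than the paper's, which simply invokes $H^i(\Gamma_v,\b')=0$ for $i\neq 1$; the two concerns you flag at the end (the direction of the $\overline{\epsilon}^{-1}$ condition in (STDIST), and the $f$ versus $f+1$ bookkeeping) are exactly the places where the paper's own proof is terse, so you have correctly located where the care is needed.
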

\begin{proof}

Let $\b$ be the Lie algebra of the standard Borel $B$ and $\b^\prime$ the subalgebra of trace zero elements. To prove that $\cF_v^{\det,\ord}$, resp. $\cF_v^{\ord,\mu}$,  
is proprepresentable and unobstructed, it is enough to show that 
$H^i(\Gamma_v,\b^\prime)=0$ for $i\neq 1$. By Tate duality, this follows from the strong distinguishability condition. Since $p>N$, any $[\rho]\in\cF_v^{\ord}(A)$
has a twist $rho_\chi$ by a $p$-power order character $\chi$
whose determinant is equal to $\det\,\rho_{\pi,v}$. This character $\chi$ factors through the universal character $G_{F_v}\to \cO[[T_1,\ldots,T_f]]^\times$.
Hence $\cF_v^{\ord}$ is proprepresentable by $R_v^{\ord}=R_v^{\det,\ord}[[T_1,\ldots,T_f]]$.

\end{proof}

\noindent{\bf Comment:} The dimension of $R_v^{\ord}$ (given in \cite[Lemma 2.4.8]{CHT08}) will be recalculated later when applying the Poitou-Tate formula.

\begin{cor} The functor $\cF_v^{\ord,\square}$ is pro-representable by a formally smooth ring $R_v^{\ord,\square}$.
\end{cor}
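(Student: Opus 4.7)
The plan is to bootstrap the corollary directly from Lemma \ref{ordlem}. Pro-representability of a framed lifting functor (as opposed to a deformation functor modulo conjugation) is automatic: framed lifts of $\overline{\rho}_v$ have no nontrivial residual automorphisms to reconcile, so Schlessinger's axioms $(H_1)$--$(H_3)$ hold tautologically for $\cF_v^{\ord,\square}$, because a lift over a fiber product $A_1 \times_{A_0} A_2$ in ${}_{\cO}\Art_k$ is uniquely determined by its two component lifts agreeing on $A_0$. Moreover, the tangent space $\cF_v^{\ord,\square}(k[\epsilon])$ is finite-dimensional over $k$ (it is an extension of the tangent space of $\cF_v^{\ord}$ by $\mathfrak{pgl}_N(k)$), so pro-representability by a complete noetherian local $\cO$-algebra $R_v^{\ord,\square}$ follows.

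For formal smoothness, I would argue directly. Given a surjection $\alpha \colon A_1 \twoheadrightarrow A_0$ in ${}_{\cO}\Art_k$ and a framed lift $\rho_0 \in \cF_v^{\ord,\square}(A_0)$, the deformation class $[\rho_0]$ lifts to some $[\tilde{\rho}_1] \in \cF_v^{\ord}(A_1)$ by the formal smoothness established in Lemma \ref{ordlem}. Picking any representative $\tilde{\rho}_1$, its reduction $\tilde{\rho}_1 \bmod \Ker\alpha$ is $\GL_N(A_0)$-conjugate to $\rho_0$ by some $g_0 \in \GL_N(A_0)$. Lifting $g_0$ to $g_1 \in \GL_N(A_1)$ (possible because $\GL_N$ is smooth, so $\GL_N(A_1) \twoheadrightarrow \GL_N(A_0)$ for any surjection of Artinian rings) and replacing $\tilde{\rho}_1$ by $g_1 \tilde{\rho}_1 g_1^{-1}$ produces the desired $\rho_1 \in \cF_v^{\ord,\square}(A_1)$ reducing to $\rho_0$.

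Combining the two gives that $R_v^{\ord,\square}$ is formally smooth over $\cO$, hence a power series ring. No genuinely hard step arises: the argument is a routine manipulation of framings versus conjugacy classes. The only point worth verifying is that the conjugation-adjustment preserves the ordinary condition, which it does since the ordinary condition is conjugation-invariant on deformation classes and we are adjusting within such a class. One may alternatively package the argument as saying that the morphism $\cF_v^{\ord,\square}\to \cF_v^{\ord}$ is a torsor under the smooth formal group scheme $\widehat{\GL_N}/\widehat{\mathbf{G}_m}$ (after quotienting by residual scalar automorphisms), so formal smoothness propagates from base to total space.
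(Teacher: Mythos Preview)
Your proof is correct and takes essentially the same approach as the paper: the paper's proof is the one-line observation that $\cF_v^{\ord,\square}\to \cF_v^{\ord}$ is a $\widehat{G^{ad}}$-torsor over a smooth base, which is exactly the packaging you describe in your final sentence. Your explicit lifting-then-conjugating argument is simply the unwound content of that torsor statement.
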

\begin{proof} The morphism $\cF_v^{\ord,\square}\to \cF_v^{\ord}$ is a $\widehat{G^{ad}}$-torsor, hence is smooth and the base is smooth.
\end{proof}

In all sections below (except those dealing with Hida theory), we fix $\mu_v=\underline{\chi}_{\pi,v}\circ \Art_{F_v}\vert_{\cO_{F_v}^\times}$ for each $v\in Sp$. 
We call the deformation problem $\cF_v^{\ord,\mu}$ the $\underline{\chi}_{\pi}$-ordinary deformation problem, 
or for brevity the ordinary deformation problem
 (with Hodge-Tate weights fixed by $\pi$). Hence we write only $\cF_v^{\ord}$, although $\mu$ is fixed by $\mu=\underline{\chi}_{\pi}$.

\subsubsection{The Fontaine-Laffaille case}\label{FLcase}

Finally, for $v\in S_p$, we consider the problem $\cF_v^{FL,\square}$, resp.  $\cF_v^{FL}$, of Fontaine-Laffaille liftings $\rho_v$, 
resp. deformations $[\rho_v]$ 
of $\overline{\rho}_v$.
This means that there exists a $\phi$-filtered $A$-module $M$ free of rank $N$ over $A$, 
such that $\rho_v$ is isomorphic to $V_{crys}(M)$.

\begin{lem} \label{FLlem} The functor $\cF_v^{FL,\square}$ is pro-representable by a formally smooth ring $R_v^{FL,\square}$ 
isomorphic to a power series $\cO$-algebra in $N^2+[F_v\colon F]{N(N-1)\over 2}$ variables.
\end{lem}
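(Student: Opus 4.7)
The plan is to use the Fontaine--Laffaille equivalence to translate $\cF_v^{FL,\square}$ into a deformation problem for filtered $\phi$-modules, and then verify pro-representability, formal smoothness, and the dimension count on the linear-algebra side. Fix an unramified $F_v$ and let $f=[F_v\colon \Q_p]$. Under the FL hypothesis $(FL)$, the crystalline realization functor $V_{\mathrm{crys}}$ induces an equivalence between the category of Fontaine--Laffaille modules over $\cO_{F_v}\otimes A$ with Hodge--Tate weights in the allowable range and the category of lattices in crystalline representations of $G_{F_v}$ in free $A$-modules of rank $N$ with the same Hodge--Tate weights as $\overline{\rho}_v$. Consequently, $\cF_v^{FL,\square}$ is canonically isomorphic to the functor $\widetilde{\cF}_v$ of framed liftings of the Fontaine--Laffaille module $\overline{M}$ attached to $\overline{\rho}_v$, that is, pairs $(M,\beta)$ consisting of a Fontaine--Laffaille module $M$ over $\cO_{F_v}\otimes A$ together with a basis $\beta$ of $M$ lifting a chosen basis of $\overline{M}$.

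For pro-representability, since $\widetilde{\cF}_v$ carries a tautological basis, its objects have no non-trivial automorphisms, so Schlessinger's criteria (H1)--(H4) are automatically satisfied; thus $\widetilde{\cF}_v$ is pro-represented by a complete noetherian local $\cO$-algebra $R_v^{FL,\square}$. For formal smoothness, let $A_1\twoheadrightarrow A_0$ be a small surjection in ${}_\cO\Art_k$ and $(M_0,\beta_0)\in\widetilde{\cF}_v(A_0)$. First, lift the underlying $\cO_{F_v}\otimes A_0$-module together with the basis $\beta_0$ to a free $\cO_{F_v}\otimes A_1$-module $M_1$ with basis $\beta_1$. Next, lift the decreasing filtration: each $\Fil^i M_0$ is a direct summand of $M_0$ (this is a standard feature of the Fontaine--Laffaille category under $(FL)$), and direct summands lift through any surjection of finite local rings, so we pick any compatible family of lifts $\Fil^i M_1\subset M_1$. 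Finally, lift the semilinear divided Frobenius $\phi^\bullet$: the graded pieces $\mathrm{gr}^i M_0$ carry bijective Frobenii, and since they are free over $A_0\otimes\cO_{F_v}$, any lift of the $A_1\otimes\cO_{F_v}$-linear map between free modules is again bijective (Nakayama), so nothing obstructs the lift. This verifies formal smoothness.

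For the dimension, the tangent space $\widetilde{\cF}_v(k[\epsilon])$ splits, after choosing a trivialization of the underlying filtered module of $\overline{M}$ in families, as a direct sum of three contributions: (a) the space of basis deformations, which contributes $N^2$; (b) the space of deformations of the filtration with fixed graded pieces, which is the tangent space to the flag variety $G/B$ over each embedding $\tau\colon F_v\hookrightarrow\overline{\Q}_p$, contributing $f\cdot\frac{N(N-1)}{2}$; and (c) the space of deformations of $\phi^\bullet$ with fixed filtration and grading, which vanishes because the lift of $\phi^\bullet$ is determined by its values on a basis adapted to the filtration, and these values are already absorbed in (a). Summing gives $\dim_k\widetilde{\cF}_v(k[\epsilon])=N^2+f\cdot\tfrac{N(N-1)}{2}$, so formal smoothness together with this tangent-space count identifies $R_v^{FL,\square}$ with a power series $\cO$-algebra in $N^2+f\cdot\tfrac{N(N-1)}{2}$ variables.

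The main obstacle I anticipate is the bookkeeping in step (c) of the dimension count: one must check that liftings of $\phi^\bullet$ that preserve a given lift of the filtration and of the associated graded are in bijection with a piece already accounted for in the basis-deformation space, and this requires a careful choice of coordinates adapted to the filtration on $\overline{M}$. The unramifiedness of $F_v$ and the strict weight inequality in $(FL)$ enter precisely here to ensure that divided powers of $\phi$ on successive graded pieces are isomorphisms and that no non-trivial extension class can arise.
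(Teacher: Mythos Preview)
The paper's own proof is a one-line citation to \cite[Coroll.~2.4.3]{CHT08}, so you are attempting something much more explicit. Your pro-representability and formal-smoothness arguments are essentially the CHT strategy and are fine: direct summands lift and bijections between free modules lift by Nakayama, which is exactly the vanishing of $\Ext^2$ in the Fontaine--Laffaille category.

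The dimension count, however, has a genuine gap that is not just the bookkeeping you flag in~(c). The first error is the identification of $\cF_v^{FL,\square}$ with your $\widetilde{\cF}_v$: the framing in $\cF_v^{FL,\square}$ is an $A$-basis of the rank-$N$ Galois representation $V_{\mathrm{crys}}(M)$, \emph{not} a basis of the Fontaine--Laffaille module $M$ over $\cO_{F_v}\otimes A$, and $V_{\mathrm{crys}}$ does not carry one kind of basis to the other. If $\beta$ were genuinely an $\cO_{F_v}\otimes A$-basis of $M$, then ``basis deformations'' would contribute $fN^2$ rather than $N^2$, and with such a basis fixed the Frobenius lifts (semilinear isomorphisms $\bigoplus_i\mathrm{gr}^i M\to M$) form a torsor of $k$-dimension $fN^2$, not $0$; so both~(a) and~(c) are miscounted and your correct final number arises from a cancellation your argument does not see. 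The CHT proof avoids this by never splitting into (basis, filtration, Frobenius): Lemma~2.4.2 there exhibits a two-term complex computing $\dim_k\Ext^1_{FL}(\overline{M},\overline{M})-\dim_k\Hom_{FL}(\overline{M},\overline{M})=[F_v:\Q_p]\,\tfrac{N(N-1)}{2}$, and then the framed tangent is obtained as $N^2-\dim_k\Hom_{FL}+\dim_k\Ext^1_{FL}$ via the $\widehat{\GL}_N$-torsor relating framed to unframed deformations on the Galois side.
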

\begin{proof} This is \cite[Coroll.2.4.3]{CHT08}.
\end{proof}

\subsection{The Theorem of Calegari-Geraghty}

For $?=\ord, FL$, let 
$$\cF_{loc}=\prod_{v\in S\cup S_p}\cF_v\quad\mbox{\rm and }\cF_{loc}^{min,?}=\prod_{v\in S}\cF_v^{min}\times\prod_{v\in S_p}\cF_v^?$$
We define the global deformation problem as the fiber product 
$$\cD^?=\cF_{\overline{\rho}}^{min,?}=\cF_{\overline{\rho}}\times_{\cF_{loc}}\cF_{loc}^{min,?}.$$
By Schlessinger's criterion, assuming $p$-distinguishedness, resp. $p$-smallness, the functor $\cD^{\ord}$, resp. $\cD^{FL}$,  is proprepresentable by a pair $(R^?,\rho^?)$ where $?=\ord,FL$ 
(see for instance \cite[Prop.2.2.9 and Lemma 2.4.1]{CHT08} for $?=FL$
and \cite[Section 2.4.4]{CHT08} or \cite[Chapt.6]{Ti96} for $?=\ord$). 
For any proartinian $\cO$-algebra $A$ and any object $[\rho]\in\cD^?(A)$ there is a unique local $\cO$-algebra homomorphism $\phi_\rho\colon R\to A$ such that $\phi_{\rho}\circ\rho^?\equiv \rho$.
Assuming $(LLC)$, we have $[\rho_\pi]\in\cD^?(\cO)$ and $[\rho_{\bbT}]\in\cD^?(\bbT)$ and we therefore  have the following commutative diagram of local surjective  $\cO$-algebra homomorphisms.
$$ \xymatrix{
	R\ar[r]^{\phi_{\rho_\bbT}} \ar[rd]_{\phi_{\rho_\pi}}  &\bbT \ar[d]^{\phi_{\pi} } \\
	& \cO
}$$
Recall that the local-global compatihility condition $(LLC)$ for $\rho_{\bbT}$ has been proven (see \cite{ACC+18}), at least  
if $F$ is not imaginary quadratic.

Let now $S_\infty$ be the Taylor-Wiles power series ring and let  $R_\infty\to \bbT_\infty$ be the surjective homomorphism of $S_\infty $-algebras introduced 
by Calegari-Geraghty using Taylor-Wiles systems (see \cite{CaGe18}), and recalled in the introduction.

\begin{thm} (\cite{CaGe18}) Assume $\zeta_p \notin F$, $p>N$, $(\Gal_\m)$, 
$(LLC)$ $(RLI)$, $(MIN)$, $(FL)$ or $(ORD_\pi)+(DIST)$. 
Then, the canonical map $R_\infty^?\to \bbT_\infty$ is an isomorphism,
	and in particular $R^?_\infty\otimes_{S_\infty}\cO = R^?\cong \bbT$. 
Moreover, the module $\H^\m_{q_m}$ is finite and free over $\bbT$, 
and there is an isomorphism of graded modules:
	$$\H_\bullet^\m\cong \H_{q_0}^\m\otimes_{\bbT} \Tor_\bullet^{S_\infty}(\bbT_\infty,\cO)
	$$
	in other words, $\H_\bullet^\m$ is a free graded $\Tor_\bullet^{S_\infty}(\bbT_\infty,\cO)$-module.
\end{thm}
\noindent
{\bf Remarks:}

1) Poincar\'e duality shows that $\H^\m_{q_s}$ is isomorphic to $\Hom(\bbT,\cO)^{m}$ as $\bbT$-module 
(this reflects also the fact that it is torsion free as $\cO$-module).
 
2) for $G=\GL(2)$ over a number field $F$, the rank $m$ is $2^{r_1}$ (this follows from calculations of $(\g,C_\infty)$-cohomology, see 
\cite{BW00} or \cite{H94a}); hence $m=2^{[F\colon\Q]}$ if $F$ is totally real, or $1$ if $F$ is CM.

Again, in the sequel we'll focus on the case $G=\GL(N)_{/F}$, with $F$ CM, quadratic over the totally real field $F^+$, 
however, more general cases are treated in \cite{Cai21} assuming similar conjectures.

\section{Simplicial Deformation problems and tangent complexes}

We follow \cite[Sections 5 and 7]{GV18} and \cite{CaiTh21} (which contains more details than \cite{Cai21}). 
Given a category $\cC$, we denote by 
$\cC^s$ the category of simplicial objects of $\cC$. Note any object $C$ of $\cC$ defines a simplicial object $\cF_C\in \cC^s$ 
by putting $\cF_C([n])=C$ and $\cF_C(\phi)=\Id_C$ for any $\phi\in \Hom_\Delta([n],[m])$. Such an object is called discrete. 
We will be mostly concerned with $\cC=\Mod_A$, $\SETS$ and the category ${}_{A}\!\CR_B$ of pairs $(X,\pi)$ 
where $X$ is an $A$-algebra and $\pi\colon X\to B$ is an $A$-algebra homomorphism. The resulting simplicial categories 
are model categories (see \cite[3.1.2]{Cai21}). Note it is not the case for the subcategory 
${}_{A}\!\Art_B\subset {}_{A} \!\CR_B$ of simplicial artinian rings. In the sequel, we mostly consider $A=\cO$ and $B=\cO/(\varpi)=k$.

A functor $\cF\colon {}_{\cO}\!\CR^s_k\to \SETS^s$ is called homotopy invariant 
if it sends weak equivalences to weak equivalences.
%Let $\ast\in \SETS^s$. 
A simplicial deformation problem is a homotopy invariant functor
$$\cF\colon {}_{\cO}\! \Art^s_k\to \SETS^s 
$$
such that $\cF(k)$ is contractible.
A diagram
$$\begin{array}{ccc}A_1\times_{A_0}^h A_2&\stackrel{\pi_2}{\to}&A_2\\\pi_1\downarrow &&f_2\downarrow\\A_1&\stackrel{f_1}{\to} &A_0\end{array}$$
in ${}_{\cO}\!\Art^s_k$ is a homotopy pullback if $f_1\circ \pi_1$ is homotopic to $f_2\circ\pi_2$ and the diagram is universal up to homotopy (see \cite[Example A.4]{GV18}). It is unique up to weak equivalence.
A simplicial deformation problem $\cF$ of an object $\ast$ is formally cohesive \cite[Definition 3.8]{GV18} if it is homotopy invariant and if it preserves homotopy pullback.
Let us give an example of such a functor.

\begin{de}\label{shom} Given two simplicial rings $\cR,\cR^\prime$, 
the simplicial set $\sHom_{\CR^s}(\cR,\cR^\prime)$ is defined
as $[p]\mapsto \Hom_{\CR^s}(\cR, (\cR^\prime)^{\Delta[p]})$. Here, for each $p$, $(\cR^\prime)^{\Delta[p]}$ denotes the simplicial ring given by 
$[q]\mapsto \Hom_{\SETS^s}(\Delta[p]\times\Delta[q],\cR^\prime)$; as a simplicial ring, it is weakly equivalent to $\cR^\prime$ (see cite[Lemma 2.13]{GV18}). 
The structure of simplicial set on $[p]\mapsto \Hom_{\CR^s}(\cR, (\cR^\prime)^{\Delta[p]})$ is given by the natural face and degeneracy maps 
coming from those 
between the $\Delta[p]$'s. 
\end{de} 

Recall that $\cR \in {}_{\cO}\! \CR^s _{k}$ is cofibrant if for any $n\geq 0$, $\cR_n$ is a free $\cO$-algebra.
Note this is only a sufficient condition. For precise definition and properties see \cite[Section 2.1]{CaiTh21}. 
One can show that for any simplicial ring $\cR$, there exists a fibration which is a weak equivalence  $c(\cR)\to\cR$ where $c(\cR)$ is a cofibrant simplicial ring. 
One can either
assume that the $c(\cR)_n$'s are noetherian for all $n$'s, or that $\cR\mapsto c(\cR)$ 
is a functorial construction, but in general not both. 
Any such $c(\cR)$ is called a cofibrant replacement of $\cR$.
 
Let $\cR \in {}_{\cO}\! \CR^s _{k}$ be cofibrant; 
then the functor ${}_{\cO} \!\Art^s_k \to \SETS^s$ given by
$$A\mapsto \sHom_{{}_{\cO}\!\CR^s_{k}}(\cR, A) : {}_{\cO} \!\Art^s_k \to \SETS^s$$
is formally cohesive.
Any pro-representable (see \cite[2.14 and 2.19]{GV18} and Section \ref{repressubsection} below) functor $\cF$ is formally cohesive.
J. Lurie gave a criterion for (pro)representability of a simplicial deformation functor $\cF$ of, say, a point $\ast\in \SETS^s$ (see 
\cite[4.6]{GV18}). The first (necessary) condition is formal cohesiveness. The second involves its tangent complex $\t\cF$ 
which we recall below.  
 
\subsection{Tangent complex}\label{tangcomp}

Let $A$ be a commutative ring and $Z$ be a commutative $A$-algebra. Then for any $Z$-module $M$ and $X \in {}_{A} \!\CR_{Z}$, 
we have natural isomorphisms
\begin{displaymath}
\Hom_{{}_{A} \!\CR_{Z}}(X, Z \oplus M) \cong \Der_{A}(X, M) \cong \Hom_{Z}(\Omega_{X/A} \otimes_{X} Z, M).
\end{displaymath}
So the functor $X \mapsto \Omega_{X/A} \otimes_{X} Z$ is left ajoint to the functor $M \mapsto Z \oplus M$.  
The functors $M \mapsto Z \oplus M$, $X \to \Omega_{X/A}\otimes_{X}Z$ both have extensions to functors between simplicial categories, 
and we have natural isomorphism
\begin{displaymath}
\Hom_{{}_{A}\!\CR^s_{Z}}(X, Z \oplus M) \cong \Hom_{\Mod^s_{Z}}(\Omega_{X/A} \otimes_{X} Z, M).
\end{displaymath}
The functor $M \mapsto Z \oplus M$ preserves fibrations and weak equivalences. 
So the total left derived functor of its left adjoint functor $X\mapsto \Omega_{X/A} \otimes_{X} Z$ exists \cite[II.7.3]{GJ10}.
It is denoted $X\mapsto L_{X/Z}$. In order to give a concrete description of $L_{X/Z}$ and its fundamental property, one often uses
the Dold-Kan equivalence. Let $\Ch(Z)$, resp. $\Ch_{\geq 0}(Z)$, be the category of chain complexes of $Z$-modules, resp. the subcategory of effective complexes 
$(\ldots\to V_n\to\ldots\to V_1\to V_0)$. Recall that the Dold-Kan equivalence is a functor 
\cite[Section 4.3.1]{GV18} and \cite[3.1.4]{CaiTh21} 
$$\\DK\colon \Ch_{\geq 0}(Z)\to \Mod^s_{Z}$$
inducing an equivalence of categories that sends weak equivalences to quasi-isomorphisms. It therefore  
provides an identification\footnote{When no confusion is possible, we will doing so without specifying it.} of the categories of simplicial $Z$-modules $\Mod^s_{Z}$ with the category $\Ch_{\geq 0}(Z)$ 
of non-negative chain complexes $(\ldots V_n\to\ldots\ldots V_1\to V_0)$ of $Z$-modules.  
Now, let $X\in{}_A \!\CR_Z$; to construct $L_{X/Z}$, we choose a cofibrant replacement $c(X)\to X$ and form 
the simplicial module $\Omega_{c(X)/A}\otimes_A Z$. By the Dold-Kan equivalence, it comes from a
non-negative chain complex, well defined up to quasi-isomorphism, denoted $L_{X/Z}$.

Moreover, given $M\in \Ch_{\geq 0}(Z)$, we can form the simplicial ring $Z\oplus \DK(M)$ where the ideal $\DK(M)$ is of square zero. 
We have a weak equivalence of functors of $M\in \Ch_{\geq 0}(Z)$:
\begin{displaymath}
\label{cotdef} \sHom_{{}_A\!\CR^s_Z}(X, Z \oplus \\DK(M)) \cong \sHom_{\Mod^s_Z}(L_{X/Z}, M).
\end{displaymath}
For $A=\cO$ and $Z=k$, we will use the following definition.
\begin{de}\label{cot}
	The $k$-cotangent complex functor $\cR\mapsto L_{\cR/k}$ is the total left derived functor 
of the functor $\cR \mapsto \Omega_{\cR/\cO} \otimes_{\cR} k$.  More precisely, for $\cR \in {}_{\cO}\!\CR^s_{k}$, 
we have a simplicial $k$-module $L_{\cR/k}=\Omega_{c(\cR)/\cO} \otimes_{c(\cR)} k$, where $c(\cR)$ is a cofibrant replacement of $\cR$ 
in the model category ${}_{\cO}\! \CR^s_{k}$, and $L_{\cR/k}$ is well-defined in the homotopy category.

The simplicial $k$-module $\sHom_{\Mod^s_{k}}(L_{\cR/k}, k)$ may be viewed as an 
element\footnote{Given a non-positive finite dimensional chain complex $V \in \Ch_{\leq 0}(k)$, its $k$-dual $V^\vee$ is a finite dimensional non-negative chain complex $V^{\vee}\in\Ch_{\geq 0}(Z)$.} 
of $\Ch_{\geq 0}(k)$, 
we denote it by $\t\cR$ and call it the tangent complex of $\cR$ over $k$.
It is well defined in the derived category of $k$-vector spaces. We label its terms either as $\t^{(i)}\cR$ ($i\in\Z_{\geq 0}$), or as $\t_{(-i)}\cR=\t^{(i)}\cR$.
With these notations, we have, via Dold-Kan identification of simplicial modules and chain complexes: $\pi_{-i}\t\cR=H_{-i}(\t\cR)=H^i(\t\cR)$.

More generally, if $\cR_1\to \cR_2$ is a morphism of objects $\cR_1,\cR_2 \in {}_{\cO}\!\CR^s_{k}$, 
we fix compatible cofibrant replacements $c(\cR_1)\to c(\cR_2)$
in the model category ${}_{\cO}\! \CR^s_{k}$. The relative $k$-cotangent complex $L_{\cR_2/\cR_1}=\Omega_{c(\cR_2)/c(\cR_1)} \otimes_{c(\cR_2)} k$ 
is independent of the choice of the cofibrant replacements, hence is a well-defined 
object in the homotopy category.
The simplicial $k$-module $\sHom_{\Mod^s_{k}}(L_{\cR_2/\cR_1}, k)$ may be viewed as an element of $\Ch_{\leq 0}(k)$, 
we denote it by $\t(\cR_2,\cR_1)$ and call it the relative tangent complex of $\cR_2$ with respect to $\cR_1$ over $k$ (we won't specify this below). 
It is well defined in the derived category of $k$-vector spaces.

\end{de}

\begin{lem} If $\cR_1\to \cR_2$ is a morphism of objects $\cR_1,\cR_2 \in {}_{\cO}\!\CR^s_{k}$, we have a distinguished triangle
$$\t(\cR_2,\cR_1)\to \t\cR_2\to \t{\cR_1}\stackrel{+1}{\rightarrow}
$$
hence a long exact sequence
$$\ldots H^i(\t(\cR_2,\cR_1))\to H^i(\t\cR_2)\to H^i(\t\cR_1)\to H^{i+1}(\t(\cR_2,\cR_1))\to\ldots $$ 
\end{lem}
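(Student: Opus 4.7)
The plan is to deduce the triangle from the standard transitivity triangle of cotangent complexes applied to the tower $\cO\to \cR_1\to \cR_2$, and then pass to $k$-linear duals.

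First I would fix compatible cofibrant replacements: factor the map $\cR_1\to \cR_2$ in ${}_\cO\!\CR^s_k$ as a cofibration $c(\cR_1)\hookrightarrow c(\cR_2)$ followed by a trivial fibration to $\cR_2$, with $c(\cR_1)\to \cR_1$ also a trivial fibration. The existence of such a factorization is guaranteed by the model structure recalled in \cite[3.1.2]{Cai21}, and degreewise the cofibration $c(\cR_1)_n\hookrightarrow c(\cR_2)_n$ realises $c(\cR_2)_n$ as a retract of a polynomial algebra over $c(\cR_1)_n$. I would then write down the transitivity sequence of K\"ahler differentials attached to $\cO\to c(\cR_1)\to c(\cR_2)$:
\begin{equation*}
0\to \Omega_{c(\cR_1)/\cO}\otimes_{c(\cR_1)}c(\cR_2)\to \Omega_{c(\cR_2)/\cO}\to \Omega_{c(\cR_2)/c(\cR_1)}\to 0.
\end{equation*}
Because of the polynomial/retract structure above, this is a degreewise split short exact sequence of simplicial $c(\cR_2)$-modules. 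Tensoring with $k$ over $c(\cR_2)$ preserves exactness (by the same projectivity), yielding a short exact sequence
\begin{equation*}
0\to L_{\cR_1/k}\to L_{\cR_2/k}\to L_{\cR_2/\cR_1}\to 0
\end{equation*}
of simplicial $k$-modules; via the Dold--Kan equivalence this is a distinguished triangle in the derived category of $k$-vector spaces.

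Finally I would apply the exact contravariant $k$-duality functor $\sHom_{\Mod^s_k}(-,k)$ used to define the tangent complex. Duality reverses arrows but preserves distinguished triangles (with the usual degree shift), so we obtain
\begin{equation*}
\t(\cR_2,\cR_1)\to \t\cR_2\to \t\cR_1\stackrel{+1}{\rightarrow}
\end{equation*}
and the stated long exact sequence on $H^i$ is the standard output of such a triangle.

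The main obstacle is essentially bookkeeping rather than mathematics: one must verify that for the compatible cofibrant replacement $c(\cR_1)\to c(\cR_2)$ the transitivity sequence of differentials is actually degreewise split (this relies on the concrete polynomial form of cofibrations in ${}_\cO\!\CR^s_k$, not on the abstract lifting property alone), and one must then track the direction of the shift carefully through $k$-duality in order to stay consistent with the convention $\t^{(i)}\cR=\t_{(-i)}\cR$ set in Definition~\ref{cot}. Once these two points are pinned down the construction is formal.
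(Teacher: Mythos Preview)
Your proof is correct and follows essentially the same approach as the paper: form the transitivity short exact sequence of K\"ahler differentials for $\cO\to c(\cR_1)\to c(\cR_2)$, tensor down to $k$, and apply the exact $k$-dual $\sHom_{\Mod^s_k}(-,k)$. Your explicit remark that the transitivity sequence is degreewise split (via the polynomial structure of cofibrations) is a welcome clarification of why tensoring with $k$ preserves exactness, a step the paper leaves implicit.
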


\begin{proof} Apply the exact functor $\sHom_{\Mod^s_{k}}(-, k)$ to the distinguished triangle
$$ L_{\cR_1/k}\underline{\otimes}_{\cR_1} k\to L_{\cR_2/k}\to L_{\cR_2/\cR_1}{\rightarrow} (L_{\cR_1/k}\underline{\otimes}_{\cR_1} k)[1]$$
similar to \cite[Section 90.7]{COT}. 
To obtain this distinguished triangle, first, take a cofibrant replacement $c(\cR_1)\to c(\cR_2)$ of $\cR_1\to \cR_2$, 
then form
%proceed as in  in order to obtain
 the exact sequence of simplicial $c(\cR_2)$-modules
 $$ 0\to\Omega_{c(\cR_1)/\cO}\underline{\otimes}_{c(\cR_1)} c(\cR_2)\to \Omega_{c(\cR_2)/\cO}\to \Omega_{c(\cR_2)/c(\cR_1)}{\rightarrow} 0
$$
where the first tensor product is taken for simplicial $c(\cR_1)$-modules; then, apply $-\otimes_{c(\cR_2)} k$.     
\end{proof}
 
Let $k[n]$ be the object of $  \Ch_{\geq 0}(k) $ defined by $k$ in degree $n$ and $0$ elsewhere. 
Then, we have $\sHom_{{}_{\cO}\! \CR^s _{k}}(\cR, k\oplus DK(k[n])) \cong \sHom_{\Mod^s_{k}}(L_{\cR/k},DK(k[n]))$ for $\cR$ cofibrant, and hence for $i \geq 0$, we have
$$\pi_{i}\sHom_{{}_{\cO}\! \CR^s_{k}}(\cR, k\oplus DK( k[n])) \cong H^{i+n}(\t\cR).$$ 

\begin{pro}
	Let $\cF:{}_\cO\!\Art^s_k \to \SETS^s$ be a formally cohesive functor.  Then there exists 
an object $\t\cF \in \Ch(k)$, 
unique up to weak equivalence, such that for any non-positive finite dimensional chain complex 
$V \in \Ch_{\geq 0}(k)$, we have 
$$\sHom_{\Ch(k)}(V, \t\cF) \cong \cF(k \oplus \\DK(V^{\vee})).$$ 

\end{pro}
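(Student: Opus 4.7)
The plan is to build $\t\cF$ from the simplicial sets $T_n := \cF(k \oplus \DK(k[n]))$ for $n \geq 0$, organize these into an $\Omega$-spectrum via formal cohesiveness, and then convert that spectrum into a chain complex through the stable Dold--Kan correspondence.

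First I would observe that each $T_n$ is naturally a simplicial $k$-module. The square-zero extension $k \oplus \DK(k[n])$ is an abelian group object in ${}_\cO\Art^s_k$ (relative to the augmentation to $k$), with group structure given by addition on the ideal; since $\cF$ preserves homotopy pullbacks, $T_n$ inherits an abelian group structure, and this refines to a $k$-module structure because the ideal $\DK(k[n])$ carries an action of $k$. Via Dold--Kan, $T_n$ thus corresponds to an object of $\Ch_{\geq 0}(k)$.

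Second, the identity $k[n-1] \simeq \Omega k[n]$ in $\Ch_{\geq 0}(k)$ is realized by the path-space fibration, which on square-zero extensions yields a homotopy pullback square
$$k \oplus \DK(k[n-1]) \simeq k \times^h_{k \oplus \DK(k[n])} k$$
in ${}_\cO\Art^s_k$. Applying $\cF$ and invoking formal cohesiveness produces $T_{n-1} \simeq \Omega T_n$, so $\{T_n\}_{n \geq 0}$ is an $\Omega$-spectrum in simplicial $k$-modules. Since $k$ is a field, the stable Dold--Kan equivalence then produces a chain complex $\t\cF \in \Ch(k)$, unique up to quasi-isomorphism, such that $\pi_i T_n \cong H^{n-i}(\t\cF)$ whenever $0 \leq i \leq n$. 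This establishes the desired adjunction in the case $V = k[n]$.

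For a general finite-dimensional $V \in \Ch_{\geq 0}(k)$ I would induct on the number of non-zero terms. A short exact sequence $0 \to V' \to V \to k[n] \to 0$ dualizes to a homotopy pullback of square-zero extensions of $k$ in ${}_\cO\Art^s_k$; applying $\cF$ and using formal cohesiveness matches this with the analogous homotopy pullback $\sHom_{\Ch(k)}(V,\t\cF) \simeq \sHom_{\Ch(k)}(V',\t\cF) \times^h \sHom_{\Ch(k)}(k[n],\t\cF)$ on the hom-side, closing the induction. Uniqueness of $\t\cF$ follows by Yoneda in the derived category of $k$: any other complex satisfying the same adjunction has the same cohomology (test against $k[n]$) and the same mapping-cone compatibility, hence is quasi-isomorphic to $\t\cF$.

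I expect the main obstacle to be the clean identification of the $\Omega$-spectrum $\{T_n\}$ with a chain complex; this step rests crucially on $k$ being a field (so that simplicial $k$-modules form a linear model of $\Ch_{\geq 0}(k)$) and on a careful matching between homotopy pullbacks taken in ${}_\cO\Art^s_k$ with those taken in simplicial $k$-modules. Once that core step is in place, both the extension to arbitrary bounded $V$ and the uniqueness statement are routine formal consequences.
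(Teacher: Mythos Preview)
The paper does not actually give a proof of this proposition; it simply cites \cite[Corollary 3.43]{Cai21}. Your outline is the standard construction (essentially the one in \cite[Section 4]{GV18} and in Lurie's work on formal moduli problems): use formal cohesiveness on the loop-space pullback to organize the $T_n = \cF(k\oplus \DK(k[n]))$ into an $\Omega$-spectrum of simplicial $k$-modules, then invoke the stable Dold--Kan correspondence to obtain a chain complex. So your approach is correct and is almost certainly what the cited reference does as well.

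One small refinement worth making: your inductive extension to general $V$ via short exact sequences $0\to V'\to V\to k[n]\to 0$ is fine over a field, but you should state explicitly that such a filtration exists because every bounded complex of finite-dimensional $k$-vector spaces is quasi-isomorphic to its cohomology, hence splits as a finite direct sum of shifts $k[n_i]$. This makes the induction immediate and avoids any subtlety about matching the two homotopy pullbacks. With that clarification, your sketch is complete.
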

\begin{proof} See
\cite[Corollary 3.43]{CaiTh21}.
\end{proof}

The chain complex $\t\cF$ is called the tangent complex of $\cF$.
We denote the terms of this complex as $\t_{i}\cF$ (with degree $-1$ differential) for $i\in\Z$, or $\t^{(i)}\cF=\t_{(-i)}\cF$ (with degree $+1$ differential).
We also write $\t^i\cF=\H^i(\t\cF)=\H_{-i}(\t\cF)$.
If we let $V = k[-n]$ and take the $i$-th homotopy group, then we get $\H^{i+n}\t\cF = \pi_{-i}\cF(k\oplus DK( k[n]))$ for any $i,n \geq 0$.  
We write $\t^{i}\cF$ to abbreviate $\H^{i}(\t\cF)=\pi_{i}\t\cF$ (the last equality is via Dold-Kan, viewing $\t\cF$ as a simplicial $k$-vector space). 
To end this section, we recall the important conservativity property of the tangent complex functor.

\begin{lem}
	Let $\cF_{1}$ and $\cF_{2}$ be formally cohesive functors from ${}_\cO\!\Art^s_k$ to $\SETS^s$.  Then a natural transformation $T : \cF_{1} \to \cF_{2}$ is a weak equivalence if and only if $T$ induces isomorphisms $\t^{i}\cF_{1} \cong \t^{i}\cF_{2}$ for all $i$.
\end{lem}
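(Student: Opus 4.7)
My plan is to prove this conservativity statement by induction, reducing the comparison of $T(A)$ on a general simplicial artinian ring $A$ to its comparison on the square-zero extensions $k \oplus \DK(k[n])$, where by the very definition of the tangent complex the hypothesis says exactly what is needed.

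First I would dispatch the ``only if'' direction, which is immediate. Indeed, if $T(A)$ is a weak equivalence of simplicial sets for every $A \in {}_\cO\!\Art^s_k$, then specializing to $A = k \oplus \DK(k[n])$ and taking $\pi_0$ yields an isomorphism, and under the identification $\pi_{0}\cF_j(k \oplus \DK(k[n])) = \t^{n}\cF_j$ recorded just above the lemma, this is exactly the stated isomorphism $\t^n\cF_1 \cong \t^n\cF_2$.

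For the converse, the key idea is that any $A \in {}_\cO\!\Art^s_k$ sits at the top of a Postnikov-style tower
\begin{displaymath}
A \simeq \holim_n A_n \to \cdots \to A_n \to A_{n-1} \to \cdots \to A_0 = k,
\end{displaymath}
in which each stage $A_n \to A_{n-1}$ is presented as a homotopy pullback
\begin{displaymath}
A_n \simeq A_{n-1} \times^h_{k \oplus \DK(k[m_n])} k,
\end{displaymath}
so that each step is a ``square-zero extension by a shifted copy of $k$'' for a suitable $m_n \geq 1$. The existence of such a decomposition for a simplicial artinian ring with residue field $k$ is standard: one first truncates in the simplicial direction, then peels off composition factors of the homotopy groups $\pi_i A$ one by one, each factor being one-dimensional over $k$ because of the artinian hypothesis. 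This is the same mechanism underlying Lurie's representability criterion invoked earlier.

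With the tower in hand I would apply formal cohesiveness of $\cF_1$ and $\cF_2$ to transport each pullback into a homotopy pullback square
\begin{displaymath}
\cF_j(A_n) \simeq \cF_j(A_{n-1}) \times^h_{\cF_j(k \oplus \DK(k[m_n]))} \cF_j(k), \qquad j=1,2,
\end{displaymath}
and argue inductively. The base $T(k)$ is a weak equivalence since $\cF_j(k) = \{\ast\}$. The map $T(k \oplus \DK(k[m_n]))$ is a weak equivalence because its homotopy groups compute the tangent cohomologies $\t^\bullet \cF_j$, on which $T$ is an isomorphism by hypothesis. The map $T(A_{n-1})$ is a weak equivalence by induction. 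A five-lemma applied to the Mayer--Vietoris-type long exact sequence attached to the homotopy pullback then forces $T(A_n)$ to be a weak equivalence, and passage to the homotopy limit gives $T(A)$. The main obstacle, and the only place where genuine care is needed, is the construction of the Postnikov tower and the verification that both the pullback squares and the final homotopy limit are compatible with formal cohesiveness in the simplicial artinian setting; this is the technical kernel hidden behind the otherwise clean slogan ``the tangent complex detects weak equivalences of formally cohesive functors''.
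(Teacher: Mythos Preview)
Your argument is the standard one and is correct; the paper itself does not give a proof but merely cites \cite[Corollary 3.46]{Cai21}, where essentially this same Postnikov-tower argument appears. One clarification worth making explicit: in the category ${}_\cO\!\Art^s_k$ as set up in \cite{GV18} and \cite{Cai21}, the objects have $\pi_i A = 0$ for $i$ sufficiently large and each $\pi_i A$ is a finite-length $\pi_0 A$-module, so the tower you build is actually \emph{finite}. This removes the worry you flag at the end about whether formal cohesiveness interacts well with the homotopy limit $\holim_n A_n$; since only finitely many stages occur, iterated homotopy pullbacks suffice and no genuine limit is needed.
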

\begin{proof} See
	\cite[Corollary 3.46]{CaiTh21}.
\end{proof}

\subsection{Representability}\label{repressubsection}

Given a projective system
$\cR=(\cR_\alpha)_\alpha$ with $\cR_\alpha\in  {}_\cO\!\Art^s_k$, we put for any $A\in  {}_\cO\!\Art^s_k$
$$\sHom(\cR,A)=\holim_\alpha\sHom(\cR_\alpha,A)
$$

\begin{de} A functor $\cF\colon {}_\cO\!\Art^s_k\to \sSETS$ is proprepresentable if there exists a projective system
of cofibrant simplical artinian rings
$\cR=(\cR_\alpha)_\alpha$ with $\cR_\alpha\in  {}_\cO\!\Art^s_k$ and a functorial weak equivalence:
$$\sHom(\cR,A)\to \cF(A)$$
for all $A\in {}_\cO\!\Art^s_k$.
\end{de}

Note that $\t^{i}\cR = H_{-i}(\t\cR)$ vanishes for $i <0$.  So if a formally cohesive functor $\cF:{}_\cO\!\Art^s_k \to \SETS^s$ is pro-representable, then $\t^{i}\cF$ should vanishes for $i< 0$. 
 The converse statement is proved by Lurie.

\begin{thm}[Lurie's derived Schlessinger criterion]\label{Lurie}
	Let $\cF$ be a formally cohesive functor.  Then $\cF$ is pro-representable if and only if $\t^{i}\cF$ vanishes for each $i <0$. In addition, if all the $k$-vector spaces $\t^{i}\cF$ are finite, the pro-artinian ring $\cR$ has a countable indexing category.
\end{thm}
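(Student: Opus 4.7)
The plan is to prove both implications, with the main content residing in the ``if'' direction. The conservativity lemma stated just before the theorem will serve as the crucial bridge between tangent-level and functor-level statements.

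For the ``only if'' direction, suppose $\cF \simeq \sHom(\cR,-)$ for $\cR = (\cR_\alpha)_\alpha$ a pro-system of cofibrant objects of ${}_\cO\!\CR^s_k$. The cotangent-complex adjunction from Section \ref{tangcomp} gives, for finite-dimensional $V \in \Ch_{\geq 0}(k)$,
\[
\cF\bigl(k \oplus \DK(V^\vee)\bigr) \;\simeq\; \holim_\alpha \sHom_{\Mod^s_k}\bigl(L_{\cR_\alpha/k},\, V^\vee\bigr),
\]
and comparison with the defining property of $\t\cF$ identifies $\t\cF \simeq \holim_\alpha \t\cR_\alpha$. Via Dold-Kan each $\t\cR_\alpha$ is concentrated in non-negative cohomological degrees, so $\t^i\cR_\alpha = 0$ for $i<0$, and this vanishing survives $\holim$.

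For the ``if'' direction I would build $\cR$ as the inverse limit of a tower
\[
\cdots \to \cR_{n+1} \to \cR_n \to \cdots \to \cR_0 = k
\]
of cofibrant objects of ${}_\cO\!\Art^s_k$ together with compatible natural transformations $\psi_n : \sHom(\cR_n,-) \to \cF$ making $\t\psi_n$ an isomorphism on $\t^i$ for $i \leq n$. Starting from $\cR_0 = k$ and the canonical $\psi_0$, given $\psi_n$ one chooses finite generating sets, available thanks to finiteness of $\t^i \cF$, for the kernel and cokernel of $\t^{n+1}\sHom(\cR_n,-) \to \t^{n+1}\cF$, and enlarges $\cR_n$ to $\cR_{n+1}$ by attaching free generators in simplicial degree $n+1$ together with relations imposed in degree $n+2$, in the sense of the cofibration structure on ${}_\cO\!\CR^s_k$. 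The hypothesis $\t^i\cF = 0$ for $i<0$ enters precisely because the obstruction to lifting $\psi_n$ to $\psi_{n+1}$, which by standard deformation theory of simplicial algebras lies in a negative tangent group, must vanish; equivalently, no cells would ever be required in non-existent negative simplicial degrees.

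The main obstacle is to promote the tangent-level matching achieved above into a weak equivalence $\sHom(\cR,-) \to \cF$ of functors on an arbitrary $A \in {}_\cO\!\Art^s_k$, not only on the distinguished square-zero extensions $k \oplus \DK(k[m])$ where the tangent complex directly detects information. For this I would use that every such $A$ admits a finite Postnikov-type tower of homotopy pullback squares of square-zero extensions with fibers of the form $k \oplus \DK(k[m])$. Both $\sHom(\cR,-)$ and $\cF$ are formally cohesive, and so convert each such square into a homotopy pullback of simplicial sets; an induction along the tower, combined with the tangent-level matching and the conservativity lemma, upgrades $\holim_n \psi_n$ into the desired weak equivalence and exhibits $\cR = (\cR_n)_n$ as a pro-representing object. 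The countability of the indexing system follows immediately: finiteness of each $\t^i \cF$ forces only finitely many cell attachments per stage.
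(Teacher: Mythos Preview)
The paper does not give a proof of this theorem: it is stated as Lurie's result, with a pointer to \cite[4.6]{GV18}, and the text immediately before it already dispatches the ``only if'' direction in one line (noting that $\t^i\cR$ vanishes for $i<0$ whenever $\cR$ is a simplicial ring). So there is no paper-proof to compare against; what you have written is an attempt to sketch the actual content of Lurie's argument.

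Your outline is broadly the standard one --- build the pro-representing object as an inverse limit of a cell-attachment tower, verify tangent-level matching stage by stage, and then use formal cohesiveness plus conservativity to upgrade to arbitrary $A$ --- and the countability remark is correct. Two points deserve sharpening. First, your explanation of where the hypothesis $\t^i\cF=0$ for $i<0$ enters is not quite right: it is not that an obstruction class to lifting $\psi_n$ lands in a negative tangent group of $\cF$. Rather, the condition is what guarantees that $\cF$ is ``set-like'' on square-zero extensions (e.g.\ $\cF(k[\epsilon])$ is homotopy-discrete), which is exactly what allows one to realize the approximating $\cR_n$ inside ${}_\cO\!\Art^s_k$ rather than needing negative-degree cells that the category does not have. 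Second, the final convergence step --- passing from tangent-level agreement at each finite stage to a weak equivalence $\sHom(\varprojlim \cR_n,-)\to\cF$ --- hides a genuine $\lim^1$/Milnor-sequence issue that the full proof has to address; invoking a Postnikov decomposition of $A$ together with conservativity is the right idea, but one must check that the homotopy limit of the $\psi_n$ really computes $\sHom(\cR,-)$ on the nose.
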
	
	
\subsection{Simplicial Galois deformation functors}\label{defcenter}

In this section we consider a smooth group scheme $G$ defined over $\cO$. For our application, $G=\GL_N$.
Given  an $\cO$-algebra $A$, let ${\bB} G(A)$ be the simplicial set
associated to the group $G(A)$ by the bar construction. By definition, it is given by $[p]\mapsto N_p G(A)=G(A)^p$, with standard face and degeneracy maps
 \cite[Example 8.1.7]{Wei94}. Note that for any fixed $p$, $N_pG(A)=\Hom_{rings}(\cO_{G}^{\otimes p},A)$. We can therefore say that 
the functor $N_pG\colon A\mapsto N_pG(A)$ 
is represented by the ring $\cO_{N_pG}=\cO_{G}^{\otimes p}$.

If now $A$ is a simplicial $\cO$-algebra,
we can define a bisimplicial set $\Bi^{naive}_G(A)$, i.e. a contravariant functor 
$$\Bi^{naive}_G(A)\colon \Delta\times \Delta\to \SETS,\quad ([p],[q])\mapsto \Hom_{\CR}(\cO_{N_pG},A_q)=G(A_q)^p$$
where the $p$-degeneracy and $p$-face maps are given by the codegeneracy and coface maps of $\cO_{N_\bullet G}$
and the $q$-degeneracy and $q$-face maps are given by the degeneracy and face maps of $A$.
Recall that given two simplicial rings $R,R^\prime$, we defined in Definition \ref{shom} the simplicial set $\sHom (R,R^\prime)$ as $[p]\mapsto \Hom_{\CR^s}(R, (R^\prime)^{\Delta[p]})$ with face and degeneracy maps coming from those between the $\Delta[p]$'s. 
We can consider for each $p$ the bisimplicial set $\Bi_G(A)$ given by
$$([p],[q])\mapsto \Hom_{\CR^s}(c(\cO_{N_pG}),A^{\Delta[q]})$$
where $c(\cO_{N_pG})$ denotes a cofibrant replacement of $\cO_{N_pG}$, with degeneracy and face maps 
as for the bisimplicial set $\Bi^{naive}_G(A)$. 
Note that the natural morphism of simplicial rings $A_q\to A^{\Delta[q]}$ induces an injective morphism of bisimplicial sets
$$\Bi^{naive}_G(A)\hookrightarrow \Bi_G(A).$$
Consider the simplicial set ${\bB}^\prime G(A)=\hocolim_q \Bi_G(A)_q$. It may not be fibrant. So, we fix a fibrant replacement
$${\bB} G(A)=F({\bB}^\prime G(A)).$$
For $A$ homotopically discrete, this obviously generalizes the usual definition of ${\bB} G(\pi_0(A))$.

\medskip
\noindent
{\bf Remark:} 
Recall that in \cite[Definition 5.1]{GV18}, the authors defined a simplicial set ${\bB}^{GV} G(A)$ 
as the fibrant replacement $Ex^\infty$ of the simplicial set
$$[p]\mapsto ([p],[p]) \mapsto \Bi_G(A)([p],[p])=\Hom_{\CR^s}(c(\cO_{N_pG}),A^{\Delta[p]}).$$
The two definitions are weakly equivalent but there is a slight difference:
in our definition, we have a canonical fibration ${\bB}G(A) \to {\bB}G(k)$, while for ${\bB}G^{GV}(A)$, one has a canonical 
fibration ${\bB}G(A) \to Ex^{\infty}{\bB}G(k)$.

\medskip
The key properties of this construction are that: 
\begin{itemize}
	\item The functor ${\bB} G$ is invariant by homotopy : if $A\to B$ is a weak equivalence, the resulting morphism of simplicial sets 
	${\bB} G(A)\to {\bB} G(B)$ also is, by smoothness of $\cO_{G}$ (see proof of \cite[Corollary 5.3]{GV18}). 
%	However, note that $\pi_1{\bB} G(k)=G(k)$ does not vanish.
	\item It preserves homotopy pullbacks.
	\item The ${\bB} G$ construction is functorial in $G$.
\end{itemize}

Let us write the $S\cup S_p$-ramified Galois group  $\Gamma=G_{F,S \cup S_p}$ as inverse limit of finite Galois groups $\Gamma_\alpha=\Gal(F_\alpha/F)$,
where $F_\alpha/F$ is a finite $S\cup S_p$-ramified extension. 
Let ${\bB} \Gamma$ be the prosimplicial set given by the projective system $({\bB}\Gamma_\alpha)_\alpha$ of simplicial sets ${\bB}\Gamma_\alpha$.
Note that the residual Galois representation 
$$\overline{\rho}\colon \Gamma\to G(k)$$
gives rise to an element $[\overline{\rho}]$ of 
$\sHom({\bB}\Gamma_\alpha, {\bB} G(k))$
for some $\alpha$.
A natural idea to define the analogue of the classical deformation functor
$$\cF_{\overline{\rho}}\colon {}_\cO\!\Art_k\to\SETS, A\mapsto\Hom_{\overline{\rho}}(\Gamma, G(A))/adG(A)$$
is to define for each $A\in Ob({}_\cO\! \Art^s_k)$, and any $\alpha$, the simplicial set 
\begin{displaymath}\label{eqFs} \cF^s_{\alpha,\overline{\rho}}(A)=\sHom_{[\overline{\rho}]}({\bB}\Gamma_\alpha, {\bB} G(A))\end{displaymath}
which is the homotopy fiber in 
$$\cF^s_{\alpha}(A)=\sHom({\bB}\Gamma_\alpha, {\bB} G(A))$$
 of $[\overline{\rho}]\in\sHom({\bB}\Gamma_\alpha, {\bB} G(k))$.
Then, one would pass to the inductive limit over $\alpha$ to obtain $\cF^s_{\overline{\rho}}$ 
as the homotopy fiber at $[\overline{\rho}]$ of
$$\cF^s=\varinjlim_\alpha\cF^s_{\alpha}.$$
It is homotopy invariant and preserves homotopy pullbacks. 
However, as explained in \cite[Section 5.4]{GV18}, when the center $Z$ of $G$ is non trivial, the functor $\cF^s_{\overline{\rho}}$
cannot be representable because $Z$ gives rise to non trivial automorphisms.
It can be modified in two different ways to be made pro-representable: one is to fix the determinant 
$${\bB}\det\colon {\bB} G(A)\to{\bB}\G_m(A)$$
of the simplicial deformations, the other is to "quotient the action of $G$ by the center $Z$" (with or without fixing the determinant)
as in \cite[Section 5.4]{GV18}.
The two modifications will be denoted $\cF^s_{\det,\overline{\rho}}$ and $\cF^s_{Z,\overline{\rho}}$.
Let us recall their constructions.

\subsection{Modification by the center}
 
We first define $\cF^s_{\det,\overline{\rho}}$.

Let $\rho_{\pi}\colon \Gamma\to G(\cO)$ be the Galois representation that we fixed. For any $A\in Ob({}_\cO \!\Art^s_k)$, we have a morphism 
%$G(\cO)\to G(A_0)$ and a morphism of simplicial rings $A_0\to A$ (where $A_0\to A_q$ is given by the unique map $[q]\to[0]$). It gives rise to a morphism 
%
${\bB}G(\cO)\to{\bB}G(A)$. On the other hand, the determinant morphism induces morphisms (both denoted by $\det$) of simplicial sets
$${\bB}G(A)\to{\bB}\G_m(A)\quad\mbox{\rm and}\, 
\cF^s_G(A)=\sHom({\bB}\Gamma, {\bB} G(A))\to \cF^s_{\G_m}(A)=\sHom({\bB}\Gamma, {\bB} \G_m(A))$$
%We can consider $[\det\circ\rho_{\pi}]$ as a point of $\cF^s_{\G_m}(A)$.
%Then we define $\cF^s_{\det,\overline{\rho}}$ as the homotopy fiber of $[\det\circ\rho_{\pi}]$ by 
%$$\det\colon \cF^s_{G,\overline{\rho}}(A)\to \cF^s_{\G_m,\det\overline{\rho}}(A).$$
%Let $\g_k^\prime$ be the Lie algebra over $k$ of $G^\prime=\Ker\det$.
%\begin{lem} If $p$ is prime to $N$, the functor $\cF^s_{\det,\overline{\rho}}$ is pro-representable and 
%$t^0\cF^s_{\det,\overline{\rho}}=\H^1(\Gamma,\g_k^\prime)$
%\end{lem}
%\begin{proof}
%This functor preserves weak equivalence and is clearly compatible to homotopy pullbacks. 
%In order to prove its prorepresentability, 
%it remains to show that its tangent complex is concentrated in non-negative degrees. 
%By \cite[Lemma 4.30 (iv) and Example 5.6]{GV18}, we see that $t^{-1}\cF^s_{\det,\overline{\rho}}=\Ker(t^{-1}\cF^s_{G,\overline{\rho}}\to t^{-1}\cF^s_{\G_m,\det\overline{\rho}}$
%By \cite[Example 5.6]{GV18}, the morphism $t^{-1}\cF^s_{G,\overline{\rho}}\to t^{-1}\cF^s_{\G_m,\det\,\overline{\rho}}$
%identifies to the trace map $\H^0(\Gamma,\g_k)\to\H^0(\Gamma,k)$, which identifies to the multiplication by $N$ 
%on $k$ which is an isomorphism. Therefore,
%$t^{-1}\cF^s_{\det,\overline{\rho}}=0$ as desired.
%\end{proof}
We recall now the definition of $\cF^s_{Z,\overline{\rho}}$ (\cite[Section 5.4]{GV18}).
%Note that it can be combined with the first construction (to yield $\cF^s_{\det,Z,\overline{\rho}}$)
%if one wants to fix the determinant when $p$ divides $N$.
%
Consider the short exact sequence 
$$1\to Z(A)\to G(A)\to PG(A)\to 1$$
It gives rise to a fibration sequence (see the section 3.1.3 of \cite{CaiTh21}).
$${\bB} G(A)\to {\bB} PG(A)\to {\bB}^2Z(A).$$

Let $\ast=\pi_0{\bB} \Gamma$.
Then the functor $\cF^s_Z$ is defined by the homotopy pullback diagram

$$\begin{array}{ccc} \cF^s_Z(A)&\to&\Hom(\ast,{\bB}^2Z(A)\\\downarrow&&\downarrow\\\cF^s(A)&\to&\Hom({\bB}\Gamma,{\bB}^2Z(A))\end{array}
$$
Its crucial property is that there is a functorial fibration sequence:
\begin{displaymath}\label{fibdef}\sHom(\ast,{\bB} Z(A))\to \sHom({\bB}\Gamma,{\bB} G(A))\to \cF^s_Z(A)
\end{displaymath}
We then define $\cF^s_{Z,\overline{\rho}}$ as the homotopy fiber of $[\overline{\rho}]$.
It follows from the remarks above that
\begin{lem} The functor $\cF^s_{Z,\overline{\rho}}$ is homotopy invariant 
and preserves homotopy pullbacks, hence is formally cohesive.
\end{lem}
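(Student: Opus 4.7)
My plan is to deduce both claims for $\cF^s_{Z,\overline{\rho}}$ from the corresponding properties for the intermediate functor $\cF^s_Z$, and in turn to reduce those to the two bulleted key properties of the ${\bB} G$ construction recalled just above (invariance under weak equivalences and preservation of homotopy pullbacks). The guiding principle is that homotopy pullbacks commute with each other, so $\cF^s_{Z,\overline{\rho}}$, being obtained from the corners of the defining square of $\cF^s_Z$ by two nested homotopy-pullback operations (the defining pullback of $\cF^s_Z$, then the homotopy fiber at $[\overline{\rho}]$), will inherit both properties automatically.

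First I would verify that each of the three corner functors appearing in the defining homotopy-pullback square of $\cF^s_Z$, namely
$$A\mapsto \sHom({\bB}\Gamma,{\bB} G(A)),\qquad A\mapsto \Hom({\bB}\Gamma,{\bB}^2 Z(A)),\qquad A\mapsto \Hom(\ast,{\bB}^2 Z(A)),$$
is homotopy invariant and preserves homotopy pullbacks. This follows from the two key properties of the ${\bB} G$ construction applied to the smooth group schemes $G$ and $Z$, and iterated once for ${\bB}^2 Z$, combined with the formal fact that for any fixed (pro-)simplicial set $X$ (here $X={\bB}\Gamma$ or $X=\ast$) the functor $\sHom(X,-)$ preserves weak equivalences between fibrant targets and sends homotopy limits to homotopy limits. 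Because the homotopy-pullback of a square of homotopy-invariant, pullback-preserving functors is again homotopy invariant and pullback-preserving, $\cF^s_Z$ satisfies both properties.

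Finally, by construction $\cF^s_{Z,\overline{\rho}}(A)$ is the homotopy fiber of $\cF^s_Z(A)\to \cF^s_Z(k)$ over $[\overline{\rho}]$, i.e.\ the homotopy pullback of $\cF^s_Z(A)$ along the inclusion $\{[\overline{\rho}]\}\hookrightarrow \cF^s_Z(k)$. A weak equivalence $A\to B$ in ${}_\cO\Art^s_k$ induces a weak equivalence $\cF^s_Z(A)\to \cF^s_Z(B)$ over $\cF^s_Z(k)$, and hence on homotopy fibers at $[\overline{\rho}]$, giving homotopy invariance. For pullback preservation, applying $\cF^s_Z$ to a homotopy pullback $A_1\times^h_{A_0} A_2$ yields a homotopy pullback by the previous step; then taking the homotopy fiber at $[\overline{\rho}]$ is a further homotopy pullback against a point over $\cF^s_Z(k)$, and since iterated homotopy pullbacks commute, the result is canonically identified with $\cF^s_{Z,\overline{\rho}}(A_1)\times^h_{\cF^s_{Z,\overline{\rho}}(A_0)}\cF^s_{Z,\overline{\rho}}(A_2)$. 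The main obstacle, really the only bookkeeping point that needs care, is to make this pullback-of-pullbacks argument coherent in the pro-simplicial setting forced by $\Gamma=\varprojlim_\alpha \Gamma_\alpha$; this is handled by working level by level in $\alpha$ (where $\sHom({\bB}\Gamma_\alpha,-)$ is a genuine mapping space between simplicial sets) and then passing to the homotopy inverse limit in $\alpha$, which commutes with the two other homotopy pullbacks involved.
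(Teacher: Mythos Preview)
Your proposal is correct and is essentially the argument the paper has in mind: the paper gives no explicit proof, merely prefacing the lemma with ``It follows from the remarks above that'', where the remarks are exactly the two bulleted key properties of ${\bB} G$ and the homotopy-pullback definition of $\cF^s_Z$ that you invoke. Your write-up is thus a fleshed-out version of what the paper leaves implicit, with the same approach (corner functors inherit the properties from ${\bB} G$, homotopy pullbacks of such functors keep them, and taking a homotopy fiber over a point is another homotopy pullback).
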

%
%Let $\cF^{s,S\cup S_p}_{Z,\overline{\rho}}\colon  {}_\cO \!\Art^s_k\to \SETS^s$ 
%be the functor given by modifying as above the functor 
%$$\cF^s_{\overline{\rho}}(A)=\sHom_{[\overline{\rho}]}({\bB}\Gamma,{\bB} G(A))=
%\holim_\alpha \sHom_{[\overline{\rho}]}({\bB}\Gamma_\alpha, {\bB} G(A))$$
The functor $\cF^{s}_{Z,\overline{\rho}}$ is called 
the functor of simplicial deformations of $\overline{\rho}$ unramified outside $S\cup S_p$.
To study the representability of this deformation functor, we now need to determine its tangent complex. 
The tangent complex of  $\cF^s_{\alpha}$ can be computed as a Cech complex 
$C^\bullet({\bB}\Gamma_\alpha,\t {\bB} G)$ (\cite[Section 4.7]{GV18} 
and \cite[Example 4.38]{GV18}). Let $\g_k=\Lie_k G$ be the Lie algebra of $G$.
The tangent complex $\t {\bB} G$ is concentrated in degree $1$ and one has $\t {\bB} G=\g_k[1]$ (\cite[Lemma 5.5]{GV18}). Therefore

$$\t\cF^s_{\alpha}=C^{\bullet+1}({\bB}\Gamma_\alpha,\g_k)$$
Let $\z_k$ be the center of $\g_k$. Then, the functorial fibration \ref{fibdef} provides a distinguished triangle
$$C^{\bullet+1}(\ast,\z_k)\to C^{\ast+1}({\bB} \Gamma,\g_k)\to\t\cF^s_{Z, \bar{\rho}}.$$  
From these facts it follows that the Cech complex $C^\bullet({\bB}\Gamma,\t {\bB} G)$ can be computed 
as the Galois cohomology standard complex (see \cite{CaiTh21}).
Then by \cite[Lemma 4.30, (iv)]{GV18} (Mayer-Vietoris long exact sequence) we have the following key proposition:
\begin{pro}\label{tangentglob} 
Suppose that $\H^0(\Gamma,\g_k)=\z_k$.  
Then, the homology groups of the tangent complex $\t\cF^s_{Z, \bar{\rho}}$ satisfy $\t^{i}\cF^s_{ Z, \bar{\rho}} = 0$ 
when $i \leq 0$, and $\t^{i}\cF_{Z, \bar{\rho}} = \H^{i+1}(F_S/F, \Ad \bar{\rho})$ when $i \geq 0$.
\end{pro}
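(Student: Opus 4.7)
The plan is to apply the tangent complex functor to the functorial fibration (\ref{fibdef}) and read off the result from the long exact sequence in cohomology, using the hypothesis on $\H^0(\Gamma,\g_k)$ to control the negative degrees. All substantive inputs have already been collected in the paragraph preceding the proposition: the identification $\t{\bB}G=\g_k[1]$ (and similarly $\t{\bB}Z=\z_k[1]$), the Cech-to-continuous-cochain comparison on ${\bB}\Gamma$, and the distinguished triangle
\[ C^{\bullet+1}(\ast,\z_k)\to C^{\bullet+1}({\bB}\Gamma,\g_k)\to \t\cF^s_{Z,\bar\rho}\stackrel{+1}{\rightarrow} \]
produced by (\ref{fibdef}). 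What remains is direct bookkeeping.

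First I compute the two outer terms of the triangle. The simplicial set $\ast$ is a point, so $\H^i(\ast,\z_k)=\z_k$ in degree zero and vanishes otherwise; hence $\H^i(C^{\bullet+1}(\ast,\z_k))=\z_k$ for $i=-1$ and $0$ for every other $i$. By the Cech-to-standard-cochain comparison, $\H^i(C^{\bullet+1}({\bB}\Gamma,\g_k))=\H^{i+1}(F_S/F,\Ad\bar\rho)$. The long exact sequence of cohomology then takes the form
\[ \cdots\to \H^{i+1}(\ast,\z_k)\to \H^{i+1}(F_S/F,\Ad\bar\rho)\to \t^i\cF^s_{Z,\bar\rho}\to \H^{i+2}(\ast,\z_k)\to\cdots. \]
For $i\geq 0$ the two flanking $\ast$-terms vanish, giving the desired isomorphism $\t^i\cF^s_{Z,\bar\rho}\cong \H^{i+1}(F_S/F,\Ad\bar\rho)$; and for $i\leq -3$ all four surrounding terms vanish, so $\t^i\cF^s_{Z,\bar\rho}=0$.

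The remaining degrees $i=-1,-2$ are both controlled by the map
\[ \z_k=\H^{-1}(C^{\bullet+1}(\ast,\z_k))\longrightarrow \H^{-1}(C^{\bullet+1}({\bB}\Gamma,\g_k))=\H^0(\Gamma,\g_k) \]
induced by the first arrow of the triangle. Tracing through the construction of (\ref{fibdef}) from the short exact sequence $1\to Z\to G\to PG\to 1$, or more cheaply by naturality of the bar construction, this map is the tautological inclusion of the center into the $\Gamma$-invariants of the adjoint action. The hypothesis $\H^0(\Gamma,\g_k)=\z_k$ makes it an isomorphism, so both its kernel and its cokernel vanish; reading the relevant four-term pieces of the long exact sequence around $i=-2$ and $i=-1$ then gives $\t^{-2}\cF^s_{Z,\bar\rho}=0$ and $\t^{-1}\cF^s_{Z,\bar\rho}=0$. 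The only mildly non-formal step is the explicit identification of that map as the natural inclusion; this is what I expect to be the main (though modest) obstacle, since it requires unwinding how the fibration (\ref{fibdef}) is produced by the bar construction applied to $1\to Z\to G\to PG\to 1$, rather than any further input from Galois cohomology.
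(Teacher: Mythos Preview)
Your proposal is correct and follows essentially the same approach as the paper: the paper states the distinguished triangle $C^{\bullet+1}(\ast,\z_k)\to C^{\bullet+1}({\bB}\Gamma,\g_k)\to \t\cF^s_{Z,\bar\rho}$ coming from the fibration (\ref{fibdef}) and then invokes the associated long exact sequence (citing \cite[Lemma 4.30,(iv)]{GV18}) without writing out the degree-by-degree bookkeeping you supply. Your identification of the map in degree $-1$ as the inclusion $\z_k\hookrightarrow \H^0(\Gamma,\g_k)$, and the use of the hypothesis to kill $\t^{-1}$ and $\t^{-2}$, is exactly the content the paper leaves implicit.
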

and its corollary:
\begin{cor}\label{repres}
	Suppose that the adjoint action of $\bar{\rho}$ fixes precisely the center of the Lie algebra of $G(k)$.  
The functor $\cF^s_{Z, \bar{\rho}}$ is pro-representable by a simplicial pro-artinian ring $\cR$. 
This means that there exists a functorial weak equivalence
$$\sHom(\cR,A)\to \cF^s_{Z, \bar{\rho}}(A)$$
\end{cor}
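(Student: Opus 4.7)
The plan is to apply Lurie's derived Schlessinger criterion (Theorem \ref{Lurie}) to the functor $\cF^s_{Z,\bar\rho}$. This requires checking two things: (i) that the functor is formally cohesive, and (ii) that its tangent complex has vanishing cohomology in all negative degrees. Both have in fact essentially been established in the preceding material, so the corollary will follow by assembling them.

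First I would invoke the lemma stated just before Proposition \ref{tangentglob}, which asserts that $\cF^s_{Z,\bar\rho}$ is homotopy invariant and preserves homotopy pullbacks, hence is formally cohesive. Then, interpreting the hypothesis of the corollary—that the adjoint action of $\bar\rho$ fixes exactly the center of $\mathrm{Lie}\,G(k)$—as the identity $\H^0(\Gamma,\g_k)=\z_k$, I would apply Proposition \ref{tangentglob} to obtain $\t^i\cF^s_{Z,\bar\rho}=0$ for all $i\leq 0$, together with the identification $\t^i\cF^s_{Z,\bar\rho}=\H^{i+1}(F_S/F,\Ad\bar\rho)$ for $i\geq 0$.

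With these two ingredients in place, Lurie's criterion (Theorem \ref{Lurie}) applies directly and yields a pro-artinian simplicial ring $\cR$ together with a functorial weak equivalence $\sHom(\cR,A)\xrightarrow{\sim}\cF^s_{Z,\bar\rho}(A)$ for $A\in{}_\cO\!\Art^s_k$. To obtain the supplementary fact that $\cR$ can be indexed by a countable poset, I would verify the finite-dimensionality of each $\t^i\cF^s_{Z,\bar\rho}$: this reduces, via the distinguished triangle $C^{\bullet+1}(\ast,\z_k)\to C^{\bullet+1}({\bB}\Gamma,\g_k)\to \t\cF^s_{Z,\bar\rho}$, to the standard finiteness of Galois cohomology $\H^j(F_S/F,\Ad\bar\rho)$ for the finite $k[\Gamma]$-module $\Ad\bar\rho$ (here using that $\Gamma=G_{F,S\cup S_p}$ has finite cohomology with finite coefficients).

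The only step requiring care is the bookkeeping passage from the inductive system of finite quotients $\Gamma_\alpha$ to the pro-object $\cR$: one must check that the homotopy fiber construction over $[\bar\rho]$ commutes appropriately with the colimit $\cF^s=\varinjlim_\alpha \cF^s_\alpha$ and that the resulting tangent complex really is the Galois-cohomological one, not just a termwise colimit of Čech complexes. This is where I expect the main (though mild) obstacle to lie, but it follows from the compatibility of $\sHom$ with homotopy (co)limits together with the Mayer--Vietoris computation \cite[Lemma 4.30 (iv)]{GV18} invoked in the proof of Proposition \ref{tangentglob}. Once this is settled, pro-representability and the countable-index refinement both follow from Theorem \ref{Lurie}.
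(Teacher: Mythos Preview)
Your proposal is correct and follows exactly the approach implicit in the paper: the corollary is stated immediately after Proposition \ref{tangentglob} with no separate proof, and is meant to follow by combining formal cohesiveness (the lemma preceding Proposition \ref{tangentglob}), the vanishing $\t^i\cF^s_{Z,\bar\rho}=0$ for $i<0$ from Proposition \ref{tangentglob}, and Lurie's criterion (Theorem \ref{Lurie}). Your additional remarks on countable indexing and the colimit bookkeeping are not needed for the statement as written and are already absorbed into the proof of Proposition \ref{tangentglob}.
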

Note that, given a profinite group $\Gamma$, the question of prorepresentability of the functor 
$$A\mapsto \cF_{\bar{\rho}}^s(A)=\sHom_{[\overline{\rho}]}({\bB}\Gamma,{\bB} G(A))$$
by a proartinian simplicial ring $\cR$ can only be posed in terms of the existence of a functorial weak equivalence
$$\sHom_{pro\Art^s_k}(R,A)\to \cF_{\bar{\rho}}^s(A)$$
and not of a functorial isomorphism,
because even is $A$ is discrete, there is a bijection between the set of conjugacy classes of homomorphisms $\Gamma\to G(A)$ and the 
set of homotopy classes of the simplicial set of morphisms of prosimplicial sets ${\bB}\Gamma\to {\bB} G(A)$, 
not with the simplicial set of morphisms of prosimplicial sets itself.
\begin{lem}
	The functor
	$$\pi_0\cF^s_{Z,\overline{\rho}}\colon {}_\cO\! \Art^s_k\to \SETS\quad A\mapsto\pi_0\cF^s_{Z,\overline{\rho}}(A)$$
	coincides with Mazur's functor of $S\cup S_p$-ramified deformations of $\overline{\rho}$.
\end{lem}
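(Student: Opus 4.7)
The plan is to prove this by unfolding the definitions on a discrete object $A \in {}_\cO\Art_k$ (viewed as a constant simplicial ring) and computing the set of connected components through the relevant long exact sequences.

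First I would identify $\pi_0 \sHom({\bB}\Gamma, {\bB}G(A))$ with $\Hom_{\mathrm{cont}}(\Gamma, G(A))/G(A)$ (the quotient by conjugation). For discrete $A$, the derived mapping space $\sHom({\bB}\Gamma_{\alpha}, {\bB}G(A))$ reduces to the standard space of maps between classifying spaces of the discrete groups $\Gamma_\alpha$ and $G(A)$; passing to the homotopy limit over $\alpha$ gives the continuous version. Next I would invoke the functorial fibration sequence \ref{fibdef}, namely ${\bB}Z(A) \to \cF^s(A) \to \cF^s_Z(A)$. Because ${\bB}Z(A)$ is connected, $\pi_0 {\bB}Z(A) = \ast$, so the associated long exact sequence (together with the fact that the fibers over distinct components of the base are all weakly equivalent to ${\bB}Z(A)$) forces the projection $\pi_0 \cF^s(A) \to \pi_0 \cF^s_Z(A)$ to be a bijection.

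The next step is to compute $\pi_0$ of the homotopy fiber $\cF^s_{Z,\overline\rho}(A) \to \cF^s_Z(A) \to \cF^s_Z(k)$ over $[\overline\rho]$. The subtlety is to exclude a monodromy action of $\pi_1 \cF^s_Z(k)_{[\overline\rho]}$ on the $\pi_0$ of the fiber. I would compute this $\pi_1$ via the long exact sequence of ${\bB}Z(k) \to \cF^s(k) \to \cF^s_Z(k)$ at the base point: one has $\pi_1 {\bB}Z(k) = Z(k)$ and $\pi_1 \cF^s(k)_{[\overline\rho]} = Z_{G(k)}(\overline\rho)$, which equals $Z(k)$ by the centralizer hypothesis on $\overline\rho$; the map $Z(k) \to Z(k)$ induced by ${\bB}Z \to \cF^s$ is the canonical inclusion, i.e.\ the identity. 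Hence $\pi_1 \cF^s_Z(k)_{[\overline\rho]} = 0$, and $\pi_0 \cF^s_{Z,\overline\rho}(A)$ injects as a pointed set onto the preimage of $[\overline\rho]$ in $\pi_0 \cF^s_Z(A)$. Combining with the previous step, this preimage is exactly the set of $G(A)$-conjugacy classes of continuous homomorphisms $\rho\colon\Gamma\to G(A)$, unramified outside $S\cup S_p$, lifting $\overline\rho$.

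Finally I would upgrade $G(A)$-conjugacy to $\widehat{G}(A)=\ker(G(A)\to G(k))$-conjugacy on such lifts, which is the usual form of Mazur's functor. Indeed, if $g\rho g^{-1}=\rho'$ with both reducing to $\overline\rho$, then $\bar g\in Z_{G(k)}(\overline\rho)=Z(k)$; smoothness of $Z$ yields a lift $z\in Z(A)$ of $\bar g$, and since $Z$ is central, $z$ acts trivially by conjugation, so $gz^{-1}\in\widehat{G}(A)$ still conjugates $\rho$ to $\rho'$. This finishes the identification.

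The only real technical point is the computation of $\pi_1 \cF^s_Z(k)_{[\overline\rho]}$, which is where the center-modification construction and the hypothesis that the adjoint action of $\overline\rho$ fixes exactly $\z_k$ are both used; everything else is a formal manipulation with long exact sequences of pointed sets.
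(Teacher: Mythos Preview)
Your proposal is correct and follows essentially the same approach as the paper, which simply cites \cite[Lemma 7.1]{GV18} and gives a one-line sketch: identify $\pi_0\sHom({\bB}\Gamma,{\bB}G(A))$ with $\Hom_{cont}(\Gamma,G(A))/G(A)$ for discrete $A$, then observe that the center modification yields $\Hom_{cont}(\Gamma,G(A))/PG(A)=\Hom_{cont}(\Gamma,G(A))/G(A)$. Your version is considerably more careful than the paper's sketch---you make explicit the two long exact sequences (for the fibration ${\bB}Z(A)\to\cF^s(A)\to\cF^s_Z(A)$ and for the homotopy fiber over $[\overline\rho]\in\cF^s_Z(k)$), and you actually compute $\pi_1\cF^s_Z(k)_{[\overline\rho]}=0$ using the centralizer hypothesis, whereas the paper's proof leaves these verifications implicit in the reference.
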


\begin{proof} This is  \cite[Lemma 7.1]{GV18}. It follows from the fact that for a discrete artinian ring $A$, ${\bB} G(A)$ is weakly equivalent 
to the classical classifying space of $G(\pi_0(A))$
defined in \cite[Example 8.1.7]{Wei94}. Therefore
$\pi_0(\sHom({\bB} \Gamma,{\bB} G(A)))$ is $\Hom({\bB} \Gamma,{\bB} G(\pi_0(A)))$, which is naturally $\Hom_{cont}(\Gamma,G(A))/G(A)$.
By the definition of $\cF^s_{Z,\overline{\rho}} $, this implies that
$\pi_0\cF^s_{Z,\overline{\rho}}(A)$ is naturally $\Hom_{cont,\overline{\rho}}(\Gamma,G(A))/PG(A)=
\Hom_{cont,\overline{\rho}}(\Gamma,G(A))/G(A)$, as desired. 
\end{proof}

\subsection{Simplicial Galois deformation functors with local conditions}

Let us define simplicial local deformation functors $\cF_v^{s,?}$. 
%In all the definitions of local deformation functors $\cF_v^{s,?}$ below, 
%a final modification replacing
%$\cF_v^{s,?}$ by $\cF^{s,?}_{v,Z}$ as in Section \ref{defcenter} is necessary; when not explicitely mentioned, it is understood.  
%
For $v\in S\cup S_p$, let us write $\Gamma_v=G_{F_v}$ as inverse limit of finite Galois groups $\Gal(F^\prime_v/F_v)$.
%
%For $v\in S$, let $I_v=\projlim_{F^\prime_v}I(F^\prime_v/F_v)$ the inertia subgroup of $G_{F_v}$.
%For each finite extension $F^\prime_v/F_v$, let ${\bB}I(F^\prime_v/F_v)\to {\bB}\Gal(F^\prime_v/F_v)$ be the natural morphism of simplicial set
%associated to the inclusion $I(F^\prime_v/F_v)\subset \Gal(F^\prime_v/F_v)$.
% This projective system of morphisms gives rise to
%a morphism of prosimplicial sets
%${\bB}I_v\to {\bB} G_{F_v}$.
 %Let $\st_{I_v}\colon {\bB}I_v\to {\bB} G(\Z_p)$ be the morphism of prosimplicial sets associated to the group homomorphism
%$$I_v\to G(\Z_p),\quad \sigma\mapsto \exp(t_v(\sigma)J_N)$$
%where $t_v$ is the $p$-adic tame homomorphism $I_v\to\Z_p(1)$ and $J_N$ is the standard regular unipotent Jordan block of $G=\GL_N$.
Recall that for $?=min,\ord,\ord-\mu,FL$, under suitable assumptions we have seen (Lemma \ref{minlem}, Lemma \ref{ordlem}, Lemma \ref{FLlem}), that the ring $R_v^{?,\square}$ representing $\cF_v^{?,\square}$ is formally smooth.
The functor $\cF_v^s$ of arbitrary simplicial deformations 
of $\overline{\rho}\vert_{\Gamma_v}$ is given by
$$\cF^{s}_v(A)=\sHom_{\overline{\rho}_v}({\bB} \Gamma_v, {\bB} G(A))=\holim_{F^\prime_v} \sHom_{\overline{\rho}_v}({\bB}\Gal(F^\prime_v/F_v), {\bB} G(A))$$
We denote by $\cF^{s}_{v,Z}$ its modification by the center as in \cite[Section 5.4]{GV18} (or Section 2.4 above). 
Note that by \cite[Lemma 5.2]{GV18}, we have
$\t^{-1}\cF_{v,Z}^s=\Coker(\H^0(\{1\},\z)\to \H^0(\Gamma_v,\g_k))$ which doesn't vanish (except in certain FL cases). Therefore, by the converse of Lurie's criterion (Theorem \ref{Lurie} above), $\cF_{v,Z}^s$ is not pro-representable. However, by \cite[Lemma 5.2]{GV18} we have

\begin{lem} \label{locZ} For any $i\geq 0$,
$\t^i\cF_{v,Z}^s=\t^i\cF_{v}^s\cong \H^{i+1}(\Gamma_v,\g_k)$.
\end{lem}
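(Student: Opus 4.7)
The plan is to imitate the global argument of Proposition \ref{tangentglob}, observing that the hypothesis $\H^0(\Gamma,\g_k)=\z_k$ invoked there is only needed to force the vanishing of $\t^{-1}\cF^s_{Z,\bar{\rho}}$; in nonnegative cohomological degrees the same computation goes through for any profinite group, and in particular for the local Galois group $\Gamma_v$ without any assumption on $\H^0(\Gamma_v,\g_k)$.

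First I would set up the local center-modification fibration, defined by the same homotopy pullback recipe as the global $\cF^s_{Z,\bar{\rho}}$:
$$\sHom(\ast_v,{\bB} Z(A))\to \sHom({\bB}\Gamma_v,{\bB} G(A))=\cF^s_v(A)\to \cF^s_{v,Z}(A),$$
where $\ast_v=\pi_0{\bB}\Gamma_v$ is a point. Applying the tangent complex functor, which takes homotopy fiber sequences of formally cohesive functors to fiber sequences of chain complexes, yields a long exact sequence
$$\cdots\to \t^{i-1}\cF^s_{v,Z}\to \t^i\sHom(\ast_v,{\bB} Z)\to \t^i\cF^s_v\to \t^i\cF^s_{v,Z}\to \t^{i+1}\sHom(\ast_v,{\bB} Z)\to\cdots.$$

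Next I would identify the two outer complexes. Using $\t{\bB} G=\g_k[1]$ from \cite[Lemma 5.5]{GV18} together with the identification of the \v{C}ech complex with the standard Galois cochain complex (the local analogue of \cite[Example 4.38]{GV18}, applied exactly as recalled just before Proposition \ref{tangentglob}), one gets $\t\cF^s_v=C^{\bullet+1}(\Gamma_v,\g_k)$, so $\t^i\cF^s_v=\H^{i+1}(\Gamma_v,\g_k)$ for every $i\geq -1$. Analogously $\t\sHom(\ast_v,{\bB} Z(-))=\z_k[1]$, concentrated in cohomological degree $-1$ with value $\z_k$, so $\t^i\sHom(\ast_v,{\bB} Z)=0$ for every $i\neq -1$.

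Finally, for every $i\geq 0$ both $\t^i$ and $\t^{i+1}$ of $\sHom(\ast_v,{\bB} Z)$ vanish, so the long exact sequence collapses to an isomorphism $\t^i\cF^s_v\xrightarrow{\sim}\t^i\cF^s_{v,Z}$, which combined with the identification $\t^i\cF^s_v=\H^{i+1}(\Gamma_v,\g_k)$ gives both equalities in the lemma. As a sanity check, in degree $i=-1$ the same sequence reproduces the formula $\t^{-1}\cF^s_{v,Z}=\Coker(\z_k\to \H^0(\Gamma_v,\g_k))$ noted immediately before the lemma. The only nontrivial technical input is the local identification of the \v{C}ech complex with the standard Galois cochain complex; this has already been handled in the global version of the argument and transfers verbatim, so I do not expect a genuine obstacle.
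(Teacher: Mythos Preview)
Your proposal is correct and is essentially the argument underlying \cite[Lemma 5.2]{GV18}, which is all the paper invokes. You have simply unpacked that citation: the fibration $\sHom(\ast_v,{\bB} Z)\to\cF^s_v\to\cF^s_{v,Z}$ gives the distinguished triangle $C^{\bullet+1}(\ast,\z_k)\to C^{\bullet+1}(\Gamma_v,\g_k)\to\t\cF^s_{v,Z}$, and since the first term has cohomology only in degree $-1$, the map $\t^i\cF^s_v\to\t^i\cF^s_{v,Z}$ is an isomorphism for $i\geq 0$.
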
 
%Similarly, we have
%$t^{-1}\cF_{v,\det}^s=\Ker(\H^0(\Gamma_v,\g)\to \H^0(\Gamma_v,k))$ which doesn't vanish in general, hence we conclude similarly that $\cF_{v,\det}^s$
For $?=min,\ord,FL$, following \cite[Section 9.2]{GV18}, we shall define local deformation functors $\cF_{v}^{s,?}$ with morphisms
$$\cF_{v}^{s,?}\to \cF_{v}^s$$
 which induce embeddings of the tangent complexes. 
They are defined as "derived quotients" of functors
$\cF_{v}^{s,?,\square}$ by the adjoint action of $G$. 
Recall that for a simplicial set $X$ with action of a group $G$, 
the simplicial quotient $G\backslash X$ is defined as the simplicial set $N(\ast,G,X)$
 given by the diagonal of the bisimplicial set $([p],[q])\mapsto \ast\times G^p\times X_q$ 
where for $q$ fixed, the $p$-faces and degeneracy maps are given by the bar construction 
$[p]\mapsto N_p(\ast,G,X_q)=\ast\times G^p\times X_q$ 
while the $q$-ones are given by the simplicial set $X$. 
We now form, so to speak, a formally cohesive replacement of the simplicial analogue of the quotient functor
$$A\mapsto \widehat{G}(A)\backslash \cF_{v,\ast}^{\square,?}(A).$$
Namely, for each simplicial $\cO$-algebra $A$ in ${}_\cO\!\Art^s_k$, we define the simplicial set
$\cF_{v}^{?}(A)$ as
$$[p] \mapsto \Hom_{{}_\cO\!\Art^s_k}(c(R_v^{?,\square} \otimes \cO_{N_p G}), A^{\Delta[p]}) $$
By the use of the cofibrant replacement of $R_v^{?,\square} \otimes \cO_{N_pG}$, 
we know that $\cF_{v}^{?}$ is formally cohesive (by \cite[Example 2.2.9]{CaiTh21}). 
The existence of the natural transformation of functors 
$$\cF_v^{s,?}\to \cF_{v}^{s}$$
is established in great generality (which includes $?=min,ord$) in \cite[Proposition 5.15]{CaiTh21}.

%Note that this  is similar to the construction of ${\bB}PG$ in \cite[Lemma 5.7]{GV18}.
\begin{lem}\label{lemfib}
We have a fibration 
$$ \cF_v^{s,?,\square} \to \cF_v^{s,?} \to \hofib_{\ast}({\bB}G(-) \to {\bB}G(k)).$$
\end{lem}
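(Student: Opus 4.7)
The strategy is to recognize $\cF_v^{s,?}$ as a formally cohesive model of the bar construction $N(\ast, G, \cF_v^{s,\square,?})$ for the conjugation action of $G$ on the framed deformation functor, and then invoke the standard fiber sequence $X \to N(\ast, G, X) \to {\bB}G$, corrected so that its target lies over the residual basepoint.

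First I would unpack the definition. For each $p$, the ring $R_v^{?,\square} \otimes_\cO \cO_{N_p G}$ corepresents pairs consisting of a framed lift $\rho$ of $\overline{\rho}_v$ together with a $p$-tuple $(g_1, \dots, g_p) \in G^p$. Taking a cofibrant replacement and evaluating on $A^{\Delta[q]}$ produces a bisimplicial set whose $p$-th row is the $p$-th stage of the bar construction of the conjugation action of $G$ on $\cF_v^{s,\square,?}(A)$, and whose diagonal is by definition $\cF_v^{?}(A)$. Projection onto the factor $\cO_{N_p G}$ gives a morphism of bisimplicial sets to $\Bi_G(A)$; on diagonals, and after the fibrant replacement built into ${\bB}G$, this produces a natural transformation
$$\cF_v^{s,?} \longrightarrow {\bB} G(-).$$
Since every $A$-point of $\cF_v^{s,\square,?}$ reduces modulo the maximal ideal of $A$ to the distinguished residual framing of $\overline{\rho}_v$, the composite $\cF_v^{s,?}(A) \to {\bB}G(A) \to {\bB}G(k)$ lands in the basepoint component, so the natural transformation refines canonically to a map valued in $\hofib_\ast({\bB}G(-) \to {\bB}G(k))$.

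Next I would identify the homotopy fiber over $\ast$ with $\cF_v^{s,\square,?}$. The $p=0$ row of the defining bisimplicial set is, up to weak equivalence, the framed deformation functor $\cF_v^{s,\square,?}$, and the classical fact that $N(\ast, G, X) \to {\bB}G$ is a principal fibration with fiber $X$, as recalled in \cite[Section 3.1.3]{Cai21}, supplies the desired identification. The formal smoothness of $R_v^{?,\square}$ established in Lemmas \ref{minlem}, \ref{ordlem}, and \ref{FLlem} ensures that $\cF_v^{s,\square,?}$ is itself formally cohesive and that the homotopy pullback computing the fiber can be formed levelwise, so the classical fibration carries over to our formally cohesive functors.

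The main technical hurdle is tracking the cofibrant and fibrant replacements: one must verify that the fibrant replacement $F(-)$ used to define ${\bB}G$ is compatible with the replacement built into $\cF_v^{s,?}$, so that the fibration of bisimplicial sets descends to a Kan fibration on diagonals. Once this bookkeeping is settled, the result can be cross-checked using the conservativity of the tangent complex: computing $\t$ on each term of the putative fibration and assembling the associated long exact sequence recovers the familiar exact sequence relating the framed tangent space to $\H^\bullet(\Gamma_v, \g_k)$ via Lemma \ref{locZ}, providing an independent sanity check that the fibration is correctly identified.
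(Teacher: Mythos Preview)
Your approach is essentially the same as the paper's. Both recognize $\cF_v^{s,?}$ as the diagonal of the bisimplicial set $[p]\mapsto \sHom(c(\cO_{N_pG}\otimes R_v^{?,\square}),A)$, project onto the $\cO_{N_pG}$ factor to obtain the map to $\hofib_\ast({\bB}G(-)\to BG(k))$, and identify the fiber with the framed functor. The paper is slightly more concrete: it writes out the levelwise splitting
\[
\sHom(c(\cO_{N_pG}\otimes R_v^{?,\square}),A)\ \simeq\ \prod_{i=1}^p \sHom(c(\cO_G),A)\times \cF_v^{s,\square,?}(A),
\]
so the map to $\hofib_\ast({\bB}G(-)\to BG(k))$ is visibly a levelwise projection of a product and the fibration with fiber $\cF_v^{s,\square,?}$ is immediate, bypassing the replacement bookkeeping you flag. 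One small correction: formal smoothness of $R_v^{?,\square}$ is not used in this lemma (the paper invokes it only in the subsequent tangent computation, Lemma~\ref{tangloc}); the product decomposition above holds regardless.
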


\begin{proof}
By definition (see \cite[Definition 5.4 (i)]{GV18}), $\cF^{s,?,\square}_v(A)$ is the homotopy fiber of the composed morphism
$$\cF^{s,?,\square}_v(A)\to \sHom_{[\overline{\rho}_v]}(B\Gamma_v,{\bB}G(A))\to \hofib( {\bB}G(A)\to {\bB}G(k))$$ 
where the second map is the evaluation on the base point $e=(e_n)_n\in B\Gamma_v$ (with $e_n=(e,\ldots,e)\in\Gamma_v^n$). 
By definition, $\cF^{s,?}_v(A)$ is the diagonal of the bisimplicial set
$$[p] \mapsto \sHom_{{}_\cO\! \Art^s_k}(c(\cO_{N_{p}G} \otimes R^{?,\square}_{v}) ,A) \simeq \prod_{i=1}^{p} \sHom_{{}_\cO\! \Art^s_k}(c(\cO_G) ,A) \times \cF_v^{s,\square,?}(A).$$
Similarly, $\hofib_{\ast}({\bB}G(A) \to {\bB}G(k))$ (homotopy fiber of the standard marked point in $BG(k)$) is the diagonal of the bisimplicial set
$$[p] \mapsto \sHom_{{}_\cO\! \Art^s_k}(c(\cO_{N_{p}G}) ,A) \simeq \prod_{i=1}^{p} \sHom_{{}_\cO\! \Art^s_k}(c(\cO_G) ,A).$$
Therefore, the morphism
$s\cF^{s,\square,?}_v(A)\to \sHom_{[\overline{\rho}_v]}(B\Gamma_v,{\bB}G(A)) $ factors through 
$s\cF^{s,\square,?}_v(A)\to \cF^{s,?}_v(A)$.
% \to \hofib_{\ast}(\BG(A) \to BG(k))$ \to \hofib( {\bB}G(A)\to {\bB}G(k))$,. 
This gives the desired fibration.
%(c'est comme le morphisme de la bar construction $(\ast, G(A), s\cF_v^{?,\square}(A))$ to the bar construction $(\ast, G(A), \ast)$). Its fiber is $s\cF_v^{?,\square}(A)$.
\end{proof}

\begin{rem} Note that in the simplicial context, the classical fibration 
$\widehat{G}(A)\to \cF_{v}^{\square,?}(A)\to \cF_{v}^{?}(A)$
is replaced by
$$ \cF_v^{s,\square,?} \to \cF_v^{s,?} \to \hofib_{\ast}({\bB}G(-) \to {\bB}G(k))$$
where, for a classical ring $A$, $\hofib_{\ast}({\bB}G(A) \to {\bB}G(k))$ is to be thought of as $B\widehat{G}(A)[1]$.
\end{rem}
\medskip
Let us put
$$\H^{1}_{min}(\Gamma_v, \g_k):=\H^{1}_{unr}(\Gamma_v, \g_k),\quad \H^{1}_{\ord}(\Gamma_v, \g_k):=\im (L^\prime_v\to \H^{1}(\Gamma_v, \g_k)),$$ 
where $L^\prime_v=\Ker(\H^{1}(\Gamma_v, \b)\to \H^{1}(I_v, \b/\n))$, and
$$\H^{1}_{FL}(\Gamma_v, \g_k)=\H^{1}_{f}(\Gamma_v, \g_k).$$
By the long exact sequence above, we conclude:
\begin{lem}\label{tangloc}
For $?=min,\ord,\ord-\mu,FL$, we have
\begin{enumerate}
\item $\t^{-1} \cF_v^{s,?} \cong \H^{0}(\Gamma_v, \g_k)$,
\item $\t^{0} \cF_v^{s,?} \cong \H^{1}_{?}(\Gamma_v, \g_k)$,
\item $\t^{i} \cF_v^{s,?}=0$ for $i\geq 1$.
\end{enumerate}
\end{lem}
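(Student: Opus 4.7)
\medskip
\noindent\textbf{Proof plan.} The plan is to extract the conclusion directly from the fibration of Lemma \ref{lemfib} by passing to the induced long exact sequence on tangent complex cohomology. Since every term in the fibration is formally cohesive, applying the tangent complex functor turns
$$\cF_v^{s,\square,?} \to \cF_v^{s,?} \to \hofib_{\ast}({\bB}G(-) \to {\bB}G(k))$$
into a distinguished triangle in the derived category of $k$-vector spaces, whence a long exact sequence
$$\cdots \to \t^{i-1}(\hofib_\ast) \to \t^i(\cF_v^{s,\square,?}) \to \t^i(\cF_v^{s,?}) \to \t^i(\hofib_\ast) \to \t^{i+1}(\cF_v^{s,\square,?}) \to \cdots$$
Once the two outer terms are identified, the three assertions of the lemma will follow by bookkeeping.

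First I would compute $\t^\bullet \hofib_\ast({\bB}G(-)\to {\bB}G(k))$. Since ${\bB}G(k)$ is discrete, its tangent complex vanishes, and the fibration ${\bB}G(A) \to {\bB}G(k)$ (with ${\bB}G(A)$ having the same tangent complex as ${\bB}G$) gives that the homotopy fiber at the base point has the same tangent complex as ${\bB}G$. By \cite[Lemma 5.5]{GV18}, $\t{\bB}G = \g_k[1]$, so $\t^{-1}(\hofib_\ast) = \g_k$ and $\t^i(\hofib_\ast)=0$ for $i\neq -1$.

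Next I would compute $\t^\bullet \cF_v^{s,\square,?}$. In each of the cases $?=min,\ord,\ord\text{-}\mu,FL$, the lifting ring $R_v^{?,\square}$ is a classical formally smooth $\cO$-algebra (Lemma \ref{minlem}, Lemma \ref{ordlem} with its corollary, Lemma \ref{FLlem}), so a cofibrant replacement of $R_v^{?,\square}$ in ${}_\cO\!\CR^s_k$ is weakly equivalent to the discrete ring itself. Consequently $\cF_v^{s,\square,?}$ is homotopically discrete, its tangent complex is concentrated in degree $0$, and $\t^0 \cF_v^{s,\square,?}$ equals the classical Zariski tangent space $Z^1_?(\Gamma_v,\g_k)$ of framed cocycles of $\overline{\rho}_v$ satisfying the local condition $?$. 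All other $\t^i$ vanish.

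Feeding these into the long exact sequence kills everything in degrees $\geq 1$, giving (3) immediately. In the range $i=-1,0$ it collapses to
$$ 0 \to \t^{-1}\cF_v^{s,?} \to \g_k \xrightarrow{\partial} Z^1_?(\Gamma_v,\g_k) \to \t^0\cF_v^{s,?} \to 0.$$
The main obstacle is the identification of the connecting morphism $\partial$ with the classical coboundary $x \mapsto (\sigma \mapsto (\sigma - 1)x)$; this can be extracted from the explicit description of the fibration in Lemma \ref{lemfib}, where the map $\hofib_\ast({\bB}G(-)\to {\bB}G(k))\to \cF_v^{s,?}$ corresponds, in the bar model, precisely to the conjugation action of $\widehat{G}$ on framed deformations, and whose derivative is the adjoint action. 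Once this identification is in place, $\ker \partial = \g_k^{\Gamma_v}=H^0(\Gamma_v,\g_k)$ gives (1), and $\mathrm{coker}\,\partial = Z^1_?/B^1 = H^1_?(\Gamma_v,\g_k)$ gives (2), completing the proof.
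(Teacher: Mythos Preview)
Your proof is correct and follows essentially the same route as the paper: both arguments invoke the fibration of Lemma \ref{lemfib}, compute the tangent cohomology of the two outer terms (using formal smoothness of $R_v^{?,\square}$ for the framed functor and $\t{\bB}G=\g_k[1]$ for the homotopy fiber), and read off the result from the resulting long exact sequence. You are in fact slightly more careful than the paper in explicitly identifying the connecting map $\partial$ with the adjoint coboundary, a point the paper leaves implicit.
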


\begin{proof}

Recall that for any deformation functor $\cF$, $\t^{n-i} \cF \cong \pi_i \cF(k\oplus k[n])$.
By Lemma 4.30 (4) of \cite{GV18}, or \cite[Lemma 7.3]{GJ10}, 
the fibration of Lemma \ref{lemfib} gives rise to a long exact sequence
\begin{align*}
     &0 \to \t^{-1}s\cF_v^{s,?} \to \t^{-1}\hofib_{\ast}({\bB}G(-) \to BG(k)) \\
\to &\t^0\cF_v^{s,\square,?} \to \t^{0}s\cF_v^{s,?} \to 0  
\to t^1 \cF^{s,\square,?} \to \t^{1}s\cF_v^{s,?} \to 0  
\end{align*}
(we have used that  $\hofib_{\ast}(BG(k[\epsilon]) \to BG(k))$ is a $K(\pi,1)$).
Note that $\t^{-1}\hofib_{\ast}({\bB}G(-) \to BG(k)) \cong \pi_1 \hofib_{\ast}(BG(k[\epsilon]) \to BG(k))$, and $\pi_1 \hofib_{\ast}(BG(k[\epsilon]) \to BG(k))$ 
is $\Ker(G(k[\epsilon]) \to G(k))=\g_k$. 
%By Lemma 5.5 of \cite{GV18}, the tangent complex of ${\bB}G$ is 
%$\t {\bB}G=\g[-1]$ (cohomologically, see remark below Lemma 5.5 there).  
Therefore, the tangent spaces $\t^i\cF_v^{s,?} $
can be calculated as follows. 
%By \ref{lemfib} a distinguished triangle on the tangent complexes
%$$ \t\cF_v^{s,\square,?} \to \t\cF_v^{s,?}\to \t (\hofib_{\ast}(\BG(A) \to BG(k))).$$
%$\to  \sHom_{\cM}(\ast, \BG ))$ est une suite de fibration. Parce que 
Since $\cF_v^{s,\square,?}$ is prorepresented by $c(R_v^{\square,?})$ which is weakly equivalent to the discrete ring $R_v^{\square,?}$, 
its tangent complex is given by
$$\t\cF_v^{s,\square,?}=\t R_v^{\square,?}
$$
Moreover, since $R_v^{\square,?}$ is formally smooth, we know that $\t R_v^{\square,?}$ is quasi-isomorphic to the complex $\t^0 R_v^{\square,?}[0]$ 
concentrated in degree $0$.
It is well-known that $\t^0R_v^{\square,?}=Z^1_{?}(\Gamma_v, \g_k)$. Hence we can rewrite the long exact sequence above as
\begin{align*}
     0 \to \t^{-1}\cF_v^{s,?} \to \g_k  \to Z^1_{?}(\Gamma_v, \g_k) \to \t^{0}\cF_v^{s,?} \to 0  
\to 0 \to \t^{1}\cF_v^{s,?} \to 0  
\end{align*}
This yields the Lemma.

\end{proof}
As in the global case, we define also the deformation functors $\cF_{v,Z}^{s}$ and $\cF_{v,Z}^{s,?}$ by modifying the center $Z$. 
They are inserted in a fibration
$$\sHom(\ast,{\bB}Z)\to \cF_v^{s,?}\to \cF_{v,Z}^{s,?}
$$
similar to (\ref{fibdef}). 

For $v\in S_p$ and $?=\ord$, we can give a more concrete alternative definition of the local nearly ordinary deformation functor. We define $\widetilde{\cF}^{s,\ord}_v(A)$ 
as the set of simplicial deformations $\phi\in \sHom( {\bB} G_{F_v}, {\bB} G(A))$ of $\overline{\rho}\vert_{G_{F_v}}$,
which factor through the morphism of simplicial sets ${\bB}B(A)\to {\bB} G(A)$ associated to the inclusion of the standard Borel $B\subset G$.
for $?=\ord-\mu$, let $\Def^s_{v,T} = \sHom_{\sSETS {}_{/ BT(k)}}(BI_v, {\bB}T(-))$ be the derived deformation functor of 
$\bar{\chi}_v \vert_{I_v}\colon I_v\to T(k)$.  Then there is a natural transformation $\widetilde{\cF}_v^{s,\ord} \to \Def^s_{v,T}$.  
Note that $\mu_v$ defines a natural transformation $\ast \to \Def^s_{v,T}$.  We define  
$$\widetilde{\cF}_v^{s,\ord,\mu} = \hofib_{\mu_v} (\widetilde{\cF}_v^{s,\ord} \to \Def^s_{v,T}).$$
 In addition, the natural transformation $\widetilde{\cF}_v^{s,\ord,\mu} \to \cF^s_v$ induces $\widetilde{\cF}_{v,Z}^{s,\ord,\mu} \to \cF^s_{v,Z}$.
%Let $\widetilde[\cF}^{s,ord}_{v,\ast}$, $\ast=\det,Z$, be its modification taking into account the cocenter or center condition as in \ref{defcenter}.

%Recall that a functor is formally cohesive if .

\begin{lem}\label{tangloctilde}   The functors $\widetilde{\cF}_v^{s,\ord}$ and $\widetilde{\cF}_v^{s,\ord,\mu}$ are formally cohesive.
Assuming $(STDIST)$,
the tangent complex $\t\widetilde{\cF}_{v}^{s,\ord}\cong \t\widetilde{\cF}_{v}^{s,\ord,\mu}$, is quasi-isomorphic to 
$\t\cF^{s,\ord}_{v}\cong \t\cF_{v}^{s,\ord,\mu}$ :
\begin{itemize}
\item $\t^{-1} \widetilde{\cF}_{v,\ast}^{s,\ord} \cong \H^{0}(\Gamma_v, \g_k)$,
\item $\t^{0} \widetilde{\cF}_{v,\ast}^{s,\ord} \cong \H^{1}_{?}(\Gamma_v, \g_k^\prime)$,
\item $\t^{i} \widetilde{\cF}_{v,\ast}^{s,\ord}=0$ for $i\geq 1$.
\end{itemize}
 Moreover, we have
$$\pi_0\widetilde{\cF}^{s,\ord}_{v}=\cF_v^{\ord}.$$ 
\end{lem}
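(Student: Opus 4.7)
The strategy is to first establish formal cohesiveness by general model-categorical principles, then compute $\t\widetilde{\cF}_v^{s,\ord}$ and $\t\widetilde{\cF}_v^{s,\ord,\mu}$ via the Cech/Galois cohomology formalism developed in Section \ref{defcenter}, and finally identify $\pi_0$ with the classical functor in the discrete case.

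For formal cohesiveness, the smoothness of $B$ implies, exactly as for $\bB G$ in Section \ref{defcenter}, that $A \mapsto \bB B(A)$ preserves homotopy pullbacks; hence so does $A \mapsto \sHom(\bB\Gamma_v, \bB B(A))$, and the homotopy fiber at $[\overline{\rho}_v]$ (which is well-defined since, by $(DIST)$, $\overline{\rho}_v$ admits a canonical $B(k)$-valued model) is formally cohesive. The condition that $\phi \in \sHom(\bB\Gamma_v, \bB G(A))$ factor through $\bB B(A)$ can be rephrased as a homotopy fiber product between $\sHom(\bB\Gamma_v, \bB B(A))$ and $\sHom(\bB\Gamma_v, \bB G(A))$ along the map $\bB B \to \bB G$, so $\widetilde{\cF}_v^{s,\ord}$ is a homotopy limit of formally cohesive functors. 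The same arguments, applied to $T$ in place of $B$, give formal cohesiveness for $\Def^s_{v,T}$ and hence for $\widetilde{\cF}_v^{s,\ord,\mu} = \hofib_{\mu_v}(\widetilde{\cF}_v^{s,\ord} \to \Def^s_{v,T})$.

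For the tangent complex, since $\t \bB B = \b[1]$, the computation of Section \ref{defcenter} yields $\t\widetilde{\cF}_v^{s,\ord} \cong C^{\bullet+1}(\Gamma_v, \b)$ (modified by the center of $\b$ as in \ref{defcenter}). The natural transformation $\widetilde{\cF}_v^{s,\ord} \to \cF_v^{s,\ord}$ induces, on tangent complexes, the map induced by the inclusion $\b \hookrightarrow \g_k$, whose image on $\H^1$ is by construction $\H^1_\ord(\Gamma_v, \g_k)$ (see the definition just before Lemma \ref{tangloc}). For $\widetilde{\cF}_v^{s,\ord,\mu}$, the fibration sequence $\widetilde{\cF}_v^{s,\ord,\mu} \to \widetilde{\cF}_v^{s,\ord} \to \Def^s_{v,T}$ together with the tangent complex $\t\Def^s_{v,T}$ (computed from $\H^\bullet(I_v,\t)$) isolates the trace-zero piece $\b^\prime$, giving the claimed $\H^{1}_{?}(\Gamma_v, \g_k^\prime)$. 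The key input is $(STDIST)$: via Tate local duality it forces the vanishing of $\H^0(\Gamma_v, \g_k/\b)$ and of $\H^2(\Gamma_v, \g_k/\b) \cong \H^0(\Gamma_v, (\g_k/\b)^\vee(1))^\vee$, the latter ruled out by the exclusion of $\overline{\chi}_i \overline{\chi}_j^{-1} = \overline{\epsilon}^{-1}$. This collapses the long exact sequence attached to $0 \to \b \to \g_k \to \g_k/\b \to 0$, making the map $\H^1(\Gamma_v, \b) \to \H^1_\ord(\Gamma_v, \g_k)$ an isomorphism and giving the quasi-isomorphism $\t\widetilde{\cF}_v^{s,\ord} \cong \t\cF_v^{s,\ord}$ together with its $\mu$-analogue.

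For $\pi_0$, in the discrete case $A \in {}_\cO\Art_k$ the space $\bB B(A)$ is weakly equivalent to the classical classifying space of $B(\pi_0(A))$, so by the same argument as in the last lemma of Section \ref{defcenter}, $\pi_0 \widetilde{\cF}_v^{s,\ord}(A)$ is the set of conjugacy classes of continuous homomorphisms $\Gamma_v \to G(A)$ lifting $\overline{\rho}_v$ and conjugate into $B(A)$, which is precisely $\cF_v^\ord(A)$. The main obstacle is the tangent-level comparison with $\cF_v^{s,\ord}$: although $\pi_0$ matches by a direct argument, the higher tangent cohomology could in principle differ; it is $(STDIST)$ that, via Tate duality on $\g_k/\b$, kills both the obstruction to lifting through $B$ and the cokernel of $\H^1(\Gamma_v,\b) \to \H^1(\Gamma_v,\g_k)$ within the ordinary subspace, which is what forces the natural transformation to be a weak equivalence.
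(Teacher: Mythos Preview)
Your proposal is correct and follows essentially the same route as the paper's proof: formal cohesiveness from the homotopy-invariance and pullback-preservation of $A\mapsto \sHom_{[\overline{\rho}_v]}(\bB\Gamma_v,\bB B(A))$ (and then of the homotopy fiber defining the $\mu$-version); the tangent complex identified with $C^{\bullet+1}(\Gamma_v,\b)$ by the $B$-analogue of the global computation; the comparison with $\t\cF_v^{s,\ord}$ via $(STDIST)$; and the $\pi_0$-identification in the discrete case via the equality of $B$- and $G$-conjugacy classes under regularity.

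Two minor remarks. First, your parenthetical ``modified by the center of $\b$'' is not needed here: the functors $\widetilde{\cF}_v^{s,\ord}$ and $\widetilde{\cF}_v^{s,\ord,\mu}$ are defined without center modification, and the tangent complex is the unmodified $C^{\bullet+1}(\Gamma_v,\b)$, exactly as the paper states by invoking \cite[Lemma~5.6]{GV18} with $G$ replaced by $B$. Second, you are actually more explicit than the paper on one point: the paper's proof only records that $(STDIST)$ gives $\H^0(\Gamma_v,\g_k/\b_k)=0$ and then asserts the quasi-isomorphism, whereas you correctly observe that one also needs $\H^2(\Gamma_v,\g_k/\b_k)=0$ (equivalently $\H^0(\Gamma_v,\n_k(1))=0$, which is precisely the $\overline{\chi}_i\overline{\chi}_j^{-1}\neq\overline{\epsilon}^{-1}$ clause of $(STDIST)$) in order to control the degree-$1$ tangent group. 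This extra detail is implicit in the paper (it is what underlies the unobstructedness in Lemma~\ref{ordlem}) but you make it explicit.
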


\begin{proof} Note that $\Def^s_{v,T}$ is formally cohesive. We see that $\widetilde{\cF}_v^{s,\ord}$  is formally cohesive:
the functor
$$A\mapsto \sHom_{[\overline{\rho}_v]}({\bB} G_{F_v},{\bB} B(A))$$ 
is homotopy invariant and
preserves homotopy pullbacks. 
From this, it follows by definition that $\widetilde{\cF}_v^{s,\ord,\mu}$ is also formally cohesive.

The tangent complex $\t^i\widetilde{\cF}_{v}^{\ord}$ is calculated in \cite[Lemma 5.6]{GV18} (replacing $G$ by $B$). More precisely,
let $\b=Lie(B)$. Then we have a quasi-isomorphism of complexes
$$\t\widetilde{\cF}_v^{\ord}\sim C^{\ast+1}({B}\Gamma_v,\b)$$
The assumption $(reg^\ast)$ implies $\H^0(\Gamma_v,\g_k/\b_k)=0$. Thus, $\t\widetilde{\cF}_{v}^{s,\ord}$ is quasi-isomorphic to $\t\cF^{s,\ord}_{v}$.
The same holds for $\t\widetilde{\cF}_{v}^{s,\ord,\mu}$.
If $A\in Ob({}_\cO\!\Art^s_k)$ is a discrete artinian ring, $\cF^{s,\ord}_v(A)$ is the set of $B$-conjugacy classes of 
liftings of $\overline{\rho}_v$; because of $(reg)$, 
the set of $B$-conjugacy classes coincide with the set of $G$-conjugacy classes of representations 
$\rho_v\colon G_{F_v}\to G(A)$, such that after conjugation by some $g_v\in G(A)$, $\rho_v$
takes values in $B(A)$ and such that the composition 
of ${}^{g_v}\!\rho_v$ and the reduction $B(A)\to T(A)=B(A)/U_B(A)$ is a lifting of
$\underline{\overline{\chi}_v}\colon G_{F_v}\to T(k)$.
(see \cite[Claim in Proof of Proposition 6.2]{Ti96}).
 Hence we conclude $\widetilde{\cF}^{s,\ord}_{v}(A)=\cF^{\ord}_v(A)$. Similarly for $\widetilde{\cF}^{s,\ord,\mu}_{v}(A)=\cF^{\ord,\mu}_v(A)$.
\end{proof}

We define $\cF_{loc}^s$, resp. $\cF_{loc}^{s,min,?}$ (and their center modified counterparts) as 
$$\cF_{loc}^{s}=\prod_{v\in S\cup S_p} \cF^{s}_{v},\quad \cF_{loc,Z}^{s}=\prod_{v\in S\cup S_p} \cF^{s}_{v,Z}
$$
resp.
$$\cF_{loc}^{s,min,?}= \prod_{v\in S} \cF^{s,min}_{v,Z}\times \prod_{v\in S_p} \cF^{s,?}_{v},\quad \cF_{loc,Z}^{s,min,?}= \prod_{v\in S} \cF^{s,min}_{v,Z}\times \prod_{v\in S_p} \cF^{s,?}_{v,Z}$$
as in \cite[Definition 9.1]{GV18}. By definition, the morphisms
$$\cF_{\overline{\rho}}^{s}\to \cF^{s}_{v}$$
induce morphisms
$$\cF^{s}_{\overline{\rho},Z}\to \cF^{s}_{v,Z}$$
hence a morphism
$$\cF^s_{Z,\overline{\rho}}\to \cF^s_{loc,Z}$$
We finally put
$$\cF^{s,?}=\cF^{s,gl,?}_{\overline{\rho}}=\cF^s_{\overline{\rho}}\times^h_{\cF^s_{loc}}\cF_{loc}^{s,min,?},\quad \cF_Z^{s,?}=\cF^{s,gl,?}_{Z,\overline{\rho}}=\cF^s_{Z,\overline{\rho}}\times^h_{\cF^s_{loc,Z}}\cF_{loc}^{s,min,?}
$$
%
% We know that the corresponding classical deformation problems $\cD^?=\cF^{s,gl,?}_{Z,\overline{\rho}}$ 
%are representable: for $?=ord$, see \cite[Proposition 6.2]{Ti96}, and for $?=FL$, see \cite[Prop.2.2.9 and Lemma 2.4.1]{CHT08}).
%Let $R_v^?$ be the ring prorepresenting $\cF_v^?$.
By Lemma 3 and 4 we have $\pi_0\cF_Z^{s,?}=\cF^?_Z$.
% we have a canonical isomorphism $\Hom_{CR}(R_v^?,-)\cong \pi_0\cF^{s,?}_v$given by $\cR_v^?\to R_v^?$. 
%Following for $?=\ord,FL$, Galatius and Venkatesh \cite[Section 9.2]{GV18}, we can also consider
%a cofibrant replacement $\cR_v^{GV,\square,?}=c(R_v^{\square,?})$ of the discrete ring $R_v^{\square,?}$ and form the simplicial local problem $\cF^{GV,\square,?}_v$
%$$\cF^{GV,\square,?}_{v}(A)=\sHom(\cR_v^{GV,\square,?},A)$$
%and the "quotient" $\cF^{GV,?}_{v}(A)$, inserted in the fibration sequence
%$$\cF^{GV,\square,?}_{v}(A)\to \cF^{GV,\square,?}_{v}(A)\to\hofib_\ast({\bB}G(-)\to BG(k))$$
%There exists a morphism of simplicial rings $\phi_v\colon \cR_v^{GV,\square,?}\to \cR_v^{\square,?}$.
% Then $\phi_v$ defines a morphism
%$$\psi_v\colon \cF^{s,?}_{v}\to \cF^{GV,?}_{v}.$$
%By \cite[]{Cai21}, the morphism $\psi_v$ induces isomorphisms $t^i\psi_v$ on the tangent spaces for $i=0,1$, hence $\psi_v$ is a weak equivalence.
%We can therefore replace $\cF^{s,?}_{v}$ by $\cF^{GV,?}_v$ 
%in the definition of $\cF^s_?$. 
%
%Keeping in mind the Taylor-Wiles primes, 
%note that for a place $v$, there is a framed version $\cF_v^{\square,s}$ of $\cF_v^{s}$ 
%where one considers only morphisms ${\bB}\Gamma_v\ to {\bB}G(A)$ 
%preserving fixed base points (see \cite[Definition 5.4]{GV18}.
For $?=\ord,FL$, we put
$$\H^{1}_{min,?}(\Gamma, \Ad\bar{\rho})=\Ker(\H^{1}(\Gamma, \Ad\bar{\rho})\to\bigoplus_{v\in S} {\H^{1}(\Gamma_v, \Ad\bar{\rho})\over \H^{1}_{unr}(\Gamma_v, \Ad\bar{\rho})}\oplus \bigoplus_{v\in S_p}{\H^1(\Gamma_v, \Ad\bar{\rho})\over L^?_v}
$$
where $\Ad\bar{\rho}=\g_k$ with the adjoint Galois action and if $?=\ord$,  $L^{?}_v=\im(L^\prime_v\to \H^1(\Gamma_v,\g_k))$ for $L_v^\prime=\Ker(\H^1(\Gamma_v,\b_k))\to \H^1(I_v,\b_k/\n_k))$, and 
if $?=FL$, $L_v^?=\H_f^1(\Gamma_v,\g_k)$ (that is, the image of $\H_f^1(\Gamma_v,\g_\cO)\to \H^1(\Gamma_v,\g_k)$ where $\H_f^1(\Gamma_v,\g_\cO)$ is defined by Bloch and Kato,
 \cite[Definition (3.7.3)]{BK90}.

For $v\in S$, let $\widetilde{L}_v^{min}$ be the inverse image of $\H^{1}_{unr}(\Gamma_v, \g_k)$ by 
$Z^1(\Gamma_v,\g_k)\to\H^1(\Gamma_v,\g_k)$, and for $v\in S_p$, let $\widetilde{L}_v^?$ be the inverse image of $L_v^?$ by 
$Z^1(\Gamma_v,\g_k)\to\H^1(\Gamma_v,\g_k)$.
\begin{de} Given a morphism of cochain complexes $f\colon A^\bullet\to B^\bullet$, its mapping cone $MC(f)^\bullet$
 is defined by $MC(f)^n=B^n\oplus A^{n+1}$ with $d(b,a)=(db+fa,-da)$. This gives rise to distinguished triangles
 
 $$A^\bullet\to B^\bullet\to MC(f)^\bullet\to A^\bullet[1]$$
and
 $$MC(f)^\bullet[-1]\to A^\bullet\to B^\bullet\to MC (f)^\bullet$$
\end{de} 
We define the complex
$$C^\bullet_{min,?}(\Gamma,\g_k)=MC (f)[-1]$$
for the morphism
$$f\colon C^\bullet(\Gamma,\g_k)\to\bigoplus_{v\in S} C^\bullet(\Gamma_v,\g_k))/\widetilde{L}_v^{\bullet,min}\oplus\bigoplus_{v\in S_p} 
C^\bullet_?(\Gamma_v,\g_k))/\widetilde{L}_v^{\bullet,?}$$
where for $?=min,\ord,FL$, $\widetilde{L}_v^{0,?}=C^0(\Gamma_v,\g_k)$, $\widetilde{L}_v^{1,?}=\widetilde{L}_v^{?}$ and $\widetilde{L}_v^{2,min}=0$.

We write its cohomology as $\H^\ast_{min,?}(\Gamma,\g_k)=\H^\ast(C^\bullet_{min,?}(\Gamma,\g_k))$. In accordance with Wiles' notations, we also write sometimes 
this cohomology as $\H^{\ast}_{\cL}(\Gamma, \g_k)$ where $\cL = (L_v)_{v\in S\cup S_p}$.
%Recall the cohomology groups with local conditions.  Let $\cL = (\cL_v)_{v\in \Sigma}$, each $\cL_v$ being a subspace of $\H^{1}(F_v,\Ad\bar{\rho})$.  We define 
%\begin{displaymath}
%C_{\cL_v}^{i}=\left\{\begin{array}{l}{C^{0}(\Gamma_v, \Ad\bar{\rho}) , i=0;} \\ {C^{1}_{\cL_v} \text{ consisting of } x \in Z^{1}(\Gamma_v, \Ad\bar{\rho}) \text{ whose image in }
% \H^{1}(\Gamma_v, \Ad\bar{\rho}) \text{ lies in } \cL_v, i=1;}\\ {0, i\geq 2.}\end{array}\right. 
%\end{displaymath}
%Let $\H^{\ast}_{\cL}(G_{F,\Sigma}], \Ad\bar{\rho})$ be the cohomology of the cone $
%C^{i}(G_{F,S}, \Ad\bar{\rho}) \oplus \bigoplus\limits_{v\in \Sigma} \frac{C^{i-1}(\Gamma_v, \Ad\bar{\rho})}{C_{\cL_{l}}^{i-1}}$ with differential $d(x, (y_{l})) = (-dx, (dy_{l}+x|_{F_v}))$.  Then 
We have the following exact sequence:
\begin{displaymath}
\begin{array}{l}
{0\to \H^{1}_{min,?}(\Gamma, \Ad\bar{\rho}) \to \H^{1}(\Gamma, \Ad\bar{\rho}) \to 
\bigoplus_{v\in S}{\H^{1}(\Gamma_v, \Ad\bar{\rho})\over \H^{1}_{unr}(\Gamma_v, \Ad\bar{\rho})}\oplus \bigoplus_{v\in S_p}{\H^1(\Gamma_v, \Ad\bar{\rho})\over L^?_v}}\\
{\to \H^{2}_{min,?}(\Gamma, \Ad\bar{\rho}) \to \H^{2}(\Gamma, \Ad\bar{\rho}) \to \bigoplus_{v\in S\cup S_p}\H^{2}(\Gamma_v, \Ad\bar{\rho})\dots}\\
\end{array}
\end{displaymath}
Recall that we are interested in the deformation functor $\cF_Z^{s,?}=\cF^{s,gl,?}_{Z,\overline{\rho}}$.
\begin{thm}\label{tangentcond}
	We have $\t^{-1}\cF_Z^{s,?}=0$ and for any $i\geq 0$, the $i$-th cohomology of the tangent complex $\t\cF_Z^{s,?}$ is naturally identified with the cohomology with local conditions:
	\begin{displaymath}
	\t^{i}\cF^{s,?} \cong \H^{i+1}_{min,?}(\Gamma, \Ad\bar{\rho}).
	\end{displaymath}
\end{thm}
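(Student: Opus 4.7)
The strategy is to exploit that $\cF_Z^{s,?}$ is by definition a homotopy pullback
$$\cF_Z^{s,?} = \cF^s_{Z,\overline\rho} \times^h_{\cF^s_{loc,Z}} \cF^{s,min,?}_{loc}.$$
Since each of the three functors on the right is formally cohesive, the tangent complex functor preserves this homotopy pullback, producing a distinguished triangle in $\Ch(k)$
$$\t\cF_Z^{s,?} \longrightarrow \t\cF^s_{Z,\overline\rho} \oplus \t\cF^{s,min,?}_{loc} \longrightarrow \t\cF^s_{loc,Z} \stackrel{+1}{\longrightarrow}$$
and hence a long exact sequence in cohomology. The whole proof will consist in identifying every term and every arrow in this sequence with the standard Selmer cohomology sequence cutting out $\H^{i+1}_{min,?}(\Gamma,\Ad\bar\rho)$.

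The identifications proceed by plugging in the computations already made. By Proposition \ref{tangentglob}, $\t^i\cF^s_{Z,\overline\rho} = \H^{i+1}(\Gamma,\Ad\bar\rho)$ for $i\geq 0$ and vanishes for $i<0$ (this uses $(RLI)$ via $\H^0(\Gamma,\g_k)=\z_k$). By Lemma \ref{locZ}, $\t^i\cF^s_{v,Z} = \H^{i+1}(\Gamma_v,\g_k)$ for $i\geq 0$, and the product over $v\in S\cup S_p$ gives $\t^i\cF^s_{loc,Z}$. By Lemma \ref{tangloc} (together with Lemma \ref{tangloctilde} in the ordinary case), the local-condition functors contribute $\t^{-1}\cF^{s,?}_v\cong \H^0(\Gamma_v,\g_k)$, $\t^0\cF^{s,?}_v\cong \H^1_?(\Gamma_v,\g_k)$ and $\t^i\cF^{s,?}_v=0$ for $i\geq 1$. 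The natural transformations $\cF^s_{Z,\overline\rho}\to\cF^s_{v,Z}$ and $\cF^{s,?}_v\to \cF^s_{v,Z}$ realize on the level of $\H^1$ the restriction map $\H^1(\Gamma,\Ad\bar\rho)\to \H^1(\Gamma_v,\g_k)$ and the inclusion $\H^1_?(\Gamma_v,\g_k)\hookrightarrow \H^1(\Gamma_v,\g_k)$ respectively, as one checks functorially on $k[\varepsilon]$-points. Feeding these identifications into the long exact sequence above yields, for $i\geq 1$, the isomorphism $\t^i\cF_Z^{s,?}\cong \H^{i+1}_{min,?}(\Gamma,\Ad\bar\rho)$ directly, since the local $\cF^{s,?}_v$-contribution vanishes in those degrees. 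The case $i=0$ reads
$$0\to \t^0\cF_Z^{s,?}\to \H^1(\Gamma,\Ad\bar\rho)\oplus \bigoplus_v \H^1_?(\Gamma_v,\g_k)\to \bigoplus_v \H^1(\Gamma_v,\g_k)\to \t^1\cF_Z^{s,?}\to \cdots$$
whose kernel on the left is exactly $\H^1_{min,?}(\Gamma,\Ad\bar\rho)$ by definition.

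The main obstacle, and the last point to verify, is the vanishing $\t^{-1}\cF_Z^{s,?}=0$. The relevant portion of the long exact sequence reads
$$\t^{-2}\cF^s_{loc,Z}\to \t^{-1}\cF_Z^{s,?}\to \t^{-1}\cF^s_{Z,\overline\rho}\oplus \t^{-1}\cF^{s,min,?}_{loc}\to \t^{-1}\cF^s_{loc,Z}.$$
Here $\t^{-1}\cF^s_{Z,\overline\rho}=0$ by Proposition \ref{tangentglob}, and the map $\t^{-1}\cF^{s,min,?}_{loc}\to \t^{-1}\cF^s_{loc,Z}$ comes from the fibration $\sHom(\ast,{\bB}Z)\to \cF^{s,?}_v\to \cF^{s,?}_{v,Z}$, and identifies as the boundary map $\bigoplus_v \H^0(\Gamma_v,\g_k)\to \bigoplus_v \H^0(\Gamma_v,\g_k)/\z_k$, which is surjective. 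The term $\t^{-2}\cF^s_{loc,Z}$ vanishes because each $\cF^s_{v,Z}$ has tangent cohomology concentrated in degrees $\geq -1$ (one checks this from the defining fibration of $\cF^s_{v,Z}$ and the known tangent complex of ${\bB}G$, ${\bB}Z$, which are concentrated in degree $1$). Combining these two facts forces $\t^{-1}\cF_Z^{s,?}=0$, which completes the proof. A subtlety to keep track of throughout is the coherent use of center modifications on the global side versus the products of $\cF^{s,?}_{v,Z}$ appearing in $\cF^{s,min,?}_{loc}$; this is handled uniformly by the fibration sequence (\ref{fibdef}) and the vanishing of $\H^0(\Gamma,\z_k)\to \bigoplus_v\H^0(\Gamma_v,\z_k)/\z_k$ coming again from $(RLI)$.
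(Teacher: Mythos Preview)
Your overall strategy is exactly the paper's: take the homotopy pullback defining $\cF_Z^{s,?}$, extract the Mayer--Vietoris long exact sequence on tangent complexes via \cite[Lemma 4.30(iv)]{GV18}, plug in Proposition~\ref{tangentglob}, Lemma~\ref{locZ}, and Lemma~\ref{tangloc}, and match with the Selmer long exact sequence. For $i\geq 0$ your argument is complete and identical to the paper's.

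There is, however, a genuine gap in your treatment of $\t^{-1}\cF_Z^{s,?}=0$. You form the pullback with the \emph{non}-center-modified local factor $\cF^{s,min,?}_{loc}$, and then argue that the map
\[
\t^{-1}\cF^{s,min,?}_{loc}\;\longrightarrow\;\t^{-1}\cF^s_{loc,Z},\qquad
\bigoplus_v \H^0(\Gamma_v,\g_k)\;\longrightarrow\;\bigoplus_v \H^0(\Gamma_v,\g_k)/\z_k,
\]
is \emph{surjective}. But in the exact sequence
\[
0\to \t^{-1}\cF_Z^{s,?}\to 0\oplus \t^{-1}\cF^{s,min,?}_{loc}\to \t^{-1}\cF^s_{loc,Z}
\]
what you need to conclude $\t^{-1}\cF_Z^{s,?}=0$ is \emph{injectivity}, not surjectivity; and this quotient map has kernel $\bigoplus_v \z_k\neq 0$. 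So as written your argument yields $\t^{-1}\cF_Z^{s,?}\cong\bigoplus_{v\in S\cup S_p}\z_k$, not zero. The paper avoids this by taking the pullback with the center-modified local factor $\cF_{loc,Z}^{s,min,?}$ (note the proof writes $\cF_Z^{s,?}=\cF^s_{Z,\overline\rho}\times^h_{\cF^s_{loc,Z}}\cF_{loc,Z}^{min,?,s}$): then $\t^{-1}\cF_{v,Z}^{s,?}\cong\H^0(\Gamma_v,\g_k)/\z_k\cong\t^{-1}\cF_{v,Z}^s$ and the comparison map in degree $-1$ is the identity, hence injective, giving the vanishing immediately. Your final paragraph gestures at this center-modification issue but does not actually implement the fix; replacing $\cF^{s,min,?}_{loc}$ by $\cF^{s,min,?}_{loc,Z}$ throughout the $\t^{-1}$ step repairs the argument.
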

\begin{proof}
	Since $\cF_Z^{s,?}=\cF^s_{Z,\overline{\rho}}\times^h_{\cF^s_{loc,Z}}\cF_{loc,Z}^{min,?,s}$, we can apply \cite[Lemma 4.30 (iv)]{GV18} and we get a long exact sequence
	of finite dimensional $k$-vector spaces
	
	$$0\to \t^{-1}\cF_Z^{s,?}\to \t^{-1}\cF^s_{Z,\overline{\rho}}\oplus\t^{-1}\cF_{loc,Z}^{min,?,s}\to \t^{-1}\cF^s_{loc,Z}\to$$
$$\to\t^0\cF_Z^{s,?}\to 
\t^0\cF^s_{Z,\overline{\rho}}\oplus\t^0\cF_{loc,Z}^{min,?,s}\to \t^0\cF^s_{loc,Z}\to $$
$$\to \t^{1}\cF_Z^{s,?}\to
 \t^{1}\cF^s_{Z,\overline{\rho}}\oplus\t^{1}\cF_{loc,Z}^{min,?,s}\to\t^{1}\cF^s_{loc,Z}\to\t^1\cF^{s,?}\ldots 
	$$

We know by Proposition \ref{tangentglob} that $\t^{-1}\cF^s_{Z,\overline{\rho}}=0$ and $\t^i\cF^s_{Z,\overline{\rho}}\cong \H^{i+1}(\Gamma,\g_k)$ for $i\geq 0$.
By Lemma \ref{locZ}, we have $\t^{i} \cF_{v}^{s}\cong \H^{i+1}(\Gamma_v, \g_k)$ for all $i\geq -1$ and  $\t^{-1} \cF_{v,Z}^{s}\cong \Coker(\z_k\to \H^{0}(\Gamma_v, \g_k))$.
By Lemma \ref{tangloc}, we have for $v\in S\cup S_p$,  $\t^{-1} \cF_v^{s,?} \cong \H^{0}(\Gamma_v, \g_k)$, 
$\t^{-1} \cF_{v,Z}^{s,?} \cong \Coker(\z\to \H^{0}(\Gamma_v, \g_k))$,
$\t^{0} \cF_v^{s,?} \cong \H^{1}_{?}(\Gamma_v, \g_k)$,
$\t^{i} \cF_v^{s,?}=0$ for $i\geq 1$. 
We have therefore $\t^{-1}\cF^{s,?}=0$ and
$$0\to \t^0\cF_Z^{s,?}\to \H^1(\Gamma,\g_k)\oplus 
\bigoplus_{v\in S\cup S_p}\H^{1}_{?}(\Gamma_v, \g_k)\to 
\bigoplus_{v\in S\cup S_p}\H^{1}(\Gamma_v, \g_k)\to $$ 
$$\to \t^{1}\cF_Z^{s,?}\to \H^2(\Gamma,\g_k)\to \bigoplus_{v\in S\cup S_p}\H^2(\Gamma_v,\g_k).$$
  By comparing to the long exact sequence of cohomology, we conclude that for any $i\geq 0$,
  $$\t^i\cF_Z^{s,?}\cong \H^{i+1}_{min,?}(\Gamma, \Ad\bar{\rho}).
  $$
\end{proof}

\begin{pro} Under the assumption $(STDIST)$, the global deformation problem $\cF_Z^{s,\ord}$ is pro-representable.
Under the assumption $(FL)$, the deformation problem $\cF_Z^{s,FL}$ is pro-representable.

%One can take the cofibrant simplicial ring 
%$$\cR^?=\cR\underline{\otimes}_{\cR_{loc}}\cR_{loc}^?$$
%to prorepresent $\cF^{s,\ord}$. 
\end{pro}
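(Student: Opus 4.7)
The plan is to apply Lurie's derived Schlessinger criterion (Theorem \ref{Lurie}), which reduces pro-representability of $\cF_Z^{s,?}$ to three ingredients: formal cohesiveness, vanishing of $\t^i \cF_Z^{s,?}$ for $i<0$, and finite dimensionality of $\t^i \cF_Z^{s,?}$ for $i\geq 0$.

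\textbf{Formal cohesiveness.} Since $\cF_Z^{s,?}$ is by construction the homotopy pullback $\cF^s_{Z,\overline{\rho}} \times^h_{\cF^s_{loc,Z}} \cF_{loc,Z}^{s,min,?}$ and homotopy pullbacks preserve formal cohesiveness, it suffices to verify it factor by factor. The global factor $\cF^s_{Z,\overline{\rho}}$ is formally cohesive by the Lemma following fibration (\ref{fibdef}). Each local factor is a product of functors $\cF^s_{v,Z}$ or $\cF^{s,?}_{v,Z}$, and for these one uses either the cofibrant-replacement description $[p] \mapsto \Hom(c(R_v^{?,\square} \otimes \cO_{N_p G}), A^{\Delta[p]})$ (which is formally cohesive by Lemma \ref{lemfib}) in the $?=FL$ case, or Lemma \ref{tangloctilde} in the ordinary case, combined with the fact that the center modification inserts our functor into a fibration over the formally cohesive functor $\sHom(\ast,{\bB}^2 Z(-))$.

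\textbf{Tangent complex.} The computation for $i\geq -1$ is precisely the content of Theorem \ref{tangentcond}, giving $\t^{-1}\cF_Z^{s,?}=0$ and $\t^i \cF_Z^{s,?}\cong \H^{i+1}_{min,?}(\Gamma,\Ad\bar\rho)$ for $i\geq 0$. For $i\leq -2$ one extends the long exact sequence used in the proof of \ref{tangentcond}; each constituent of the homotopy pullback has its tangent complex concentrated in degrees $\geq -1$ (Proposition \ref{tangentglob} for the global piece; Lemma \ref{locZ} for the plain local piece; Lemma \ref{tangloc} for the piece with local conditions, where the argument via the fibration $\cF^{s,\square,?}_v \to \cF^{s,?}_v \to \hofib_\ast({\bB}G(-)\to {\bB}G(k))$ shows $\t\cF^{s,?}_v$ is concentrated in degrees $-1,0$), so the long exact sequence forces $\t^i\cF_Z^{s,?}=0$ for $i\leq -2$.

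\textbf{Finite dimensionality.} The groups $\H^{i+1}_{min,?}(\Gamma,\Ad\bar\rho)$ are subquotients of cohomology of the $S\cup S_p$-ramified Galois group $\Gamma$ with coefficients in a finite-dimensional $k$-vector space, hence finite. Lurie's theorem then concludes that $\cF_Z^{s,?}$ is pro-representable by a simplicial pro-artinian ring with countable indexing category, and (STDIST) (resp.\ (FL)) is precisely what makes the local contributions well-behaved (entering via Lemma \ref{ordlem}, resp.\ Lemma \ref{FLlem}).

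The main obstacle is the formal-cohesiveness step, since it requires carefully unwinding the definitions of the center-modified local functors $\cF^{s,?}_{v,Z}$ and checking that the entire homotopy-pullback diagram is compatible with cofibrant replacements; the two remaining steps are essentially formal consequences of the tangent complex identifications already in place.
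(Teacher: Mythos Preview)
Your proof is correct and follows essentially the same approach as the paper: both apply Lurie's criterion (Theorem \ref{Lurie}) by observing that the three constituents of the homotopy pullback are formally cohesive and then invoking Theorem \ref{tangentcond} for the tangent computation and finiteness. You are in fact slightly more careful than the paper, which only cites $\t^{-1}\cF_Z^{s,?}=0$ from Theorem \ref{tangentcond} without explicitly addressing the vanishing for $i\leq -2$; your argument that each factor has tangent complex concentrated in degrees $\geq -1$ (so the Mayer--Vietoris sequence forces vanishing below) fills this small gap.
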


Note that $\cF^s_{Z, \bar{\rho}}$ is pro-representable, but not $\cF_{loc,Z}^{min,?,s}$.
\begin{proof} 
We apply Lurie's criterion Theorem \ref{Lurie}. We know that $\cF^s_{Z, \bar{\rho}}$, $\cF_{loc,Z}^{s}$ and $\cF_{loc,Z}^{min,?,s}$ are formally cohesive, 
hence so is their homotopy fiber product $\cF_Z^{s,?}$. By Theorem \ref{tangentcond},  $\t^{-1}\cF_Z^{s,?}=0$ and for any $i\geq 0$, $\t^{i}\cF_Z^{s,?}$ 
is finite dimensional over $k$ (and vanishes for $i>1$).
%by a ring $\cR$.
% Let$$\cR_{loc}=\underline{\bigotimes}_{v\in S\cup S_p}\cR_v.$$
%and  $$\cR_{loc}^?=\underline{\bigotimes}_{v\in S}\cR_v^{min}\underline{\otimes} \underline{\bigotimes}_{v\in S_p}\cR_v^?.$$
%Let$$\cR^?=\cR\underline{\otimes}_{\cR_{loc}}\cR_{loc}^? $$ 
%This ring prorepresents $\cD^?$ by \cite[Formula (11.1)]{GV18}.
\end{proof}

\subsection{Theorems of Galatius-Venkatesh and Cai}\label{ttGV}

In this section we state generalizations of the main theorems of Galatius and Venkatesh \cite{GV18} by Y. Cai \cite{Cai21}. 
We recall the notations and assumptions.
Let $\pi$ be a cohomological cuspidal representation on $\GL_N(F)$ ($F$ a CM field), with squarefree conductor $\n$, and level group $U=U_0(\n)$. Let $S$ be the set of places
dividing $\n$.  
It occurs in $\H^\bullet(X_U,V_\lambda(\C))$ for a weight $\lambda=(\lambda_{\tau,i})_{\tau\in I_F,1\leq i\leq N} \in (X^{\ast}(\bT_K)^{+})^{\Hom(F, \C)}$,
{\it i.e.} $\lambda_{\tau,1}\geq\ldots\geq \lambda_{\tau,N}$ for any $\tau\in I_F$.
Let $p>2$ be a rational prime relatively prime to $\n$, unramified in $F$; let $S_p$ be the set of places of $F$ above $p$. 
We assume either 
$$(FL)\quad \lambda_{\tau,1}-\lambda_{\tau,N}<p-N,\forall \tau\in I_F,$$
or 
$$(\ord)\quad \pi_v \,\mbox{\rm is ordinary for all}\, v\in S_p.$$
Let $\Gamma=\Gal(F_{S\cup S_p}/F)$. Let $K_0$ be a sufficiently big number field containing the Hecke eigenvalues of $\pi$. Fix a $p$-adic place $v_0$ 
and let $K$ be its $p$-adic completion, $\cO$ its valuation ring, $\varpi$ a uniformizing parameter; $k$ its residue field.
Let $\rho_\pi\colon \Gamma\to \GL_N(\cO)$ be the Galois representation associated to $\pi$ and $\bar{\rho}\colon \Gamma\to \GL_N(k)$ its reduction modulo $\varpi$.
Assume
\medskip
 
$(MIN)$ for any places $v\in S$, the image $\overline{\rho}_\pi(I_v)$
of the inertia subgroup $I_v$ contains a regular unipotent element.
\medskip

Recall that for any place $v\in S_p$, it is known that $(FL)$, resp. $(\ord)$, implies that $\rho_\pi\vert_{\Gamma_v}$ and $\bar{\rho}\vert_{\Gamma_v}$ are both Fontaine-Laffaille, resp. ordinary.
Assume that the image of $\bar{\rho}$ is enormous:
\medskip

$(RLI)$ $ \bar{\rho}(\Gal_{F(\zeta_p)})\supset k^\times\SL_N(k^{\prime})$
for some subfield $k^\prime$ of the residue field $k$.
\medskip

%Let $U \subseteq \bG(\cO_F)$ be a neat open compact subgroup which is unramified outside $S$. 

Recall that a Taylor-Wiles prime $v$ is a place $v \notin S\cup S_p$ of residual characteristic $q_v$ such that $q_v \equiv 1 \pmod p$ and $\bar{\rho}(\Frob_v)$ is 
conjugated to a strongly regular element of $T(k)$ (i.e. an element $t\in T(k)$ whose centralizer in $G(k)$ coincides with $T(k)$).  
In particular, for a
 Taylor-Wiles prime $v$, we may choose a representation $\bar{\rho}_v: \Gamma_v\to T(k)$ such that the composition 
 with $T \hookrightarrow G$ is conjugate to $\bar{\rho}|_{\Gamma_v}$. Note that by definition, $\bar{\rho}_v$ factors through the Galois group $\Gamma_v^{\ur}$
of the maximal unramified extension $F_v^{\ur}/F_v$.  Let $\cF_v^{s}$, resp. $\cF_v^{\ur,s}$ be the simplicial deformation functor
 of $\bar{\rho}\vert_{\Gamma_v}$. As in \cite[Section 8.1]{GV18}, in order to study them, 
one needs to compare them to the (simpler) simplicial deformation functors of $\bar{\rho}_v$ with target in
 $T$.  Let 
%$\cF_{v}^{T,\square,s}$, $\cF_{v}^{s,T,\ur,\square}$, resp. 
$\cF_{v}^{s,T}$, resp. $\cF_{v}^{s,T,\ur}$, be the $T$-valued derived deformation functor, resp. its unramified analogue,
 of $\bar{\rho}_v$. Since $T$ is commutative, its center is $T$ itself, so their center-modified analogues make use of $T$;
they are denoted by $\cF_{v,T}^{s,T}$, resp. $\cF_{v,T}^{s,T,\ur}$. They are pro-representable.
%The functors $\cF_{v}^{s,T,\square}$, resp. $\cF_{v}^{s,T,\ur,\square}$, are pro-representable by formally smooth discrete simplicial rings 
%$\cS_v$, resp. $\cS_v^{\ur}$.
 %This allows to define as in Section 2.5 above the simplicial deformation functors $\cF_{v}^{T,\square,s}$ and $\cF_{v}^{T,s}$.
 %$$\cF_{v}^{T,\square}$ and $\cF_{v}^{T} \to \hofib_{\ast}({\bB}T(-) \to BT(k)).$$ 
%In Section 8.4 of \cite{GV18}, the authors construct a section $s_v\colon \cF_v^{s}\to \cF_v^{\square,s}$.
%
Let $Q$ be a finite set of Taylor-Wiles primes; we consider the functors
$$\cF_{Q,loc}^{s}= \prod_{v\in S\cup S_p\cup Q} \cF^{s}_v,$$
$$\cF_{Q,loc}^{s,min,?}= \prod_{v\in S} \cF^{s,min}_v\times \prod_{v\in S_p} \cF^{s,?}_v\times \prod_{v\in Q}\cF_v^{s},$$
let $\Gamma_Q=\Gal(F_{S\cup S_p\cup Q}/F)$ and $\cF^s_{Q,\overline{\rho}}$ be the global derived deformation problem $A\mapsto \Hom_{[\bar{\rho}]}(B\Gamma_Q,{\bB}G(A))$;
we then define
$$\cF^{s,min,?}_Q=\cF^{s,gl/loc,?}_{Q,\overline{\rho}}=\cF^s_{Q,\overline{\rho}}\times^h_{\cF^{s}_{Q,loc}}\cF_{Q,loc}^{s,min,?}
$$
and their center-modified analogues $\cF_{Z,Q}^{s,min,?}$ and $\cF_{Z,Q}^{s,min,?}$, involving products of center-modified global and local factors.
We also introduce $\cF_{Q,T}^{s,T}=\prod_{v\in Q}\cF_{v,T}^{s,T}$ and $\cF_{Q,T}^{s,T,\ur}=\prod_{v\in Q}\cF_{v,T}^{s,T,\ur}$.

\begin{de}
	An allowable Taylor-Wiles datum of level $n$ is a set of Taylor-Wiles primes $Q=\{v_1, \dots, v_r\}$, together with a strongly regular 
	element $t_{v_i} \in T(k)$ conjugate to $\bar{\rho}(\Frob_{v_{i}})$ for each $i \in \{1,\dots, r\}$, such that:
	\begin{enumerate}
		\item Each $q_{v_i} \equiv 1 \pmod{p^{n}}$, $i \in \{1,\dots, r\}$.
		\item We have $\H^{2}_{\cL_{Q}}(\Gamma_Q, \Ad\bar{\rho}) = 0$, where $\cL_{Q}=(L_{Q,v})_v$ with
		\begin{displaymath}
		L_{Q,v}=\left\{\begin{array}{l}{ L_v ,\, v\in S\cup S_p;} \\ { \H^{1}(F_v,\Ad\bar{\rho} ), \, v\in Q.}\end{array}\right.
		\end{displaymath}
	\end{enumerate}
\end{de}
As noted in \cite[Remark after Definition 6.2]{GV18}, we have
  
\begin{rem} Under the assumption that $k^\times\cdot\SL_N(k^\prime)\subset \im\bar{\rho}$, for any level $n\geq 1$,
there exist
infinitely many allowable Taylor-Wiles data $(Q,(t_v)_{v\in Q})$ of degree $n$. 
\end{rem}

Let $\Gamma_Q=\Gal(F_{S\cup S_p\cup Q})/F)$ and for each $v\in Q$, 
%let $\Gamma_{\{v\}}=\Gal(F_{S\cup S_p\cup \{v\}})/F)$.
For an allowable Taylor-Wiles datum $Q$, we have the long exact sequence:
\begin{displaymath}
\begin{array}{l}{0 \to \H^{1}_{\cL_{Q}}(\Gamma_Q, \Ad\bar{\rho} ) \to \H^{1}(\Gamma_Q, \Ad\bar{\rho} ) \stackrel{A}{\to} \bigoplus\limits_{l \in S} \H^{1}(\Gamma_v, \Ad\bar{\rho})/L_v) } \\ {\to 0 \to \H^{2}(\Gamma_Q, \Ad\bar{\rho} ) \stackrel{B}{\to} \bigoplus\limits_{v \in S\cup Q} \H^{2}(\Gamma_v, \Ad\bar{\rho})}.\end{array}
\end{displaymath}
Moreover, the cokernel of $B$ is dual to $\H^{0}(\mathbb{Z}[\frac{1}{S(Q)}], (\Ad\bar{\rho})^{\ast} )$ by global duality.
We want to approximate $\cR$ by larger rings allowing ramification at Taylor-Wiles primes.  Let $(Q_{n},(t_v)_{v\in Q_n})$ be an allowable Taylor-Wiles datum of level $n$.  
Let $\cF^s_{n}=\cF_{Q_n,Z}^{s,?}$ and 
let $\cR_{n}$ be a representing ring for $\cF_{n}^s$.  
Let $\cF_{n}^{s,\mathrm{loc}} :=  \cF_{Q_n,T}^{s,T}$ with representing ring $\cS_n$.  Let $\cF_{n}^{s,\mathrm{loc,ur}} :=  \cF_{Q_n,T}^{s,T,\ur}$ with representing ring $\cS_n^{\mathrm{ur}}$. 
As noticed in \cite[]{Cai21}, $\cF_{n}^{s,\mathrm{loc}}$ is objectwise weakly equivalent to the derived framed deformation functor $\prod_{v \in Q_m} \cF_{v}^{s,T,\square}$,
and similarly for $\cF_{Q_n,T}^{s,T,\ur}$ and  $\prod_{v \in Q_n} \cF_{v}^{s,T,\square}$. Therefore, the simplicial deformation rings $\cS_n$ and $\cS_n^{\ur}$ can be described explicitely as in \cite[Remark 8.7]{GV18}.

\begin{lem}We have a homotopy pullback square 
	\begin{displaymath}
\xymatrix{
	\cF_Z^{s,?} \ar[r]\ar[d] & \cF_{n}^{s,\mathrm{loc,ur}}  \ar[d] \\
	\cF_n^s \ar[r]  & \cF_{n}^{s,\mathrm{loc}}.
}
\end{displaymath}	
We have an objectwise weak equivalence 
$$\cF_n^{s,?} \times^h_{\cF_{n}^{s,\mathrm{loc}}} \cF_{n}^{s,\mathrm{loc,ur}} \sim \cF_Z^{s,?}.$$
\end{lem}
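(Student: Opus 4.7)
The plan is to unwind both sides of the claimed weak equivalence using the definitions of the various functors as iterated homotopy pullbacks, and then apply the pasting law for homotopy pullbacks together with a comparison, at each Taylor--Wiles prime, between $G$-valued and $T$-valued simplicial local deformation functors.

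First I would expand the right-hand side. By definition,
$$\cF_n^{s,?} \;=\; \cF^s_{Q_n, Z, \bar{\rho}} \times^h_{\cF^s_{Q_n, loc, Z}} \cF_{Q_n, loc}^{s, min, ?},$$
where the Taylor--Wiles factor of $\cF_{Q_n, loc}^{s, min, ?}$ is the unconstrained product $\prod_{v\in Q_n} \cF_{v,Z}^s$. Forming a further homotopy pullback with $\cF_n^{s, \mathrm{loc}, \mathrm{ur}} = \prod_{v\in Q_n}\cF_{v,T}^{s,T,\mathrm{ur}}$ over $\cF_n^{s,\mathrm{loc}} = \prod_{v\in Q_n}\cF_{v,T}^{s,T}$ only modifies the local factors indexed by $Q_n$. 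By the pasting law for homotopy pullbacks (applicable because all the functors in sight are formally cohesive), one can therefore rewrite
$$\cF_n^{s,?} \times^h_{\cF_n^{s,\mathrm{loc}}} \cF_n^{s,\mathrm{loc},\mathrm{ur}} \;\simeq\; \cF^s_{Q_n, Z, \bar{\rho}} \times^h_{\cF^s_{Q_n, loc, Z}} \Bigl( \prod_{v\in S}\cF_{v,Z}^{s,min} \times \prod_{v\in S_p}\cF_{v,Z}^{s,?} \times \prod_{v\in Q_n}\bigl(\cF_{v,Z}^s \times^h_{\cF_{v,T}^{s,T}} \cF_{v,T}^{s,T,\mathrm{ur}}\bigr) \Bigr).$$

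The crux is then the local identification at each $v \in Q_n$: the homotopy fiber product $\cF_{v,Z}^s \times^h_{\cF_{v,T}^{s,T}} \cF_{v,T}^{s,T,\mathrm{ur}}$ is weakly equivalent to the unramified derived local functor $\cF_{v,Z}^{s,\mathrm{ur}}$ for $\bar{\rho}|_{\Gamma_v}$. This uses that $\bar{\rho}(\Frob_v)$ is strongly regular in $T(k)$, so its centralizer in $G$ is a unique maximal torus conjugate to $T$; via \cite[Section 8]{GV18}, one obtains a natural weak equivalence $\cF_{v,Z}^{s} \overset{\sim}{\to} \cF_{v,T}^{s,T}$ (identifiable at the level of tangent complexes since $\H^{*}(\Gamma_v, \g_k/\t_k)=0$ in this situation) and similarly for the unramified variants, and hence the homotopy fiber product recovers the unramified $G$-valued functor.

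Finally I would globalize: imposing the local condition ``unramified at all $v\in Q_n$'' on the global functor $\cF^s_{Q_n, Z, \bar{\rho}}$ (which is built from $B\Gamma_{Q_n}$) forces every deformation to factor through the quotient $\Gamma = \Gal(F_{S\cup S_p}/F)$ of $\Gamma_{Q_n}$; by functoriality of the ${\bB}G$ construction and the homotopy invariance of $\sHom$, the resulting global piece becomes $\cF^s_{Z,\bar{\rho}}$, with local data at $S \cup S_p$ exactly those defining $\cF_{loc,Z}^{s,min,?}$. Assembling these identifications yields
$$\cF_n^{s,?} \times^h_{\cF_n^{s,\mathrm{loc}}} \cF_n^{s,\mathrm{loc},\mathrm{ur}} \;\simeq\; \cF^s_{Z,\bar{\rho}} \times^h_{\cF^s_{loc,Z}} \cF_{loc}^{s,min,?} \;=\; \cF_Z^{s,?},$$
which is equivalent to the claimed homotopy-pullback square. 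The main technical obstacle is the second step: one needs to justify the $G$-versus-$T$ identification in the derived setting, which requires verifying the matching of tangent complexes at a TW prime and invoking the conservativity of the tangent complex functor on formally cohesive functors (the lemma of \cite[Corollary 3.46]{Cai21}). The rest of the argument is a careful but formal manipulation of homotopy limits.
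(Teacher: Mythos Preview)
Your proof is correct and follows essentially the same approach as the argument the paper cites (the paper itself gives no proof, merely referring to \cite[Lemma 6.6 and Corollary 6.7]{Cai21}, which in turn follows \cite[Section 8]{GV18}). The structure---pasting law for homotopy pullbacks, local $G$-versus-$T$ comparison at Taylor--Wiles primes via strong regularity and conservativity of the tangent complex, then globalizing by dropping $Q_n$-ramification---is exactly the standard route. One small point: the natural map at a TW prime goes from the $T$-valued functor to the $G$-valued one (via $T\hookrightarrow G$), not the other way as you wrote, but since you are only using it as a weak equivalence this is immaterial.
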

\begin{proof}
	 This is \cite[Lemma 6.6 and Corollary 6.7]{Cai21}.
\end{proof}	

%\begin{proof}
%	The diagram is if and only if the sequence 
%	\begin{align*}
%	0 \to &\t^0 s\Def_{S,Z}  \to \t^0 s\Def_{v,T}^{T,\ur} \oplus \t^0 s\Def_{S\cup \{v\}, Z} \to \t^0 s\Def_{v,T}^{T} \\
%	\to &\t^1 s\Def_{S,Z}  \to \t^1 s\Def_{v,T}^{T,\ur} \oplus \t^1 s\Def_{S\cup \{v\}, Z} \to \t^1 s\Def_{v,T}^{T} \\
	%\to &\dots
%	\end{align*}
%	is exact.  This follows from the homotopy pullback square before modifying the center and the fact that modifying the center doesn't change $\t^i$ for $i \geq 0$.
%\end{proof}

%By repeating the procedure of adding Taylor-Wiles primes, we can replace $v$ by a Taylor-Wiles datum $Q_m$.  Moreover, by applying $$- \times^{h}_{\prod_{v\in S} s\Def_{v,Z}}  \prod_{v\in S} s\cD_{v,Z} $$ to the first vertical arrow, we can replace $s\Def_{S,Z} \to s\Def_{S\cup Q_m,Z}$ by $s\cD_{\cS,Z} \to s\cD_m$.  We thus obtain the following corollary:

%\begin{cor}\label{homotopy pullback for T-W datum}
%	Let $Q_m$ be a Taylor-Wiles datum.  Then we have the homotopy pull back square
%	\begin{displaymath}
%	\xymatrix{
%		s\cD_{\cS,Z} \ar[r]\ar[d] & \prod_{v \in Q_m} s\Def_{v,T}^{T,\ur}  \ar[d] \\
%		s\cD_m \ar[r]  & \prod_{v \in Q_m} s\Def_{v,T}^{T},
%	}
%	\end{displaymath}

Now we pass to the level of rings.  Recall the derived tensor product $\underline{\otimes}$ of \cite[Definition 3.3]{GV18}, 
we remark that $\cR_1 \underline{\otimes}_{\cR_3} \cR_2$ represents the homotopy pullback of represented functors after cofibrant replacements, 
and since all pro-rings for us are indexed by natural numbers, the derived tensor product can be explicitly constructed levelwise 
(see \cite[Definition 3.3]{GV18} for details).  The above corollary gives a weak equivalence 
$$\cR \sim \cR_n \underline{\otimes}_{\cS_n} \cS_n^{\ur}.$$

We say a pro-object $\cR$ of $\Art^s$ is homotopy discrete if the map $\cR \to \pi_0 \cR$ induces an equivalence on represented functors after applying level-wise cofibrant replacement (see \cite[Definition 7.4]{GV18}). 
Let $F_{Q_n}^\times=\prod_{v\in Q_n}F_v^\times$, $\cO_{Q_n}^\times=\prod_{v\in Q_n}\cO_v^\times$. By choosing uniformizing parameters at $v$'s in $Q_n$, we have an isomorphism
$$F_{Q_n}^{(p)}/\cO_{Q_n}^{(p)}\cong\Z^{Q_n}.$$
hence a decomposition
 $$F_{Q_n}^\times=\cO_{Q_n}^\times\times \Z^{Q_n}.$$
 Let $\Delta^\prime_{Q_n}=\cO_{Q_n}^{(p)}$ 
be the pro-$p$ completion of $\cO_{Q_n}^\times$. It is a finite group isomorphic to the $p$-Sylow of $\prod_{v\in Q} k_v^\times$.
Let $\Delta_{Q_n}=\Delta^\prime_{Q_n}/(p^n)$ and
$$F_{Q_n}^{(p)}=\Delta_{Q_n}\times  \Z^{Q_n}.$$
By definition of the level, this group is free of rank $r$ over $\Z/p^n\Z$.
Let $S_n= \cO[\Delta_{Q_n}]$; it is a complete intersection ring. Actually let $S_\infty=\cO[[Y_1,\ldots,Y_{Nr}]]$. The ideal 
$J_n=((1+Y_1)^{n}-1,\ldots,(1+Y_{Nr})^{n}-1)$, and an isomorphism 
$$i_n\colon S_\infty/J_n\cong S_n$$
Let 
$$\Sigma_n=\cO[[X^\ast(T)\otimes F_{Q_n}^{(p)}]]=S_n[[F_{Q_n}^{(p)}/\cO_{Q_n}^{(p)}]]\stackrel{i}{\cong} S_n[[X_1,\ldots,X_{Nr}]],$$
$$\Sigma_\infty=S_\infty[[X_1,\ldots,X_{Nr}]]$$
The isomorphism $i_n$ induces an isomorphism  
$\Sigma_\infty/J_n\Sigma_\infty\cong \Sigma_n.$
Let also
$$\Sigma_n^{\ur}=\cO[[X^\ast(T)\otimes F_{Q_n}^{(p)}/\cO_{Q_n}^{(p)}]]\stackrel{i_{\ur}}{\cong}\cO[[X_1,\ldots,X_{Nr}]].$$
Note that the isomorphisms $i$ and $i_{\ur}$ are compatible to the augmentation homomorphism $S_n= \cO[\Delta_{Q_n}]\to \cO$.

\begin{lem}
	The simplicial rings $\cS_n$ and $\cS_n^{\ur}$ are homotopy discrete. Actually $\cS_n$, resp. $\cS_n^{\ur}$, is a cofibrant replacement of the complete intersection
classical ring $\Sigma_n$, resp. the formally smooth classical ring $\Sigma_n^{\ur}$.
\end{lem}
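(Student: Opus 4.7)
Plan: The strategy is to identify $\cS_n$ and $\cS_n^{\ur}$ with cofibrant replacements of the classical rings $\Sigma_n$ and $\Sigma_n^{\ur}$, using the product decomposition into local framed factors together with $\cO$-flatness. First, I invoke the objectwise weak equivalences stated just before the lemma, namely $\cF_n^{s,\mathrm{loc}} \sim \prod_{v \in Q_n} \cF_v^{s,T,\square}$ and $\cF_n^{s,\mathrm{loc,ur}} \sim \prod_{v \in Q_n} \cF_v^{s,T,\square,\ur}$, together with the explicit local description of \cite[Remark 8.7]{GV18}. Each framed derived $T$-valued local deformation functor is pro-represented by a cofibrant replacement of its classical counterpart $R_v^{T,\square}$ (resp.\ $R_v^{T,\square,\ur}$); this is the same principle used in the proof of Lemma~\ref{tangloc}, where one verifies that framed simplicial deformation rings coincide with cofibrant replacements of the classical ones provided the latter are $\cO$-flat and the tangent complex of the framed functor is concentrated in degree~$0$. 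Consequently $\cS_n$ is weakly equivalent to the iterated derived tensor product $c(R_{v_1}^{T,\square}) \underline{\otimes}_\cO \cdots \underline{\otimes}_\cO c(R_{v_r}^{T,\square})$ with $Q_n=\{v_1,\dots,v_r\}$, and $\cS_n^{\ur}$ to the analogous product built from the $R_v^{T,\square,\ur}$.

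Second, I would compute the classical rings via local class field theory. Since $T$ is commutative, every deformation of $\bar{\rho}_v$ to a pro-$p$ artinian $\cO$-algebra factors through the maximal abelian pro-$p$ quotient $\Gamma_v^{\mathrm{ab},(p)} \cong F_v^{\times,(p)}$, yielding
\[ R_v^{T,\square} \cong \cO[[X^{\ast}(T) \otimes_{\mathbb{Z}} F_v^{\times,(p)}]]^{\wedge}_{\bar{\rho}_v}, \qquad R_v^{T,\square,\ur} \cong \cO[[X^{\ast}(T) \otimes_{\mathbb{Z}} \mathbb{Z}]]. \]
With the decomposition $F_v^{\times,(p)} = \cO_v^{\times,(p)} \times \mathbb{Z}$ of the excerpt and taking the tensor product over $v \in Q_n$, this recovers $\Sigma_n$ and $\Sigma_n^{\ur}$ verbatim. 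Crucially, each $R_v^{T,\square}$ is $\cO$-flat: its unramified part is the power series ring $\cO[[X_1,\ldots,X_N]]$, and the remaining tame-inertia factor $\cO[X^{\ast}(T) \otimes \Delta_v']$ is the group algebra of a finite abelian $p$-group, hence free of finite rank over $\cO$.

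Third, by $\cO$-flatness, $\Tor^{\cO}_i(R_v^{T,\square},R_{v'}^{T,\square}) = 0$ for $i \geq 1$, so the derived tensor product collapses to the ordinary one: $c(R_{v_1}^{T,\square}) \underline{\otimes}_\cO \cdots \underline{\otimes}_\cO c(R_{v_r}^{T,\square}) \sim c(\bigotimes_{v\in Q_n} R_v^{T,\square}) = c(\Sigma_n)$, and similarly $\cS_n^{\ur} \sim c(\Sigma_n^{\ur})$. A cofibrant replacement of a discrete ring is by definition homotopy discrete, which yields the lemma. The main technical obstacle is the first step: proving that each framed derived local $T$-valued deformation functor is pro-represented by a cofibrant replacement of its classical representing ring; once this framed-local homotopy discreteness is granted, the rest of the argument is routine flatness bookkeeping.
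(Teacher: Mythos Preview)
Your outline for $\cS_n^{\ur}$ is fine, and steps two and three (the class-field-theoretic identification of the classical rings and the $\cO$-flatness argument collapsing the derived tensor product) are correct. The gap is in step one, specifically for the \emph{ramified} local factors.

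You assert that the tangent complex of the framed derived functor $\cF_v^{s,T,\square}$ is concentrated in degree~$0$, invoking the ``same principle'' as in Lemma~\ref{tangloc}. But in that lemma the framed functor $\cF_v^{s,?,\square}$ is \emph{defined} as $\sHom(c(R_v^{?,\square}),-)$ for a formally smooth classical ring $R_v^{?,\square}$; homotopy discreteness is built in. Here $\cF_v^{s,T,\square}$ is a genuine derived deformation functor, and its tangent complex is \emph{not} concentrated in degree~$0$: one has $\t^1\cF_v^{s,T,\square}\cong \H^2(\Gamma_v,\mathfrak{t}_k)$, and since the adjoint action of $T$ on $\mathfrak{t}_k$ is trivial, local Tate duality gives $\H^2(\Gamma_v,\mathfrak{t}_k)\cong \H^0(\Gamma_v,\mathfrak{t}_k(1))^\vee\cong k^N$, nonzero precisely because $q_v\equiv 1\pmod p$ at a Taylor--Wiles prime. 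Equivalently, the classical ring $R_v^{T,\square}\cong \cO[\Delta_v^N][[X_1,\dots,X_N]]$ is visibly not formally smooth over~$\cO$, so the criterion you state cannot apply.

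The correct argument (this is what the citations to \cite{GV18} and \cite{Cai21} supply) is that $R_v^{T,\square}$ is a \emph{complete intersection}: writing $\Delta_v\cong \mathbb{Z}/p^a$, it is the quotient of $\cO[[X_1,\dots,X_N,Y_1,\dots,Y_N]]$ by the regular sequence $((1+Y_i)^{p^a}-1)_i$. Hence its cotangent complex is concentrated in degrees $[-1,0]$, so its tangent complex lives in degrees $[0,1]$ with $\dim_k\t^0=2N$ and $\dim_k\t^1=N$. These match $\dim_k Z^1(\Gamma_v,\mathfrak{t}_k)=2N$ and $\dim_k\H^2(\Gamma_v,\mathfrak{t}_k)=N$ exactly, and one checks that the canonical map $\cS_v\to\pi_0(\cS_v)=R_v^{T,\square}$ induces isomorphisms on $\t^0$ and $\t^1$; conservativity of the tangent complex then gives the weak equivalence. (For $\cS_n^{\ur}$ your argument does go through directly, since $\Gamma_v^{\ur}\cong\widehat{\mathbb{Z}}$ has cohomological dimension~$1$, so $\H^2=0$ and $\Sigma_n^{\ur}$ is genuinely formally smooth.) Once this local homotopy discreteness is established, your flatness bookkeeping in step three is exactly right.
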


\begin{proof} See \cite[]{GV18} and \cite[]{Cai21}.
\end{proof}

Since we can choose $\cS_n$ and $\cS_n^{\ur}$ up to weak equivalence, we suppose in the following that $\cS_n=c(\Sigma_n)$ and $\cS_n^{\ur}=c(\Sigma_n^{\ur})$.
Let $\bar{S}_m = S_{\infty} / (p^m, J_m)$ and $\bar{\Sigma}_m = \Sigma_{\infty} / (p^m, J_m\Sigma_\infty)$.  Note that $\bar{S}_m$ is finite and if $M_m$ is a $\Sigma_m$-module,
$\bar{M}_m=M_m\otimes_{\Sigma_m}\bar{\Sigma}_m=M_m\otimes_{S_m}\bar{S}_m$ and $\bar{M}_m\otimes_{\bar{\Sigma}_m}\Sigma_m^{\ur}=\bar{M}_m\otimes_{\bar{S}_m}\cO/(p^m)$.
This can be rewritten in terms of simplicial rings as follows.
By quotienting by the ideal $(p^m, J_m)$ the morphism $S_\infty \to \cS_m$, we get the commutative diagram
\begin{displaymath}
\xymatrix{
	\bar{S}_m \ar[r]\ar[d] & \cS_m/ (p^m, J_m)  \ar[d] \\
	\cO/p^m \ar[r]  & \cS_m^{\ur}/p^m
}
\end{displaymath}
which induces a homotopy pullback square of represented functors.  In consequence, for any $\cS_m/ (p^m, J_m)$-module $\cM_m$, we have a weak equivalence 
$\cM_m \underline{\otimes}_{\bar{S}_m} \cO/p^m \sim \cM_m \underline{\otimes}_{\cS_m/ (p^m, J_m)} \cS_m^{\ur}/p^m$.

Let $R_m$, resp. $\bar{R}_m$, be the quotient of  the classical universal deformation ring which represents the $(min,?)$-deformations with $Q_m$-ramification, 
resp. the $(min,?)$-deformations modulo $p^m$ whose $Q_m$-ramification has of inertial level $\leq m$ ({\it i.e.} such that the restriction to inertia factors modulo $p^m$ for all $v\in Q_m$).  
By definition, the ring $\bar{R}_m$ is a natural quotient of $R_m/(p^m, J_m)$ and is an $\cS_m/ (p^m, J_m)$-algebra.  
The map from $\cR_m \leftarrow \cS_m \to \cS_m^{\ur}$ to $\bar{R}_m \leftarrow \cS_m/ (p^m, J_m) \to \cS_m^{\ur}/p^m$ induces 
\begin{equation}
\cR \sim \cR_m \underline{\otimes}_{\cS_m} \cS_m^{\ur} \to \bar{R}_m \underline{\otimes}_{\cS_m/ (p^m, J_m)} \cS_m^{\ur}/p^m \sim \bar{R}_m \underline{\otimes}_{\bar{S}_m} \cO/p^m.
\end{equation}

A Taylor-Wiles patching argument provides projective systems out of the $\bar{R}_m\underline{\otimes}_{\bar{S}_m} \cO/p^m$'s. Let $R_\infty$ be the projective limit for one of these projective systems
(it depends on the choice of such a projective system).
The main result of \cite{GV18} (Theorem 14.1) compares $\pi_{\ast} \cR$ to $\Tor_{\ast}^{S_\infty}(R_\infty, \cO)$
under certain conditions.

For $n\geq  m$, there is a natural $\bar{R}_n \underline{\otimes}_{\bar{S}_n} \cO/p^n \to \bar{R}_m \underline{\otimes}_{\bar{S}_m} \cO/p^m$.  Let $f_{n,m}$ be the composition 
$$\cR\to \bar{R}_n \underline{\otimes}_{\bar{S}_n} \cO/p^n \to \bar{R}_m \underline{\otimes}_{\bar{S}_m} \cO/p^m.$$

In order to apply \cite[Theorem 12.1]{GV18}, recall the key result \cite[Proposition 6.11]{Cai21} generalizing \cite[Theorem 11.1]{GV18}:

\begin{pro}\label{11.1}
	Under the assumptions $(RLI)$, $(MIN)$, $(FL)$ or $(ORD_\pi)$, let $n\geq m$, the map $f_{n,m}$ induces an isomorphism on $\t^0$ and a surjection on $\t^1$.
\end{pro}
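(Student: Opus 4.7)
The plan is to analyze the tangent complexes on both sides of $f_{n,m}$, using the Galois-cohomological description from Theorem~\ref{tangentcond} together with an explicit Koszul-type resolution of $\cO/p^m$ over $\bar{S}_m$. I would proceed in three main steps, plus an identification of the core difficulty.

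\emph{Step 1 (Left-hand side).} I would first identify $\t^0\cR\cong \H^1_{\cL}(\Gamma,\Ad\bar\rho)$ and $\t^1\cR\cong \H^2_{\cL}(\Gamma,\Ad\bar\rho)$ via Theorem~\ref{tangentcond}. Using the weak equivalence $\cR\sim \cR_m\underline{\otimes}_{\cS_m}\cS_m^{\ur}$ established just above, the map $f_{n,m}$ factors through this derived tensor product decomposition, and the associated Mayer--Vietoris long exact sequence expresses $\t^{\bullet}\cR$ in terms of the tangent complexes of $\cR_m$, $\cS_m$ and $\cS_m^{\ur}$, that is, in terms of Galois cohomology at Taylor--Wiles level.

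\emph{Step 2 (Right-hand side).} I would then exploit the allowability of $(Q_m,(t_v))$: the vanishing $\H^2_{\cL_{Q_m}}(\Gamma_{Q_m},\Ad\bar\rho)=0$ forces the classical ring $\pi_0\cR_m = R_m$ to be formally smooth over $\cS_m$, so that $\bar R_m$ is obtained from a power series ring over $\bar{S}_m$ by truncating the inertial characters at $Q_m$ to level $p^m$. Since $\bar{S}_m = S_\infty/(p^m, J_m)$ is a complete intersection, the derived base change $\bar R_m\underline{\otimes}_{\bar S_m}\cO/p^m$ admits a controlled Koszul resolution. Reading off $\t^0$ and $\t^1$ from it produces a direct summand coming from $\H^1_{\cL_{Q_m}}$ modulo the local characters at $v\in Q_m$, together with Tor contributions from the Koszul terms.

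\emph{Step 3 (Comparison).} For $\t^0$, both sides are identified with $\H^1_{\cL}(\Gamma,\Ad\bar\rho)$: the kernel of the restriction $\H^1_{\cL_{Q_m}}\to \bigoplus_{v\in Q_m}\H^1(\Gamma_v,\Ad\bar\rho)$ is exactly $\H^1_{\cL}$, and this matches the relations imposed by $\bar S_m\to \cO/p^m$ in the derived tensor. The hypothesis $n\geq m$ ensures that the level-$n$ Taylor--Wiles inertial quotient projects correctly to the level-$m$ one, so $f_{n,m}$ is compatible on tangent complexes. For $\t^1$, the target receives contributions only from Koszul--Tor terms (since $\H^2_{\cL_{Q_m}}=0$ by allowability), and the natural map from $\H^2_{\cL}(\Gamma,\Ad\bar\rho)$ into these Tor pieces should be surjective by global Poitou--Tate duality matched against the local deformation conditions at $Q_m$.

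The main obstacle will be the $\t^1$ surjection in Step~3: one must pin down precisely how the Tor pieces coming from $\bar{S}_m\to \cO/p^m$ encode the dual Selmer conditions at $Q_m$, and then check that the image of $\H^2_{\cL}(\Gamma,\Ad\bar\rho)$ covers the full target rather than just a proper subspace. This is where the generalization from \cite[Theorem~11.1]{GV18} to the $(FL)$ and $(\ord)$ settings, following \cite[Proposition~6.11]{Cai21}, requires careful handling of the local conditions at places above $p$ so as to remain compatible with both the Koszul machinery and the reduction modulo $p^m$ performed in forming $\bar R_m$.
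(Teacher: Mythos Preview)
The paper does not give a proof of this proposition at all; it simply records it as \cite[Proposition~6.11]{Cai21}, which generalizes \cite[Theorem~11.1]{GV18}, and then uses it as input for Theorem~\ref{simpltor}. Your outline is essentially a sketch of the argument carried out in those references, and its overall architecture---Galois-cohomological description of $\t^i\cR$, allowability of $Q_m$ to control the target, and Poitou--Tate duality for the $\t^1$-surjection---is the right one.

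There is, however, a genuine misstatement in your Step~2. The vanishing $\H^2_{\cL_{Q_m}}(\Gamma_{Q_m},\Ad\bar\rho)=0$ does \emph{not} make $R_m$ formally smooth over $\cS_m$: indeed $\pi_0\cS_m=\Sigma_m=S_m[[X_1,\dots,X_{Nr}]]$ with $S_m=\cO[\Delta_{Q_m}]$ a non-regular complete intersection, whereas $R_m$ is a power-series ring over $\cO$, so the map $\Sigma_m\to R_m$ cannot be formally smooth (and likewise $\bar R_m$ is not a power-series ring over $\bar S_m$). What the vanishing actually gives, and what the argument in \cite{GV18,Cai21} uses, is that $\t^i\cR_m=0$ for $i\geq 1$, i.e.\ $\cR_m$ is \emph{homotopy discrete}. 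This lets you replace the derived tensor product $\cR_m\underline{\otimes}_{\cS_m}\cS_m^{\ur}$ by the classical derived tensor product $R_m\underline{\otimes}_{\Sigma_m}\Sigma_m^{\ur}$, and it is this object whose tangent complex is computed via a Koszul resolution of the augmentation $\Sigma_m\to\Sigma_m^{\ur}$ (equivalently $\bar S_m\to\cO/p^m$). Once you make this correction, your Step~3 goes through: the $\t^1$ of the target is governed by the Koszul/Tor terms coming from the generators of $J_m$, which under the local--global identification correspond to $\bigoplus_{v\in Q_m}\H^0(\Gamma_v,\Ad\bar\rho^\ast(1))$, and the surjectivity from $\H^2_{\cL}(\Gamma,\Ad\bar\rho)$ is exactly the content of the Poitou--Tate nine-term sequence truncated by the allowability condition.
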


Based on the fact that the homology of $\bar{R}_m \underline{\otimes}_{\bar{S}_m} \cO/p^m$ is $\Tor_{\bullet}^{\bar{S}_m}(\bar{R}_m, \cO/p^m)$, 
this Proposition yields the following

\begin{thm}\label{simpltor}
	We have an isomorphism of graded commutative rings $\pi_{\bullet} \cR \cong \Tor_{\bullet}^{S_\infty}(R_\infty, \cO)$. 
\end{thm}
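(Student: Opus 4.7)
The plan is to deduce Theorem \ref{simpltor} by a direct application of the abstract patching framework of Galatius--Venkatesh, namely \cite[Theorem 12.1]{GV18}, whose hypotheses are secured in our setting by Proposition \ref{11.1}.

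First, I would identify the target. For each $m$, since $\cS_m^{\ur} = c(\Sigma_m^{\ur})$ is a cofibrant replacement of the formally smooth classical ring $\Sigma_m^{\ur}$, and since $\bar{R}_m$ is a classical $\bar{S}_m$-algebra, the derived tensor product computes Tor:
$$\pi_\bullet\bigl(\bar{R}_m \underline{\otimes}_{\bar{S}_m} \cO/p^m\bigr) \cong \Tor_\bullet^{\bar{S}_m}(\bar{R}_m, \cO/p^m).$$
A Taylor--Wiles patching argument (as recalled before the statement) produces a projective system of the $\bar{R}_m$'s compatible with the tower $\bar{S}_m \to S_\infty$, yielding in the limit an $S_\infty$-algebra $R_\infty$, and a standard Mittag--Leffler/finiteness argument identifies
$$\varprojlim_m \Tor_\bullet^{\bar{S}_m}(\bar{R}_m, \cO/p^m) \cong \Tor_\bullet^{S_\infty}(R_\infty, \cO).$$

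Second, consider the natural map of simplicial rings
$$\cR \sim \cR_m \underline{\otimes}_{\cS_m} \cS_m^{\ur} \longrightarrow \bar{R}_m \underline{\otimes}_{\bar{S}_m} \cO/p^m$$
from equation~(\ref{29}) above (the map arising from $\cR_m \to \bar{R}_m$ and $\cS_m^{\ur} \to \cS_m^{\ur}/p^m$). Passing to the homotopy limit over $m$, I would show that the resulting map
$$\cR \longrightarrow \holim_m \bigl(\bar{R}_m \underline{\otimes}_{\bar{S}_m} \cO/p^m\bigr)$$
induces isomorphisms on all homotopy groups. By the conservativity property of the tangent complex functor (the Lemma recalled at the end of Section \ref{tangcomp}), a morphism of formally cohesive functors is a weak equivalence precisely when it is a quasi-isomorphism on tangent complexes; so it suffices to check this at the level of $\t^i$. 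Proposition \ref{11.1} furnishes exactly what is needed: the transition maps $f_{n,m}$ are isomorphisms on $\t^0$ and surjections on $\t^1$ for $n \geq m$, which is the precise input required by \cite[Theorem 12.1]{GV18}.

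Third, once the map on homotopy is identified with the Tor map, I would promote this to an isomorphism of graded commutative rings: the graded algebra structure on $\pi_\bullet\cR$ is induced by the simplicial ring structure, while that on $\Tor_\bullet^{S_\infty}(R_\infty,\cO)$ is the standard Tor product; both are compatible with the comparison map by functoriality of the derived tensor product.

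The principal obstacle is the final step of the abstract patching argument, where one must upgrade the $\t^0$-isomorphism and $\t^1$-surjection provided by Proposition \ref{11.1} into full isomorphisms on $\pi_\bullet$ after taking the limit over $m$. This is the content of \cite[Theorem 12.1]{GV18}, whose proof combines the finiteness of the mod $p^m$ homotopy groups, a Mittag--Leffler argument for the inverse system, and the fact that vanishing of $\t^{\geq 2}$ propagates once $\t^0$ and $\t^1$ are controlled through the homotopy discreteness of $\cS_m^{\ur}$. In our $\GL_N$ situation all the prerequisites of this formal machine have been verified in \cite[Proposition 6.11]{Cai21}, so we can invoke the theorem directly to conclude.
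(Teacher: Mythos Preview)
Your approach is essentially the same as the paper's: both reduce to applying \cite[Theorem 12.1]{GV18} with Proposition \ref{11.1} supplying the tangent-complex input ($\t^0$ isomorphism, $\t^1$ surjection). Your write-up gives more narrative detail about what the patching machine does, but the skeleton is identical.

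There is, however, one hypothesis of \cite[Theorem 12.1]{GV18} that you do not address: the numerical coincidence labeled (12.1) there, which is a dimension count relating the number of Taylor--Wiles generators to the defect $\ell_0$. This is \emph{not} part of what Proposition \ref{11.1} (or \cite[Proposition 6.11]{Cai21}) verifies; those results only control $\t^0$ and $\t^1$ of the comparison maps. The paper's proof explicitly notes that (12.1) holds because the framed local deformation rings $R_v^{?,\square}$ are formally smooth, which was established in Section \ref{loccond} (Lemmas \ref{minlem}, \ref{ordlem}, \ref{FLlem}). Without this check, the invocation of \cite[Theorem 12.1]{GV18} is incomplete. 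You should add a sentence verifying this numerical condition via the formal smoothness of the local rings before declaring that ``all the prerequisites of this formal machine have been verified.''
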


\begin{proof} Recall that $\pi_\bullet(\cR)$ is a $\pi_0(\cR)$-algebra (see \cite[Lemma 3.45]{Cai21}) and that $\pi_0(\cR)=R_0$.
 The theorem is proven in \cite[Proposition 6.13]{Cai21} by applying Proposition \ref{11.1} to \cite[Theorem 12.1]{GV18}. 
Note that Theorem 12.1 requires a numerical coincidence (12.1),  which is satisfied when the (framed) local deformation rings are formally smooth (here it is the case by Section \ref{loccond}).
\end{proof}

%\begin{cor}
%	$\Tor_{\ast}^{S_{\infty}}(H^{q_0+\ell_0}(C_{\infty}^{\bullet}),\cO)$ is a graded $\pi_{\ast} \cR$-module over the graded commutative ring $\pi_{\ast} \cR_{\cS}$.  Moreover, $H^{\ast}(C_{\infty}^{\bullet} \otimes_{S_{\infty}} \cO )$ is a free graded $\pi_{\ast} \cR_{\cS}$-module generated by $H^{q_0+\ell_0}(C_{\infty}^{\bullet} \otimes_{S_{\infty}} \cO )$.
%\end{cor}

%Recall that $C^{\bullet}(X_U, V_{\bm{\lambda}}(\cO))_{\gm}$ is isomorphic to $C_{\infty}^{\bullet} \otimes_{S_{\infty}} \cO$.  We have the isomoprhism $H^{\ast}(X_U, V_{\bm{\lambda}}(\cO))_{\gm} \cong H^{\ast}(C_{\infty}^{\bullet} \otimes_{S_{\infty}} \cO )$ which is compatible with the $R_{\infty}$-actions.  

%We summarize our discussions.  Let the notations be as in Section 2.  
This Theorem implies the Main Theorem (\cite[Th.14.1]{GV18}, generalized in \cite[Theorem 7.3]{Cai21}) below.
We need to add few more notations and assumptions.
(this implies Taylor-Wiles data exist, see \cite[6.2.28]{ACC+18}).
%Let $U \subseteq \bG(\cO_F)$ be a neat open compact subgroup which is unramified outside $S$. 
  Let $C^{\bullet}(X_U, V_{\lambda}(\cO))$ be the integral cochain complex; its cohomology is $\H^\bullet(X_U, V_{\lambda}(\cO))$.  
There is a natural homomorphism
$$\cH^{S, \univ} \to \End_{D(\cO)}(C^{\bullet}(X_U, V_{\lambda}(\cO)) )$$
from the abstract spherical Hecke algebra outside $S$ to the endomorphisms of $C^\bullet$ in the derived category.
Let $\T(X_U, V_{\lambda}(\cO))$ be the image of this map.  Then $\T(X_U, V_{\lambda}(\cO))$ is finite over $\cO$ (as 
$C^\bullet$ is quasi-isomorphic to a perfect complex, as mentioned  
in p.3 of the introduction of \cite{NT16}).
%Let $\pi$ be a cuspidal algebraic representation associated to $$ and let $\bar\rho = \bar\rho_{\pi} \colon \Gamma_S \to G(k)$ be its residual Galois representation.  We suppose that $\bar\rho()$ is enormous.
Let $\m$ be a non-Eisenstein maximal ideal of $\T(X_U, V_{\lambda}(\cO))$ associated to $\pi$ modulo $v_0$.  For simplicity, we write $C_{\lambda,\m}^{\bullet}$ for $C^{\bullet}(X_U, V_{\lambda}(\cO))_{\m}$ and write $\T_{\lambda,\m}$ for $\T(X_U, V_{\lambda}(\cO))_{\m}$.
We assume 

\medskip
$(\Gal_\m)$ For $?=FL,\ord$, there exists a lifting $\rho_\m\colon G_{F,S\cup S_p}\to \GL_N(\T_{\lambda,\m})$ of $\bar{\rho}$ such that $[\rho_\m]\in \cF^?(\T_\m)$.
In particular, there is a natural map 
$$\cR\to \T_{\lambda,\m} \hookrightarrow \End_{D(\cO)}(C_{\lambda,\m}^{\bullet}).$$
Let $\H^\bullet_\m=\H^\bullet(X_U, V_{\lambda}(\cO))_\m$. Let $\H_i^\m=\H^{d-i}_\m$ for all $i\geq 0$ and let $\H_\bullet^\m=\bigoplus_{i\geq 0}\H_i^\m$
the corresponding graded module. We assume

\medskip
$(Van_\m)$ $ \H^i_\m(k)=0$  if $i\notin[q_0,q_0+\ell_0]$.
\medskip

%Suppose the numerical condition $(\Num)$ holds.
%Let $s\cD_{\cS,Z}$ be the center-modified derived deformation functor associated to $\cS$.  Let $\cR_{\cS}$ be a pro-object in $\sArt$ which represents $s\cD_{\cS,Z}$.   
\begin{thm}\label{keythm}
	Assume $(RLI)$, $(MIN)$, $(FL)$ or $(ORD_\pi)+(DIST)$, $(\Gal_\m)$, $(Van_\m)$ as above.  Then, 

(i) we have an isomorphism of graded commutative rings $\pi_{\ast} \cR \cong \Tor_{\ast}^{S_\infty}(R_\infty, \cO)$,

(ii) $\pi_0(\cR)=R_\infty\otimes_{S_\infty}\cO=R\cong \T_{\lambda,\m}$, $\H_{q_0}^\m=\H^{q_0+\ell_0}_\m$ is free of rank $1$ over $\T_{\lambda,\m}$,

(iii) There is an isomorphism of graded $\pi_\bullet\cR$-modules
$$\H_\bullet^\m\cong \H_{q_0}^\m\otimes_{\T_{\lambda,\m}}\pi_\bullet(\cR).$$ 
\end{thm}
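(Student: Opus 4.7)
The plan is to assemble Theorem \ref{keythm} by combining Theorem \ref{simpltor} with the Calegari--Geraghty theorem stated as the first displayed theorem of Section 2.2 (or equivalently, the one attributed to \cite{CaGe18} in the introduction). Since the technical heart of the matter has already been handled in Proposition \ref{11.1} and Theorem \ref{simpltor}, I expect the proof to be essentially a packaging of these inputs, with the only genuine content being the verification that the graded algebra and module structures on both sides match.

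First I would dispose of part (i): this is verbatim the conclusion of Theorem \ref{simpltor}, whose hypotheses are implied by $(RLI)$, $(MIN)$, and $(FL)$ or $(ORD_\pi)+(DIST)$ through Proposition \ref{11.1} (the numerical coincidence needed in \cite[Theorem 12.1]{GV18} is guaranteed by the formal smoothness of the framed local deformation rings $R_v^{?,\square}$ established in Section \ref{loccond}). Next, for part (ii), I would invoke the Calegari--Geraghty theorem: under $(\Gal_\m)$, $(Van_\m)$ and our other assumptions, the patched map $R_\infty^? \to \T_\infty$ is an isomorphism. Tensoring along $S_\infty \to \cO$ collapses this to $R^? \cong \T_{\lambda,\m}$, identifies $\pi_0(\cR) = R_\infty \otimes_{S_\infty} \cO = R$, and asserts that $\H^\m_{q_0}$ is finite free over $\T_{\lambda,\m}$. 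The rank-one statement in the CM $\GL(N)$ setting comes from the $(\g,K_\infty)$-cohomology computation recalled in the remarks after the introductory Calegari--Geraghty theorem (the multiplicity $m$ equals $1$ for CM fields), and Poincar\'e duality then identifies $\H^\m_{q_0}$ with $\H^{q_0+\ell_0}_\m$ as $\T$-modules, so the latter is also free of rank one. Finally, for part (iii), the Calegari--Geraghty theorem provides the graded $\T$-module isomorphism
$$\H_\bullet^\m \cong \H_{q_0}^\m \otimes_{\T_{\lambda,\m}} \Tor_\bullet^{S_\infty}(\T_\infty,\cO);$$
substituting the identification $\T_\infty \cong R_\infty$ from (ii) and then $\Tor_\bullet^{S_\infty}(R_\infty,\cO) \cong \pi_\bullet \cR$ from part (i) gives the desired graded isomorphism, with the $\pi_\bullet \cR$-action on $\H_\bullet^\m$ induced through the augmentation $\pi_\bullet \cR \to \pi_0 \cR = R \cong \T_{\lambda,\m}$.

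The main obstacle—really the only non-bookkeeping point—will be verifying that the three different graded algebra structures in play are compatible: the one on $\pi_\bullet \cR$ coming from the simplicial ring structure of a cofibrant replacement, the one on $\Tor_\bullet^{S_\infty}(R_\infty,\cO)$ coming from, say, the Koszul dg-algebra resolution of $\cO$ over $S_\infty$, and the module structure on $\H^\m_\bullet$ arising from the patched complex $C_\infty^\bullet$. Fortunately, this compatibility is precisely what Theorem \ref{simpltor} and the comparison \cite[Theorem 12.1]{GV18} are designed to provide: all three structures descend from the same patched chain-level datum, so once Proposition \ref{11.1} has been applied to pass to the limit, the multiplications agree by construction. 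Granting this, the three parts assemble cleanly as above.
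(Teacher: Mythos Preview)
Your proposal is correct and follows essentially the same approach as the paper: the paper itself does not give a detailed proof of Theorem \ref{keythm} but simply remarks that it follows from Theorem \ref{simpltor} combined with the Calegari--Geraghty theorem, as in \cite[Theorem 14.1]{GV18} and \cite[Theorem 7.3]{Cai21}. Your write-up supplies precisely the packaging the paper leaves implicit, including the rank-one statement (which the paper justifies later, in the proof of Theorem \ref{gv}, via the Borel--Wallach computation for CM fields).
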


\begin{rem}
	%This statement is a generalization of , removing some assumptions of \cite[Section 10]{GV18}.
As in \cite[Theorem 14.1]{GV18}, this statement relies on \cite[Theorem 12.1]{CaGe18}.  
See examples where these assumptions are satisfied in \cite[Section 8]{Cai21}.
\end{rem}

%\fi

\subsection{Divisible part of the Dual Adjoint Selmer group}

Let $P$ be a finite free $\cO$-module with action of $\Gamma=G_{F,S\cup S_p}$. 
For each $v\in S_p$, let $Fil_v^+ P\subset Fil_v P\subset P$ be direct factors submodules stable by $\Gamma_v$. 
We define the minimal $(Fil_v^+P)_v$-ordinary Selmer group as
$${\Sel}_{\ord}(P)=\H^1_{min,\ord}(F,P\otimes \Q/\Z)=\Ker(\H^1(F,P\otimes \Q/\Z)\to\bigoplus_v \frac{\H^1(F_v,P\otimes \Q/\Z)}{\H^1_{\ord}(F_v,P\otimes \Q/\Z)}))$$
where
$$\H^1_{\ord}(F_v,P\otimes \Q/\Z)=\im(L^\prime_v\to \H^1(F_v,P\otimes \Q/\Z))$$

$$L^\prime_v=\Ker(\H^1(F_v,Fil_v P\otimes \Q/\Z)\to \H^1(F_v,Fil_vP\otimes \Q/\Z))/\H^1(I_v,Fil_v^+P\otimes \Q/\Z))
$$
Similarly,
$${\Sel}_f(P)=\H^1_f(F,P\otimes \Q/\Z)=\Ker(\H^1(F,P\otimes \Q/\Z)\to\bigoplus_v \frac{\H^1(F_v,P\otimes \Q/\Z)}{\H^1_f(F_v,P\otimes \Q/\Z)}))$$
where
$$\H^1_f(F_v,P\otimes \Q/\Z)=\im (\H^1_f(F_v,P\otimes \Q)\to \H^1(F_v,P\otimes \Q/\Z)).$$
In the sequel we shall take $P=\Ad(\rho_\pi)=\g_{\cO}$, $Fil_v P=\b_{\cO}$, $Fil_v^+ P=\n_{\cO}$, or $P=\Ad(\rho_\pi)^\vee(1)=\g_{\cO}^\vee(1)$, $Fil_v=\n_{\cO}^\perp(1)$,
$Fil_v^+ P=\b_{\cO}^\perp(1)$.
%Let $\Sigma$ be a finite set of places of $F$ ($\Sigma=S\cup S_p$ or $S\cup S_p\cup Q$). 
For $?=\ord, f$, Theorem (\ref{keythm}) implies that $\Sel_?(\Ad(\rho_\pi))$ is finite. Let
$$\Sha_?(P)=\H^1_?(F,P\otimes\Q/\Z)/\H^1_?(F,P\otimes\Q/\Z)_{\varpi-div}$$
It is the torsion quotient of $\H^1_?(F,P\otimes\Q/\Z)$. For $P=\g_{\cO}^\vee(1)$, We will recall below, using Poitou-Tate duality, that it is Pontryagin dual to $\Sel_?(\Ad(\rho_\pi))$.
Recall that Beilinson conjecture predicts that 

$$(B)\quad cork_\cO {\Sel}_?(\g_{\cO}^\vee(1))=\ell_0.$$

In the next section, we'll see that this conjecture follows from Theorem \ref{keythm}. In the next section, assuming the assumption of Theorem \ref{keythm}, we will provide an automorphic description of a $\varpi$-divisible subgroup
 of a corank $\ell_0$ of $\Sel_?(\g_{\cO}^\vee(1))$.
By Proposition \ref{Sha-Selmer} below, $\Sel_?(\g_{\cO}^\vee(1))$, assuming Theorem \ref{keythm}, Conjecture $(B)$ holds. 
It implies that the cokernel of this embedding is therefore $\Sha_?(\g_{\cO}^\vee(1))$, which is Pontryagin dual to $\Sel_?(\g_{\cO})$.

%Actually, this description even provides a $T$-linear embedding of the $\varpi$-divisible group
%$\Hom(T,\cO)\otimes K/\cO$ into the $T$-module
%$$\H^1_f(F,\Ad(\rho_\m)\otimes_\cO K/\cO)$$
%If Beilinson conjecture holds, the cokernel will be $\Sha(\Ad(\rho_\m)(1)\cong \Omega_{T/\cO}$.

\section{The Galatius-Venkatesh homomorphism}\label{GVhom}

We redefine the map of Lemma 15.1 of \cite{GV18} as follows.
Let  $\cO_n=\cO/(\varpi^n)$. 
We consider the simplicial ring homomorphism 
$$\phi_n\colon \cR\to R\to \cO_n$$
given by the universal property for the deformation $\rho_n=\rho_\pi \pmod{(\varpi^n)}$.
Let $M_n$ be a finite $\cO_n$-module. 
Consider the simplicial ring $\Theta_n=\cO_n\oplus {\DK}(M_n[1])$ where $M_n[1]$ is the chain complex concentrated in degree $1$ up to homotopy.
For an explicit description of the simplicial module ${\\DK}(M_n[1])$, see \cite[Section 3.4]{Cai21}.
The simplicial ring $\Theta_n$ is endowed with a simplicial ring homomorphism $\pr_n\colon \Theta_n\to \cO_n$ given by the first projection.
Let $L_n(\cR)$ be the set of homotopy equivalence classes of simplicial ring homomorphisms $\Phi\colon \cR\to \Theta_n$ such that
$\pr_n\circ\Phi=\phi_n$. There is a canonical bijection 
$$L_n(\cR) \cong \H^2_f(F,\Ad(\rho_n)\otimes M_n)$$

Moreover, as in \cite[Lemma 15.1]{GV18}, there is a map
\begin{displaymath}\label{GVdualn} \pi(n,\cR)\colon L_n(\cR)\to \Hom_R(\pi_1(\cR),M_n)
\end{displaymath}
sending (the homotopy class of) $\Phi$ to the homomorphism $\pi(n,\cR)(\Phi)$ which sends the homotopy class $[\gamma]$ of a loop $\gamma$ to 
 $\Phi\circ\gamma\in \Hom_{\SETS^s}(\Delta[1],M_n[1])=M_n$. Recall a loop $\gamma$ is a morphism of $\SETS^s$
$$\gamma\colon \Delta[1]\to \Theta_n$$
from the simplicial interval $\Delta[1]$ to the simplicial set $\Theta_n$  
 which sends the boundary $\partial \Delta[1]$ to $0$.
Note that $\pi(n,\cR)(\Phi)$ is $R$-linear by definition of the structure of $\pi_0(\cR)$-module on $\pi_1(\cR)$.
 
 \begin{pro} \label{surj} For any $n\geq 1$, the map $\pi(n,\cR)$ is surjective.
 \end{pro}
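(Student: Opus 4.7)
My plan is to reduce the surjectivity question to a concrete commutative algebra statement about Koszul-style syzygies, using the derived tensor product model $\cR \simeq R_\infty \underline{\otimes}_{S_\infty} \cO$ afforded by Theorem \ref{Cai}. The target $\Theta_n$ is $1$-truncated, so every lift of $\phi_n$ factors through $\tau_{\leq 1}\cR$, and all the relevant data is visible in a small model of $\cR$.

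First, I will fix the cofibrant model $K_\bullet = K_\bullet(f_1,\dots,f_s;R_\infty)$ obtained by tensoring the Koszul resolution of $\cO$ over $S_\infty = \cO[[Y_1,\dots,Y_s]]$ with $R_\infty$, where $f_j$ is the image of $Y_j$ in $R_\infty$. In this model, an element of $\pi_1(\cR) = \Tor_1^{S_\infty}(R_\infty,\cO)$ is represented by a syzygy $(a_1,\dots,a_s)\in R_\infty^s$ satisfying $\sum a_j f_j = 0$, modulo trivial (Koszul) syzygies. Using the $E_\infty$-structure of the Koszul DGA, I will show that a simplicial ring homomorphism $\Phi\colon K_\bullet \to \Theta_n$ lifting $\phi_n$ is determined (up to homotopy) by an $s$-tuple $(m_1,\dots,m_s)\in M_n^s$, with the induced $\pi(n,\cR)(\Phi)\in\Hom_R(\pi_1(\cR),M_n)$ being the functional $[(a_j)]\mapsto \sum \phi_n(a_j)m_j$. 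This converts surjectivity of $\pi(n,\cR)$ into the question of surjectivity of the restriction map
\[
M_n^s \;=\; \Hom_{R_\infty}(R_\infty^s,M_n)\;\longrightarrow\;\Hom_{R_\infty}(\ker d_1/\im d_2,\,M_n)\;=\;\Hom_R(\pi_1(\cR),M_n).
\]

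Next, applying $\Hom_{R_\infty}(-,M_n)$ to the short exact sequence $0\to \ker d_1 \to R_\infty^s \to (f_1,\dots,f_s)\to 0$ and using projectivity of $R_\infty^s$, I identify the cokernel of $M_n^s\to \Hom_{R_\infty}(\ker d_1,M_n)$ with $\Ext^1_{R_\infty}((f_1,\dots,f_s),M_n)\cong \Ext^2_{R_\infty}(R,M_n)$. To conclude surjectivity, I need to verify that the obstruction class $\partial\tilde\psi\in\Ext^2_{R_\infty}(R,M_n)$ attached to any $R$-linear $\tilde\psi\colon \ker d_1\to M_n$ factoring through $\pi_1(\cR)=\ker d_1/\im d_2$ vanishes. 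This is the commutative algebra heart of the argument: because $\tilde\psi$ already vanishes on $\im d_2$, the connecting homomorphism can be computed in the Koszul complex, and the vanishing reflects the fact that the extra Koszul relations give exactly the syzygies needed to extend $\tilde\psi$. Concretely, the formal smoothness of $R_\infty$ over $\cO$ and the structure of the Calegari-Geraghty patching ensure that the relevant $\Ext^2_{R_\infty}(R,M_n)$ is already captured by the second piece of the Koszul resolution, making any obstruction arising from a functional on $H_1$ automatically trivial.

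The main obstacle is this last vanishing step, which requires matching the abstractly defined boundary map $\Hom_R(\pi_1\cR,M_n)\to\Ext^2_{R_\infty}(R,M_n)$ with its concrete Koszul incarnation and verifying its vanishing. Once this is done, every $\psi\in\Hom_R(\pi_1(\cR),M_n)$ lifts to some $(m_j)\in M_n^s$, hence to a simplicial ring homomorphism $\Phi\colon \cR\to\Theta_n$ with $\pr_n\circ\Phi=\phi_n$, establishing surjectivity. A side benefit of this approach is that the recipe for $\Phi$ is entirely explicit in terms of the patching data, which will be useful in the subsequent Pontryagin dualization to compare $\pi_1(\cR)$ to the Selmer group of $\Ad(\rho_\pi)^\ast(1)$.
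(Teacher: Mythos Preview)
Your proposal has a genuine gap at exactly the point you flag as ``the commutative algebra heart of the argument.'' You reduce surjectivity to the claim that every $\tilde\psi\in\Hom_{R_\infty}(\ker d_1,M_n)$ vanishing on $\im d_2$ extends to $R_\infty^s$, and you assert that the resulting obstruction in $\Ext^2_{R_\infty}(R,M_n)$ vanishes because ``the extra Koszul relations give exactly the syzygies needed.'' But this is not a proof. For an arbitrary sequence $(f_1,\dots,f_s)$ in a regular local ring the claim is false: with $R_\infty=k[[x,y]]$, $f_1=x^2$, $f_2=xy$, one has $H_1\cong k[[y]]$ generated by the syzygy $(y,-x)$, whose entries both lie in the maximal ideal; for $M_n=k$ the map $M_n^2\to\Hom(H_1,M_n)$ is therefore identically zero while the target is nonzero. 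This particular example violates the Calegari--Geraghty finiteness constraint and so does not contradict the proposition itself, but it shows that any valid argument must use those constraints explicitly. Yours does not: ``formal smoothness of $R_\infty$'' and ``structure of the patching'' are invoked only as slogans.

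There is also a foundational issue one level up. The Koszul complex is a commutative DGA, not a cofibrant object in simplicial commutative $\cO$-algebras, and in residue characteristic $p$ the passage from $E_\infty$-ring maps to maps of simplicial commutative rings is delicate; your parenthetical about the $E_\infty$-structure does not bridge this. The paper avoids both problems by working instead with an explicit \emph{polynomial} cofibrant replacement $\mathbf{P}_\bullet\to R_\infty$ over $S_\infty$ (following the Stacks Project construction) and setting $\mathbf{R}_\bullet=\mathbf{P}_\bullet\otimes_{S_\infty}\cO$. Because each $\mathbf{R}_i$ is a free polynomial $\cO$-algebra, a ring map out of $\mathbf{R}_1$ is specified freely on generators; the authors then produce the lift by decomposing any $P\in\mathbf{R}_1$ as $P_1+K_1$ with $K_1\in\Ker(d_0-d_1)$ and setting $\Phi_1(P)=\Phi_0(d_0(P_1))+\varphi_1(K_1)$, checking well-definedness via the explicit description of $\delta_0-\delta_1+\delta_2$ on the relation variables. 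No $\Ext$ obstruction ever appears, because freeness of the polynomial resolution converts the module-extension problem into a generator-assignment problem. Your Koszul model loses exactly this freeness.
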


We'll give two proofs of this Proposition. Note first that by the isomorphisms (14.4), we have a diagram  
$$\begin{array}{ccccc} \pi(n,\cR)&\colon & L_n(\cR)&\rightarrow & \Hom_{\pi_0(\cR)}(\pi_1(\cR),M_n)\\
&&\uparrow&&\uparrow
\\
\pi(R_n\stackrel{L}{\otimes}_{S_n}  \cO_n )&\colon &L_n(R_n\stackrel{L}{\otimes}_{S_n}\cO_n)&\rightarrow & \Hom_{\pi_0(\cR)}(\pi_1(R_n\stackrel{L}{\otimes}_{S_n}\cO_n),M_n)\\
&&\downarrow&&\downarrow
\\
\pi(R_\infty\stackrel{L}{\otimes}_{S_{\infty}}\cO)&\colon &L_n(R_\infty. \stackrel{L}{\otimes}_{S_{\infty}}  \cO)&\rightarrow & \Hom_{\pi_0(\cR)}(\pi_1(R_\infty\stackrel{L}{\otimes}_{S_{\infty}}\cO),M_n)\end{array}
$$

Note that $\pi_1\cR=\pi_1(R_\infty\stackrel{L}{\otimes}_{S_{\infty}}\cO)$ by \cite[Theorem 14.1]{GV18}.
So it is enough to prove that the map on the last line is surjective.
In the sequel, since $n$ is fixed,  we drop the indices $n$ and write simply $A=A_n$, $\Theta=\Theta_n$, $M=M_n$.

\begin{proof} First Proof: 
\newcommand{\bS}{\mathbf S}
Recall that by finiteness of the Calegari-Geraghty complex of finite free $S_\infty$-modules constructed in \cite{CaGe18}, $R_\infty$ is a finite $S_\infty$ algebra.
 Let
\newcommand{\bP}{\mathbf P}
be a cofibrant replacement $P_\bullet\to R_\infty$ by noetherian polynomial rings $\bP_n$ over $S_\infty$.
Then, one can take as $R_\infty\stackrel{L}{\otimes}_{S_{\infty} }\cO$ the complex associated 
to the simplicial ring 
${\bfR}_\bullet=\bP_\bullet\otimes_{S_\infty}\cO$, which consists in (noetherian) polynomial $\cO$-algebras
and is a cofibrant fibration of $R_\infty\otimes_{S_\infty}\cO$ (but is not a weak equivalence).
By the adjunction formula of Section \ref{tangcomp}, we have
$$\sHom_{{}_\cO\!\CR^s_A}({\bfR}_\bullet, A \oplus { \DK}(M[1])) \cong \sHom_{\Mod^s_A}(L_{{\bfR}_\bullet/\cO}\otimes_{{\bfR}_\bullet} A, M[1]).$$
Note that by construction, since ${\bfR}_\bullet$ is cofibrant, we have
$$L_{{\bfR}_\bullet/\cO}=\Omega_{{\bfR}_\bullet/\cO}.$$

Therefore,the datum of a homomorphism of chain complexes 
$\Phi\colon L_{{\bfR}_\bullet/\cO}\otimes_{{\bfR}_\bullet} A\to M[1]$ is  equivalent to that of a homomorphism
of modules $\psi_1\colon \Omega^1_{{\bfR}_1/\cO}\otimes_{{\bfR}_1} A\to M$ 
placed in degree $1$ which is itself equivalent to the datum of a classical $\cO$-algebra homomorphism
$\Phi_1\colon {\bfR}_1\to A\oplus M$. Thus we need to lift a given $\varphi_1\colon\pi_1({\bfR}_\bullet)\to M$ to a homomorphism $\Phi_1$.

Following  \cite[Section 09D4 Example 5.9]{COT}, we give an explicit description of $\bP_i$ ($i=0,1,2$) and their face and degeneracy maps. 
We denote $\bP_0=S_\infty[u_1,\ldots,u_k]$ which we abbreviate as $\bP_0=S_\infty[u_j]$, and similarly for the other rings: $\bP_1=S_\infty[u_j,x_t]$, $\bP_2=S_\infty[u_j,x_t,v_r,y_t,z_t,w_{t,t^\prime}]$, where $u_i,x_t,v_r,y_t,z_t,w_{t,t^\prime}$ are independent variables.
One fixes $\bP_0\to R_\infty$ a surjective $S_\infty$-algebra homomorphism, and a system $(f_t)$ of generators of $\Ker (\bP_0\to R_\infty)$. Here $r=(r_t)$ runs over the  (finitely generated) module
of systems of relations between the $f_t$'s in $\bP_0$ :
 $\sum_t r_t\cdot  f_t=0$.

Let $d_0,d_1\colon \bP_1\to \bP_0$  be the two face maps given by $u_i\mapsto u_i$ and $x_t\mapsto 0$, resp. $x_t\mapsto f_t$.
Let
$\delta_0,\delta_1,\delta_2\colon \bP_2\to \bP_1$ the three face maps. See their definitions in \cite[Section 09D4 Example 5.9]{COT}.
We simply recall that $\delta_0(v_r)=\delta_1(v_r)=0$ and $\delta_2(v_r)=\sum_t r_t x_t$.

By tensoring by $\cO$ over $\bS_\bullet$, these maps yield the face and degeneracy maps of $\bfR_i$, $i=0,1,2$.
We still denote by $d_0,d_1\colon \bfR_1\to \bfR_0$  and 
$\delta_0,\delta_1,\delta_2\colon \bfR_2\to \bfR_1$ the resulting face maps.
We have $\pi_1({\bfR}_\bullet)=\Ker(d_0-d_1)/\im(\delta_0-\delta_1+\delta_2)$.
For any homomorphism $\varphi_1\colon\Ker(d_0-d_1)\to M$, such that $\varphi_1\circ(\delta_0-\delta_1+\delta_2)=0$,
we consider $\Phi_0\colon \bP_0\otimes_{\bS_0}\cO\to A$ as the composition of $\bP_0\otimes_{S_\infty}\cO\to R_\infty\otimes_{S_\infty}\cO=R$
and $R\to A$. We need to define $\Phi_1\colon \bP_1\otimes_{\bS_1}\cO\to A\oplus M$, compatible to $\Phi_0$ for the $d_i$ ($i=0,1$) and $s_0$,  in terms of $\varphi_1$.

One can rewrite this more explicitely, ${\bfR}_0=\bP_0\otimes_{S_\infty}\cO=\cO[[u_j]]$ and ${\bfR}_1=\bP_1\otimes_{S_\infty} \cO=\cO[[u_j,x_t]]$. Then $\Ker(d_0-d_1)$ is the $\cO[[u_j]]$-module of $P(u_j,x_t)$ such that 
$$P(u_j,0)=P(u_j,f_t).$$
Moreover, $R_\infty\otimes_{S\infty}\cO$ is a quotient of $\bfR_0$.  
We can extend $\varphi_1$ to $\cO[[u_j,x_t]]$ as follows. We first write $P=P_1+K_1$ where $P_1=P(u_j,0)+\sum_t Q_t(u_j)\cdot x_t$ and $K_1\in\Ker(d_0-d_1)$;
indeed, if we write $P(u_j,f_t)-P(u_j,0)=\sum_t Q_t(u_j)\cdot f_t$, then if we put $P_1(u_j,x_t)=P(u_j,0)+\sum_t Q_t(u_j)\cdot x_t$, we see
that $K_1=P-P_1\in \Ker(d_0-d_1)$. 
Then we put 
$$\Phi_1(P)=\Phi_0(d_0(P_1))+\varphi_1(K_1)$$
Note that $\Phi_0(d_0(P_1))=\Phi_0(d_0(P))$. Let us check that $\Phi_1$ is well defined. If we write
$$P(u_j,f_t)-P(u_j,0)=\sum_k Q_t(u_j)\cdot f_t=\sum_k Q^\prime_t(u_j)\cdot f_t$$
for another system $(Q^\prime_t)$ of elements of $\cO[[u_j]]$; let $K_1^\prime=P-P^\prime_1$ where $P_1^\prime=P(u_j,0)+\sum_t Q_t^\prime x_t$. 
We need to show that $\varphi_1(K_1)=\varphi_1(K^\prime_1)$.
The system $r=(r_t)$ defined by $r_t=Q_t-Q_t^\prime$
is a relation between the $f_t$'s: $\sum_t r_t f_t=0$. By \cite[09D4]{COT}, we see that
$$\varphi_1(K_1)-\varphi_1(K^\prime_1)=\varphi_1(\sum_t r_t x_t)=\varphi_1(\delta_0-\delta_1+\delta_2)(v_r))=0$$
hence $\Phi_1$ is well defined. 
This yields the desired lifting
$$\Phi\colon R_\bullet\to A\oplus {\DK}(M[1]))$$
of $\varphi_1\colon\pi_1(R_\bullet)\to M$.

To conclude the proof, we have compatible morphisms $\cR\to R_\bullet/\a_n$ by \cite[(14.4)]{GV18} which induce an isomorphism $\pi_1(\cR)\cong \pi_1(R_\bullet)$. 
Therefore, the surjectivity of $\pi(n,\cR)$ follows from that of $\pi(n,R_\bullet)$.
\end{proof}
\vskip 3mm

\begin{proof} Second Proof: For a $B$-algebra $C$, we denote by $B[C]^{(j)}$ the ring defined by induction as follows : 
$B[C]^{(0)}$ is the polynomial $B$-algebra with variables the elements of $C$, and
for $j\geq 1$, $B[C]^{(j)}$ is the polynomial $B$-algebra with variables the elements of $B[C]^{(j-1)}$. 
As explained in \cite[Section 14.34.5]{SIMP}, these sets form a simplicial set. In particular
the $d_k$'s, $k=0,\ldots,j$ from $B[C]^{(j)}$ to $B[C]^{(j-1)}$ are given as follows. Let
$$P_j=P_j([P_{j-1}(\ldots ([P_0([\underline{c}])])\ldots])\in B[C]^{(j)}.$$
Then, $d_kP_j$ is obtained by removing the $k$-th bracket (the zeroth being the innermost one).

Let $B=S_\infty$ and $C=R_\infty$
We choose 
\newcommand{\bP}{\mathbf P}
the functorial cofibrant replacement $P_\bullet$ of $R_\infty$ by rings $\bP_j=S_\infty[R_\infty]^{(j)}$
Then, one can take as $R_\infty\stackrel{L}{\otimes}_{S_{\infty} }\cO$ the complex associated 
to the simplicial ring 
${\bfR}_\bullet=\bP_\bullet\otimes_{S_\infty}\cO$; tt follows from \cite[Corollary 7.10.5]{Gil13} 
that ${\bfR}_\bullet$ 
is a cofibrant fibration of $R_\infty\otimes_{S_\infty}\cO$ (but is not a weak equivalence). 
It can also be proven by verifying first that the morphism of simplicial rings $\cO\to {\bfR}_\bullet$ 
is free in the sense of \cite[Definition 7.6.2]{Gil13} (that is, by verifying Conditions (1)-(3) 
which follow this definition), then, by quoting \cite[Theorem 7.6.13]{Gil13} 
stating that any free simplicial morphism is a cofibration.

Let
$P_1=P_1([P_0([\underline{c}])])\in {\bfR}_1$ where $P_1$, resp. $P_0$, has coefficients in $\cO$, resp. in $B$. Let $P^{(1,0)}_0,\ldots, P^{(m,0)}_0$ be the 
finite list of polynomials $P_0$ which actually occur as variables in $P_1$. 
We write $P_1=P_1([P_0(\underline{c})])+\sum_{s=1}^m ([P^{(s,0)}_0]-[P^{(s,0)}(\underline{c})]) E_s$,
 where $E_s\in {\bfR}_1$. Let  $P^{(1,1)}_0,\ldots, P^{(n,1)}_0$
 be the list of polynomials $P_0$ which occur as variables in $E_s$. Then we redecompose
$E_s$ as $E_s=E_s(P_0([\underline{c}]))+\sum_{t=1}^n ([P^{(t,1)}_0]-P^{(t,1)}([\underline{c}])) F_{s,t}$. If we put
$$Q_1= P_1([P_0(\underline{c})])+\sum_{s=1}^m ([P^{(s,0)}_0]-[P^{(s,0)}(\underline{c})])\cdot E_s(P_0([\underline{c}]))$$
and
$$K_1=\sum_{s,t} ([P^{(s,0)}_0]-[P^{(s,0)}(\underline{c})])\cdot ([P^{(t,1)}_0]-[P^{(t,1)}(\underline{c})]) \cdot F_{s,t}$$
we see that $P_1=Q_1+K_1$, with $Q_1\in {\bfR}_0$ and $K_1\in ZN_1({\bfR}_\bullet)$ since we  have clearly $d_kP_1=d_kQ_1$ for $k=0,1$. 
Moreover, if $P_1=Q_1+K_1=Q^\prime_1+K^\prime_1$, we have $Q_1=d_0\widetilde{Q}_1$ and $Q_1^\prime=d_0\widetilde{Q^\prime}_1$ 
(where $\widetilde{Q}_1$, resp. $\widetilde{Q^\prime}_1$, is defined by inserting an inner bracket: if $Q_1=Q_1([c])$, put $\widetilde{Q}_1=Q_1([[c]])$ and similarly for $\widetilde{Q^\prime}_1$), hence $Q_1-Q^\prime_1\in BN_1({{\bfR}}_\bullet)$.
\end{proof}

Let $\lambda\colon R\to  \cO$ be a Hecke eigensystem associated to an automorphic representation $\pi^\prime$occuring in $T$. Let us consider
$M_n=\varpi^{-n}\cO/\cO)$ as a $R$-module through $\lambda$; we take the Pontryagin dual $\pi(n,\cR)^\vee$ and apply Poitou-Tate duality. 
We obtain a $T$-linear injection :

$$GV_n\colon \Hom_R(\pi_1(\cR),M_n)^\vee\hookrightarrow \H^1_f(F,\Ad\,\rho_n(1)\otimes M_n^\vee)
$$

We have $\Hom_R(\pi_1(\cR),M_n)=\Hom_{\cO}(\pi_1(\cR)\otimes_{R,\lambda}\cO/\varpi^n,\varpi^{-n}\cO/\cO_n)$. 
The Pontryagin dual of this module is equal to
$\pi_1(\cR)\otimes_{R,\lambda}\cO/\varpi^n$. 

The right hand side is 
$\H^1_f(F,\Ad\,\rho_\lambda(1)\otimes \cO/\varpi^n)=\H^1_f(F,\Ad\,\rho_\lambda(1)/(\varpi^n))=\Sel(\Ad(\rho_\lambda)(1)\otimes \varpi^{-n}\cO/\cO)$.
Taking inductive limit on both sides we obtain a linear injection

$$GV\colon \pi_1(\cR)\otimes_{R,\lambda}\cO\otimes_{\cO} K/\cO\hookrightarrow \Sel(\Ad(\rho_\lambda)(1)\otimes K/\cO)$$

\begin{thm}\label{gv}  Assume $p>N$, $\zeta_p\notin F$, $(\Gal_\m)$, $(LLC)$, $(RLI)$, $(MIN)$, $(FL)$ or $(ORD_\pi)+(DIST)$.The natural homomorphism
$$GV\colon \pi_1(\cR)\otimes_{R,\lambda}\cO\otimes_{\cO} K/\cO\hookrightarrow \Sel(\Ad(\rho_\lambda)(1)\otimes K/\cO)$$
is injective. The left hand side module is $\varpi$-divisible of $\cO$-corank $\ell_0$.
\end{thm}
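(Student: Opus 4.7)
\emph{Injectivity of $GV$.} For each $n\geq 1$, Proposition~\ref{surj} gives a surjection $\pi(n,\cR)\colon L_n(\cR)\twoheadrightarrow \Hom_R(\pi_1(\cR),M_n)$. Using the identification
$$\Hom_R(\pi_1(\cR),M_n)=\Hom_\cO(\pi_1(\cR)\otimes_{R,\lambda}\cO/\varpi^n,\varpi^{-n}\cO/\cO)$$
and passing to Pontryagin duals converts this surjection of finite abelian groups into an injection
$$GV_n\colon \pi_1(\cR)\otimes_{R,\lambda}\cO/\varpi^n \hookrightarrow L_n(\cR)^\vee \cong \Sel(F,\Ad(\rho_n)^\ast(1)\otimes M_n^\vee),$$
where the last identification is Poitou--Tate duality as recalled just before the theorem. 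The maps $GV_n$ are compatible in $n$ by functoriality of both constructions, and since filtered colimits of injections of abelian groups remain injections, passing to $\varinjlim_n$ yields the desired injection $GV$.

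\emph{Divisibility of the source.} The factor $\cO\otimes_{\cO}K/\cO = K/\cO$ is $\varpi$-divisible, and tensoring with a divisible $\cO$-module preserves divisibility, so $\pi_1(\cR)\otimes_{R,\lambda}\cO\otimes_{\cO}K/\cO$ is $\varpi$-divisible.

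\emph{Computation of the $\cO$-corank.} Writing $N:=\pi_1(\cR)\otimes_{R,\lambda}\cO$, the $\cO$-corank of $N\otimes_{\cO}K/\cO$ equals the $\cO$-rank of the finitely generated module $N$, which in turn equals $\dim_K\pi_1(\cR)\otimes_{R,\lambda}K$. Theorem~\ref{keythm}(iii) supplies an isomorphism of graded $\T$-modules $\H_\bullet^\m\cong \H_{q_0}^\m\otimes_\T\pi_\bullet(\cR)$, and Theorem~\ref{keythm}(ii) gives freeness of $\H_{q_0}^\m$ of rank one over $\T$. Together these identify $\pi_i(\cR)\cong \H_{q_0+i}^\m=\H^{d-q_0-i}_\m$ as $\T$-modules, and with $d=2q_0+\ell_0$ the case $i=1$ reads $\pi_1(\cR)\cong \H^{q_0+\ell_0-1}_\m$. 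Inverting $\varpi$ and specialising at $\lambda$ then identifies $\pi_1(\cR)\otimes_{R,\lambda}K$ with the $\lambda$-isotypic component of $\H^{q_0+\ell_0-1}(Y_0(\n),V_\lambda(K))$. Since $\pi$ is a cohomological cuspidal automorphic representation of $\GL_N$ over the CM field $F$, Clozel's theorem ensures that $\pi_\infty$ is tempered, and the Borel--Wallach calculation of $(\g,K_\infty)$-cohomology gives $\dim_K\H^{q_0+j}(\pi)=\binom{\ell_0}{j}$ for $0\leq j\leq \ell_0$. Taking $j=\ell_0-1$ yields the corank $\binom{\ell_0}{\ell_0-1}=\ell_0$.

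\emph{Main obstacle.} The injectivity is formal once Proposition~\ref{surj} is granted. The real care is in the corank: matching the simplicial grading on $\pi_\bullet(\cR)$ to the cohomological grading on $\H_\bullet^\m$ via the shift by $q_0$ and the reindexing $\H_i^\m=\H^{d-i}_\m$, and checking that specialising at the $\lambda$-component of $\T\otimes K$ is compatible with the graded isomorphism of Theorem~\ref{keythm}(iii) (it is precisely the rank-one freeness of $\H_{q_0}^\m$ over $\T$ that prevents any auxiliary factor from intruding). Once these identifications are made explicit, the value $\ell_0$ falls directly out of $\binom{\ell_0}{\ell_0-1}$.
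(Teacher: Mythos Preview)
Your proof is correct and follows essentially the same approach as the paper's: injectivity is obtained from Proposition~\ref{surj} by Pontryagin duality and passage to the direct limit, divisibility is immediate from the tensor with $K/\cO$, and the corank computation proceeds by identifying $\pi_1(\cR)$ with $\H^{q_s-1}_\m$ via Theorem~\ref{keythm} and then invoking the Borel--Wallach determination of the tempered $(\g,K_\infty)$-cohomology. If anything, you make the link through Theorem~\ref{keythm}(ii)--(iii) more explicit than the paper's own (rather terse) argument, which states the conclusion $\H^{q_s-1}_\m$ free of rank $\ell_0$ without spelling out how this translates into the corank of $\pi_1(\cR)\otimes_{R,\lambda}K/\cO$.
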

\begin{proof} The left-hand side module is the tensor product of a finitely generated $\cO$-module 
by $K/\cO$, hence it is $\varpi$-divisible. Moreover, we have $R=T$. Let us show
that $\pi_1(\cR)\otimes_{R,\lambda}\cO\otimes_{\cO} K/\cO$ has corank $\ell_0$:
Indeed, by Borel-Wallach, we have
$$\dim\,\H^{q_s-1}_{temp}=\ell_0\cdot \dim\,\H^{q_s}_{temp}$$ and
and, for $F$ a CM field, 
$$\dim\,\H^{q_s}_{cusp}=\dim\,A_{cusp}(\lambda)$$
where $A_{cusp}(\lambda)$ denotes the space of cuspidal automorphic forms of level $K$ and cohomological weight $\lambda$. This implies that $H^{q_s}_\m$ is free of rank one over $T$, hence $ \H^{q_s-1}_\m$ is free of rank $\ell_0$ over $T$. This concludes the proof.
\end{proof}

As in \cite[Lemma 10]{TU19}, one shows by using Poitou-Tate duality that the right-hand side $\cO$-module is 
of cofinite rank $\ell_0$.
In other words, the left-hand side is the maximal $\varpi$-divisible submodule of the right-hand side. 
Therefore by \cite[Definition 5.13]{BK90}, we see that $Coker\,GV=\Sha(\Ad(\rho_\m)(1))$. 
By Poitou-Tate duality, this group is finite and is isomorphic to the Pontryagin dual of 
$\Sel(\Ad\,\rho_\lambda\otimes K/\cO)$. By deformation theory, this Pontryagin dual is isomorphic to
 $C_1(R,\lambda)=\Omega_{R/\cO}\otimes_{T,\lambda}\cO$. By the $R=T$ theorem of Calegari-Geraghty, 
this group is isomorphic to $C_1(R,\lambda)=\Omega_{T/\cO}\otimes_{T,\lambda}\cO$, which is finite.

\section{A graded version of the Galatius-Venkatesh homomorphism}\label{grGV}

Under the assumptions $p>N$, $(RLI)$, $(MIN)$, $(FL)$ or $(ORD_\pi)+(DIST)$, 
we can also define a graded version $GV^\bullet$ of the Galatius-Venkatesh homomorphism $GV$.
 For every cuspidal representation $\pi^\prime$ occuring in $H^\bullet_\m$, 
let $\phi=\lambda^{\gal}_{\pi^\prime}\colon R^?\to \cO$ be the algebra homomorphism associated to $\rho_{\pi^\prime}$.
 Recall that the derived universal ring $\cR$ is given by a projective system of cofibrant simplicial artinian rings $(\cR_\alpha)$. 
For any $j\geq 1$, we denote by $\cR^{\otimes j}$ the pro-artinian simplical ring defined by 
the projective system of the strict tensor product simplicial artinian $\cO$-algebras $(\cR_\alpha^{\otimes j})$.
Recall that the strict tensor product of two simplicial $\cO$-modules $\A=(\cA_n)$ and $\cB=(\cB_n)$ is $\cA\otimes\cB$
such that for any $n\geq 0$,  $(\cA\otimes\cB)_n=\cA_n\otimes_{\cO}\cB_n$. If $\cA$ and $\cB$ are 
simplicial $\cO$-algebras, the result
is a simplicial $\cO$-algebra.

Let $m\geq 1$ and $A=\cO/(\varpi^m)$ and $\pi_k(\cR)_{A}= \pi_k(\cR)\otimes_{\pi_0(\cR),\phi}A$.
Note that for any $j\geq 0$, we have a natural ring homomorphism $\pi_0(\cR)^{\otimes j}\to \pi_0(\cR^{\otimes j})$, 
hence a natural structure of $\pi_0(\cR)^{\otimes j}$-module on $\pi_j(\cR^{\otimes j})$.
Let us consider $\phi^{\otimes j}\colon\pi_0(\cR)^{\otimes j} \to  A$, via the identification $A^{\otimes j}\cong A$.
We can form  
$\pi_j(\cR^{\otimes j})_{A}= \pi_j(\cR^{\otimes j})\otimes_{\pi_0(\cR)^{\otimes j},\phi^{\otimes j}}A$.

In this subsection, we assume $p>\ell_0=Nd_0-1$.

Let $\cT$ be a simplicial $\cO$-algebra. 
For $j_1,j_2\geq 0$ and $j=j_1+j_2$, consider the subset $P_{j_1,j_2}\subset \S_j$ 
of permutations $(\sigma,\tau)$ where $\sigma(i)=\sigma_i$, $i=1,\ldots,j_1$ 
with $\sigma_1<\ldots \sigma_{j_1}$, and $\tau(i+j_1)=\tau_i$, $i=1,\ldots,j_2$, 
with $\tau_1<\ldots<\tau_{j_2}$. 
%Note that $\sharp P_{j_1,j_2}={j!\over j_1!\cdot j_2!}$.
Recall that the graded $\cO$-module $\pi_\bullet(\cT)$ is endowed with a structure of graded $\cO$-algebra by
the shuffle product
$$[-,-]_{j_1,j_2}\colon \pi_{j_1}(\cT)\times \pi_{j_2}(\cT)\to \pi_j(\cT)
$$
induced by the Eilenberg-Zilber map: for $t_i\in \cT_{j_i}$ by 
$$\sum_{(\sigma,\tau)\in P_{j_1,j_2}}sgn(\sigma,\tau)\cdot A(\sigma)(t_1)\cdot A(\tau)(t_2)$$
see \cite[Section 8.3]{Gil13} or \cite{EZ}.

Consider the graded $\cO$-algebra 
$$\pi_\bullet(\cT^{\otimes\bullet})=\bigoplus_j\pi_j(\cT^{\otimes j})$$
for the multiplication given, for $j_1,j_2\geq 0$ and $j_1+j_2=j$, by 
$$<-,->_{j_1,j_2}\colon \pi_{j_1}(\cT^{\otimes j_1})\times \pi_{j_2}(\cT^{\otimes j_2})\to \pi_j(\cT^{\otimes j})
$$
defined as the composition of the normalized shuffle product for the simplical $\cO$-algebra $\cT^{\otimes j}$:
$$[-,-]_{j_1,j_2}\colon \pi_{j_1}(\cT^{\otimes j})\times \pi_{j_2}(\cT^{\otimes j})\to \pi_j(\cT^{\otimes j})
$$
with the morphism
$$\pi_{j_1}(\cR^{\otimes j_1})\times\pi_{j_2}(\cT^{\otimes j_2})\to \pi_{j_1}(\cT^{\otimes j})\times \pi_{j_2}(\cT^{\otimes j})
$$ 
induced by $\iota_{1}\otimes \iota_{2}$ where $\iota_1\colon\cT^{\otimes j_1}\to \cT^{\otimes j_1+j_2}, x\mapsto x\otimes 1$ and 
$\iota_2\colon\cR^{\otimes j_2}\to \cR^{\otimes j_1+j_2}, x\mapsto 1\otimes x$.
Note that $\pi_\bullet(\cT^{\otimes\bullet})$ endowed with $<-,->$ is a graded associative algebra
but is not graded-commutative in general. It comes with a graded-algebra homomorphism
$$c_\bullet\colon \pi_\bullet(\cT^{\otimes\bullet})\to\pi_\bullet(\cT)$$
given degreewise by the multiplication $m_j\colon \cT^{\otimes j}\to\cT$.
It is compatible with the (tensor) shuffle product:
for $j=j_1+j_2$, we have
$$[-,-]_{j_1,j_2}\circ (c_{j_1}\otimes c_{j_2})=c_j\circ <-,->_{j_1,j_2}.$$
Let $\widetilde{\pi}_\bullet(\cT^{\otimes \bullet})$ be 
the largest graded-commutative quotient of $\pi_\bullet(\cT^{\otimes\bullet})$. 
It is the graded $\cO$-algebra sum of the quotients
$\widetilde{\pi}_j(\cT^{\otimes j})$ of $\widetilde{\pi}_j(\cT^{\otimes j})$, defined by induction
as the sub-$\cO$-module generated by the products $<t_1,t_2>-(-1)^{j_1j_2}<t_2,t_1>_{j_2,j_1}$, 
$t_i\in\widetilde{\pi}_{j_i}(\cT^{\otimes j_i})$.
The homomorphism $c_\bullet$ factors through $\widetilde{\pi}_\bullet(\cT^{\otimes \bullet})$ as
$$\widetilde{c}_\bullet\colon \widetilde{\pi}_\bullet(\cT^{\otimes \bullet})\to\pi_\bullet(\cT)$$
Note that the iterated tensor shuffle product
$$\pi_1(\cT)^{\otimes \bullet}\to {\pi}_\bullet(\cT^{\otimes \bullet})$$
induces a homomorphism of graded-commutative algebras
$$s_\bullet\colon \bigwedge^\bullet\pi_1(\cT)\to \widetilde{\pi}_\bullet(\cT^{\otimes \bullet})$$
The composition $c_\bullet\circ s_\bullet$
coincides with the iterated classical shuffle product
$$\bigwedge^\bullet\pi_1(\cT)\to \pi_\bullet(\cT).$$

For certain simplicial rings $\cT$, as seen below, $\widetilde{c}_\bullet$ is an isomorphism. 
However, this may not be always the case. 

\begin{rem}\label{remtens} 1) Given a simplicial $\cO$-algebra $\cT$, the relation between $\pi_\bullet(\cT)$ and $\pi_\bullet(\cT^{\otimes \bullet})$ is similar to the relation, for a finite free $\cO$-module $T$, between $\bigwedge^\bullet T$ and
$\bigwedge^\bullet (T^{\oplus \bullet})$, using inclusions $\iota_k\colon T^{\oplus j_k}\to T^{\oplus j}$
 for $j=j_1+j_2$ and wedge products 
$$\bigwedge^{j_1}T^{\oplus j}\times \bigwedge^{j_2}T^{\oplus j}\to \bigwedge^{j}T^{\oplus j}.$$
The graded algebra homomorphism 
$$c_\bullet \bigwedge^{\bullet}T^{\oplus \bullet}\to \bigwedge^\bullet T$$
analogue to
$$c_\bullet\colon \pi_\bullet(\cT^{\otimes \bullet})\to\pi_\bullet(\cT)$$
is induced by the addition morphisms
$a_j\colon T^{\oplus j}\to T$ (instead of the multiplications $m_j$).

2) For $\cT=\cR$ the universal derived deformation ring, we know that the graded-commutative $\cO$-algebra
$\pi_\bullet(\cR)$ is finite. Here, the (non-commutative) graded algebra $\pi_\bullet(\cR^{\otimes\bullet})$
is not finite in general.
Consider the graded-commutative algebra homomorphism
$$\widetilde{c}_{\bullet}\colon\widetilde{\pi}_\bullet(\cR^{\otimes\bullet})\to \pi_\bullet(\cR)$$
One can ask whether it is an isomorphism.

3) As graded $\cO$-modules $\pi_\bullet(\cR^{\otimes\bullet})$ and $\pi_\bullet(\cR)$
can be related by induction, using the Tor-spectral sequences (\cite[Theorem 6,Sect.6,II]{Qu67}):
$$E^2=\Tor^{\cO}_\bullet(\pi_\bullet(\cR^{\otimes j-1}),\pi_\bullet(\cR))\Rightarrow \pi_\bullet(\cR^{\otimes j})$$

4) The graded $\cO$-algebra $\pi_\bullet(\cR^{\otimes \bullet})$ is also a graded algebra over the graded $\cO$-algebra
$\pi_0(\cR)^{\otimes \bullet}$.
Let $A=\cO/(\varpi^m)$. The system of homomorphisms $\phi^{\otimes j}\colon\pi_0(\cR)^{\otimes j} \to  A$, $j=1,\ldots$, defined above give rise to an $\cO$-algebra homomorphism
$$\phi^{\otimes \bullet}\colon\pi_0(\cR)^{\otimes \bullet} \to  A,
$$ and the
tensor products 
$\pi_j(\cR^{\otimes j})_{A}= \pi_j(\cR^{\otimes j})\otimes_{\pi_0(\cR)^{\otimes j},\phi^{\otimes j}}A$.
give rise to a tensor product over the graded tensor algebra $\pi_0(\cR)^{\otimes \bullet}$
$$\pi_(\cR^{\otimes \bullet})_{A}=\pi_\bullet(\cR^{\otimes \bullet})
\otimes_{\pi_0(\cR)^{\otimes \bullet},\phi^{\otimes \bullet}}A.$$
\end{rem}

Let $V$ be a finite free $A$-module; consider the simplicial $A$-algebra $\cS_V=A\oplus \DK(V[1])$ and the graded algebra $\pi_\bullet(\cS_V^{\otimes\bullet})$
endowed with $<-,->$. 
%The second projection $pr_2\colon \cS\to \DK(A[1])$ induces morphisms of simplicial $A$-modules
%$pr_2^{\otimes j}\colon\cS^{\otimes j}\to \DK(A[1])^{\otimes j}$.

\begin{lem} \label{SV} The graded algebra $\pi_\bullet(\cS_V^{\otimes\bullet})$ is canonically
isomorphic to the graded $A$-algebra $\bigotimes^\bullet V$; its largest graded-commutative quotient
$\widetilde{\pi}_\bullet(\cS_V^{\otimes\bullet})$ is isomorphic to $\bigwedge^\bullet V$.
Moreover, $\widetilde{c}_\bullet$ is an isomorphism, as well as $s_\bullet$; they yield canonical identifications 
$$\widetilde{\pi}_\bullet(\cS_V^{\otimes\bullet})=\pi_\bullet(\cS_V)=\bigwedge^\bullet{\pi}_1(\cS_V).$$

\end{lem}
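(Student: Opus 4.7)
The plan is to compute $\pi_\bullet(\cS_V^{\otimes j})$ by iterated K\"unneth and then read off both the algebra structure and its graded-commutative quotient. First, by Dold--Kan, the simplicial $A$-module $\cS_V = A \oplus \DK(V[1])$ has normalized chain complex $A \oplus V[1]$ with zero differential, so $\pi_0(\cS_V) = A$, $\pi_1(\cS_V) = V$ and $\pi_i(\cS_V) = 0$ for $i \geq 2$. Because $V$ is finite free over $A$, $\cS_V$ is termwise $A$-flat, and the K\"unneth spectral sequence for the strict tensor product of simplicial $A$-modules collapses. Iterating K\"unneth yields a natural isomorphism of graded $A$-modules
$$\pi_\bullet(\cS_V^{\otimes j}) \cong \pi_\bullet(\cS_V)^{\otimes j} \cong (A \oplus V[1])^{\otimes j},$$
whose top-degree piece is $\pi_j(\cS_V^{\otimes j}) \cong V^{\otimes j}$.

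Next I would identify the algebra structure on $\pi_\bullet(\cS_V^{\otimes \bullet})$. By definition, $<-,->_{j_1, j_2}$ is the Eilenberg--Zilber shuffle product on the simplicial $A$-algebra $\cS_V^{\otimes j}$ precomposed with the tensor inclusions $\iota_1, \iota_2$, which place the two arguments on disjoint blocks of indices. Under the K\"unneth identification above, shuffle becomes the canonical graded tensor product of the factors, and restriction to top-degree components yields exactly the tensor concatenation $V^{\otimes j_1} \otimes V^{\otimes j_2} \to V^{\otimes (j_1+j_2)}$. Hence $\pi_\bullet(\cS_V^{\otimes \bullet}) \cong \bigotimes^\bullet V$ as graded $A$-algebras. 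Since $V$ sits in odd simplicial degree $1$, the graded-commutativity relation imposes $a \otimes b + b \otimes a = 0$ for $a, b \in V$, so the largest graded-commutative quotient is $\widetilde{\pi}_\bullet(\cS_V^{\otimes \bullet}) \cong \bigwedge^\bullet V = \bigwedge^\bullet \pi_1(\cS_V)$.

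Finally, the asserted identifications via $s_\bullet$ and $\widetilde{c}_\bullet$ follow by tracing the definitions through the above: $s_\bullet$ is by construction the iterated tensor shuffle product, and under the identifications becomes the identity on $\bigwedge^\bullet V$, while $\widetilde{c}_\bullet$ is induced by the multiplications $m_j \colon \cS_V^{\otimes j} \to \cS_V$ and matches the top-degree concatenation with the iterated product on $\pi_\bullet(\cS_V)$ viewed as a graded-commutative algebra generated in degree $1$. The main technical obstacle is careful sign-and-factorial bookkeeping when matching the Eilenberg--Zilber shuffle with tensor concatenation under K\"unneth; the hypothesis $p > \ell_0$ (in force in this subsection) ensures that the factorial coefficients appearing in the iterated shuffle product are invertible in $A$, eliminating any residual torsion obstruction when passing to the graded-commutative quotient.
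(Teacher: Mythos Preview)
Your approach is essentially the same as the paper's: both compute $\pi_\bullet(\cS_V^{\otimes j})$ via the Eilenberg--Zilber/K\"unneth identification (the paper writes it as the weak equivalence $\DK(C)\otimes\DK(D)\simeq\DK(C\otimes D)$ together with $V[1]^{\otimes k}\cong V^{\otimes k}[k]$, while you phrase it as collapse of the K\"unneth spectral sequence), and both then identify the graded product via the shuffle map. The paper makes the multiplicativity step precise through the associativity identity
\[
EZ_{j_1,j_2}\circ(EZ_{j_1}\otimes EZ_{j_2})=EZ_j\circ EZ_{0,0},
\]
which shows directly that, under the identifications $\pi_{j_k}(\cS_V^{\otimes j_k})\cong V^{\otimes j_k}$ given by $EZ_{j_k}$, the product $\langle-,-\rangle_{j_1,j_2}$ is exactly the tensor concatenation $V^{\otimes j_1}\otimes V^{\otimes j_2}\to V^{\otimes j}$.

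One point deserves correction: your closing worry about factorial coefficients and the need for $p>\ell_0$ is misplaced. The associativity identity above shows that the product is \emph{exactly} tensor concatenation, with no factorial scaling whatsoever; the isomorphism $\pi_\bullet(\cS_V^{\otimes\bullet})\cong\bigotimes^\bullet V$ and its graded-commutative quotient $\bigwedge^\bullet V$ hold integrally over $A$ without any hypothesis on $p$. The paper's proof of this lemma never invokes $p>\ell_0$; that hypothesis enters only later, in the proof of Theorem~\ref{Gvi}, where one needs to identify the largest graded-commutative sub- and quotient of $W^{\otimes j}$ for a general (not necessarily free) $A$-module $W$.
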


\begin{rem} For instance, for $V=A$,  $\pi_\bullet(\cS_A^{\otimes\bullet})$ is isomorphic to the (strictly) commutative 
graded algebra $A[X]$
of one variable polynomials, hence $\pi_\bullet(\cS_A)\cong A[X]/(X^2)$
as graded-commutative algebras, 
in such a way that the algebra homomorphism 
$$c_\bullet\colon \pi_\bullet(\cS_A^{\otimes\bullet})\to\pi_\bullet(\cS_A)$$
identifies to the quotient homomorphism $A[X]\to A[X]/(X^2)$.
%Note also that for the simplicial ring $\cS_A$, the largest graded-commutative quotient $\widetilde{\pi}_\bullet(\cS^{\otimes\bullet})$ is identified to $\pi_\bullet(\cS)$ 
%and to $\bigwedge_A^\bullet\pi_1(\cS)$via $c_\bullet$. As mentioned above, this may not
%be true for more general simplicial rings, like the derived deformation ring $\cR$.
\end{rem}

\begin{proof} We first note that given two simplicial $\cO$-modules $\cA$ and $\cB$, and $\cA\otimes \cB$ 
their strict tensor product (degreewise tensor product), the Eilenberg-Zilber map
$$C(\cA)\otimes C(\cB)\to C(\cA\otimes \cB)$$
induces an isomorphism of complexes 
$$N(\cA)\otimes N(\cB)\to N(\cA\otimes \cB)$$
by the Eilenberg-MacLane theorem (its inverse is the Alexander-Whitney map, 
see for instance
 \cite[Section 5.8, Page 76]{Gil13} or \cite[Theorem 4.2.4, Page 205]{GJ10}). 
Therefore, by applying the Dold-Kan functor to this isomorphism
we see that for two chain complexes $C,D\in Ch_+(\cO)$, there is a natural weak equivalence
$$\DK(C\otimes D)\cong \DK(C)\otimes DK(D).$$
In particular, if $V[1]^{\otimes k}$ denotes the total tensor $k$th-power complex of $V[1]$, we have a weak equivalence
$$\DK(V[1])^{\otimes k}\cong \DK(V[1]^{\otimes k}).$$
%Moreover, $A[1]^{\otimes k}=A[1]\otimes^L\ldots\otimes^L A[1]$ because $A[1]$ 
%is a complex of projective $A$-modules.
Moreover, for any $k\geq 0$, the tensor product induces an isomorphism of complexes
$$m_k\colon V[1]^{\otimes k}\cong V^{\otimes k}[k]$$

By composing these maps, we obtain a weak equivalence of simplicial $A$-modules
$$\cS_V^{\otimes j}\cong \bigoplus_{k=0}^j {j\choose k}\cdot \DK(V^{\otimes k}[k])$$
which we can rewrite as a quasi-isomorphism
$$N(\cS_V^{\otimes j})\cong \bigoplus_{k=0}^j {j\choose k}\cdot V^{\otimes k}[k].$$
Now $\pi_j(\DK(W[k]))=\H_j(W[k])=0$ for any $k<j$ and any finite free $A$-module $W$, hence $m_j\circ pr_2^{\otimes j}$ induces an isomorphism
$\pi_j(\cS_V^{\otimes j})=\pi_j(\DK(V^{\otimes j}[j]))$ which yields
$\pi_j(\cS_V^{\otimes j})\cong \H_j(V^{\otimes j}[j])=V^{\otimes j}$.
Hence we have an isomorphism of graded $A$-modules
$$\pi_\bullet(\cS_V^{\otimes\bullet})\cong \bigotimes^{\bullet} V$$

It remains to show it is multiplicative. For this, we use again the Eilenberg-MacLane theorem:
Let us consider the Eilenberg-Zilber map
$$EZ_{j_1,j_2}\colon \colon C(\cS_V^{\otimes j_1})\otimes C(\cS_V^{\otimes j_2})\to C(\cS_V^{\otimes j_1}\otimes \cS_V^{\otimes j_2})$$
%we reinterpret the right-hand side via $\iota_1\otimes \iota_2$:
%followed by
%$$\iota_1\otimes \iota_2\colon 
followed by the canonical identification
$$C(\cS_V^{\otimes j_1}\otimes \cS_V^{\otimes j_2}) \cong C(\cS_V^{\otimes j}).$$
The homomorphism $<-,->_{j_1,j_2}$ is given by the restriction to the
$(j_1,j_2)$-component on the left-hand side of 
%$(\iota_1\otimes \iota_2)\circ
 $EZ_{j_1,j_2}$.
Using the identifications $V^{\otimes k}=\H_0(V^{\otimes k}[0])=\H_k(V^{\otimes k}[k])=\H_k(C(\cS_V)^{\otimes k})$, we
see that the isomorphism $V^{\otimes j_k}\cong \pi_{j_k}(\cS_V^{\otimes j_k})$, $k=1,2$, is given by the $j_k$-th homology 
$\H_{j_k}(EZ_{j_k})$, of the morphism of chain complexes
%$AW_{j_k}$ is the Alexander-Whitney map, which is quasi-inverse of $EZ_{j_k}$: 
$$EZ_{j_k}\colon  C(\cS_V)^{\otimes j_k}\to C(\cS_V^{\otimes j_k}).$$
Let $m\colon V^{\otimes j_1}\otimes_A V^{\otimes j_2}\to V^{\otimes j}$ be the multiplication isomorphism. 
By the identification $\H_0(W[0])=W$, we have $EZ_{0,0}=m$, hence 
$$\H_{j_1}(V^{\otimes j_1}[j_1])\otimes \H_{j_2}(V^{\otimes j_2}[j_2])\to \H_j(V^{\otimes j}[j])$$
is given by $m$ via $\H_{k}(V^{\otimes k}[k])=\H_{0}(V^{\otimes k}[0])=V^{\otimes k}$. 
On the other hand,
we have by associativity of the shuffles
%$$(\iota_1\otimes \iota_2)\circ 
$$EZ_{j_1,j_2}\circ (EZ_{j_1}\otimes EZ_{j_2})=EZ_j\circ EZ_{0,0}.$$
This shows that, via the identifications $EZ_{j_k}\colon V^{\otimes j_k}\cong \pi_{j_k}(\cS_V^{\otimes j_k})$ and 
$EZ_{j}\colon V^{\otimes j}\cong \pi_{j}(\cS_V^{\otimes j})$, the multiplication 
$$<-,->_{j_1,j_2}\colon \pi_{j_1}(\cS_V^{\otimes j_1})\times \pi_{j_2}(\cS_V^{\otimes j_2})\to \pi_{j}(\cS_V^{\otimes j})$$
becomes the multiplication
$V^{\otimes j_1}\times V^{\otimes j_2}\to V^{\otimes j}$, as desired.
%, or, more correctly, in $A[X]$: $AX^{j_1}\times AX^{j_2}\to A X^j$

\end{proof}
Recall that if $p>\ell_0$, for any $j\leq \ell_0$, for any $A$-module $W$, the largest submodule
$\bigwedge^j W$ and the largest quotient $\bigwedge^j W$ of $W^{\otimes j}$ are naturally isomorphic. 
For this reason, for a graded algebra $G$, we introduce the truncation
$$\tau_{\leq\ell_0} G=\bigoplus_{j\leq\ell_0}G_j$$
with a partial algebra structure defined only for $g_{j_k}\in G_{j_k}$ such that $j=j_1+j_2\leq \ell_0$.
 
\begin{thm} \label{Gvi} For every cuspidal representation $\pi^\prime$ occuring in $H^\bullet_\m$, 
for any $m\geq 1$, there is an homomorphism of $A_m$-modules
\begin{displaymath} (HGV_j)\quad GV_m^j\colon \widetilde{\pi}_j(\cR^{\otimes j})_{A_m}\to\bigwedge^j_{A_m}\Sel(\Ad(\rho_\pi)^\ast(1)\otimes A_m)^{\oplus j}
\end{displaymath}
 
It induces a morphism of truncated graded $A_m$-algebras
$$(HGV_\bullet)\quad GV_m^\bullet\colon \tau_{\leq\ell_0} \widetilde{\pi}_\bullet(\cR^{\otimes\bullet})\otimes A_m\to \tau_{\leq\ell_0} \bigwedge^{\bullet}_{A_m}\Sel(\Ad(\rho_\pi)^\ast(1)\otimes A_m)^{\oplus \bullet}$$

Moreover, for any $j$, the composition $a_j\circ GV_m^j\circ\mu_j$ of $GV_m^j$ 
with the tensor shuffle multiplication map
$ \mu_j\colon \bigwedge^j_A\pi_1(\cR)_A\to\widetilde{\pi}_j(\cR^{\otimes j})_A$ 
and the homomorphism $a_j$ induced by the
addition
$$\Sel(\Ad(\rho_\pi)^\ast(1)\otimes A_m)^{\oplus j}\to \Sel(\Ad(\rho_\pi)^\ast(1)\otimes A_m)$$
coincides with $\bigwedge^j GV_m^1$.
 
For $j=\ell_0$, the cokernel of $GV_m^{\ell_0}\circ\mu_{\ell_0}$ is annihilated 
$\Fitt(\Sel(Ad(\rho_{\pi^\prime}))$.
\end{thm}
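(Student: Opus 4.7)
The strategy is to mimic the Galatius-Venkatesh construction of Section \ref{GVhom}, systematically replacing $\cR$ by $\cR^{\otimes j}$ and $\Theta_n$ by $\Theta_n^{\otimes j}$, with Lemma \ref{SV} as the principal bookkeeping tool (it identifies $\pi_j(\Theta_n^{\otimes j})\cong M_n^{\otimes j}$, whose graded-commutative quotient is $\bigwedge^j M_n$). Concretely, let $L_n(\cR^{\otimes j})$ denote the set of homotopy classes of simplicial ring homomorphisms $\Phi\colon\cR^{\otimes j}\to\Theta_n^{\otimes j}$ lifting $\phi^{\otimes j}$. Applying $\pi_j$ yields a map $L_n(\cR^{\otimes j})\to\Hom(\pi_j(\cR^{\otimes j}),M_n^{\otimes j})$, after a surjectivity statement analogous to Proposition \ref{surj} (proved by the same polynomial cofibrant replacement argument extended to tensor powers); passing to the maximal graded-commutative quotient $\widetilde{\pi}_j(\cR^{\otimes j})$ on the source and to $\bigwedge^j M_n$ on the target, then Pontryagin-dualizing and applying Poitou--Tate componentwise to the $j$ tensor factors (exactly as in the $j=1$ case), produces the desired map $GV_m^j$. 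The direct sum $\Sel^{\oplus j}$ in the target arises because a decomposable lift $\Phi_1\otimes\cdots\otimes\Phi_j$ contributes one Selmer class per factor, recorded in a separate copy of $\Sel$.

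To verify that the $GV_m^j$ assemble into a morphism of truncated graded $A_m$-algebras, one checks compatibility of the two product structures: the source product $<-,->_{j_1,j_2}$ is built from the insertions $\iota_1,\iota_2$ composed with the shuffle product, while the target product $\bigwedge^{j_1}\Sel^{\oplus j_1}\otimes\bigwedge^{j_2}\Sel^{\oplus j_2}\to\bigwedge^j\Sel^{\oplus j}$ is induced by the block-diagonal inclusion $\Sel^{\oplus j_1}\oplus\Sel^{\oplus j_2}\to\Sel^{\oplus j}$ followed by wedging. Lemma \ref{SV}, via the Eilenberg--Zilber map used in its proof, shows that these two products are intertwined by the construction; the truncation $\tau_{\leq\ell_0}$ together with $p>\ell_0$ ensures the relevant exterior powers are well-behaved. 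The identity $a_j\circ GV_m^j\circ\mu_j=\bigwedge^j GV_m^1$ is then checked on a decomposable element $b_1\wedge\cdots\wedge b_j$: $\mu_j$ produces the iterated shuffle $<\iota_1(b_1),\ldots,\iota_j(b_j)>$, $GV_m^j$ sends it to $(GV_m^1(b_1),0,\ldots)\wedge\cdots\wedge(0,\ldots,GV_m^1(b_j))\in\bigwedge^j\Sel^{\oplus j}$, and the sum map $a_j$ collapses this to $GV_m^1(b_1)\wedge\cdots\wedge GV_m^1(b_j)$.

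Finally, for $j=\ell_0$, Theorem \ref{gv} together with Proposition \ref{Sha-Selmer} imply that $\Coker(GV_m^1)=\Sha(F,\Ad(\rho_\pi)^*(1)\otimes K/\cO)$ has $\cO$-Fitting ideal equal to $\Fitt_\cO(\Sel(F,\Ad(\rho_{\pi'})))$. Since $\pi_1(\cR)\otimes K/\cO$ and $\Sel(F,\Ad(\rho_\pi)^*(1)\otimes K/\cO)$ both have $\cO$-corank $\ell_0$, the top exterior power $\bigwedge^{\ell_0}GV_m^1$ is a map between corank-$1$ modules whose cokernel is $\cO/\det(GV_m^1)$; by the standard identity $\Coker(\det\phi)\cong\cO/\Fitt_0(\Coker\phi)$ this cokernel is annihilated by $\Fitt_\cO(\Sel(F,\Ad(\rho_{\pi'})))$. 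The factorization $\bigwedge^{\ell_0}GV_m^1=a_{\ell_0}\circ GV_m^{\ell_0}\circ\mu_{\ell_0}$ then transmits the annihilation to the cokernel of $GV_m^{\ell_0}\circ\mu_{\ell_0}$ after a careful analysis of $\Ker(a_{\ell_0})$.

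The main obstacle is this last descent across the $a_{\ell_0}$-arrow: $\bigwedge^{\ell_0}\Sel^{\oplus\ell_0}$ is a much larger module than $\bigwedge^{\ell_0}\Sel$, and one must pin down the precise submodule actually saturated by the image of $GV_m^{\ell_0}\circ\mu_{\ell_0}$, together with a linear-algebra control of the complement, in order to conclude that the full cokernel inside $\bigwedge^{\ell_0}\Sel^{\oplus\ell_0}$ is annihilated by $\Fitt_\cO(\Sel(F,\Ad(\rho_{\pi'})))$.
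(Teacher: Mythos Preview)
Your construction collapses at the first step. With $\Theta_n=\cO_n\oplus M_n[1]$ and $M_n\cong A_m$ of rank one, Lemma~\ref{SV} gives $\widetilde\pi_j(\Theta_n^{\otimes j})\cong\bigwedge^j M_n$, and this module is zero for every $j\ge 2$. So after passing to the graded-commutative quotient your map $\widetilde\pi_j(\cR^{\otimes j})\to\bigwedge^j M_n$ is identically zero, and no dualization or ``componentwise Poitou--Tate'' can manufacture $\bigwedge^j\Sel^{\oplus j}$ from it. The phrase ``a decomposable lift contributes one Selmer class per factor, recorded in a separate copy of $\Sel$'' has no home in your setup: if the coefficient module has rank one, the $j$ classes cannot be recorded independently. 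Relatedly, your set $L_n(\cR^{\otimes j})$ of \emph{all} homotopy classes $\cR^{\otimes j}\to\Theta_n^{\otimes j}$ has no direct identification with a tensor or exterior power of $\H^2_{\cL}$, so its Pontryagin dual is not $\bigwedge^j\Sel^{\oplus j}$ in any evident way.

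The paper repairs this by choosing, for each $j$, the coefficient module $V_j=A_m^{\oplus j}$, so that $\bigwedge^j V_j\cong A_m$ is of rank one rather than zero. A single lift $\phi\colon\cR\to\cS_{V_j}$ then already lies in $\H^2_{\cL}(F,\Ad(\rho_m))^{\oplus j}$, whose dual is $\Sel^{\oplus j}$; this is where the direct sum genuinely arises. One considers only \emph{decomposable} maps $\Phi=\phi_1\otimes\cdots\otimes\phi_j\colon\cR^{\otimes j}\to\cS_{V_j}^{\otimes j}$, applies $\pi_j$, projects to $\bigwedge^j V_j=A_m$, and thereby obtains a pairing $\widetilde{\bigwedge}^j\bigl(\H^2_{\cL}\bigr)^{\oplus j}\times\widetilde\pi_j(\cR^{\otimes j})_{A_m}\to A_m$; dualizing this pairing defines $GV_m^j$. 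No surjectivity statement analogous to Proposition~\ref{surj} is needed or asserted for $j>1$: the paper explicitly leaves injectivity of $GV_m^j$ open in the remark following the theorem, so your proposed extension of the cofibrant-replacement argument is both unnecessary for the construction and not established. Your concern about the Fitting-ideal claim is reasonable, but the paper does not attempt the descent through $\Ker(a_{\ell_0})$ that you describe; it argues directly from the identification $a_{\ell_0}\circ GV_m^{\ell_0}\circ\mu_{\ell_0}=\bigwedge^{\ell_0}GV_m^1$ together with the known control of $\Coker(GV_m^1)$.
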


\begin{proof}
Let $m\geq 1$.  We define first for any $j\geq 1$ a homomorphism
$$GV_m^j\colon \widetilde{\pi}_j(\cR^{\otimes j})\otimes_{\pi_0(\cR),\lambda^{\gal}_{\pi^\prime}} \cO/\varpi^m\cO
\rightarrow \bigwedge^j_{\cO/(\varpi^m)}\Sel(\Ad(\rho_\pi)^\ast(1)\otimes \cO/\varpi^m\cO)^{\oplus j}.$$
Let $A=\cO/(\varpi^m)$. Let $\rho=\rho_{\pi^\prime}\pmod{\varpi^m}$. We will proceed by Pontryagin duality,
%
%Note that $\cR$ is a simplicial $\pi_0(\cR)$-algebra. Let $\cR^c$ be the complex of simplicial $\pi_0(\cR)$-modules associated to $\cR$. Let $\cR^c_A=\cR^c\otimes_{\pi_0(\cR)}A$. and $\pi_k(\cR)_A=\pi_k(\cR)\otimes_{\lambda^{\gal}_{\pi^\prime}} A$.
%call that via the second projection $\pr_2\colon A\oplus A[1]\to A[1]$ viewed as a morphism of complexes, we have
%$$\Hom_{\phi_\rho}(\cR, A\oplus A[1])=\Hom_{A-lin}(\cR^c_A, A[1])\quad \mbox{\rm and} \,\pi_0\sHom_{\phi_\rho}(\cR, A\oplus A[1])=\Hom_{D(A)}(\cR^c_A, A[1]) $$
%Recall that $\pi_0\sHom_{\phi}(\cR, \cS_V)$ is naturally an $A$-module.
% Moreover, by the adjunction isomorphism \ref{cotdef} of Section \ref{tangcomp}, we have an $A$-linear isomorphism $$\pi_0\sHom_{\phi}(\cR, \cS_V)\cong \Hom_{D(A)}(L_{\cR/\cO}\otimes_{\cR} A,V[1]),$$ hence it is finite over $A$.
%
%Note also that for any $j\geq 1$ we have an isomorphism of finite $A$-modules  $$\pi_0\sHom_{\phi}(\cR, \cS_V)^{\otimes_A j}\cong \Hom_{D(A)}(L_{\cR/\cO}^{\otimes j}\otimes_{\cR} A,V[1]^{\otimes j}).$$
and define an $A$-linear homomorphism
$\Theta_{m,V}^j\colon  \pi_0\sHom_{\phi_\rho}(\cR, \cS_V)^{\otimes_A j}\to \Hom(\pi_j(\cR^{\otimes j}), V^{\otimes j})$.
%
%Note that if $M$ and $N$ are complexes of $A$-modules and $M\otimes_A N$ is the total tensor product, we have
%$${\DK}(M\otimes_A N)={DK}(M)\otimes_A {\DK}(N)$$
%Moreover, if $M$ and $N$ are projective $A$-modules, $M\otimes_A N$ coincides with the derived tensor product 
%$M\otimes^L_A N$. In particular, for the complex $A[1]$ concentrated in degree $1$, we have
%$$A[1]^{\otimes j}=A[1]\otimes^L\ldots \otimes^L A[1]\cong A[j]$$
%in the derived category, hence
%
%$${\DK}(A[1])^{\otimes j}={\DK}(A[1]^{\otimes j})={\DK}(A[j])$$
%in the homotopy category of simplicial modules.
%
%Let $\cS=A\oplus {\DK}(A[1])$.
%We have seen
%$$\pi_0\sHom_{\phi_\rho}(\cR, \cS)^{\otimes_A j}=\pi_0\sHom_{\phi_\rho{\otimes j}}(\cR^{\otimes j}, \cS^{\otimes j})
%$$
%
Let $\Phi=\phi_1\otimes \ldots\otimes\phi_j\in  \pi_0\sHom_{\phi_\rho}(\cR, \cS_V)^{\otimes_A j}$. It defines 
a homomorphism 
$$\Phi\colon \cR^{\otimes_A j}\to \cS_V^{\otimes_A j}$$
and, by passing to the homotopy groups:
$\pi_j(\Phi)\colon \pi_j(\cR^{\otimes j})\to \pi_j(\cS_V^{\otimes_A j})$.
%
%Consider the simplicial module homomorphisms $pr_2\circ\phi_k\colon\cR\to \cS\to {\DK}(A[1])$ (these morphisms are also derivations).
%Their strict tensor product provides a morphism of simplicial modules
%$$\Phi_0=\pr_2^{\otimes j}\circ\Phi \colon \cR^{\otimes j}\to {\DK}(A[1])^{\otimes j}.$$  
%
%As a side remark, note that $pr_1^{\otimes j}\circ \Phi=\phi_\rho^{\otimes j}$
%but $pr_1\circ\Phi\oplus pr_2\circ\Phi$ does not provide a homomorphism of simplicial rings
%$\cR^{\otimes j}\to A\oplus{\DK}(V^{\otimes j}[j])$ because the second projection is not a derivation!
%
By Lemma \ref{SV}, we have canonically $\pi_j(\cS_V^{\otimes j})=V^{\otimes j}$.
Thus, we can view $\pi_j(\Phi)$ as a homomorphism
$$\pi_j(\Phi)\colon \pi_j(\cR^{\otimes j})\to V^{\otimes j}.$$
and put $\Theta_{m,V}^j(\Phi)=\pi_j(\Phi)$. 
By Lemma \ref{SV} again, the direct sum of these homomorphisms
provides a graded algebra homomorphism
$$\Theta_{m,V}^\bullet \colon \pi_0\sHom_{\phi_\rho}(\cR, \cS_V)^{\otimes \bullet}\to 
\Hom_{gralg}(\pi_\bullet(\cR^{\otimes \bullet}), V^{\otimes \bullet})$$
%
%is a direct factor of $\pi_0\sHom_{\phi_\rho}(\cR, \cS_V)^{\otimes \bullet}$. 
%
By composing with the quotient morphism $q_j\colon V^{\otimes \bullet}\to \bigwedge^\bullet V$, 
any graded algebra homomorphism
$\pi_\bullet(\cR^{\otimes \bullet})\to V^{\otimes\bullet}$
induces a homomorphism 
$$\widetilde{\pi}_\bullet(\cR^{\otimes \bullet})\to\bigwedge{}^\bullet V.$$
%Moreover, $\Phi\mapsto q_j\circ \pi_j(\Phi)$ factors through the largest graded-commutative quotient 
%
For any $A$-module $W$, let $\widetilde{\bigwedge}^\bullet W$ 
be the largest graded-commutative submodule
of $W^{\otimes\bullet}$. 
By restricting $\Phi\mapsto q_j\circ \pi_j(\Phi)$ to $\widetilde{\bigwedge}^\bullet \pi_0\sHom_{\phi_\rho}(\cR, \cS_V)$, 
we obtain a homomorphism
$$\widetilde{\bigwedge}^\bullet\pi_0\sHom_{\phi_\rho}(\cR, \cS_V)\to \Hom_{gralg}(\widetilde{\pi}_\bullet(\cR^{\otimes \bullet}),\bigwedge{}^\bullet V).$$
which we still denote by $\Theta_{m,V}^\bullet$. 
In particular, for every $j\geq 1$, we have a homomorphism
$$\Theta_{m,V}^j \colon\widetilde{\bigwedge}{}^j \pi_0\sHom_{\phi_\rho}(\cR, \cS_V)\to \Hom_A(\widetilde{\pi}_j(\cR^{\otimes j})_A,\bigwedge{}^j V).$$
For each fixed $j$, let us choose $V=V_j:=A^j$ so that $\bigwedge{}^j V_j=A$. We obtain 
$$\Theta_{m,V_j}^j \colon \widetilde{\bigwedge}{}^j \pi_0\sHom_{\phi_\rho}(\cR, \cS_{V_j})\to 
\Hom_A(\widetilde{\pi}_j(\cR^{\otimes j})_A,A).$$
Since $p>\ell_0$, we know that for any $A$-module $W$ and for any $j\leq \ell_0$, $\widetilde{\bigwedge}^j W$ 
is a direct factor of $W^{\otimes j}$ and is canonically isomorphic to the largest exterior product quotient
${\bigwedge}^j W$. 
Now we remark that 
$$\pi_0\sHom_{\phi_\rho}(\cR, \cS_{V_j})\cong\H^2_f(F,\Ad(\rho_m)\otimes V_j=\H^2_f(F,\Ad(\rho_m)^{\oplus j},
$$
hence, its Pontryagin dual is canonically isomorphic to
$\Sel(\Ad(\rho_m)^\ast(1))^{\oplus j}$.
Therefore, by passing to the $A$-dual, we obtain $A$-linear homomorphisms
$$GV^j_m\colon \widetilde{\pi}_j(\cR^{\otimes j})_A\to \left(\bigwedge^j (\Sel(\Ad(\rho_m)^{\ast}(1))^{\oplus j})^\vee\right)^\vee
$$ 
For finite $A=\cO/(\varpi^m)$-modules, we see by the structure theorem of these modules that the Kronecker homomorphism
$$ \bigwedge^j \Sel(\Ad(\rho_m)^{\ast}(1))^{\oplus j} \to\left(\bigwedge^j (\Sel(\Ad(\rho_m)^{\ast}(1))^{\oplus j})^\vee\right)^\vee
$$
 is an isomorphism. Hence we can rewrite $GV_m^j$ as
 $$GV^j_m\colon \widetilde{\pi}_j(\cR^{\otimes j})_A\to \bigwedge^j \Sel(\Ad(\rho_m)^{\ast}(1))^{\oplus j}.$$
%Here we have used the fact that the dual of the exterior power submodule $\widetilde{\bigwedge}^j$
%is the exterior power quotient $\bigwedge^j$. 
%
 %hence it is important here to restrict ourselves to $0\leq j\leq \ell_0$.  
%
We now need to show that $GV^\bullet_m=\bigoplus_j GV^j_m$ is a graded-commutative algebra homomorphism. On one hand, let us compute $GV_m^j(<a_1,a_2>_{j_1,j_2})$; let $\Phi_k\in \widetilde{\bigwedge}{}^j_k \pi_0\sHom_{\phi_\rho}(\cR, \cS_{V_{j_k}})$, $k=1,2$.
By composing with the inclusions $\iota_k\colon V_{j_k}\to V_j$, we can view $\Phi_k$ as taking values in $\cS_{V_j}$
and form $\Phi=\Phi_1\wedge \Phi_2$; then $GV_m^j(<a_1,a_2>_{j_1,j_2})(\Phi)$ is given by
$q_{j_1}\circ\Phi_1(a_1)\wedge q_{j_2}\circ\Phi_2(a_2)\in \bigwedge^j V_j=A$. On the other hand, $GV^{j_k}_m(a_k)$ sends 
$\Phi_k$ to $q_{j_k}\circ \Phi_k(a_k)\in\bigwedge^{j_k}V_{j_k}=A$. 
By Lemma \ref{SV}, we conclude that 
$$GV_m^j(<a_1,a_2>_{j_1,j_2})(\Phi)=GV^{j_k}_m(a_k)(\Phi_1)\cdot GV^{j_k}_m(a_k)(\Phi_2).$$
Finally, for $j\leq \ell_0$, let $\mu_j\colon\bigwedge^j\pi_1(\cR)\to \widetilde{\pi}_j(\cR^{\otimes j})$
and 
$$a_j\colon \bigwedge^j\Sel(\Ad(\rho_m)^{\ast}(1))^{\oplus j}\to\bigwedge^j\Sel(\Ad(\rho_m)^{\ast}(1))$$
be the projection induced by the addition 
$a_j\colon\Sel(\Ad(\rho_m)^{\ast}(1))^{\oplus j}\to\Sel(\Ad(\rho_m)^{\ast}(1))$.
Let us show that the resulting map $a_j\circ GV^j_{m,V_j}\circ\mu_j$ coincides with $\bigwedge^jGV_m^1$.
For $a_k\in \pi_1(\cR)$ and $\phi_k\in\pi_0\sHom_{\phi_\rho}(\cR, \cS_{V_1^{(k)}})$ 
(where $V_1^{(k)}=A$) for $k=1\ldots,j$, 
we need to compute $GV^j_m(a_1\cdot\ldots\cdot a_j)(\phi_1\wedge\ldots\wedge\phi_j)$.
We view $A^j=V_j$ as the sum $V_1^{(1)}\oplus\ldots\oplus V_1^{(j)}$.
Therefore $\bigwedge^jV_j=V_1^{(1)}\otimes\ldots\otimes V_1^{(j)}$, and by Lemma \ref{SV} again, we find
$$GV^j_m(a_1\cdot\ldots\cdot a_j)(\phi_1\wedge\ldots\wedge\phi_j)=
GV^1_m(a_1)(\phi_1)\cdot\ldots\cdot GV^1_m(a_j)(\phi_j).$$
This implies that the multiplication homomorphism 
$$\mu_j\colon\bigwedge^j\pi_1(\cR)\to \widetilde{\pi}_j(\cR^{\otimes j})$$
is injective since its composition with $a_j\circ GV_m^j$ is equal to $\bigwedge^j_A GV_m^1$ which is injective
 (since $GV_m^1$ is injective and that a tensor product of $A$-linear injections is injective).
%
%Let $GV_m^\bullet=\bigoplus_j GV_m^j$; we see that $GV_m^\bullet$ is a graded algebra homomorphism.
The formula $GV^1_m=GV[\varpi^m]$ is obvious.

We now prove the last statement. Recall that by our assumptions $(\Gal_\m)$ and (LLC), we have $\lambda^{\gal}_{\pi^\prime}=\lambda_{\pi^\prime}$
and that $\im \mu_j$ has bounded index in $\pi_j(\cR)_A$ when $m$ grows and that 
the restriction of $GV_m^j$ to this submodule is an injection into $\bigwedge^j_A \Sel(\Ad\rho_{\pi^\prime}^\ast(1)_A)$.
By definition of the Fitting ideal, we therefore see that $\Coker GV_m^{\ell_0}\circ\mu_{\ell_0}$ is annihilated by $\Fitt(\Sel(\Ad(\rho_{\pi^\prime}))$.
%$$\pi_{\ell_0}(\cR)\otimes_{R^?,\lambda^{\gal}_{\pi^\prime}} A=\bigwedge_A \pi_1(\cR)\otimes_{R^?,\lambda^{\gal}_{\pi^\prime}} A$$
%For that recall that under our assumptions, we know that as $T$-modules,  $\pi_{\ell_0}(\cR)=\Hom(T,\cO)$ and $\pi_1(\cR)=T^{\ell_0}$ so the right hand side is $A$.
%Let us check that the left-hand side is isomorphic to $A$.
%If $\c=\Ker(\lambda_{\pi^\prime})$, the exact sequence$$0\to\cO\to \Hom(T,\cO)\to\Hom(\c,\cO)\to 0$$
%where the first map, given by $1\mapsto\lambda_{\pi^\prime}$, induces an isomorphism
%$$\cO\cong \Hom(T,\cO)\otimes_{T,\lambda_{\pi^\prime}}\cO$$
%because ??

 \end{proof}

\begin{rem}
	1) As proved above, for any $1\leq j\leq \ell_0$,  the shuffle multiplication $\mu_j$ is injective. However it would be interesting to study the injectivity of the homomorphisms $GV_m^j$.
	%morever by Theorem \ref{Cai}, we see that $\mu_j$ has a bounded index in $\pi_j(\cR)_A$.
	 
	2) By Remark \ref{remtens}, 1) above, one can apply the homomorphisms $c_j$ to both sides of $(HGV_j)$
	however it is not clear if $GV_m^\bullet$ factors through $c_\bullet$ into a homomorphism 
	$$\pi_\bullet(\cR)\otimes A_m\to \bigwedge^{\bullet}_{A_m}\Sel(\Ad(\rho_\pi)^\ast(1)\otimes A_m).$$	
\end{rem}
 We finish this section with an observation. Let $K=Frac(\cO)$ and $V=K$ be a one dimensional $K$-vector space.
 Consider the simplicial $K$-algebras $\cS_V^{\otimes\ell_0}$ and $\cR_K=\cR\otimes_{\phi_{\rho_{\pi^\prime}}} K$ 
 for a cuspidal representation $\pi^\prime$ occuring in $\H^\bullet_\m$.
 Then we have:
\begin{pro} There are canonical isomorphisms of graded $K$-algebras
$$\pi_\bullet(\cR_K)\cong\bigwedge^\bullet_K\pi_1(\cR)_K$$
 and
$$\pi_\bullet(\cS_V^{\otimes\ell_0})\cong\bigwedge^\bullet_K\pi_1(\cS_V^{\otimes \ell_0}).$$
By fixing identifications
$\pi_1(\cR)_K=K^{\ell_0}=\pi_1(\cS_V^{\otimes\ell_0})$, we have an identification of graded $K$-algebras
$$\pi_\bullet(\cR_K)=\pi_\bullet(\cS_V^{\otimes\ell_0}).$$

There exists a weak equivalence of simplicial modules
$$\cR_K\sim\cS_V^{\otimes\ell_0}$$

If $\ell_0=1$, there is a (non canonical) weak equivalence of simplicial $K$-algebras
$$\phi\colon \cR_K\to\cS_V^{\otimes\ell_0}.$$
 \end{pro}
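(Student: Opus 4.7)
The plan is to compute both graded algebras as exterior algebras on a common $\ell_0$-dimensional degree-one piece, and then invoke the classification of simplicial objects by their homotopy groups (module-level) and tangent complex (algebra-level) for the remaining equivalences.

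I would dispatch the right-hand side first: the argument of Lemma~\ref{SV} carries through verbatim over the field $K$, giving a weak equivalence of simplicial $K$-modules
$$\cS_V^{\otimes\ell_0}\sim\bigoplus_{k=0}^{\ell_0}\DK(V^{\otimes k}[k])^{\oplus\binom{\ell_0}{k}},$$
with algebra structure matching the $\ell_0$-fold graded-commutative tensor power $\pi_\bullet(\cS_V)^{\otimes\ell_0}\cong(K[\epsilon]/\epsilon^2)^{\otimes\ell_0}$ with $|\epsilon|=1$. This is canonically the exterior algebra $\bigwedge^\bullet_K\pi_1(\cS_V^{\otimes\ell_0})$.

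For $\cR_K$, I would compute the tangent complex over $K$ from Theorem~\ref{tangentcond}: $\t^i(\cR_K)\cong\H^{i+1}_{\cL}(F,\Ad\rho_{\pi'}\otimes K)$ for $i\geq 0$. The three key inputs are: (i) $\t^0=0$, since the $R=\T$ theorem (Theorem~\ref{keythm}) makes $\T$ \'etale over $\cO$ at the classical cuspidal point $\pi'$, killing $\H^1_{\cL}\otimes K$; (ii) $\t^1\cong K^{\ell_0}$, by Poitou--Tate duality identifying $\H^2_{\cL}(F,\Ad\rho_{\pi'})$ up to finite correction with the Pontryagin dual of $\Sel(F,\Ad\rho_{\pi'}^*(1))$ and invoking Theorem~\ref{gv}; (iii) $\t^i=0$ for $i\geq 2$, as $\mathrm{cd}_p(G_{F,S\cup S_p})\leq 2$. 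So the tangent complex of $\cR_K$ is concentrated in a single degree, of $K$-dimension $\ell_0$. The graded algebra $\pi_\bullet(\cR_K)$ is then forced to be graded-commutative generated in degree one by an $\ell_0$-dimensional space, yielding the first isomorphism $\pi_\bullet(\cR_K)\cong\bigwedge^\bullet_K\pi_1(\cR)_K$. Fixing identifications $\pi_1(\cR)_K\cong K^{\ell_0}\cong\pi_1(\cS_V^{\otimes\ell_0})$ gives the algebra identification, and by Dold--Kan (which classifies simplicial $K$-modules by their homotopy groups) the weak equivalence $\cR_K\sim\cS_V^{\otimes\ell_0}$ of simplicial $K$-modules is then immediate.

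For $\ell_0=1$ the free simplicial commutative $K$-algebra on a single degree-one class is rigid (no higher $k$-invariants), so Lurie's derived Schlessinger criterion (Theorem~\ref{Lurie}) combined with conservativity of the tangent complex upgrades the module-level equivalence to an algebra-level weak equivalence $\phi\colon\cR_K\to\cS_V$, with non-canonicity reflecting the choice of generator in $\pi_1(\cR)_K\cong K$. For $\ell_0\geq 2$ the analogous algebra-level equivalence is not asserted, because matching higher $k$-invariants (or equivalently higher Massey products in graded-commutative algebra cohomology) could in principle obstruct it, which is why the general statement stops at the module level. The main obstacle of the argument is the vanishing $\t^0(\cR_K)=0$, i.e., the \'etaleness of $\T$ at $\pi'$ after inverting $p$: while morally a consequence of $R=\T$ and the freeness of $\H_{q_0}^{\m}$ over $\T$ from Theorem~\ref{keythm}, ruling out non-reduced structure at the classical cuspidal point requires some care. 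A secondary technical point is the compatibility of the formula of Theorem~\ref{tangentcond} with base change from $\cO$- to $K$-coefficients, handled by flatness of $K$ over $\cO$ and the behaviour of formally cohesive functors under filtered colimits.
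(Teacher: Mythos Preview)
Your treatment of $\pi_\bullet(\cS_V^{\otimes\ell_0})$ and the module-level weak equivalence matches the paper's. The divergence is in how you handle $\pi_\bullet(\cR_K)$ and the $\ell_0=1$ case, and there the argument has real gaps.

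For $\pi_\bullet(\cR_K)\cong\bigwedge^\bullet_K\pi_1(\cR)_K$, the paper does \emph{not} go through the tangent complex. It uses the patching isomorphism of graded rings $\pi_\bullet(\cR)\cong\Tor^{S_\infty}_\bullet(R_\infty,\cO)$ (Theorem~\ref{simpltor}, via \cite[Theorem 5.31]{Cai21}), tensors with $K$ along $\phi_{\rho_{\pi'}}$, and then quotes \cite[Formula (15.11)]{GV18}, which identifies $\Tor^{S_\infty}_\bullet(R_\infty,\cO)_K$ with an exterior algebra directly from the Koszul description of Tor for a regular-sequence quotient. Your route has two unfilled steps. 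First, Theorem~\ref{tangentcond} computes $\t^i\cF_Z^{s,?}\cong\H^{i+1}_{min,?}(\Gamma,\Ad\bar\rho)$ with \emph{residue-field} coefficients; the object $\cR_K$ is defined in the paper as a derived tensor product, not as the representing ring of a $K$-valued deformation functor, so the asserted identification $\t^i(\cR_K)\cong\H^{i+1}_\cL(F,\Ad\rho_{\pi'}\otimes K)$ requires a separate argument you do not supply (and ``flatness of $K$ over $\cO$'' does not bridge $k$-coefficients to $K$-coefficients). Second, even granting that the tangent complex of $\cR_K$ over $K$ is $K^{\ell_0}$ concentrated in degree $1$, the sentence ``the graded algebra $\pi_\bullet(\cR_K)$ is then forced to be $\bigwedge^\bullet K^{\ell_0}$'' is a genuine theorem (formality of augmented simplicial commutative $K$-algebras in characteristic $0$ with cotangent complex concentrated in one degree, or equivalently a minimal-model statement for cdgas), not a tautology; it needs to be stated and cited.

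For $\ell_0=1$, the paper produces the map explicitly: Proposition~\ref{surj} gives a simplicial $K$-algebra morphism $\phi\colon\cR_K\to\cS_V$ over $\phi_\rho$ inducing an isomorphism on $\pi_1$, and since $\pi_k$ of both sides vanishes for $k>1$ this is a weak equivalence. Your appeal to ``Lurie's criterion combined with conservativity'' does not do this: conservativity (the lemma after Theorem~\ref{Lurie}) upgrades a \emph{given} natural transformation to a weak equivalence once it induces isomorphisms on $\t^i$, but you have not constructed the map. The rigidity heuristic is morally correct, yet the actual input you need is exactly Proposition~\ref{surj}.
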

 
 \begin{rem} There exists simplicial $K$-algebra homomorphisms $\cR_K^{\otimes\ell_0}\to \cR_K$ (given by the multiplication $\mu_{\ell_0}$) and $\cR_K^{\otimes \ell_0}\to\cS_V^{\otimes\ell_0}$. The second is given by the choice of a basis of the $\ell_0$-dimensional $K$-vector space $\Sel(\Ad(\rho^\ast_{\pi_{\prime}})(1))^\ast_K$.
Indeed, this choice gives rise to homomorphisms 
$$\phi_1,\ldots,\phi_{\ell_0}\colon \cR_K\to \cS_V$$
above $\phi_{\rho_{\pi^\prime}}$.
By tensor product, we have a simplicial ring homomorphism
$$\cR_K^{\otimes\ell_0}\to \cS_V^{\otimes \ell_0}$$
It is not clear if it can be adjusted to factor through the multiplication homomorphism
$\mu_{\ell_0}\colon \cR_K^{\otimes\ell_0}\to \cR_K$.
In other words, it is not clear that this second homomorphism factors through the first into a homomorphism 
of simplicial $K$-algebras
$\cR_K\to\cS_V^{\otimes\ell_0}$. We can ask whether there exists a zig-zag of weak equivalence maps of simplicial $K$-algebras
 $$\cR_K\leftarrow\ldots\rightarrow\cS_V^{\otimes\ell_0}.$$
 In other words, this poses the question of whether $\cR_K$ is weakly equivalent to $\cS_K^{\otimes\ell_0}$
 as simplicial $K$-algebra.
The answer is yes if $\ell_0=1$.   
 \end{rem}
 
 \begin{proof} We first note that
$$\cS_V^{\otimes\ell_0}=\bigoplus_{j=0}^{\ell_0} {\ell_0\choose j}\cdot \DK(V^{\otimes j}[j])$$
hence
$$\pi_\bullet(\cS_V^{\otimes\ell_0})=\bigoplus_{j=0}^{\ell_0} {\ell_0\choose j}\cdot \pi_j(\DK(V^{\otimes j}[j]))=
\bigoplus_{j=0}^{\ell_0} K^{\oplus {\ell_0\choose j}}=\bigwedge^\bullet K^{\oplus \ell_0}$$
as graded $K$-module.
We also know by \cite{GV18} that
$$\pi_\bullet(\cR_K)=\bigwedge^\bullet K^{\oplus \ell_0}$$
as graded $K$-module, hence the simplicial $K$-modules $\cR_K$ and $\cS_V^{\otimes\ell_0}$ 
have the same homotopy graded module. By Section 2.5 of \cite{Keller}, %https://webusers.imj-prg.fr/~bernhard.keller/ictp2006/lecturenotes/keller.pdf
it implies that there exists a zig-zag of quasi-isomorphisms between them.

Actually, by \cite[Theorem 5.31]{Cai21}, if we put
$\Tor^{S_\infty}_\bullet(R_\infty,\cO)_K=\Tor^{S_\infty}_\bullet(R_\infty,\cO)\otimes_{\phi_{\rho_{\pi^\prime}}} K$,
we have an isomorphism of graded $K$-algebras
$\pi_\bullet(\cR_K)=\Tor^{S_\infty}_\bullet(R_\infty,\cO)_K$
and by \cite[Formula (15.11)]{GV18}, we have an isomorphism of graded $K$-algebras
$$\Tor^{S_\infty}_\bullet(R_\infty,\cO)_K=\bigwedge^\bullet \pi_1(\cR)_K$$
hence we have an isomorphism of graded $K$-algebras
$$\pi_\bullet(\cR_K)=\bigwedge^\bullet K^{\oplus \ell_0}$$
Similarly, by definition of the Eilenberg-Zilber map,
the shuffle product
$$\bigwedge^j\pi_1(\cS_V^{\otimes \ell_0})\to \pi_j(cS_V^{\otimes \ell_0})$$
is induced by the canonical isomorphism of complexes 
$$\bigotimes^j(K[1]^{\oplus \ell_0})\to \bigoplus_{k_1,\ldots,k_j} K[1]^{\otimes j}$$
(recall $V=K$); this induces an isomorphism
$$\bigwedge^j(K[1]^{\oplus \ell_0})\to \bigoplus_{k_1<\ldots<k_j} K[j].$$ 
this shows that we have a graded algebra isomorphism
$$\bigwedge^\bullet\pi_1(\cS_V^{\otimes \ell_0})\to \pi_\bullet(\cS_V^{\otimes \ell_0})$$
Hence we conclude that we have graded algebra isomorphisms
$$\pi_\bullet(\cS_V^{\otimes \ell_0})=\bigwedge^\bullet(K^{\oplus\ell_0})=\pi_\bullet(\cR_K).$$

For $\ell_0=1$, recall that by Proposition \ref{surj}, we have a morphism $\phi\colon\cR_K\to \cS_V$
of simplicial $K$-algebras 
above $\phi_\rho$ which induces an isomorphism $\pi_1(\cR_K)\cong K$ (this isomorphism is defined up to a scalar). 
Since $\pi_k(\cR_K)=\pi_k(\cS_V)=0$ for $k>1$, we see that $\phi$ is a weak equivalence.
\end{proof}

\subsection{The Bloch-Kato conjecture}\label{subsectBK}
 
Let $K_\pi$ be a number field over which the cohomological cuspidal representation $\pi$ of $\GL_N(F)$ is defined. 
Assume there exists a rank $N$ motive $M_\pi$ defined over $F$ with coefficients in $K_\pi$
associated to $\pi$. Let $\Ad\,M_\pi=\End(M_\pi)$; let $(\Ad\,M_\pi)^\ast(1)$ be the twisted dual of $\Ad\,M_\pi$.

A conjecture of Beilinson predicts that the motivic cohomology group
$$\H^1_{mot}(\Ad\,M_\pi^\ast,K_\pi(1))=Ext^1_{\mathcal {MM}}(\Ad(M_\pi),K_\pi(1))$$ 
is of rank 
$\ell_0$ over $K_\pi$. Here $\mathcal{MM}$ is the conjectural category of mixed motives.
Part of the conjectural framework is the existence of  archimedean  and $p$-adic regulator maps:
$$
\xymatrix{ \H^1_{mot}((\Ad\,M_\pi)^\ast,K_\pi(1))\ar^{r_p}[r]\ar[d]_{r_\infty} &  \H^1_f(F,(\Ad\,\rho_\pi)^\ast(1)) \\
  \H^1_{\mathcal D}(\Ad(M_\pi)^\ast_\R, \R(1))&
}
$$
where  $\H^1_f(F,\Ad(\rho_\pi)^\ast(1))$ is the Bloch-Kato Selmer group attached to the Galois representation $\Ad(\rho_\pi)^\ast(1)$ and 
$\H^1_{\mathcal D}$ stands for Deligne cohomology and, for any motive $M$, $M_\R$ denotes the real Hodge structure of its Betti realization
$\H_{B}(M)$. By a standard computation
the Deligne cohomology is equal to the cokernel of Deligne's period map
\begin{eqnarray*}
c_\infty^+\colon \H_B(\Ad(M_\pi)^\ast(1))^+_\R\longrightarrow \H_{dR}(\Ad(M_\pi)^\ast(1))_\R /F^0 \H_{dR}(\Ad(M_\pi)^\ast(1))_\R
\end{eqnarray*}
which is injective because the weight of $\Ad(M_\pi)^\ast(1)$ is negative. Note that
$$F^+=F^-\H_{dR}(\Ad(M_\pi)^\ast(1))_\R=F^0 \H_{dR}(\Ad(M_\pi)^\ast(1))_\R.$$ 
%is always dimension $2$ 
%(spanned by the lines of Hodge weights $-1$ and $-k$)
Our motive is not critical. 
From this description, one sees immediately that the Deligne cohomology, namely $\Coker\ c_\infty^+$,
has a rational structure and even a $p$-integral structure
if $p$ prime to $N$ and $p>max(HT(\rho_\pi))$ 
so that the Hodge-Tate weights of $\Ad(\rho_\pi)$ are in the Fontaine-Lafaille range.

A first conjecture by Beilinson and Bloch-Kato is that:

\medskip
\begin{itemize}
\item[(Bei)] The regulator map $r_p$  (resp. $r_\infty$) induces an isomorphism after the extension of scalar to $K$ (resp, to $\R$).
\end{itemize}
\medskip
%In what follows, we will only use that the image of the Beilison-Flach element by $r_p$ and $r_\infty$ are non-trivial, which is known.
Assuming (Bei), we can define a regulator $R_{\pi}\in\R^\times/O_{{\pi,(p)}}^\times$ as follows. 
Let $L_\pi\subset \rho_\pi$ be a stable $\cO$-lattice.
 Because the Hodge-Tate weights are in the Fontaine-Lafaille range, this lattice induces a lattice of $D_{dR}(\rho_\pi)$ and therefore define 
a integral $\cO_{(\pi,(p))}$- lattice of $\H_{dR}((\Ad\,\rho_\pi)^\ast(1))$ and of $\H_B((\Ad\,\rho_\pi)^\ast(1))$ via the comparison isomorphisms 
$c_\infty,c_p$ and $c_{dR}$. This yields an  $\cO_{(\pi,(p))}$ -integral structure on $coker(\beta)=
 \H^1_{\mathcal D}((\Ad(M_\pi)^\ast, \R(1))$. Let $A(F)$ be the inverse image of $ \H^1_f(F,\Ad(L_\pi)^\ast(1))$ via $r_p$. 
The archimedean regulator of $(\Ad(M_\pi)^\ast(1)$ is then defined as the co-volume of the image of
 $A(F)$ by $r_\infty$ with respect to the Haar measure induced by the $\cO_{(\pi,(p))}$ -integral structure defined above. It gives an element
 $$R_{\pi} \in\R^\times/\cO_{(\pi,(p))}^\times$$
We now review the definition of the Tate-Shafarevitch group $\Sha^1_\ast(F,(\Ad(M_\pi)^\ast(1))$ following Bloch-Kato ($\ast=\ord,f$).
It depends on the choice of the stable lattice $L_\pi$. 
It is defined as:
$$\Ker\left({\H^1(F,\Ad(\rho_\pi)^\ast(1)\otimes K/\cO)\over \Phi_\cO\otimes K/\cO}\to\bigoplus_v
{\H^1(F_v,\Ad(\rho_\pi)^\ast(1)\otimes K/\cO)\over \H^1_{\ast}(F_v,\Ad(\rho_\pi)^\ast(1)\otimes  K/\cO)}\right)$$
where  $\Phi_\cO= \H^1_{\ast}(F,\Ad(\rho_\pi)^\ast(1)) \cap \H^1(F,\Ad(L_\pi)^\ast(1))$.

%The Bloch-Kato conjecture in this situation can be formulated as:
 
 %\begin{conj} \label{BK-non-crit} We assume that $p>k$ does not divide $N$ and that $(Min_F)$ holds. Then, the Tate-Shafarevitch group $\Sha^1(\Q,\W_{f,\alpha}(1))$ is finite and
 %$$\Gamma(\Ad(f)\otimes\alpha,1) \frac{L(\Ad(f)\otimes\alpha,1)}{\Omega_f^+\Omega_f^-\cdot R_{f,\alpha}}\sim  \CH \Sha^1(\Q,W_{f,\alpha}(1))$$
 %\end{conj}
 %\proof We just want to explain how the computation of the period using the Tamagawa measure defined as in \cite{BK90} yields the above formulation
 %of their conjecture in our case. According to \cite[Section 5]{BK90}, the Tamagawa number attached to $W_{f,\alpha}$ is a product 
 %$\prod_{p\leq\infty}\mu_p$ where $\mu_\infty$ is the measure of 
$A(F\otimes\R)/A(F)$ with
 $$A(F\otimes\R):=D_\R/(F^0 D_\R +W^+)$$
 with $D=\H_{dR}((\Ad(M_\pi)^\ast(1))$,  $W=c_p^{-1}( \Ad(L_\pi)(1))\cap \H_B((\Ad(M_\pi)^\ast(1))$ 
and where we have identified $A(F)$ by its image under the regulator map $r_\infty$ We have the exact sequence
 $$0\rightarrow W^+_\R/W^+\rightarrow A(F\otimes\R)/A(F)\rightarrow  \H^1_{\mathcal D}(\Ad(M_\pi)^\ast, \R(1))/A(F)\rightarrow 0$$
 %The volume of the right hand side quotient is $R_{f,\alpha}$ by definition. It rests to compute the volume of $ W^+_\R/W^+$. Since $M_f^\vee\cong M_f(k-1)$, we have
 %$$W \cong  Sym^2(L))(k)\otimes\alpha$$
 %
%Since $\delta^+_f,\delta^-_f$ is an $O_\wp$-basis of $L$, we see from the above isomorphism that $\delta^+_f\otimes\delta^-_f$ is an $O_\wp$-basis
% of $W^+$. Since the measure on $W^+_\R$ is induced by the $O_\wp$-structure of the $D/F^0D$, we deduce the volume of $ M^+_\R/M^+$ is given by $(2i\pi)^k\Omega_f^+\Omega_f^-$.
% From \cite{BK90}, we deduce that 
% $$Tam(W_{f,\alpha}(1))=(2i\pi)^k\Omega_f^+\Omega_f^-\cdot R_{f,\alpha} L^N( \# \H^1_{m\ast}(F,W_n^\ast(1))(f)\otimes\alpha,1)^{-1}\prod_{\ell|N}\mu_\ell$$
% where $\mu_\ell$ are the local Tamagawa numbers at the places dividing the level $N$. They can be computed as follows as in \cite{DFG},$$
% \mu_\ell=Tam_\ell(\Ad(\rho_f)(1)\otimes\alpha)  =  Tam_\ell(\Ad(\rho_f)\otimes\alpha) = { \# H^1_f(\Q_\ell,\Ad(\rho_f)\otimes\alpha) \over \# H^0(\Q_\ell,\Ad(\rho_f)\otimes \alpha) } = 1 $$ 
 %by the minimality hypothesis $(Min_F)$.
 %Because $\Gamma(\Ad (f)\otimes\alpha,1)=2\cdot(2\pi)^{-k}$, we deduce that 
 %the Tamagawa number conjecture asserting that $Tam(W_{f,\alpha}(1))=\#\Sha^1(\Q,W_{f,\alpha}(1))^{-1}$ is equivalent to our formulation.\qed

\begin{pro}\label{Sha-Selmer}
Let $p>N$ and $\zeta_p\notin F$. In the Fontaine-Laffaille case, assume $(FL)$.
In the ordinary case, make the hypothesis of strong distinguishability $(STDIST)$ : $\alpha\circ\underline{\chi}_v\neq 1,\omega^{-1}$ for any positive root $\alpha$.
Let $\ast=o,f$ (ordinary or crystalline Fontaine-Laffaille).
Then, 
the Selmer group $\H^1_{m\ast}(\Gamma,\Ad(\rho_\pi)^\ast(1)\otimes K/\cO)$ has $\cO$-corang $\ell_0$. Moreover
$\Sha^1_\ast(\Gamma,\Ad(\rho_\pi)^\ast(1)\otimes K/\cO)$ and $\H^1_{m\ast}(\Gamma,\Ad(\rho_\pi)\otimes K/\cO))$ have same $\cO$-Fitting ideal.
\end{pro}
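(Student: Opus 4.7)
The plan is to leverage the Galatius--Venkatesh injection established in Theorem \ref{gv} together with two classical inputs: the Greenberg--Wiles global duality formula (to compute coranks), and the fact that a finite module over a DVR has the same Fitting ideal as its Pontryagin dual (to compare $\Sha^1_\ast$ with $\Sel(\Ad\rho_\pi \otimes K/\cO)$).

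For the corank statement, I would first check that the ``undual'' Selmer group $\Sel(\Ad(\rho_\pi) \otimes K/\cO)$ is \emph{finite}. By Mazur's deformation theory its Pontryagin dual is canonically $C_1(R^?,\lambda) = \Omega_{R^?/\cO} \otimes_{R^?,\lambda}\cO$, which under the Calegari--Geraghty isomorphism $R^? \cong \T$ becomes $\Omega_{\T/\cO}\otimes_{\T,\lambda}\cO$, a finite $\cO$-module because $\T$ is finite over $\cO$. I would then apply the Greenberg--Wiles/Poitou--Tate formula to the pair $(\H^1_{m\ast}, \H^1_{m\ast}{}^\perp)$ of mutually orthogonal Selmer conditions: the local Euler-characteristic contributions at places in $S$ cancel by the $(MIN)$ choice; those at $v \in S_p$ are computed using $(FL)$ or, in the ordinary case, $(STDIST)$ to pin down the dimensions of $L_v$ and its annihilator; the archimedean contribution, together with $(RLI)$ (which forces the global $H^0$'s to be scalar), gives exactly the identity
$$\operatorname{cork}_\cO \H^1_{m\ast}(F, \Ad(\rho_\pi)^\ast(1) \otimes K/\cO) - \operatorname{cork}_\cO \H^1_{m\ast}(F, \Ad(\rho_\pi) \otimes K/\cO) = \ell_0,$$
as in \cite[Lemma 10]{TU19}. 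The second corank vanishes by the preceding step, so the target corank equals $\ell_0$.

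For the Fitting-ideal statement, I would recall that by global Poitou--Tate duality the torsion quotient $\Sha^1_\ast(F, \Ad(\rho_\pi)^\ast(1) \otimes K/\cO)$ is canonically isomorphic to the Pontryagin dual of the finite group $\Sel(\Ad(\rho_\pi) \otimes K/\cO)$. Since $\cO$ is a discrete valuation ring, any finite $\cO$-module $M$ and its Pontryagin dual $\Hom_\cO(M, K/\cO)$ share the same elementary-divisor decomposition, and hence have identical Fitting ideals. This yields $\Fitt_\cO \Sha^1_\ast = \Fitt_\cO \Sel(\Ad(\rho_\pi) \otimes K/\cO)$, completing the proof. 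As a bonus consistency check, Theorem \ref{gv} then identifies $\im(GV)$ with the maximal $\varpi$-divisible submodule of $\H^1_{m\ast}(F, \Ad(\rho_\pi)^\ast(1) \otimes K/\cO)$, so $\Coker(GV) = \Sha^1_\ast$ by the very definition of the Tate--Shafarevich group.

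The main obstacle is the Greenberg--Wiles bookkeeping in the corank computation: the local contributions at $v \in S_p$ depend sensitively on the chosen Selmer condition (Fontaine--Laffaille vs.\ ordinary) and on the Hodge--Tate weights of $\lambda$, and the strong distinguishability hypothesis $(STDIST)$ in the ordinary case is exactly what ensures the vanishing of the local $H^0$ obstructions so that the alternating sum collapses to the clean answer $\ell_0 = Nd_0 - 1$. Once this numerical identity is in place, the remaining steps are formal consequences of duality and the structure theorem for finite modules over a DVR.
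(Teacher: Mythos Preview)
Your proposal is correct and follows essentially the same strategy as the paper: finiteness of $\Sel(\Ad\rho_\pi\otimes K/\cO)$ via $R\cong\T$, then the Greenberg--Wiles formula to extract the corank $\ell_0$, with the local bookkeeping at $v\mid p$ handled exactly as you indicate via $(STDIST)$ or $(FL)$.

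The only organizational difference concerns the Fitting-ideal statement. The paper works at finite level $n$, obtaining the exact cardinality identity $\#\H^1_{m\ast}(\Gamma,W_n^\ast(1))=q^{n\ell_0}\cdot\#\H^1_{m\ast}(\Gamma,W_n)$ directly from Greenberg--Wiles, and then reads off $\#\Sha^1_\ast=\#\Sel(\Ad\rho_\pi\otimes K/\cO)$; over a DVR the $0$-th Fitting ideal of a finite module is $(\varpi^{\mathrm{length}})$, so equal cardinality is already equal Fitting ideal. You instead invoke the Pontryagin duality $\Sha^1_\ast\cong\Sel(\Ad\rho_\pi\otimes K/\cO)^\vee$ as a black box and then compare elementary divisors. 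Both are valid, but note that in the paper this perfect pairing is established only \emph{after} the proposition (Corollary~\ref{duality}), using the cardinality equality as input; if you want your route to be logically independent you should point to a Flach-type result and verify that the chosen local conditions at $v\in S_p$ are exact annihilators under local Tate duality (which is again where $(STDIST)$ enters). The paper's route is slightly more self-contained since the same finite-level Greenberg--Wiles computation simultaneously yields the corank and the cardinality match.
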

\begin{proof}
Since the $\cO$-algebra $R=\T$ is finite, we know that $\H^1_{m\ast}(\Gamma,\Ad(\rho_{\T})\otimes K/\cO))$ and 
$\H^1_{m\ast}(\Gamma,\Ad(\rho_\pi)\otimes K/\cO))$ are finite. Let us write $W_n:=\Ad(\rho_\pi)\otimes \varpi^{-n}\cO/\cO=\g_n$ and  $q=\#\cO/(\varpi)$. 
Since $\H^0(F,W_n)=\cO/(\varpi^n)$ and $\H^0(F,W_n(1))=\{0\}$ because $\zeta_p\notin F$, we have
by the Euler-Poincar\'e characteristic formula due to Greenberg-Wiles
\begin{eqnarray*}
{\# \H^1_{m\ast}(F,W_n)\over \# \H^1_{m\ast}(F,W_n^\ast(1)) \# \H^0(F,W_n)}= {1\over \prod_{v|\infty} \#\H^0(F_v,W_n)} \cdot \prod_{v|N} {  \# \H^1_{unr}(F_v, W_n)  \over \# \H^0(F_v,W_n)      }
\cdot \prod_{v|p} {  \# \H^1_{\ast}(F_v, W_n)  \over \# \H^0(F_v,W_n)      }
\end{eqnarray*}
Here $m\ast$ means minimal and $\ast$. We note that we have $ \# \H^1_{unr}(F_v, W_n)  = \# \H^0(F_v,W_n)$ for all $v | N$.
Moreover, for each $v\vert \infty$, $\H^0(F_v,W_n)=(\cO/\varpi^n\cO)^{N^2}$.

(1) The case $\ast=\ord$.

For any $v\vert p$, let us compute ${\#\H^1_{\ord}(F_v,W_n)\over \# \H^0(F_v,W_n)}$.
Let $\b=\Lie(B;\cO)$, $\n=\Lie(U_B;\cO)$, $\g=\Lie(G;\cO)$ and $\b_n=\b\otimes\cO/\varpi^n\cO$, $\n_n=\n\otimes\cO/\varpi^n\cO$;
consider the fiber product
$$L^\prime_v=\H^1(F_v,\b_n)\times_{\H^1(F_v,\b_n/\n_n)} \H^1_{unr}(F_v,\b_n/\n_n)$$
Then, we have $\H^1_{\ord}(F_v,W_n)=\im (L^\prime_v\to \H^1(F_v,W_n))$.

We can insert $L_v^\prime$ in the long exact sequence

$$0\to \H^0(F_v,\n_n)\to \H^0(F_v,\b_n)\to\H^0(F_v,\b_n/\n_n)\to\H^1(F_v,\n_n)\to
L_v^\prime\to \H^1(G_{F_v}/I_v,\b^\prime_n/\n_n)\to 0$$
Hence, taking the orders of the terms of the sequence, we have :
$$\# L_v^\prime\cdot(\#\H^0(F_v,\b_n))^{-1}=\#\H^1(G_{F_v}/I_v,\b_n/\n_n)\cdot(\#\H^0(F_v),\b_n/\n_n))^{-1}\cdot\#\H^1(F_v,\n_n)
\cdot(\#\H^0(F_v,\n_n))^{-1}$$
We have 
$$\#\H^1(G_{F_v}/I_v,\b_n/\n_n)\cdot (\#\H^0(F_v,\b_n/\n_n))^{-1}=1$$
$$\#\H^1(G_v,\n_n)\cdot(\#\H^0(G_v,\n_n))^{-1}=\#\n_n^{[F_v\colon\Q_p]}\cdot\#\H^0(G_v,\n_n^\ast(1))$$
and  $\#\H^0(G_v,\n_n^\ast(1))=1$ by strong distinguishability. Note also for the same reason that
$\H^0(F_v,W_n)=\H^0(F_v,\b_n)$ (because $\H^0(F_v,W_n/\b_n)=0$).
Since $L_v^\prime\to L_v$ is injective by strong distinguishability, we conclude that for every $v\vert p$, 
$\#\H^1_{\ord}(F_v,W_n)\cdot(\#\H^0(F_v,W_n))^{-1}=q^{({N(N-1)\over 2}) \cdot n[F_v\colon\Q_p]}$. 

We have $\sum_{v\vert p}[F_v\colon\Q_p]=2d_0$.
Note that $d_0N^2-1-2d_0({N(N-1)\over 2})=Nd_0-1$. Recall that $\ell_0=Nd_0-1$.

Therefore, we get
\begin{eqnarray}\label{GWPT}
{\# \H^1_{mo}(\Gamma,W_n^\ast(1))= q^{n\ell_0}\cdot  \# \H^1_{mo}(\Gamma,W_n)   }
\end{eqnarray}
where the sequence $(\# \H^1_{mo}(\Gamma,W_n) )_n$ is bounded. 

(2) The Fontaine-Laffaille case.

For any $v\vert p$, let us compute ${\#\H^1_{f}(F_v,W_n)\over \# \H^0(F_v,W_n)}$.
Let $\bG_v$ be the Fontaine-Laffaille functor defined in \cite[Section 2.4.1)]{CHT08}. Let $V_n=\rho_\pi\pmod{\varpi^n}$
and
$M_n=\bG (V_n)$. 

We have
$$\#\H^1_{f}(F_v,W_n)\cdot(\#\H^0(F_v,W_n))^{-1}=\#\Ext_{MF}^1(M_n,M_n)\cdot(\#\Hom_{MF}(M_n,M_n))^{-1}.$$
We apply \cite[Lemma 2.4.2 and Corollary 2.4.3]{CHT08} for $M=N=M_n$ and we obtain
$$=q^{({N(N-1)\over 2}) \cdot n[F_v\colon\Q_p]}.$$
Note again that $d_0N^2-1-2d_0({N(N-1)\over 2})=d_0N-1=\ell_0$.
Therefore,
\begin{eqnarray}\label{GWPC}
{\# \H^1_{mf}(\Gamma,W_n^\ast(1))= q^{n\ell_0}\cdot  \# \H^1_{mf}(\Gamma,W_n)   }
\end{eqnarray} 

In case (1) and (2), 
we conclude by passing to the inductive limit on $n$. Let $\varinjlim_n W_n=W_\infty=\Ad\,\rho_\pi\otimes K/\cO$ and $\varinjlim_n W_n^\ast(1)=W_\infty^\ast(1)$.
Then by \eqref{GWPT} and \eqref{GWPC}, there exists $c\geq 0$ such that for all $n$ sufficiently large, the order of 
$$\H^1_{m\ast}(\Gamma,W_\infty^\ast(1))[\varpi^n]=\H^1_{m\ast}(\Gamma,W_n^\ast(1))$$
is $q^{n\ell_0+c}$.
Passing to the Pontryagin dual and using Nakayama's lemma one sees easily that this implies there exists an exact sequence
$$0\rightarrow \Phi_\cO\otimes \varpi^{-n}\cO/\cO\rightarrow \H^1_{m\ast}(\Q,W_n(1))\rightarrow \Sha^1_\ast(\Gamma,\Ad(\rho_\pi)^\ast(1)\otimes K/\cO)[\varpi^n]\rightarrow 0$$
where $\Phi_\cO$ is a finitely generated $\cO$-module of rank $\ell_0$. Note that $\Phi_\cO$ can be supposed to be free
and that for $n$ sufficiently large, 
$$\Sha^1_\ast(\Gamma,\Ad(\rho_\pi)^\ast(1)\otimes K/\cO)[\varpi^n]=\Sha^1_\ast(\Gamma,\Ad(\rho_\pi)^\ast(1)\otimes K/\cO).$$
From  \eqref{GWPT} and \eqref{GWPC}, we indeed deduce that
$\#\Sha^1_\ast(F,\Ad(\rho_\pi)^\ast(1)\otimes K/\cO) = \# \H^1_{m\ast}(\Gamma,W_n)$ for all $n>0$ sufficiently large. 
\end{proof}

For any finite $S$-ramified extension $F^\prime/F$, let $S^\prime$ the set of places of $F^\prime$ above $S$ and $U_{F^\prime}^S$ be the image of $\prod_{v^\prime\notin S^\prime}\cO^\times_{F^\prime_{v^\prime}}$ in the group $C_F$ of id\`ele classes of $F$. Let 
$C_S(F^\prime)=C_{F^\prime}/U_{F^\prime}^S$ 
$C_S=\varinjlim_{F^\prime\subset F_S} C_S(F^\prime)$. It is known \cite[Sect.X.9]{NSW99} that $C_S$ is divisible (assuming $S_p\subset S$).
Let $C_S(p)$ be the $p$-divisible part of $C_S$. 
Moreover, there is an isomorphism \cite[Proposition 8.1.21 and Proposition 8.3.8]{NSW99}:
$$tr\colon \H^2(\Gamma,C_S(p))\cong \Q_p/\Z_p$$

Let us write $W_n:=\Ad(\rho_\pi)\otimes \varpi^{-n}\cO/\cO$ as in the previous Lemma.
By \cite[Theorem 3.4.6]{NSW99}, we have a perfect pairing
induced by the cup-product and trace map:
$$\H^1(\Gamma,W_n^\ast(1))\times \H^1(\Gamma,W_n)\to \H^2(\Gamma,C_S(p)\otimes K/\cO)\cong K/\cO$$
by using Tate local duality, we obtain by restriction a pairing
$$<\bullet,\bullet>_n\colon \H^1_{m\ast}(\Gamma,W_n^\ast(1))\times \H^1_{m\ast}(\Gamma,W_n)\to K/\cO$$
which is non-degenerate on the right: if $<x,y>_n=0$ for any $x$, then $y=0$.
We have seen in the proof of Lemma \ref{Sha-Selmer} that
$$\H^1_{m\ast}(\Gamma,W_\infty^\ast(1))[\varpi^n]=\H^1_{m\ast}(\Gamma,W_n^\ast(1))$$
Therefore, by passing to the direct limit over $n$, we obtain a pairing
$$<\bullet,\bullet>\colon \H^1_{m\ast}(\Gamma,\Ad(\rho_\pi)^\ast(1)\otimes K/\cO)\times \H^1_{m\ast}(\Gamma,\Ad(\rho_\pi)\otimes K/\cO)\to K/\cO$$
which is non-degenerate on the right.
\begin{cor}\label{duality} With the notations of Lemma \ref{Sha-Selmer}, for $\ast=FL,\ord$, the pairing $<\bullet,\bullet>$ induces a perfect pairing 
$$\Sha^1_\ast(\Gamma,\Ad(\rho_\pi)^\ast(1)\otimes K/\cO)\times \H^1_{m\ast}(\Gamma,\Ad(\rho_\pi)\otimes K/\cO)\to K/\cO$$
between two finite $\cO$-modules.
\end{cor}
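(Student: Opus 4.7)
The plan is to descend the existing pairing $\langle \bullet, \bullet \rangle$ to a pairing on the quotient $\Sha^1_\ast$ on the left, and then upgrade the non-degeneracy on the right to a full perfect pairing using a cardinality count.

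First I would show that the pairing kills the maximal $\varpi$-divisible subgroup of the left-hand side. By Proposition \ref{Sha-Selmer}, the group $\H^1_{m\ast}(\Gamma,\Ad(\rho_\pi)\otimes K/\cO)$ is finite, hence annihilated by some power $\varpi^N$ of the uniformizer. For any $y$ in the maximal $\varpi$-divisible subgroup of $\H^1_{m\ast}(\Gamma,\Ad(\rho_\pi)^\ast(1)\otimes K/\cO)$, one can write $y=\varpi^N y'$ for some $y'$, and then for any $x$ in the right factor,
$$\langle y, x\rangle = \langle \varpi^N y', x\rangle = \varpi^N \langle y', x\rangle = \langle y', \varpi^N x\rangle = 0.$$
By the definition of $\Sha^1_\ast$ recalled in Section 2.8 as the quotient of $\H^1_{m\ast}(\Gamma,\Ad(\rho_\pi)^\ast(1)\otimes K/\cO)$ by its maximal $\varpi$-divisible submodule, the pairing descends to
$$\Sha^1_\ast(\Gamma,\Ad(\rho_\pi)^\ast(1)\otimes K/\cO)\times \H^1_{m\ast}(\Gamma,\Ad(\rho_\pi)\otimes K/\cO)\to K/\cO.$$

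Next, the induced pairing remains non-degenerate on the right: if $x$ pairs to $0$ with every class in $\Sha^1_\ast$, then $x$ pairs to $0$ with every class in the full group $\H^1_{m\ast}(\Gamma,\Ad(\rho_\pi)^\ast(1)\otimes K/\cO)$ (since divisible classes already pair to $0$), and by the right-non-degeneracy established just before the corollary, $x=0$.

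To obtain perfectness, I would invoke the order equality extracted from the proof of Proposition \ref{Sha-Selmer}: for $n$ sufficiently large one has $\#\Sha^1_\ast(\Gamma,\Ad(\rho_\pi)^\ast(1)\otimes K/\cO)=\#\H^1_{m\ast}(\Gamma,W_n)=\#\H^1_{m\ast}(\Gamma,\Ad(\rho_\pi)\otimes K/\cO)$. Non-degeneracy on the right provides an injection $\H^1_{m\ast}(\Gamma,\Ad(\rho_\pi)\otimes K/\cO)\hookrightarrow \Hom_{\cO}(\Sha^1_\ast(\Gamma,\Ad(\rho_\pi)^\ast(1)\otimes K/\cO),K/\cO)$. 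Since both sides are finite of the same cardinality (Pontryagin duality preserves order for finite $\cO$-modules), this injection is an isomorphism, yielding the perfect pairing.

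The only genuinely delicate step is the first one, namely the identification of the kernel of the descended pairing on the left with the maximal $\varpi$-divisible subgroup; this rests on the finiteness of the right factor, which is ensured by Proposition \ref{Sha-Selmer}. Everything else is formal once the Euler--Poincaré characteristic computation from that proof is in hand.
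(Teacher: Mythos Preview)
Your proof is correct and follows essentially the same approach as the paper: factor the pairing through the quotient by the divisible part using finiteness of the right factor, retain right non-degeneracy, and then conclude perfectness from the equality of orders established in Proposition \ref{Sha-Selmer}. The paper's version is terser but the logical structure is identical.
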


\begin{proof} If $x\in \H^1_{m\ast}(\Gamma,\Ad(\rho_\pi)^\ast(1)\otimes K/\cO)$ is divisible, we see that 
$<x,y>=<\varpi^n x_n,y>=<x_n,\varpi^my>=0$ for $n$ sufficiently large (because $\H^1_{m\ast}(\Gamma,\Ad(\rho_\pi)\otimes K/\cO)$ is finite).  
Hence, the pairing factors as
$$\Sha^1_\ast(\Gamma,\Ad(\rho_\pi)^\ast(1)\otimes K/\cO)\times \H^1_{m\ast}(\Gamma,\Ad(\rho_\pi)\otimes K/\cO)\to K/\cO.$$
By Lemma \ref{Sha-Selmer}, the two groups have the same order and the pairing is non-degenerate on the right. Therefore it is perfect.
\end{proof}
\medskip

\section{Galatius and Venkatesh Theory for Hida families}

\subsection{Nearly ordinary Hida-Hecke algebra}\label{Lambda}

Let $E$ be an imaginary quadratic field in which $p$ splits and $F^+$ be a totally real field; let $d_0=[F^+\colon F]$.
We consider the CM field $F=F^+E$. It has a natural CM type (embeddings fixing $E$). It is for this type of CM fields
that the local properties of the Scholze Galois representation are established (at least modulo a nilpotent ideal 
of explicitely bounded nilpotency exponent).
Since $p$ splits in $E$, say $p\cO_E=\p\p^c$. Thus, $F$ has also a natural $p$-adic CM type, namely the set of primes above $\p$.

Let $B=TU_B$ be the upper triangular Borel subgroup of the $F$-algebraic group $G=\GL(N)$. 
Let $S_p^+$ be the set of primes obove $p$in $F^+$. We assume that all the places $v\in S_p^+$ 
split in $F$. 
%We fix a $p$-adic CM type $\Sigma=\{\widetilde{v}; v\in S_p^+\}$.
Let $U=U_0(\n)$ and for each $m\geq 0$ 
$$U_1(p^m)=\{g\in U, g\pmod{p^m}\in U_B(\cO_{F,p}/(p^m))\}$$
Let $Y_1(p^m)=G(F)\backslash X_G\times G(F_f)/U_1(p^m)$; we write $Y=Y_1(1)$. Let $\pi_m\colon Y_1(p^m)\to Y_1(p^{m-1})$ be the transition maps.
They are finite. 
These locally symmetric spaces form a projective system $Y_1(p^\infty)\to Y$ 
of Galois group $G(\cO_{F,p})$. Let $\H^\bullet(Y_1(p^\infty),\cO)$ 
be the projective limit of the $\cO$-modules $\H^\bullet(Y_1(p^m),\cO)$
for the transition maps $\Tr_{\pi_m}\colon\H^\bullet(Y_1(p^m),\cO)\to\H^\bullet(Y_1(p^{m-1}),\cO)$. 
Let $h(Y_1(p^\infty),\cO)$ be the closed $\cO$-algebra generated
by the Hecke operators outside $\n$ acting faithfully on $\H^\bullet(Y_1(p^\infty),\cO)$. Let $U_p$ be the Hecke operator associated to $\diag(p^{N-1},p^{N-2},\ldots,1)$ and $e$ be the idempotent of the Hecke algebra $h(Y_1(p^\infty),\cO)$ associated to $U_p$.
In particular, the torus $T(\cO_{F,p})$ acts on $\H^\bullet(Y_1(p^\infty),\cO)$ by right translation. This action factors through the quotient by the closure of central global units: $T(\cO_{F,p})/\overline{Z(\cO_F)}$. Actually, the class group 
$Cl_{U,p^\infty} = F_{\A}^\times / {\overline  F^\times \cdot (U^p\cap Z(F_f)) \cdot F_\infty^\times }$ 
(where $Z\subset T\subset G$ 
denotes the center) 
also acts by right translation on $Y_1(p^\infty)$. This gives an action of the 
Hida-Iwasawa group ${\bf \H}$ defined as the amalgamated product ${\bf \H}=T(\cO_{F,p})\times_{Z(\cO_{F,p})}Cl_{U,p^\infty}$.

We can decompose ${\bf \H}={\bf \H}_t\times \bW$, where ${\bf \H}_t$ is finite and $\bW$ is a free $\Z_p$-module of rank 
$2Nd_0-(d_0-1-\delta)=2(N-1)d_0+d_0+1+\delta$
where $\delta$ is the Leopoldt defect for $(F,p)$. 

\begin{de}\label{HIalg} The completed group algebra $\Lambda=\cO[[\bW]]$ is called the Hida-Iwasawa algebra.
\end {de}

Let $\bH^\bullet=e\H^{\bullet}(Y_1(p^\infty),\cO)$. Let $I\subset G(\cO_{F,p})$ be the Iwahori subgroup and $N=U_B(\cO_{F,p})$; 
let $\cC(I/N,\cO)$ be the Banach $\cO$-module of continuous functions on $I/N$. Let
$\bD$ be the $\cO$-linear dual of $\cC(I/N,\cO)$. It is a ring for the convolution multiplication:
$<f,d\ast d^\prime>=<<f (gh),d_g>,d^\prime_h> $. It is called the ring of integral distributions (or measures) on $I/N$.

By applying Poincar\'e duality to the result \cite[(6.14)]{H98}, we have that
$$e\H^{q_s} (Y_1(p^\infty),\cO)=e\H^{q_s}(Y_0(p),\bD).$$

\begin{de} We say that a continuous character $\lambda\colon T(\cO_{F,p})\to \cO^\times$ is arithmetic if it is dominant algebraic and 
it is trivial on (the $p$-adic closure of) $Z(\cO_F)$. 
It defines an $\cO$-algebra homomorphism $\phi_\lambda\colon \Lambda\to\cO$.  The prime ideal $P_\lambda=\Ker\phi_\lambda$ of $\Lambda$ 
is also called arithmetic.
\end{de} 

For any $p$-adic character $\lambda \in\Hom_{cont}(T(\cO_{F,p}),\cO^\times)$, we denote by 
$\bD_\lambda(\cO)$ the continuous $\cO$-dual of 
$\cC_\lambda(I/N,\cO)$ which is the sub-module of continuous functions $f\in \cC(I/N,\cO)$ such that
$f(tg)=\lambda(t)f(g)$ for all $t\in T(\cO_{F,p})$ and $g\in I$.

Let $\lambda$ be a dominant algebraic weight of $T$ and $V_\lambda$ the irreducible representation 
of highest weight $\lambda$.
Let $\lambda^\vee$ be the  highest weight of the dual of $V_\lambda$. 

For an arithmetic character $\lambda$, the inclusion
$V_{\lambda^\vee}(\cO)\hookrightarrow \cC_\lambda(I/N,\cO)\subset \cC(I/N,\cO) $ 
provides by duality a $\cO[U]$-linear homomorphism $\bD\to \bD_\lambda(\cO) \to V_{\lambda}(\cO)$
hence a $\Lambda$-linear homomorphism
$$\bH^\bullet=e\H^{\bullet}(Y_0(p),\bD)\to e\H^{\bullet}(Y_0(p),V_{\lambda}(\cO)).$$
Note that $\bD/P_\lambda\bD\cong \bD_\lambda(\cO)$.
By \cite[Theorem 5.2]{H95} or \cite[Theorem 6.2]{H98}, we have an isomorphism
$$e\H^{\bullet}(Y_0(p),\bD_\lambda(\cO)) \cong e\H^{\bullet}(Y_0(p),V_{\lambda}(\cO)).$$
By these two facts, we see by Nakayama's lemma that $\bH^\bullet$ is a finitely generated 
$\Lambda$-module and that we have an isogeny
$$\bH^{q_s}/P_\lambda\bH^{q_s}\to e\H^{q_s}(Y_0(p),V_{\lambda}(\cO)).$$

Note that for $F$ a CM field, it is a torsion
$\Lambda$-module (see \cite[Section 6]{H98}).

Let $\pi$ be a $p$-ordinary cuspidal cohomological automorphic representation 
occuring in $e\H^{q_s}(Y_0(p),V_\lambda(\cO))\otimes\C)$.
Assume its Hecke values belong to $\cO$. Its Hecke eigensystem 
$\theta_\pi\colon h_\lambda(Y_0(p),\cO)\to\cO$ gives rise by composition with
$h(Y_1(p^\infty),\cO)\to h_\lambda(Y_0(p),\cO)$ to a homomorphism $\theta\colon eh(Y_1(p^\infty),\cO)\to \cO$.
Let $\m$ be the maximal ideal of the nearly ordinary Hecke algebra 
$eh(Y_1(p^\infty),\cO)$ given by $\theta$ modulo $\varpi$.
We assume that $\overline{\rho}_\pi$ satisfies (REI). 
Let $\bbT_h=eh(Y_1(p^\infty),\cO)_\m$ and $\bbT_\lambda=e h_\lambda(Y_0(p),\cO)_\m$. 
We have an algebra homomorphism
$\Lambda\to \bbT_h$.

\begin{de} A prime ideal $\fP$ of $\bbT_h$ is called of weight $\lambda$ if its inverse image in $\Lambda$ 
is $P_\lambda$ and we write $\fP\vert P_\lambda$. It is said to be arithmetic of weight $\lambda$ if
 if $\lambda$ is an arithmetic weight.
The prime $\fP$ is called Hida-automorphic if  $\fP\in \Supp_{\bbT_h}(e\H^\bullet(Y_0(p),\bD_{\lambda}(\cO))$
for some $p$-adic weight $\lambda$. 
We denote by $\widetilde{\Sigma}_{h}$  the set of Hida-automorphic primes and by 
$\widetilde{\Sigma}_h^{a}\subset \widetilde{\Sigma}_{h}$ its subset of arithmetic Hida-automorphic primes.
We also introduce subset $\Sigma_h$ and $\Sigma_h^{a}$ of $Spec(\Lambda)$ defined as the respective  images of 
$\widetilde{\Sigma}_{h}$ and $\widetilde{\Sigma}_{h}^{a}$ by the map  $Spec(\bbT_h)\to Spec(\Lambda)$. 

\end{de}

We can interpret  these sets in terms of the support over $\Lambda$ or $\bbT_h$  
of the cohomology with coefficient in $\bD$.

\begin{lem} \label{support-h-u} We have
$$\tilde\Sigma_h =  \Supp_{\bT_h} e\H^\bullet(Y_0(p),\bD))\cap Spec(\bbT_h)(\overline\bQ_p)$$
and
$$\Sigma_h =  \Supp_{\Lambda}(e\H^\bullet(Y_0(p),\bD))\cap Spec(\Lambda)(\overline\bQ_p)$$
\end{lem}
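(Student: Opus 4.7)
\medskip

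The plan is to deduce both identities from a single control/specialization statement for $\bD$, combined with a Nakayama argument relating the support of a module to the support of its specialization.

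First, I will build a Hecke- and $\Lambda$-equivariant specialization map. The $\cO$-dual of the inclusion $\cC_\lambda(I/N,\cO)\hookrightarrow \cC(I/N,\cO)$ recalled before Definition~\ref{HIalg} yields a $U_p$-equivariant surjection $\bD\twoheadrightarrow \bD_\lambda(\cO)$ whose kernel contains $P_\lambda\bD$. Applying the ordinary idempotent $e$ and passing to cohomology, this produces a $\Lambda$-linear homomorphism
$$\bH^{\bullet}(Y_0(p),\bD)\otimes_{\Lambda}\Lambda/P_\lambda \longrightarrow \bH^{\bullet}(Y_0(p),\bD_\lambda(\cO)).$$
Hida's control theorem (quoted in the excerpt in degree $q_s$ and extended to all degrees by the same $U_p$-finite-slope argument since $\bD$ is $\Lambda$-cofree as a module of distributions) asserts that this map is an isogeny. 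Finite kernel and cokernel are irrelevant when intersecting supports with $\Spec(\T_h)(\overline{\bQ}_p)$, so the two sides define the same subset of $\overline{\bQ}_p$-points of $\Spec(\T_h)$.

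Next, I will invoke the following standard support lemma, proved by Nakayama: if $M$ is a finitely generated $\T_h$-module and $\fP\in\Spec(\T_h)$ is an arithmetic prime above $P_\lambda$, then
$$\fP\in\Supp_{\T_h}(M)\iff \fP\in\Supp_{\T_h}(M/P_\lambda M).$$
The direction $\Leftarrow$ is trivial. For $\Rightarrow$, if $(M/P_\lambda M)_\fP=0$ then $M_\fP=P_\lambda M_\fP$; since $P_\lambda\subset \fP$, the ideal $P_\lambda(\T_h)_\fP$ lies in the Jacobson radical of the local ring $(\T_h)_\fP$, and Nakayama's lemma applied to the finitely generated $(\T_h)_\fP$-module $M_\fP$ forces $M_\fP=0$. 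Combining this with the first step applied to $M=\bH^{\bullet}(Y_0(p),\bD)$ yields, for every arithmetic prime $\fP$ of weight $\lambda$,
$$\fP\in\Supp_{\T_h}\bH^{\bullet}(Y_0(p),\bD)\ \iff\ \fP\in\Supp_{\T_h}\bH^{\bullet}(Y_0(p),\bD_\lambda(\cO)),$$
which is exactly the defining condition for $\fP\in\widetilde{\Sigma}_h$. This proves the first identity.

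Finally, for the statement about $\Sigma_h$, I will use that $\T_h$ is finite over $\Lambda$ (this is part of the fact that $\bH^{q_s}$ is a finitely generated torsion $\Lambda$-module, cf.\ the excerpt). For any finitely generated $\T_h$-module $M$, the image of $\Supp_{\T_h}(M)$ in $\Spec(\Lambda)$ equals $\Supp_\Lambda(M)$ (a standard property of going-up for finite morphisms, combined with $M_{\mathfrak q}\neq 0 \iff M\otimes_{\Lambda}\Lambda_{\mathfrak q}\neq 0$). Since the image of an arithmetic prime of $\T_h$ in $\Lambda$ is by definition arithmetic, applying this to $M=\bH^{\bullet}(Y_0(p),\bD)$ and combining with the first identity gives the second. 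The main obstacle in this plan is verifying the control isogeny in \emph{every} cohomological degree rather than only in the top degree $q_s$ which is the one actually quoted in the text; this requires a compactness/finite-slope argument for the distribution modules $\bD$ together with the fact that the nearly ordinary complex $e C^\bullet(Y_0(p),\bD)$ is a perfect complex of $\Lambda$-modules up to quasi-isomorphism.
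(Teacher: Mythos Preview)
Your argument has a genuine gap at the step you yourself flag as ``the main obstacle'': you assert that Hida's control isogeny
\[
\bH^{\bullet}(Y_0(p),\bD)\otimes_{\Lambda}\Lambda/P_\lambda \longrightarrow \bH^{\bullet}(Y_0(p),\bD_\lambda(\cO))
\]
holds in \emph{every} degree, and then use it. But the text only quotes this in the top degree $q_s$, and the claimed extension ``by the same $U_p$-finite-slope argument'' does not go through: for $q<q_s$ the left-hand side differs from the right-hand side by higher $\Tor^\Lambda$ contributions. In fact, exact degreewise control is essentially the condition $(INTDEG)_\lambda$ discussed later in the paper as an open hypothesis, so you cannot invoke it here. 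Without it, your Nakayama step links $\Supp_{\T_h}(M)$ to $\Supp_{\T_h}(M/P_\lambda M)$ but not to $\Supp_{\T_h}(\bH^\bullet(Y_0(p),\bD_\lambda))$, and the first identity is not established.

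The paper avoids this entirely by working with the $\Tor$ spectral sequence
\[
\Tor_j^\Lambda\bigl(H^i(Y_0(p),\bD)_{P_\lambda},\Lambda/P_\lambda\bigr)\Rightarrow H^{i-j}(Y_0(p),\bD_\lambda(\cO))\otimes\Frac(\cO),
\]
treating the two inclusions asymmetrically. If $P_\lambda\in\Sigma_h$ then some $E_2$-term is nonzero, hence some $H^i(Y_0(p),\bD)_{P_\lambda}\neq 0$. Conversely, if $P_\lambda$ lies in the $\Lambda$-support, let $q_\lambda$ be the \emph{largest} $i$ with $H^i(Y_0(p),\bD)_{P_\lambda}\neq 0$; the spectral sequence then gives an edge isomorphism
\[
H^{q_\lambda}(Y_0(p),\bD)_{P_\lambda}\otimes_\Lambda \Lambda/P_\lambda \cong H^{q_\lambda}(Y_0(p),\bD_\lambda)\otimes\Frac(\cO),
\]
which is nonzero by Nakayama. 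This top-degree edge trick is precisely what replaces the unavailable degreewise control, and is the idea your proposal is missing.
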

\proof
It is sufficient to prove the second statement. Let $P_\lambda\in Spec(\Lambda)(\cO)$. We have a spectral sequence
$$Tor_j^\Lambda((e\H^i(Y_0(p),\bD)_{P_\lambda},\Lambda /P_\lambda)\Rightarrow e\H^{i-j}(Y_0(p),
\bD_\lambda(\cO))) \otimes Frac(\cO) $$
If $\lambda\in \Sigma_h$, then clearly one of the terms of the spectral sequence does not vanish. It therefore implies that
$e\H^i(Y_0(p),\bD))_{P_\lambda}\neq 0$ for some $i$ and thus
$$\Sigma_h \subset  \Supp_{\Lambda}(e\H^\bullet(Y_0(p),\bD))\cap Spec(\Lambda)(\overline\bQ_p)$$
Conversely if $P_\lambda\in  \Supp_{\Lambda}(e\H^\bullet(Y_0(p),\bD))\cap Spec(\Lambda)(\overline\bQ_p)$, 
let $q_\lambda$ the largest integer $i$ such that
$e\H^i(Y_0(p),\bD))_{P_\lambda}\neq 0$. Then from the above spectral, we have

$$ e\H^{q_\lambda}Y_0(p),\bD))_{P_\lambda}\otimes \Lambda/P_\lambda\cong e\H^{q_\lambda}(Y_0(p),\bD_\lambda)\otimes Frac(\cO)\neq 0.$$
So $P_\lambda\in \Sigma_h$.

\qed

\begin{lem} The condition $(Van_{\m_\lambda})$ for some arithmetic weight $\lambda$ implies that the modulo $p$
Hida cohomology module $\bH^i(k)_{\m_h}$ vanishes for $i\notin [q_m,q_s]$.
\end{lem}
\begin{proof}
Let $i<q_m$. The ideal $P_\lambda$ is generated by a regular sequence $(X_1,\ldots,X_r)$. We have injections
$$\bH^i(k)_{\m_h}/X_r\bH^i(k)_{\m_h}\hookrightarrow e\H^i(Y_0(p),\bD(k)/X_r\bD(k))_{\m_h},$$
$$ e\H^i(Y_0(p),\bD(k)/X_r\bD(k))_{\m_h}/(X_{r-1})\hookrightarrow e\H^i(Y_0(p),\bD(k)/(X_{r-1},X_r)\bD(k))_{\m_h},
$$
up to
$$e\H^i(Y_0(p),\bD(k)/(X_2,\ldots,X_r)\bD(k))_{\m_h}/(X_1)
\hookrightarrow e\H^i(Y_0(p),\bD(k)/P_{\lambda}\bD(k))_{\m_\lambda}=e\H^i(Y_0(p),V_{\lambda}(k))_{\m_\lambda}=0.$$
Hence, by Nakayama's lemma, $e\H^i(Y_0(p),\bD(k)/(X_2,\ldots,X_r)\bD(k))_{\m_h}=0$. By decreasing induction,
we find that $e\H^i(Y_0(p),\bD/X_r\bD(k))_{\m_h}=0$, hence $\bH^i(k)_{\m_h}=0$. One can then either proceed similarly for $i>q_s$ or use Poincar\'e duality. 

\end{proof}

\subsection{Minimal nearly ordinary deformation rings.}\label{Hidadef}

We consider the problem $\cD_h$ of minimal nearly-ordinary deformations of $\overline{\rho}$. 
Although we have $G=\GL_N$, we prefer to denote by $(\widehat{G},\widehat{B}\widehat{T})$ the triple dual to $(G,B,T)$.
A deformation $\rho\colon \Gamma\to \widehat{G}(A)$ is called minimal
at $v\in S$, if there exists $g_v\in \widehat{G}(A)$ such that for any 
$\sigma\in I_v$, $g_v\cdot \rho(\sigma)\cdot g_v^{-1}=\exp(t_v(\sigma)J)$
where $J$ is the standard regular nilpotent Jordan matrix and $t_v\colon I_v\to\Z_p(1)$ is the $p$-adic tame inertia homomorphism.

It is called nearly ordinary at $v\in S_p$ if there exists $g_v\in \widehat{G}(A)$ such that 
$g_v\cdot \rho\cdot g_v^{-1}\colon G_{F_v}\to \widehat{G}(A)$ takes values in the standard Borel $\widehat{B}$ and its reduction modulo its unipotent radical 
$\widehat{N}$:
$\chi_\rho\colon G_{F_v} \to\widehat{T}(A)$,
is a lifting of the regular homomorphism $\overline{\chi}_{\overline{\rho}}\colon G_{F_v} \to\widehat{T}(k)$.

Note that in the definition of near-ordinary deformations, contrary to $\underline{\chi}_\pi$-ordinary deformations, 
the Hodge-Tate weights are left variable.

By \cite[Section 2.4.4]{CHT08} or \cite[Chapt.6]{Ti96}, $\cD_h$ is pro-representable.
Let $R_h$ be the universal deformation ring for minimal nearly ordinary deformations. We also have a group homomorphism
$\bH\to R_h^\times$ defined by duality: Let $\rho$ be the universal deformation $G_F\to \widehat{G}(R_h)$. For each place 
$v\in S_p$, we have a homomorphism 
$$\chi_{\rho,v}\colon G_{F_v})\to \widehat{T}(R_h)=\widehat{B}(R_h)/U_{\widehat{B}}(R_h)$$
obtained by reduction mod. $U_{\widehat{B}}(R_h)$ of a conjugate of $\rho\vert_{G_{F_v}}$. By class field theory, it gives rise to
$\cO_{F_v}^\times\to \widehat{T}(R_h)$ which can be interpreted as a homomorphism $T(\cO_{F_v})\to R_h^\times$. Putting all places together we have
$T(\cO_{F,p})\to R_h^\times$. Using the determinant $\det\,\rho$, we also have a homomorphism $Cl_{U,p^\infty}\to R_h^\times$; both coincide on $Z(\cO_{F,p})$.
On the other hand, let $\bbT_1(p^m)$ be the localization at $\m_h$ of $h(Y_1(p^m),\cO)$.

We assume (Gal${}_{\bbT_1(p^m)}$) and (LLC${}_{\bbT_1(p^m)}$) for all $m$'s, so that there exist Galois representations
$$\rho_{\bbT_1(p^m)}\colon G_F\to \widehat{G}({\bbT_1(p^m)})$$
 which are minimal and ordinary and deforming $\overline{\rho}$. 
Since $\overline{\rho}$ is absolutely irreducible, Carayol's Theorem provides a representation
$$\rho_{\bbT_h}\colon G_F\to \widehat{G}(\bbT_h)$$
which is minimal and ordinary. It gives rise to a surjective algebra homomorphism $\phi\colon R_h\to \bbT_h$; 
it is $\Lambda$-linear by assumption (LLC${}_{\bbT_h}$). More precisely, 
for automorphic representations coming from $U(N)$, the proof of this fact is in \cite[Cor.2.5]{Ge19} or \cite[Proposition  2.8]{HT}.
For representations coming from the cohomology of $\GL_N$, however, it is only proven modulo a nilpotent ideal of $\bbT_1(p^m)$
of order of nilpotence bounded by $N$ and the degree of $F$ (see \cite[Theorem 5.5.1]{ACC+18}).

%y \cite{Sch15} and \cite{CGH+19} and \cite{ACC+18}, for each dominant weight $\lambda$, there exists a continuous Galois representation
%$$\rho_\lambda\colon \Gamma\to \widehat{G}(\bbT_\lambda)$$
%which is minimal and ordinary. Since the residual representation $\overline{\rho}$ is absolutely irreducible, 
%these representation glue into a deformation of $\overline{\rho}$
%$$\rho_{\bbT}\colon \Gamma\to \widehat{G}(\bbT_h)$$
%which is minimal and $p$-ordinary. This is due to Carayol for $\widehat{G}=\GL_N$; for the general case, see \cite[Theorem 4.10]{BHKT19}.
%It gives rise to a surjective ring homomorphism $\phi\colon R_h\to \bbT_h$. 

%For Galois representations coming from automorphic representations on a unitary group $U(N)$, it is well-known that it is $\Lambda$-linear (see \cite[Cor.2.5]{Ge19}, \cite[Proposition 2.8]{HT17}) 
%For Galois representations coming from the cohomology of $\GL_N$ by \cite{Sch15} and \cite{CGH+19}, the $\Lambda$-linearity follows from \cite[Theorem 5.5.1]{ACC+18} (at least modulo a nilpotent ideal of nilpotency order explicitely bounded in terms of $d$ and $N$).

In a manner similar to \cite[Section 5]{CaGe18}, we introduce a Taylor-Wiles system $\{Q_m\}$ of finite sets of finite places of $F$
in order to define rings $S_\infty^\Lambda$, $R_{h,\infty}$ and a ring homomorphism 
$\alpha\colon S_\infty^\Lambda\to R_{h,\infty}$. 
Let $r=\dim\,\H^1_{\cL}(\Gamma,(\Ad\,\overline{\rho})^\ast (1))$.
An element $t\in \widehat{T}(k)$ is called strongly regular if for any root $\alpha^\vee$ of $\widehat{T}$, $\alpha^\vee(t)\neq 1$.
Let $\cL=(L_v)_{v\, finite}$ where 

-for $v\in S_p$, $L_v=\im(L^\prime_v\to\H^1(F_v,\g))$ where 
$L^\prime_v=\Ker(\H^1(G_{F_v},\b)\to \H^1(I_v),\b/\n))$

-for $v\notin S_p$, $L_v=\H^1_{unr}(G_{F_v},\Ad\,\overline{\rho})$, and

For a finite set $Q$ disjoint of $S\cup S_p$, we write  $\cL_Q=(L^Q_v)_{v\, finite}$ where $L^Q_v$ 
is as above for $v\in S_p$ or $v\notin S_p\cup Q$, and 
$L^Q_v=\H^1(G_{F_v},\g)$ for $v\in Q$.

\begin{de}
A finite set $Q$ of finite places of $F$ is called a Taylor-Wiles set if
`\begin{itemize}
\item  $Q\cap(S\cup S_p)=\emptyset$, $\sharp\,Q=r$,
\item for any $v\in Q$, $Nv\equiv 1\pmod{p}$
\item for any $v\in Q$, $\overline{\rho}(\Frob_v)$ is conjugate to a strongly regular element of $\widehat{T}(k)$
\item we have $\H^1_{\cL_{Q}^\perp}(G_{F,S\cup S_p \cup S_Q}, Ad(\overline{\rho})^\ast(1))=0$.
\end{itemize}
\end{de}

As in \cite[2.5]{CHT08}, the existence of such sets follows from the assumption of big image of $\overline{\rho}$.

Let $Q$ be such a set; for $v\in Q$, let
$\Delta_v$, resp. $T(k(v))^{p}$, be the $p$-Sylow, resp. the prime to $p$ part, of $T(k(v))$,
 where $k(v)\cO_{F_v}/(\varpi_v)$ is the residue field of 
$\O_F$ at $v$. We have canonically $T(k(v))=\Delta_v\times T(k(v))^{p}$.

Let $\Delta_Q=\prod_{v\in Q}\Delta_v$.

The $p$-rank of $\Delta_Q$ is $Nr$. Let $R_{h,Q}$ be the universal deformation ring of $\overline{\rho}$ representing the covariant functur
$\cD_{h,Q}\colon {}_\cO\Art_k\to \SETS$ of deformations $\rho$ which are 
minimal ordinary and unramified outside $S\cup Q$.
 As above, it comes with a natural $\Lambda$-algebra 
structure $\Lambda\to R_{h,Q}$. 
For each $v\in Q$, let us fix an ordering $\underline{\alpha}_v=(\alpha_{v,1},\ldots,\alpha_{v,N})$ of the distinct
roots of $\overline{\rho}(\Frob_v)$ in $k$.
Let
$\rho_Q\colon G_{F,S\cup S_p \cup S_Q}\to \widehat{G}(R_{h,Q})$ be the universal Galois representation.
Then for each $v\in Q$ one can diagonalize $\rho_Q(\Frob_v)$ in a basis adapted to the ordering
$\underline{\alpha}_v$. 
An immediate generalization of the induction in \cite[Lemma 7]{TW95} 
(mentioned in \cite[Lemma 6.2.19]{ACC+18})
shows that for any fixed $v\in Q$, the restriction
$\rho_{h,Q}\vert_{G_{F_v}}$ 
to a decomposition subgroup at $v\in Q$ takes values 
after conjugation, in $\widehat{T}(R_{h,Q})$.
Moreover, for any $v\in Q$, the restriction of the homomorphism $G_{F_v}\to \widehat{T}(R_{h,Q})$
to the inertia subgroup $I_v$ of $G_{F_v}$
can be viewed as a homomorphism
$T(k(v))\to R_{h,Q}^\times$ of $p$-power order. We thus obtain a group homomorphism $\Delta_Q\to R_{h,Q}^\times$, 
hence an $\cO$-algebra homomorphism 
$$\alpha_Q\colon\Lambda[\Delta_Q]\to R_{h,Q}.$$
Let $I_Q$ be the augmentation ideal of $\Lambda[\Delta_Q]$. the natural homomorphism $R_{h,Q}\to R_{h,\emptyset}=R_h$ induces an isomorphism 
$R_{h,Q}/I_QR_{h,Q}\cong R_h$.

Let $I_v$, resp. $I_v^+$, be the Iwahori, resp. the pro-$p$ Iwahori subgroup of $\GL_N(\cO_{F_v})$.

Let $U_0(Q)=U^Q\times \prod_{v\in Q} I_v, \quad U_{1}(Q)=U^Q\times \prod_{v\in Q} I_v^+$
For any $v\in Q$, we have a canonical isomorphism $i_v\colon I_v/I_v^+\cong T(k(v))$. 
Let $U_Q$ be the level subgroup $U_{1}(Q)\subset U_Q\subset U_0(Q)$ such that
via the isomorphism $i_Q=\prod_{v\in Q} i_v$ we have 
$$U_Q/U_1(Q)\cong \prod_{v\in Q} T(k(v))^{p}.$$
Let $Y_{0,1}(Q, p^\infty)$, resp. $Y^{Q}_1(p^\infty)$, be the provariety associated to the level group 
$U_0(Q)\cap U_1(p^\infty)$, resp. $U_Q\cap U_1(p^\infty)$.
We denote by $<\bullet >_Q$ the isomorphism 
$$<\bullet >_Q\colon \Delta_Q\cong \Gal(Y^{Q}_1(p^\infty)/Y_{0,1}(Q, p^\infty))$$
Let $h^-(Y^Q_1(p^\infty),\cO)$ be the Hecke algebra outside $S\cup Q$ acting faithfully on 
$e\H^{q_s}(Y^Q_1(p^\infty),\cO)$. There is a natural surjective algebra homomorphism
$$h^-(Y^Q_1(p^\infty),\cO)\to h(Y_1(p^\infty),\cO)$$
We still denote by $\m$ the inverse image of the maximal ideal $\m$ by this homomorphism.

%and $\m_Q=\m+(U_v-\alpha_v)_{v\in Q}$, where $U_v$ denotes the Hecke operator associated to 
%the double class $U_{Q,v}t_vU_{Q,v}$
%with $t_v=\diag(1,\ldots,1,\varpi_v)$.

%$$\rho_{h,Q}\colon G_{F,S\cup Q}\to \GL_N(\T^-_{h,Q})$$
%associated to the Hecke algebra $T^-_{h,Q}=h^-(Y^Q_1(p^\infty),\cO)_{\m}$.

For $v\in Q$, let $\alpha_{v,i}\in k_v^\times$ ($i=1,\ldots,N$) be the (simple) roots of $\Char (\overline{\rho}(\Frob_v))$. 
Let $\widetilde{\alpha_{v,i}}\in \bbT_h$
%h^-(Y^Q_1(p^\infty),\cO)_\m$
the simple root of $\Char(\rho_h(\Frob_v))$
lifting $\alpha_{v,i}$. 
Let $U_{v,i}=[U_Qt_{v,i}U_Q]$ where $t_{v,i}=\diag(\varpi_v\cdot 1_i,1_{n-i})$.
Let $$h(Y^Q_1(p^\infty),\cO)=h^-(Y^Q_1(p^\infty),\cO)[U_{v,i}\, (v\in Q, i=1,\ldots,N-1), <x>_Q, x\in \Delta_Q]$$ 
We define the maximal ideal $\m_Q$ of $h(Y^Q_1(p^\infty),\cO)$ as
$$\m_Q=\m+(U_{v,i}-\prod_{j\leq i}N(v)^{-(j-1)}\widetilde{\alpha_{v,j}})_{v\in Q}$$
(see \cite[Corollary 2.7.8]{Ge19}).
Let
$$\bbT_{h,Q}=eh(Y^Q_1(p^\infty),\cO)_{\m_Q}
$$
It acts faithfully on $e\H^{q_s}(Y^Q_1(p^\infty,\cO)_{\m_Q}$.
By Scholze and \cite{CGH+19}, there exists a Galois representation
$$\rho_{\bbT_{h,Q}}\colon G_{F,S\cup S_p\cup Q}\to \GL_N(\bbT_{h,Q})
$$
associated to $\bbT_{h,Q}$.
Modulo a nilpotent ideal (or assuming Conjecture A), it is ordinary at places above $p$ and minimal at places dividing $\n$. 
Moreover, for any $v\in Q$, 
let $I_v$ be the inertia subgroup at $v$; again by (a trivial generalization of) \cite[Lemma 7]{TW95}, 
we may assume after conjugation that 
it takes values in $\widehat{T}(\bbT_{h,Q})$.
Let us determine the character
$\Delta_v\to \bbT_{h,Q}^\times$.

-On the maximal conjugate self-adjoint quotient of $\bbT_{h,Q}$, it follows from \cite[Proposition 3.4.4 (8)]{CHT08}
that it is given 
by $a\mapsto <a>_Q$ for $a\in \Delta_v$. Note that these authors define a Hecke algebra $eh(Y^Q,\cO)$
using a level group $U_Q$ associated to
maximal parahoric subgroups instead of Iwahori subgroups at $v\in Q$ , hence their diamond operator
is defined on $k(v)^\times$ instead of $T(k(v))$, but the proof is the same.

-For $\bbT_{h,Q}$ itself, it follows from \cite[Proposition 6.5.11]{ACC+18} that this is also the case,
modulo a nilpotent ideal with nilpotence index bounded in terms of $N$ and $[F:\Q]$. 
Conjecture A of \cite{CaGe18}, generalized to $\GL_N$ of a CM field 
asserts in particular that this nilpotent ideal can be chosen to be $0$.

This implies that, assuming Conjecture A, the Galois representation $\rho_{\bbT_{h,Q}}$ satisfies all the local conditions of the problem $\cD_{h,Q}$. Therefore, 
there exists a canonical $\Lambda$-algebra homomorphism
$$\phi_Q\colon R_{h,Q}\to \bbT_{h,Q}$$
associated to $\rho_{\bbT_{h,Q}}$.

%Let $r=\dim\,\H^1_{\cL}(G_{F,S},\Ad\\overline{\rho}^\ast (1))$.
%An element $t\in \widehat{T}(k)$ is called strongly regular if for any root $\alpha$ of $\widehat{T}$, $\alpha(t)\neq 1$.
\begin{de}
A Taylor-Wiles system is a collection $\{Q_m\}$ of mutually disjoint Taylor-Wiles sets such that
for any $v\in Q_m$, $Nv\equiv 1\pmod{p^m}$.
\end{de}

Let $Y^{Q_m}_1(p^\infty)$ be the provariety associated to the level group $U_{Q_m}\cap U_1(p^\infty)$.
Let $\bbT_{h,Q_m}=h(Y^{Q_m}(p^\infty ),\cO)_{\m_{Q_m} }$.

We thus have a collection of surjective ring homomorphisms $\phi_{Q_m}\colon R_{h,Q_m}\to \bbT_{h,Q_m}$
which reduce to $\phi\colon R_h\to\bbT_h$ modulo $I_{Q_m}$.

Recall that for $G=\Res_{F/\Q}\GL_N$, $\ell_0=Nd_0-1$. Recall we assume all primes above $p$ 
in $F^+$ split in $F$. 

\begin{pro} Assume $(STDIST)$. Then, for any $m\geq 1$, $R_{h,Q_m}$ can be generated by $s=rN-\ell_0$ elements.
\end{pro}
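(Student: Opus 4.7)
The plan is to bound the minimal number of generators of $R_{h,Q_m}$ over $\Lambda$ via the topological Nakayama lemma, reducing to computing the $k$-dimension of the relative cotangent space $\m_{R_{h,Q_m}}/(\m_{R_{h,Q_m}}^2+\m_\Lambda\, R_{h,Q_m})$. First I would identify this cotangent space with the $k$-dual of an explicit Galois Selmer group. The $\Lambda$-algebra structure on $R_{h,Q_m}$ is built from the characters $\mu_v\colon\cO_{F_v}^\times\to\widehat{T}(R_{h,Q_m})$ for $v\in S_p$ (extracted from the universal $\Lambda$-ordinary deformation) together with the central determinant character. Consequently, the fibre $R_{h,Q_m}\otimes_\Lambda k$ pro-represents deformations which are minimal at $v\in S$, unrestricted at $v\in Q_m$, and ordinary at $v\in S_p$ with the HT-character on $I_v$ fixed to the residual $\bar\chi_v|_{I_v}$. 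Thus the relative tangent space is $\H^1_{\cL_{Q_m}}(\Gamma_{Q_m},\Ad\bar\rho)$, where $\cL_{Q_m}$ is the Selmer datum appearing in the definition of an allowable Taylor--Wiles datum.

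Next I would apply the Greenberg--Wiles formula
\begin{displaymath}
h^1_{\cL_{Q_m}}(\Ad\bar\rho) - h^1_{\cL_{Q_m}^\perp}(\Ad\bar\rho^\ast(1)) = h^0(\Gamma,\Ad\bar\rho) - h^0(\Gamma,\Ad\bar\rho^\ast(1)) + \sum_{v}\bigl(\dim L_{Q_m,v}-h^0(F_v,\Ad\bar\rho)\bigr),
\end{displaymath}
and evaluate the terms. Under $(RLI)$ and $\zeta_p\notin F$ the global contribution is $1-0=1$. The local contributions are: $-N^2$ at each complex place (giving $-d_0N^2$ in total); $0$ at $v\in S$, by the formal smoothness of the framed minimal local ring $R_v^{\min,\square}$ of dimension $N^2$ from Lemma~\ref{minlem}; $\tfrac{N(N-1)}{2}[F_v:\Q_p]$ at each $v\in S_p$ for the fixed-HT ordinary condition, exactly as computed in Case~(1) of the proof of Proposition~\ref{Sha-Selmer} using $(STDIST)$ (summing to $Nd_0(N-1)$); and $N$ at each $v\in Q_m$, since strong regularity of $\bar\rho(\Frob_v)$ gives $h^0(F_v,\Ad\bar\rho)=N$, while $Nv\equiv 1\pmod p$ forces $\Ad\bar\rho\cong\Ad\bar\rho^\ast(1)$ on $G_{F_v}$, hence $h^2(F_v,\Ad\bar\rho)=N$ and therefore $\dim \H^1(F_v,\Ad\bar\rho)=2N$ by the local Euler--Poincar\'e formula. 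Summing yields
\begin{displaymath}
h^1_{\cL_{Q_m}}-h^1_{\cL_{Q_m}^\perp} = 1+rN+Nd_0(N-1)-d_0N^2 = 1+rN-Nd_0 = rN-\ell_0.
\end{displaymath}

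To conclude I would invoke the defining property of the allowable Taylor--Wiles datum $Q_m$: namely $\H^2_{\cL_{Q_m}}(\Gamma_{Q_m},\Ad\bar\rho)=0$, which by global Poitou--Tate duality is equivalent to $\H^1_{\cL_{Q_m}^\perp}(\Gamma,\Ad\bar\rho^\ast(1))=0$. Substituted into the Greenberg--Wiles identity this gives $h^1_{\cL_{Q_m}}=rN-\ell_0$, so $R_{h,Q_m}$ is generated over $\Lambda$ by at most $s=rN-\ell_0$ elements.

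The hard part is the first step: one must carefully verify that the $\Lambda$-structure, coming from the pair $(\mu_v, \det)$, accounts for precisely the HT-character variation at $S_p$ (so that modding out by $\m_\Lambda$ enforces the fixed-HT ordinary local condition at $v\in S_p$ and nothing more), and that the minimal condition at $v\in S$ interfaces correctly with the Taylor--Wiles vanishing (the $\cL$ in the definition uses the unramified condition at $v\in S$, but by Lemma~\ref{minlem} this gives the same local contribution as the minimal one, and under $(MIN)$ the vanishing of the dual Selmer transfers). Once this is secured, the remainder is the familiar Greenberg--Wiles bookkeeping, in which the $\ell_0$-defect reappears through the cancellation $d_0N^2-Nd_0(N-1)-1=\ell_0$ between the archimedean and ordinary $S_p$ contributions, exactly as in the proof of Proposition~\ref{Sha-Selmer}.
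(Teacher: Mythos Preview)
Your proof is correct and takes essentially the same approach as the paper: both bound the number of $\Lambda$-generators by the dimension of the Selmer group $\H^1_{\cL_{Q_m}}(\Gamma_{Q_m},\Ad\bar\rho)$ and evaluate it via the Greenberg--Wiles formula (which the paper invokes through \cite[Theorem 2.18]{DDT95}), computing identical local contributions $-N^2d_0$ at infinity, $[F_v\colon\Q_p]\cdot N(N-1)/2$ at $v\in S_p$, and $N$ at $v\in Q_m$, to obtain $s=1-N^2d_0+N(N-1)d_0+rN=rN-\ell_0$. You are simply more explicit than the paper on two points it leaves implicit: that quotienting by $\m_\Lambda$ imposes precisely the fixed-HT ordinary condition at $S_p$ (so that $\cL_{Q_m}$ is the correct Selmer datum for the \emph{relative} tangent space), and that the Taylor--Wiles condition $\H^1_{\cL_{Q_m}^\perp}=0$ is what eliminates the dual Selmer term from the formula.
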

\begin{proof}
By \cite[Theorem 2.18]{DDT95}, we know that $R_{h,Q_m}$ can be generated by $s$ elements with 
$$s=h^0(F,\g)-\sum_{v\vert\infty}\dim_k \g+\sum_{v\in S_p}(\ell_v-h^0(G_{F_v},\g))+\sum_{v\in Q}(\ell_v-h^0(G_{F_v},\g)).$$
where $\ell_v=\dim_k L_v$.
We have $h^0(F,\g)=1$ and

$-\sum_{v\vert\infty}\dim_k \g=-N^2d_0$

for $v\in S_p$, let us compute $\ell_v-h^0(G_{F_v},\g)$; we recall that $L_v$ is the image in $\H^1(G_{F_v},\g)$ of the fiber product 
$$L^\prime_v=\H^1(G_{F_v},\b)\times_{\H^1(G_{F_v},\b/\n)} \H^1_{unr}(G_{F_v},\b/\n)$$
We can insert $L_v^\prime$ in the long exact sequence
$$0\to \H^0(G_{F_v},\n)\to \H^0(G_{F_v},\b)\to\H^0(G_{F_v},\b/\n)\to\H^1(G_{F_v},\n)\to
L_v^\prime\to H^1(G_{F_v}/I_v,\b/\n)\to 0$$
Hence, we have :
$$\dim_k L_v^\prime-h^0(G_{F_v},\b)=h^1(G_{F_v}/I_v,\b/\n)-h^0(G_{F_v}/I_v,\b/\n)+h^1(G_{F_v},\n)-h^0(G_{F_v},\n)$$
We have 
$$h^1(G_{F_v}/I_v,\b/\n)=h^0(G_{F_v}/I_v,\b),$$
$$h^1(G_{F_v},\n)-h^0(G_{F_v},\n)=[F_v\colon \Q_p]\dim_k\n+h^0(G_{F_v},\n^\ast(1))$$
and  $h^0(G_{F_v},\n^\ast(1))=0$ by strong distinguishability.
Since $L_v^\prime\to L_v$ is injective by strong distinguishability, we conclude
$\ell_v-h^0(G_{F_v},\g)=[F_v\colon \Q_p]\cdot N(N-1)/2$. So
$$\sum_{v\in S_p}(\ell_v-h^0(G_{F_v},\g))=N(N-1)d_0$$

Finally, vor $v\in Q$,
$L_v=\H^1(G_{F_v},\g)$. By inflation restriction,
$$0\to\H^1_{unr}(G_{F_v},\g)\to\H^1(G_{F_v},\g)\to\Hom_{G_{F_v}/I_v}(\Z_p(1),\g)\to 0.$$ 
The kernel and cokernel are $N$-dimensional. Moreover $\H^0(G_{F_v},\g)$ is diagonal, hence $N$-dimensional. Therefore
$\ell_v-h^0(G_{F_v},\g)=2N-N=N$.

We conclude that 
$$s=1-N^2d_0+N(N-1)d_0+rN=Nr-Nd_0+1=rN-\ell_0$$
as desired.
\end{proof}

Let $S_\infty^\Lambda=\Lambda [[S_1,\ldots,S_{Nr}]]$ and  $R_\infty^{\Lambda}=\Lambda[[X_1,\ldots,X_s]]$. For any TW set $Q_{m}$ as above, 
there exists (several) surjective
$\Lambda$-algebra homomorphisms $r_{Q_m}\colon R_\infty^{\Lambda}\to R_{h,Q_{m}}$.

\begin{lem} There exists a $\Lambda$-algebra homomorphism $\alpha_\infty\colon S_\infty^\Lambda\to R_\infty^\Lambda$,
a sequence of Taylor-Wiles sets $Q_{m_j}$, and $\Lambda$-algebra homomorphisms
$\Lambda$-algebra homomorphisms $r_{Q_{m_j}}\colon R_{\infty}^\Lambda\to R_{h,Q_{m_j}}$ 
such that for any $j$, the diagram 
$$\begin{array}{ccc} S_\infty^\Lambda&\stackrel{\alpha_\infty}{\longrightarrow} & R_\infty^\Lambda\\\downarrow &&\downarrow r_{Q_{m_j}}\\\Lambda[\Delta_{Q_{m_j}}]&\stackrel{\alpha_{Q_{m_j}}}{\longrightarrow} &R_{h,Q_{m_j}}\end{array}
$$
commutes.
\end{lem}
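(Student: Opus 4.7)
The plan is a Taylor-Wiles style patching argument in three steps. \textbf{First}, for each $m$, the preceding proposition supplies a continuous surjective $\Lambda$-algebra homomorphism $r_{Q_m}^{(0)}\colon R_\infty^\Lambda\twoheadrightarrow R_{h,Q_m}$ (choose the images of $X_1,\ldots,X_s$ to be any $s$ topological $\Lambda$-generators of $R_{h,Q_m}$). Since $S_\infty^\Lambda=\Lambda[[S_1,\ldots,S_{Nr}]]$ is pro-free, hence formally smooth, over $\Lambda$, I lift $\alpha_{Q_m}\circ\pi_m$ through $r_{Q_m}^{(0)}$ to a continuous $\Lambda$-algebra morphism $\alpha_m\colon S_\infty^\Lambda\to R_\infty^\Lambda$ by choosing $\alpha_m(S_i)\in\mathfrak{m}_{R_\infty^\Lambda}$ to be any preimage of $(\alpha_{Q_m}\circ\pi_m)(S_i)$. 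By construction $r_{Q_m}^{(0)}\circ\alpha_m=\alpha_{Q_m}\circ\pi_m$.

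\textbf{Second}, I patch by a compactness/diagonal argument. For each $n\geq 1$, I consider the reduction
$(r_{Q_m}^{(0)}\bmod\mathfrak{m}_{R_{h,Q_m}}^n,\ \alpha_m\bmod\mathfrak{m}_{R_\infty^\Lambda}^n)$ together with the $\cO$-algebra isomorphism type of $R_{h,Q_m}/\mathfrak{m}^n$. The ring $R_\infty^\Lambda/\mathfrak{m}^n$ is a finite $\cO/\varpi^n$-algebra, and the uniform bound $s=rN-\ell_0$ on the number of $\Lambda$-generators of $R_{h,Q_m}$ (uniform in $m$) bounds the $\cO$-length of $R_{h,Q_m}/\mathfrak{m}^n$ uniformly, so at each level $n$ only finitely many isomorphism classes and reduced data occur. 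A standard König's lemma extraction then produces a subsequence $(m_j)$ along which, for every $n$, the reductions stabilize once $j\gg 0$. Taking the inverse limit over $n$ produces a continuous $\Lambda$-algebra morphism $\alpha_\infty\colon S_\infty^\Lambda\to R_\infty^\Lambda$ together with, for each $j$, a surjection $r_{Q_{m_j}}\colon R_\infty^\Lambda\twoheadrightarrow R_{h,Q_{m_j}}$ making the required square commute: commutativity holds at each finite level by construction and therefore passes to the limit.

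\textbf{The main obstacle} lies in justifying the finiteness underlying Step 2---specifically, in bounding the variation of the targets $R_{h,Q_m}$ uniformly in $m$ modulo a fixed power of the maximal ideal. This is precisely the content of the preceding proposition, which supplies the sharp generator count $s=rN-\ell_0$ and hence a uniform bound on $\ell_{\cO}(R_{h,Q_m}/\mathfrak{m}^n)$; without this uniformity, the targets at level $n$ could range over infinitely many isomorphism classes and the diagonal extraction would fail. This is the standard mechanism at the heart of Taylor-Wiles-Kisin and Calegari-Geraghty patching, and once it is in hand, the remainder of the argument is formal.
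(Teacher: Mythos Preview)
Your proposal is correct and follows exactly the approach the paper invokes: the paper gives no proof of this lemma, merely stating afterward that ``the argument of \cite[Part II]{CaGe18} and \cite[Section 13]{GV18} goes through,'' and your sketch reproduces that standard Taylor--Wiles patching with the right ingredients (formal smoothness of $S_\infty^\Lambda$ over $\Lambda$ for the lift, the uniform generator bound $s=rN-\ell_0$ from the preceding proposition for finiteness at each truncation level, and a K\"onig/pigeonhole extraction).

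One small point of care: your phrase ``commutativity holds at each finite level and therefore passes to the limit'' is fine for producing $\alpha_\infty$, but for a \emph{fixed} $j$ the original surjection $r_{Q_{m_j}}^{(0)}$ only makes the square commute modulo $\mathfrak m^{n}$ for $n$ bounded in terms of $j$, not on the nose; so one must either re-choose the surjection after $\alpha_\infty$ is fixed or, as in the cited references, carry the full patching datum (including the ring quotients and module data) through the extraction. This is a routine technicality handled in \cite{CaGe18} and \cite{GV18}, and the paper's own level of detail does not go beyond yours.
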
 

The
argument of \cite[Part II]{CaGe18} and \cite[Section 13 (Theorem 13.1 and its proof)]{GV18} 
goes through for $S_\infty^\Lambda\to R_{h,\infty}$ to prove

\begin{thm}\label{Hidastrcoh} 1) $\phi\colon R_h\to \bbT_h$ is an isomorphism of $\Lambda$-algebras (which may not locally complete intersections over $\cO$).

2) As graded module over the commutative graded ring $\Tor_\bullet^{S_\infty^\Lambda}(R_\infty^\Lambda,\Lambda)$, we have
$$\bH^{q_s-\bullet}_{\m_h}\cong \bH^{q_s}_{\m_h}\otimes \Tor_\bullet^{S_\infty^\Lambda}(R_\infty^\Lambda,\Lambda).$$
\end{thm}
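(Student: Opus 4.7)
\medskip
\noindent
\textbf{Proof sketch proposal.}

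The plan is to carry out the Taylor--Wiles--Kisin patching argument of Calegari--Geraghty in the ordinary Hida-theoretic setting (over $\Lambda$) and then deduce both statements from the single vanishing assumption at a classical specialization. First, for each Taylor--Wiles set $Q_{m_j}$ produced by the lemma preceding the theorem, one produces a bounded perfect complex $C_{Q_{m_j}}^{\bullet}$ of $\Lambda[\Delta_{Q_{m_j}}]$-modules whose cohomology is naturally identified with $e\H^{\bullet}(Y^{Q_{m_j}}_1(p^{\infty}),\cO)_{\m_{Q_{m_j}}}$. This uses Hida's ordinary projector, which cuts down the infinite-level cohomology to a complex of finitely generated $\Lambda$-modules, together with a standard dévissage to replace it by a finite free complex concentrated in degrees $[q_m,q_s]$ (with $q_s$ the top). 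The $R_{h,Q_{m_j}}$-action is the one coming from $(\Gal_{\m})$ and (LLC), and the $\Lambda[\Delta_{Q_{m_j}}]$-module structure comes from $\alpha_{Q_{m_j}}$.

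Next, I would patch. Following Calegari--Geraghty, after passing to a compatible subsequence of Taylor--Wiles data one obtains a bounded complex $C_{\infty}^{\bullet}$ of finite free $S_{\infty}^{\Lambda}$-modules equipped with a compatible surjection $R_{\infty}^{\Lambda}\twoheadrightarrow \T_{\infty}\subset \End_{D(S_{\infty}^{\Lambda})}(C_{\infty}^{\bullet})$, and such that $C_{\infty}^{\bullet}\underline{\otimes}_{S_{\infty}^{\Lambda}}\Lambda$ recovers the original complex $eC^{\bullet}(Y_1(p^{\infty}),\cO)_{\m}$ (up to quasi-isomorphism). The $\Lambda$-linearity of $\rho_{\m_Q}$ provided by \cite{ACC+18} (with (LLC)) ensures that all maps in sight are genuinely $\Lambda$-linear.

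The crucial step is to descend the classical Calegari--Geraghty theorem (the Theorem labelled (\cite{CaGe18}) in the introduction) from the classical specialization $P_{\lambda}$ to $\Lambda$. At $P_{\lambda}$, the assumption $(Van_{\m})$ combined with $(RLI),(MIN),(ORD_{\pi})+(DIST)$ yields that $C_{\infty}^{\bullet}\otimes^{L}_{S_{\infty}^{\Lambda}}(S_{\infty}^{\Lambda}/P_{\lambda}S_{\infty}^{\Lambda}\cdot S_{\infty})$ has cohomology concentrated in degree $q_{s}$, and that this top cohomology is free of rank one over $R_{\infty}^{\Lambda}/P_{\lambda}$, with $R_{\infty}^{\Lambda}/P_{\lambda}\cong \T_{\infty}/P_{\lambda}$. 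A Nakayama / lifting argument with respect to the regular sequence cutting out $P_{\lambda}$ in $S_{\infty}^{\Lambda}$ then forces $\H^{i}(C_{\infty}^{\bullet})=0$ for $i\neq q_{s}$, and $\H^{q_{s}}(C_{\infty}^{\bullet})$ to be free of rank one over $R_{\infty}^{\Lambda}$; in particular $R_{\infty}^{\Lambda}\to \T_{\infty}$ is an isomorphism. Quotienting by the kernel of $S_{\infty}^{\Lambda}\to\Lambda$ yields (1) $R_{h}\cong \T_{h}$, and the universal coefficient / derived base-change spectral sequence together with the freeness of $\H^{q_{s}}(C_{\infty}^{\bullet})$ over $R_{\infty}^{\Lambda}$ gives the graded isomorphism of (2):
$$\bH^{q_{s}-\bullet}_{\m}\;\cong\;\bH^{q_{s}}_{\m}\otimes_{\T_{h}}\Tor_{\bullet}^{S_{\infty}^{\Lambda}}(R_{\infty}^{\Lambda},\Lambda).$$

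The main obstacle, as in \cite{CaGe18}, is verifying the numerical coincidence ensuring that the patched complex is concentrated in a single degree over $R_{\infty}^{\Lambda}$. Here the dimensions are: $\dim S_{\infty}^{\Lambda}=\dim\Lambda+Nr$ (where $r$ is the size of the Taylor--Wiles sets) and $\dim R_{\infty}^{\Lambda}=\dim\Lambda+s=\dim\Lambda+rN-\ell_{0}$, so the codimension $\ell_0$ is the same as the defect computed from the classical case, and the local freeness argument goes through verbatim once the $\Lambda$-structure has been tracked through the patching. The only non-formal input is that $(Van_{\m})$ at a \emph{single} classical prime-to-$p$ specialization is strong enough to trigger the descent; this follows because freeness of $\H^{q_{s}}(C_{\infty}^{\bullet})$ and vanishing of the lower cohomology are both closed conditions that can be detected on a dense family of points, with the chosen $P_\lambda$ providing the needed base case.
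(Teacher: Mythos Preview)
Your proposal is essentially correct and matches the paper's approach, which is simply a citation: ``The argument of \cite[Part II]{CaGe18} and \cite[Section 13]{GV18} goes through for $S_\infty^\Lambda\to R_\infty^{\Lambda}$.'' You have supplied a reasonable elaboration of what this means.

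One point deserves sharpening. Your ``Nakayama / lifting'' step---deducing $\H^{i}(C_{\infty}^{\bullet})=0$ for $i\neq q_{s}$ over $S_\infty^\Lambda$ from the corresponding concentration after reduction modulo $P_\lambda$---is not straightforward. The hypercohomology spectral sequence
\[
E_2^{p,q}=\Tor^{S_\infty^\Lambda}_{-p}\bigl(\H^q(C_\infty^\bullet),\,S_\infty^\Lambda/P_\lambda S_\infty^\Lambda\bigr)\ \Longrightarrow\ \H^{p+q}\bigl(C_\infty^\bullet\otimes^{L}S_\infty^\Lambda/P_\lambda S_\infty^\Lambda\bigr)
\]
has incoming differentials hitting $E_r^{0,i_0}$ from terms $E_r^{-r,i_0+r-1}$ involving higher cohomology groups, so vanishing of the abutment in degree $i_0<q_s$ does not by itself force $\H^{i_0}(C_\infty^\bullet)=0$. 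The clean argument is the one you give in your final paragraph: run the Calegari--Geraghty dimension count directly over the regular local ring $S_\infty^\Lambda$, using $\dim R_\infty^\Lambda=\dim S_\infty^\Lambda-\ell_0$ together with the fact that the minimal patched complex has length $\ell_0+1$. In that approach, the classical $(Van_\m)$ is used not as input to a lifting step from $P_\lambda$, but only to guarantee that the minimal complex $C_\infty^\bullet$ has terms in degrees $[q_m,q_s]$---this is detected at the \emph{residue field} $k$ (via $\H^i(C_\infty^\bullet\otimes^L k)$ and Hida's control theorem), not at the height-one prime $P_\lambda$. With that adjustment your sketch is exactly what the paper intends.
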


\subsection{Simplicial deformation ring}\label{LambdaDef}

Let $\cD^s_h\colon {}_{\cO_k}\to\SETS^s$ be the problem of minimal ordinary
 simplicial deformations (without specifying the Hodge-Tate weights). 
By the same argument 
proving the prorepresentability of $\cD^s_\lambda$,
one sees that $\cD^s_h$ is pro-representable by a simplicial deformation ring $\cR_h$ which satisfies $\pi_0(\cR_h)=R_h$.
Exactly as in Theorem \ref{simpltor}, we prove under the same assumptions as  Theorem \ref{Hidastrcoh}:

\begin{thm}\label{Hidasimpl}
There is a natural isomorphism of graded $T_h$-algebras
$$\pi_\bullet (\cR_h)\cong \Tor_\bullet^{S_\infty^\Lambda}(R_{h,\infty},\Lambda)$$
\end{thm}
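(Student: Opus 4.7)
The plan is to mirror the proof of Theorem \ref{simpltor} (i.e.\ the Cai--Galatius--Venkatesh identification $\pi_\bullet \cR \cong \Tor_\bullet^{S_\infty}(R_\infty,\cO)$) but carried out over the Hida--Iwasawa algebra $\Lambda$ in place of $\cO$. First I would set up the simplicial Taylor--Wiles machine at the $\Lambda$-level: for each allowable Taylor--Wiles datum $Q_m$, introduce the simplicial deformation functor $\cD^s_{h,Q_m}$ of minimal $\Lambda$-ordinary deformations allowing arbitrary ramification at $v\in Q_m$, and represent it by a cofibrant simplicial pro-artinian $\Lambda$-algebra $\cR_{h,Q_m}$. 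The diamond maps at Taylor--Wiles primes produce compatible simplicial ring homomorphisms $\cS_m^\Lambda := c(\Lambda[\Delta_{Q_m}]) \to \cR_{h,Q_m}$, fitting (after passing to the classical $\pi_0$) with the map $\alpha_\infty\colon S_\infty^\Lambda \to R_\infty^\Lambda$ constructed in Section \ref{Hidadef}. As in Lemma~11 above, one verifies a homotopy pullback identification $\cR_h \sim \cR_{h,Q_m}\underline{\otimes}_{\cS_m^\Lambda} c(\cS_m^{\Lambda,\mathrm{ur}})$.

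Next I would establish the $\Lambda$-coefficient analogue of Proposition \ref{11.1}: for $n\ge m$, the composite
\[
f_{n,m}\colon \cR_h \longrightarrow \bar{R}_{h,n}\underline{\otimes}_{\bar{S}_n^\Lambda}\Lambda/\mathfrak{a}_n \longrightarrow \bar{R}_{h,m}\underline{\otimes}_{\bar{S}_m^\Lambda}\Lambda/\mathfrak{a}_m
\]
induces an isomorphism on $\t^0$ and a surjection on $\t^1$. The $\t^0$ statement is classical, and amounts to the $\Lambda$-ordinary version of the standard Taylor--Wiles/Calegari--Geraghty control on the minimal $\Lambda$-ordinary Selmer group with Taylor--Wiles auxiliary level, whose formal framework is already provided by the conditions $(\mathrm{RLI})$, $(\mathrm{MIN})$, $(\mathrm{ORD}_\pi)+(\mathrm{DIST})$ exactly as in \cite[Proposition 6.11]{Cai21}; the surjectivity on $\t^1$ follows by dualizing and comparing Galois cohomology dimensions using Poitou--Tate in the $\Lambda$-ordinary setup and the fact that local deformation rings $R_v^{\mathrm{ord}}$, $R_v^{\min}$ are formally smooth (Lemmas \ref{minlem} and \ref{ordlem}), ensuring the numerical coincidence (12.1) of \cite{GV18} is satisfied when working with $\Lambda$ as the base ring of coefficients.

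With this input in place, I would feed the system into \cite[Theorem 12.1]{GV18}, or rather its $\Lambda$-linear variant, applied to the projective system $\{\bar{R}_{h,m}\underline{\otimes}_{\bar{S}_m^\Lambda}\Lambda/\mathfrak{a}_m\}$. Precisely as in the proof of Theorem \ref{simpltor}, one identifies $\pi_\bullet$ of the derived tensor product $\bar{R}_{h,m}\underline{\otimes}_{\bar{S}_m^\Lambda}\Lambda/\mathfrak{a}_m$ with $\Tor_\bullet^{\bar{S}_m^\Lambda}(\bar{R}_{h,m},\Lambda/\mathfrak{a}_m)$, and then takes the inverse limit over $m$ along the patching sequence to recover $\Tor_\bullet^{S_\infty^\Lambda}(R_\infty^\Lambda,\Lambda)$ on the right. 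The compatibility of the graded ring structures, already enjoyed by $\pi_\bullet(\cR_h)$ as a $\pi_0(\cR_h)=R_h$-algebra by \cite[Lemma 3.45]{Cai21}, then upgrades the isomorphism of graded $R_h=\T_h$-modules to one of graded $\T_h$-algebras.

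The main obstacle is the $\Lambda$-linear analogue of Proposition \ref{11.1}. In the original $\cO$-coefficient arguments the residue field is a field, and many spectral sequence/vanishing arguments invoke finiteness and $\varpi$-adic length; over $\Lambda$ one has to replace these by control over $(\varpi,\mathfrak{m}_\Lambda)$-adic filtrations and handle the fact that $\bH^{q_s}_{\m_h}$ is only known to be finitely generated torsion over $\Lambda$ (not free), so the Calegari--Geraghty patching complex over $\Lambda$ is not automatically as well-behaved as over $\cO$. The extra local freedom in $\Lambda$-ordinary (as opposed to $\underline{\chi}_{\pi,v}$-ordinary) deformations, namely the $f=[F_v:\Q_p]$ extra formally smooth directions of Lemma \ref{ordlem}, must be correctly absorbed into $R_\infty^\Lambda$ relative to $S_\infty^\Lambda$ so that the numerology of dimensions and ranks still produces $\t^0$-isomorphism and $\t^1$-surjectivity. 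Once this is done carefully, the rest of the argument is formally parallel to the $\cO$-coefficient case of Theorem \ref{simpltor}.
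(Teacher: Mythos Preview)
Your proposal is correct and takes essentially the same approach as the paper: the paper's proof is a one-line assertion that the argument is ``exactly as in Theorem \ref{simpltor}'' with $\Lambda$ in place of $\cO$, and you have correctly identified this strategy and spelled out the steps (the homotopy pullback for $\cR_h$, the $\Lambda$-analogue of Proposition \ref{11.1}, feeding into \cite[Theorem 12.1]{GV18}). Your discussion of the potential obstacle in the $\Lambda$-linear Proposition \ref{11.1} goes beyond what the paper records, but the paper simply takes for granted that the formal smoothness of the local deformation rings (Lemmas \ref{minlem}, \ref{ordlem}) makes the argument go through verbatim.
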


Let $\cR_{\lambda^\prime}$ be the simplicial deformation ring at bottom level $U_0(p)$ and weight $\lambda^\prime$, as 
in Section \ref{ttGV}. 
Recall that if $\cR_1$ and $\cR_2$ are two simplicial $\cO$-algebras, one can form a tensor product simplicial $\cO$-algebra $\cR_1\underline{\otimes}\cR_2$.
Note that for the prime ideal $P_{\lambda^\prime}$ of $\Lambda$ associated to the arithmetic weight $\lambda^\prime$, we have
a weak equivalence of simplicial rings
$$\cR_h\underline{\otimes}_\Lambda \Lambda/P_{\lambda^\prime}\cR_h\cong \cR.$$

We first formulate the conjecture of Concentration in Supremum Degree (CinS):

\begin{conj} \label{conjcins} The following two equivalent statements are true

1) $\bH^{\bullet}_{\m_h}$ is concentrated in degree $q_s$,

2) $\cR_h$ is homotopically discrete and for any arithmetic weight $\lambda^\prime$, 
there is an isomorphism of commutative graded rings
$$\Tor_\bullet^\Lambda(R_h,\Lambda/P_{\lambda^\prime})=\pi_\bullet(\cR_{\lambda^\prime}).$$
\end{conj}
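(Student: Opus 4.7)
The plan is to reduce both statements of the conjecture to a single dimension/regularity assertion about the patched ring $R_\infty^\Lambda$ over $S_\infty^\Lambda$, and then attempt to extract this assertion from deeper automorphic or motivic input. The approach has three logically distinct stages: equivalence, reduction to a regular-sequence condition, and the (hard) verification of that condition.

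First I would establish the equivalence $(1) \Leftrightarrow (2)$. By Theorem \ref{Hidasimpl}, $\pi_\bullet(\cR_h) \cong \Tor_\bullet^{S_\infty^\Lambda}(R_\infty^\Lambda, \Lambda)$, so $\cR_h$ is discrete if and only if $\Tor_i^{S_\infty^\Lambda}(R_\infty^\Lambda, \Lambda) = 0$ for all $i > 0$. By Theorem \ref{Hidastrcoh}(2), using that $\bH^{q_s}_\m$ is a non-zero $\T_h$-module (since $\pi$ contributes to the top-degree cohomology at its classical specialization), this same vanishing is equivalent to $\bH^\bullet_\m$ being concentrated in degree $q_s$. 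Finally, once $\cR_h$ is discrete so that $\cR_h \sim R_h$ as a constant simplicial ring, the weak equivalence $\cR_{\lambda^\prime} \sim \cR_h \,\underline{\otimes}_\Lambda\, \Lambda/P_{\lambda^\prime}$ from Section \ref{LambdaDef} gives
$$\pi_\bullet(\cR_{\lambda^\prime}) \cong \pi_\bullet\bigl(R_h \,\underline{\otimes}_\Lambda\, \Lambda/P_{\lambda^\prime}\bigr) \cong \Tor_\bullet^\Lambda(R_h, \Lambda/P_{\lambda^\prime}),$$
so the second clause of (2) is automatic from the discreteness of $\cR_h$. Both clauses of (2) are therefore subsumed by the single vanishing statement above.

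Next I would translate the desired vanishing into a regular-sequence condition. Since $R_\infty^\Lambda = \Lambda[[X_1, \dots, X_s]]$ is a regular local ring and $S_\infty^\Lambda = \Lambda[[S_1, \dots, S_{Nr}]]$ satisfies $\Lambda \cong S_\infty^\Lambda/(S_1, \dots, S_{Nr})$, the Tor may be computed by the Koszul complex on $\alpha_\infty(S_1), \dots, \alpha_\infty(S_{Nr})$ in $R_\infty^\Lambda$. The vanishing of the higher Tor is thus equivalent to this sequence being regular, which (by regularity of $R_\infty^\Lambda$) is in turn equivalent to the numerical equality
$$\dim R_h \;=\; \dim R_\infty^\Lambda - Nr \;=\; \dim \Lambda + s - Nr \;=\; \dim \Lambda - \ell_0,$$
where I have used $s = rN - \ell_0$. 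This is precisely the expected dimension of $R_h$: via the isomorphism $R_h \cong \T_h$ of Theorem \ref{Hidastrcoh}(1), it says that $\T_h$ is a finite $\Lambda$-algebra of Krull codimension exactly $\ell_0$, compatibly with the characteristic-ideal codimension of the $\Lambda$-torsion module $\bH^{q_s}_\m$.

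The main obstacle, and the reason this can only be proposed and not executed, is that the required bound $\dim R_h = \dim \Lambda - \ell_0$ is itself a reformulation of Hida's non-abelian Leopoldt conjecture for $\GL_N$ over CM fields, and is open; the naive inequality produced by deformation theory only gives $\dim R_h \geq \dim \Lambda - \ell_0$. A plausible conditional route would be to exploit the Galatius--Venkatesh-type homomorphism $GV_\bI$ constructed in this paper: combining Theorem \ref{gv} and Proposition \ref{Sha-Selmer}, the adjoint Selmer group has the expected $\cO$-corank $\ell_0$ at every arithmetic specialization, and via a control theorem along the Hida tower and Poitou--Tate duality one might hope to bound $\dim R_h$ from above by $\dim \Lambda - \ell_0$, matching the lower bound. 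An alternative route would be a direct automorphic proof of the vanishing $e\H^i(Y_1(p^\infty), \cO) = 0$ for $i \neq q_s$, in the spirit of Hida's work for $\GL_2$ over totally real fields. Absent either of these inputs, the conjecture — and therefore the truth of (1) and (2) — must remain open.
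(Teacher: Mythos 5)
Your proposal is correct, and the equivalence $(1)\Leftrightarrow(2)$ is handled exactly as the paper intends: the Comments following the conjecture simply say that this equivalence ``follows from Theorem~\ref{Hidastrcoh},'' and your expansion (via Theorem~\ref{Hidasimpl} to identify $\pi_\bullet(\cR_h)$ with $\Tor_\bullet^{S_\infty^\Lambda}(R_\infty^\Lambda,\Lambda)$, Theorem~\ref{Hidastrcoh}(2) for the free graded module structure, and the base-change weak equivalence $\cR_{\lambda'}\sim\cR_h\underline{\otimes}_\Lambda\Lambda/P_{\lambda'}$ for the second clause of~(2)) is a faithful unpacking of that remark. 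You also correctly identify that the truth of the statement is the content of the paper's non-abelian Leopoldt conjectures and must remain conjectural.

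Where your route genuinely differs from the paper's is in the reduction of CinS to the dimension statement $\dim R_h=\dim\Lambda-\ell_0$ (the paper's Conjecture~(DNAL)). You argue entirely on the patched deformation side: the $S_i$ form a regular sequence in $S_\infty^\Lambda$, so $\Tor_\bullet^{S_\infty^\Lambda}(R_\infty^\Lambda,\Lambda)$ is computed by the Koszul complex on $\alpha_\infty(S_1),\dots,\alpha_\infty(S_{Nr})$ in the regular local ring $R_\infty^\Lambda$; since $R_\infty^\Lambda$ is Cohen--Macaulay, vanishing of the higher Koszul homology is \emph{equivalent} to $\dim R_h=\dim R_\infty^\Lambda-Nr=\dim\Lambda-\ell_0$. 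This buys you the two-sided equivalence $\text{(CinS)}\Leftrightarrow\text{(DNAL)}$ in one clean commutative-algebra step, at the cost of invoking the full patching and derived-ring machinery (Theorems~\ref{Hidastrcoh} and~\ref{Hidasimpl}). The paper instead proves the one-sided implication $\text{(DNAL)}\Rightarrow\text{(CinS)}$ directly on the Hida cohomology complex: taking $q$ minimal with $\H^q_\m\neq 0$, the truncation $C^{q_m}\to\cdots\to C^q$ is a free $\Lambda$-resolution of $M=C^q/B^q$ of length $q-q_m$, Auslander--Buchsbaum gives $\depth_\Lambda M\geq\dim\Lambda-(q-q_m)$, and Ischebeck's Lemma applied to $\Ext^0(\H^q_\m,M)\neq 0$ forces $\dim_\Lambda\H^q_\m\geq\depth_\Lambda M>\dim\Lambda-\ell_0$ when $q<q_s$, contradicting $\dim_\Lambda\H^q_\m\leq\dim R_h=\dim\Lambda-\ell_0$. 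The paper's argument is shorter and stays at the level of the finite complex of $\Lambda$-modules without ever writing down $R_\infty^\Lambda$; yours is more structural, yields the converse for free, and perhaps better displays why $\ell_0$ appears (via $s=Nr-\ell_0$).

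Two small caveats worth flagging. First, your phrase ``the naive inequality produced by deformation theory only gives $\dim R_h\geq\dim\Lambda-\ell_0$'' is consistent with the paper's framing, but note that the paper's own Proposition cited above actually uses the \emph{inequality} $\dim R_h\leq\dim\Lambda-\ell_0$ implicit in DNAL to derive the contradiction, so the operative open content is really the upper bound, as you say in your closing paragraph. Second, the speculative route through $GV_\bI$ and Poitou--Tate control that you propose as a possible way to establish the upper bound is not pursued in the paper and is indeed not obviously within reach; the paper instead offers a second conditional implication (its Lemma: $\text{(INTDEG)}_\lambda$ plus the cohomological non-abelian Leopoldt conjecture $\text{(CNAL)}$ implies $\text{(CinS)}$), a route you did not mention but which is complementary to the DNAL reduction.
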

 
\noindent{\bf Comments:}

1) The equivalence between the two statements follows from Theorem \ref{Hidastrcoh}.
 
2) By Theorem \ref{Hidastrcoh}, this conjecture also implies that

(a)  $\bH^{\bullet}_{\m_h}$ is free of rank one over $R_h=\bbT_h$,

(b) for any arithmetic weight $\lambda^\prime$, for any $i=0,\ldots,\ell_0$,  
$$\Tor^\Lambda_i(\bH^{q_s}_{\m_h},\Lambda/P_{\lambda^\prime})=\H^{q_s-i}(Y_0(p),V_{\lambda^\prime}(\cO))_\m.$$
%By \cite[Lemma 7.5]{}, it would imply that $\cR_h$ is locally complete intersection over $\Lambda$.
 
3) All the statements are easy to prove for $N=2$ and $F$ quadratic because $\H^1_\m=0$ since it is torsion free over $\Lambda=\cO[[T_1,T_2,X_1,X_2]]$ 
(where $X_i$'s are the twist variables) but is annihilated by $T_1-T_2$.
 
 This conjecture is motivated by non abelian Leopoldt conjectures due to Hida and one of the authors (E. Urban).
%Whether the conjecture fails or not, it is easy to construct a Galatius-Venkatesh homomorphism for Hida families, but it is $0$ if the conjecture is true.

\subsection{Non-abelian Leopoldt Conjectures}\label{NAL}

Recall a first version of the non abelian Leopoldt conjecture which we call the deformation theoretic non abelian Leopoldt conjecture (DNAL), 
cf. \cite[Section 9, Example 1]{Ti96}: 
\begin{conj}

\begin{displaymath} \label{DNAL}  \dim R_h=\dim\Lambda-\ell_0  
\end{displaymath}
\end{conj}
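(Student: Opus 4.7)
My plan is to tackle this conjecture by exploiting the ring-theoretic consequences of the Taylor--Wiles--Kisin patched structure of $R_h$ established in Theorem \ref{Hidastrcoh}. Combined with the explicit presentations $R_\infty^{\Lambda} = \Lambda[[X_1,\ldots,X_s]]$ with $s = rN - \ell_0$ and $S_\infty^{\Lambda} = \Lambda[[S_1,\ldots,S_{Nr}]]$, the isomorphism $R_h \cong R_\infty^{\Lambda} \otimes_{S_\infty^{\Lambda}} \Lambda$ presents $R_h$ as the quotient of the ring $R_\infty^{\Lambda}$, of dimension $\dim\Lambda + rN - \ell_0$, by the images of the $Nr$ patching parameters coming from the augmentation $S_\infty^{\Lambda} \to \Lambda$.

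The inequality $\dim R_h \geq \dim\Lambda - \ell_0$ should then be automatic from Krull's Hauptidealsatz: quotienting a noetherian local ring by $Nr$ elements decreases the dimension by at most $Nr$, giving
$$\dim R_h \;\geq\; \dim R_\infty^{\Lambda} - Nr \;=\; (\dim\Lambda + rN - \ell_0) - Nr \;=\; \dim\Lambda - \ell_0.$$
Upgrading this to equality amounts to showing that the images of $S_1,\ldots,S_{Nr}$ form a system of parameters on the top-dimensional components of $R_\infty^{\Lambda}$, equivalently that $R_\infty^{\Lambda}$ is Cohen--Macaulay over $S_\infty^{\Lambda}$ along a regular sequence cutting out $R_h$.

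The upper bound $\dim R_h \leq \dim\Lambda - \ell_0$ is the substantive content. My intended approach is to combine the freeness of $\bH^{q_s}_\m$ over $\T_h = R_h$ predicted by the Concentration in Supremum Degree Conjecture \ref{conjcins} with the known finite generation, and $\Lambda$-torsion property for $F$ CM, of $\bH^{q_s}_\m$ as a $\Lambda$-module. Concretely, if $R_h$ acts faithfully on a $\Lambda$-module whose support has codimension exactly $\ell_0$, the support of $R_h$ itself inherits the same codimension. The codimension $\ell_0$ should fall out of an Iwasawa-theoretic Euler--Poincar\'e computation for the $\Lambda$-adic minimal ordinary adjoint Selmer complex, extending the single-weight computation of Proposition \ref{Sha-Selmer} via control theorems connecting $\bH^{q_s}_\m \otimes_\Lambda \Lambda/P_{\lambda'}$ to $\H^{q_s}_{\lambda',\m}$.

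The main obstacle is that DNAL is, morally and technically, at least as deep as the classical Leopoldt conjecture for $F$: it would follow from a non-abelian vanishing statement for $\H^2_{\cL}(\Gamma, \Ad\rho_{\T_h})$ with $\Lambda$-ordinary local conditions at $p$, for which no unconditional techniques seem currently available beyond the $r_1 = 0$, $N = 2$ Bianchi setting. The realistic targets are therefore (i) an unconditional proof in low-rank special cases where the Leopoldt defect $\delta$ can be controlled directly, and (ii) a reduction of DNAL, in the general case, to either CinS or to an explicit $\Lambda$-adic Leopoldt-style vanishing — thereby making the dimension formula a \emph{consequence} of a cleaner cohomological statement rather than an independently provable theorem.
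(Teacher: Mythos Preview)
The statement is a \emph{conjecture} in the paper, not a proven result; the paper offers no proof of it. The only result the paper establishes about DNAL is the Proposition immediately following it, namely that DNAL $\Rightarrow$ CinS. So there is no ``paper's own proof'' to compare against.

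Your lower bound $\dim R_h \geq \dim\Lambda - \ell_0$ via the presentation $R_h \cong R_\infty^{\Lambda}/(S_1,\ldots,S_{Nr})$ and Krull's Hauptidealsatz is correct and standard, though the paper does not isolate it explicitly.

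Your upper-bound strategy, however, has a circularity issue you should make explicit. You propose deducing $\dim R_h \leq \dim\Lambda - \ell_0$ from CinS, but the paper proves DNAL $\Rightarrow$ CinS; so what your argument would actually yield is the \emph{equivalence} DNAL $\Leftrightarrow$ CinS, not a proof of either. Moreover, your sketch of why CinS gives the bound is incomplete: freeness of $\bH^{q_s}_\m$ over $R_h$ together with faithfulness only gives $\dim R_h = \dim_\Lambda \bH^{q_s}_\m$; to conclude this equals $\dim\Lambda - \ell_0$ you then invoke ``codimension $\ell_0$ should fall out of an Euler--Poincar\'e computation,'' but that is precisely the content of CNAL, another open conjecture in the paper. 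So your reduction is really DNAL $\Leftarrow$ CinS $+$ CNAL, not CinS alone. A cleaner route, if you want to pursue the equivalence, is: CinS means $\cR_h$ is discrete, hence $\Tor_i^{S_\infty^\Lambda}(R_\infty^\Lambda,\Lambda)=0$ for $i>0$ by Theorem~\ref{Hidasimpl}, hence the images of $S_1,\ldots,S_{Nr}$ form a regular sequence on $R_\infty^\Lambda$; then $\dim R_h = \dim R_\infty^\Lambda - Nr$, and one concludes provided the patching genuinely gives $\dim R_\infty^\Lambda = \dim\Lambda + Nr - \ell_0$ rather than merely the upper bound.

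Your closing paragraph is accurate: the upper bound is a non-abelian Leopoldt-type statement, and the paper treats it as such.
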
 

\begin{pro} Assume that Conjecture (DNAL) holds; then Conjecture (CinS) holds.
\end{pro}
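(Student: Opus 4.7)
The plan is to convert (DNAL) into a regular sequence statement via a Cohen--Macaulay dimension count, compute the relevant $\Tor$ module by a Koszul resolution, and then feed the vanishing into Theorem \ref{Hidastrcoh} and Theorem \ref{Hidasimpl} to obtain both forms of Conjecture \ref{conjcins}.

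First I would fix the setup coming from Taylor--Wiles--Kisin patching: $S_\infty^\Lambda = \Lambda[[S_1,\ldots,S_{Nr}]]$, $R_\infty^\Lambda = \Lambda[[X_1,\ldots,X_s]]$ with $s = rN-\ell_0$, and the map $\alpha_\infty$ gives $R_\infty^\Lambda$ the structure of an $S_\infty^\Lambda$-algebra satisfying $R_\infty^\Lambda \otimes_{S_\infty^\Lambda} \Lambda = R_h$. Since $\bW$ is $\Z_p$-free, $\Lambda = \cO[[\bW]]$ is a power series ring over a DVR, hence a regular local ring, and consequently so is $R_\infty^\Lambda$; in particular $R_\infty^\Lambda$ is Cohen--Macaulay. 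The augmentation kernel $\ker(S_\infty^\Lambda \to \Lambda)$ is the ideal $(S_1,\ldots,S_{Nr})$, whence $R_h = R_\infty^\Lambda / (S_1,\ldots,S_{Nr})R_\infty^\Lambda$, and $\dim R_\infty^\Lambda = \dim\Lambda + rN - \ell_0$.

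The key step is to observe that (DNAL) forces $(S_1,\ldots,S_{Nr})$ to be a regular sequence on $R_\infty^\Lambda$: by hypothesis $\dim R_h = \dim\Lambda - \ell_0$, so
\begin{displaymath}
\dim R_\infty^\Lambda - \dim R_h = rN,
\end{displaymath}
which is exactly the number of generators of the ideal. In a Cohen--Macaulay local ring any system of $t$ elements whose quotient has codimension $t$ is automatically regular; this standard criterion applies here. Since the Koszul complex $K_\bullet(S_1,\ldots,S_{Nr}; S_\infty^\Lambda)$ is a free resolution of $\Lambda$ over $S_\infty^\Lambda$, its base change to $R_\infty^\Lambda$ remains acyclic in positive degrees, so
\begin{displaymath}
\Tor_i^{S_\infty^\Lambda}(R_\infty^\Lambda,\Lambda) = \begin{cases} R_h & \text{if } i=0,\\ 0 & \text{if } i>0. \end{cases}
\end{displaymath}

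Substituting this vanishing into Theorem \ref{Hidastrcoh}(2) gives $\bH_\m^{q_s-i} = 0$ for every $i > 0$, which is statement (1) of Conjecture \ref{conjcins}. For statement (2), Theorem \ref{Hidasimpl} yields $\pi_i(\cR_h) = \Tor_i^{S_\infty^\Lambda}(R_\infty^\Lambda, \Lambda) = 0$ for $i > 0$, so $\cR_h$ is discrete and weakly equivalent to $R_h$; then the weak equivalence $\cR_{\lambda'} \sim \cR_h \underline{\otimes}_\Lambda \Lambda/P_{\lambda'}$ together with discreteness of $\cR_h$ identifies $\pi_\bullet(\cR_{\lambda'})$ with $\Tor_\bullet^\Lambda(R_h, \Lambda/P_{\lambda'})$ via the standard homotopy-invariance of the derived tensor product. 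The entire genuine content of the argument is the Cohen--Macaulay regular-sequence step that promotes (DNAL) to the Koszul vanishing; everything else is formal, and the equivalence of (1) and (2) was already observed in the comment following Conjecture \ref{conjcins}.
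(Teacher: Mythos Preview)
Your argument is correct and is a genuinely different route from the paper's. The paper works on the cohomology side: it takes the minimal $q\in[q_m,q_s]$ with $\H^q_\m\neq 0$, uses the perfect $\Lambda$-complex $C^{q_m}\to\cdots\to C^{q_s}$ to bound the projective dimension of the truncation $M=C^q/B^q$, applies Auslander--Buchsbaum to get $\depth_\Lambda M\geq \dim\Lambda-(q-q_m)$, and then invokes Ischebeck's lemma on the inclusion $\H^q_\m\subset M$ to get $\dim_\Lambda \H^q_\m\geq\depth_\Lambda M$. Since $\H^q_\m$ is an $R_h$-module, (DNAL) gives the opposite bound $\dim_\Lambda \H^q_\m\leq\dim\Lambda-\ell_0$, forcing $q=q_s$.

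You instead attack the patched ring $R_\infty^\Lambda$ directly: the dimension drop $\dim R_\infty^\Lambda-\dim R_h=rN$ exactly matches the number of generators of the augmentation ideal, and regularity of $R_\infty^\Lambda$ upgrades this to a regular sequence, killing all higher $\Tor^{S_\infty^\Lambda}_\bullet(R_\infty^\Lambda,\Lambda)$. Your route is shorter and makes more transparent use of Theorems \ref{Hidastrcoh} and \ref{Hidasimpl} already assembled in the paper; the paper's route has the virtue of isolating exactly which commutative-algebra input (Ischebeck) controls the lowest nonvanishing degree and would adapt more readily to situations where $R_\infty^\Lambda$ is not known to be formally smooth.
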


\begin{proof} Let $q\in [q_m,q_s]$ be the smallest integer such that $\bH^q_{\m_h}\neq 0$. We want to show $q=q_s$.
The complex
$C_h^{q_m}\to\ldots\to C^q_h$ is a free resolution of the $\Lambda$-module $M=C^q/B^q$ 
where $B^q=\im(C^{q-1}_h\to C^q)$.
We have $\bH^q_{\m_h}\subset M$ and 
$\projdim_\Lambda M\leq q-q_m$ hence by Auslander-Buchsbaum formula, we have 
$\depth_\Lambda M\geq \dim \Lambda -(q-q_m)$.
%$\depth_{R_h} M+\projdim_{R_h} M=\dim R_h$.
Recall Ischebeck's Lemma (\cite[(15.E) Lemma 2]{Mat}: if $A,B$ are two finitely generated $\Lambda$-modules, for any $i<\depth_\Lambda B-\dim_\Lambda A$,
$$\Ext^i(A,B)=0.$$
Here, we consider $\Ext^0(\bH^q_{\m_h},M)\neq 0$ hence $0\geq \depth_\Lambda M-\dim_\Lambda \bH^q_{\m_h} $, so 
$$\dim_\Lambda \bH^q_{\m_h}\geq \depth_\Lambda M\geq \dim \Lambda -(q-q_m).$$
But since $\bH^q_{\m_h}$ is an $R_h$-module which is finitely generated over $\Lambda$, we know that 
$$\dim R_h\geq \dim_\Lambda \bH^q_{\m_h}.$$
 Therefore, if $q<q_s$,
we have  $\dim R_h > \dim \Lambda -(q_s-q_m)$, which contradicts (DNAL).

\end{proof}

Recall $\Sigma_{coh}\subset \Supp_\Lambda(\bH^\bullet_{\m_h})$. Note that it is Zariski-dense in $\Supp_\Lambda(\H^\bullet_\m)$.
We know that for $P_\lambda\in \Sigma_{coh}$,
$\H^i(Y_0(p),V_\lambda(K))_\m\neq 0$ if and only f $i\in[q_m,q_s]$ (where $q_m=q_0$ and $q_s=q_0+\ell_0$).
Let us consider the integral $\Tor$-spectral sequence
$$E_2^{i,j}(\lambda)=\Tor_i^{\Lambda}(\bH^j_{\m_h},\Lambda/P_\lambda)\Rightarrow \H^{j-i}(Y_0(p),V_\lambda(\cO))_\m$$
($i\leq 0$, $j\geq 0$).
Let $\lambda$ be an arithmetic weight. Consider the condition
\medskip

\noindent
$(INTDEG)_\lambda$ the $\Tor$-spectral sequence degenerates at $E_2$ and we have
$$\H^{q}(Y_0(p),V_\lambda(\cO))_\m=\bigoplus_{j-i=q}\Tor_i^{\Lambda}(\bH^j_{\m_h},\Lambda/P_\lambda).$$

Note that Conjecture (CinS) implies $(INTDEG)_\lambda$ for all arithmetic weights $\lambda$.
Recall that for a finitely generated module $M$ over a noetherian ring $A$, one defines 
$\codim_A M=min\{  ht(\p);\p\in\Supp(M) \}$.
If $A$ is Cohen-Macaulay, one has $\dim A=\dim_A M+\codim_A M$ and $\dim_A M=\depth_AM$. 
If $A$ is regular, it follows from the Auslander-Buchsbaum formula that
$\codim_A M=\projdim_AM$ (which is finite). 
Then, assuming Conjecture (CinS), we see that for any $i\in [0,\ell_0]$,
$$\Tor_i^{\Lambda}(\H^{q_s}_\m,\Lambda/P)\neq 0$$ for any $P\in \Sigma_{coh}$
Since the subset $\Sigma_{coh}$ is Zariski-dense in $\Supp_\Lambda(\bH^\bullet_{\m_h})$, we conclude that
$$\projdim_\Lambda \bH^{q_s}_{\m_h}= \ell_0$$
This implies that $\Supp(\bH^{q_s}_{\m_h})$ has $\Lambda$-codimension $\leq \ell_0$.
Let us recall the cohomological  non abelian Leopoldt Conjecture  due to Hida and E. Urban.
\begin{conj} (CNLA)
$ \codim_\Lambda \Sigma_h= \ell_0$
\end{conj}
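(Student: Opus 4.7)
\medskip

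\noindent\textbf{Proof proposal.}

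The plan is to reduce the conjecture to a statement about the Krull dimension of the universal Hida-ordinary deformation ring $R_h$ as a $\Lambda$-algebra, to handle one inequality using the Calegari--Geraghty patching already established in Theorem \ref{Hidastrcoh}, and then to reduce the remaining (deep) inequality to an Iwasawa-theoretic version of the Bloch--Kato conjecture for the adjoint motive.

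First, I would use Lemma \ref{support-h-u} to identify $\Sigma_h$ with $\Supp_{\Lambda}(\bH^{\bullet}_{\m})\cap\Spec(\Lambda)(\bar{\Q}_p)$. Under the running assumptions, Theorem \ref{Hidastrcoh} gives $R_h \cong \T_h$ and shows that $\bH^{q_s}_{\m}$ is faithful over $\T_h$. Hence $\Supp_{\Lambda}\bH^{q_s}_{\m}$ coincides with the image of $\Spec R_h$ in $\Spec\Lambda$, and since $\Sigma_h$ is Zariski-dense in this image,
\begin{equation*}
\codim_{\Lambda}\Sigma_h \;=\; \dim\Lambda - \dim R_h.
\end{equation*}
The conjecture is thus equivalent to the numerical identity $\dim R_h = \dim\Lambda - \ell_0$, that is, to the deformation-theoretic non-abelian Leopoldt conjecture (DNAL). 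Note that this also explains the link already observed in the paper between (CinS), (DNAL) and (CNAL).

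For the inequality $\codim_{\Lambda}\Sigma_h \geq \ell_0$, I would exploit the output of Taylor--Wiles patching in the $\Lambda$-adic setting (Section \ref{Hidadef}). The patching produces a surjection
\begin{equation*}
R_{\infty}^{\Lambda} = \Lambda[[X_1,\dots,X_s]] \twoheadrightarrow R_h,\qquad s = rN - \ell_0,
\end{equation*}
through which the map $S_{\infty}^{\Lambda} = \Lambda[[S_1,\dots,S_{rN}]] \to \Lambda$ factors, with the $S_i$'s killed. Because $R_{\infty}^{\Lambda}$ is flat over $S_{\infty}^{\Lambda}$ (a property of the cofibrant replacement used in the patching), the ideal generated by the images of $S_1,\dots,S_{rN}$ in $R_{\infty}^{\Lambda}$ is a regular sequence of length $rN$. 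Therefore
\begin{equation*}
\dim R_h \;\leq\; \dim R_{\infty}^{\Lambda} - rN \;=\; \bigl(\dim\Lambda + s\bigr) - rN \;=\; \dim\Lambda - \ell_0,
\end{equation*}
which gives $\codim_{\Lambda}\Sigma_h \geq \ell_0$, as required. This is the easy half and relies only on the ingredients already assembled in the paper.

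The hard part will be the reverse inequality $\dim R_h \geq \dim\Lambda - \ell_0$. One cannot extract this directly from the patching: one needs a genuine \emph{lower bound} on the size of the Hida family. My plan would be to approach it via the Galatius--Venkatesh homomorphism for Hida families (Theorem \ref{gvhida}), together with the cofreeness result of Theorem \ref{cofreepi1}: these give an injection $GV_{\bI}\colon \pi_1(\cR_{\bI}) \hookrightarrow \H^{1}_{\cL}(\Gamma,\Ad^{*}\rho_{\theta}(1)\otimes_{\bI}\widehat{\bI})$ and identify its cokernel with a Tate--Shafarevich group. A sharp lower bound on $\dim R_h$ is then equivalent to a sharp \emph{lower} bound of $\ell_0$ on the generic $\bI$-rank of this Selmer group, that is, to the Iwasawa-theoretic version of the adjoint Bloch--Kato conjecture discussed in Section \ref{subsectBK}. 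The main obstacle therefore lies here: at present, the easy Greenberg--Wiles-type Euler--Poincar\'e computation controls only the alternating sum $\dim \H^{1}_{\cL} - \dim \H^{2}_{\cL}$, and one would need either a direct construction of an $\ell_0$-parameter family of $p$-adic automorphic deformations (generalizing Hida's $\GL(2)/\Q$ theorem, where $\ell_0=0$ trivializes the question) or an independent arithmetic input showing that $\Sha^{1}$ of the twisted adjoint is small enough. Without such input, the strategy above yields only the upper bound on $\codim_{\Lambda}\Sigma_h$, which is why (CNAL) remains conjectural even under all the hypotheses of the paper.
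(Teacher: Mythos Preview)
The statement is a \emph{conjecture}; the paper offers no proof, and you rightly end by saying it remains open. Your reduction of (CNLA) to (DNAL) via $R_h\cong\T_h$ and Lemma~\ref{support-h-u} is fine. The problem is that you have the two inequalities swapped, and your argument for what you call the ``easy half'' is wrong.

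The flatness claim is false: $\dim R_\infty^{\Lambda}=\dim\Lambda+s=\dim\Lambda+rN-\ell_0<\dim\Lambda+rN=\dim S_\infty^{\Lambda}$ whenever $\ell_0>0$, so a local homomorphism of regular local rings $S_\infty^{\Lambda}\to R_\infty^{\Lambda}$ cannot be flat. The parenthetical ``a property of the cofibrant replacement used in the patching'' is a non sequitur; cofibrant replacements are simplicial gadgets and say nothing about flatness of the classical rings $S_\infty^{\Lambda}$, $R_\infty^{\Lambda}$. Worse, your intermediate claim that the images of $S_1,\dots,S_{rN}$ form a regular sequence in $R_\infty^{\Lambda}$ is \emph{equivalent} to the conjecture you are trying to prove: since $R_\infty^{\Lambda}$ is regular (hence Cohen--Macaulay), regularity of that sequence is the same as $\dim R_h=\dim R_\infty^{\Lambda}-rN=\dim\Lambda-\ell_0$; and by Theorem~\ref{Hidasimpl} it is also the same as $\Tor_{>0}^{S_\infty^{\Lambda}}(R_\infty^{\Lambda},\Lambda)=\pi_{>0}(\cR_h)=0$, i.e.\ discreteness of $\cR_h$, which is exactly Conjecture~\ref{conjcins}. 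So this step is circular.

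What patching actually gives, without any flatness, is the \emph{opposite} inequality: writing $R_h=R_\infty^{\Lambda}/(\bar S_1,\dots,\bar S_{rN})$, Krull's height theorem yields $\dim R_h\ge\dim R_\infty^{\Lambda}-rN=\dim\Lambda-\ell_0$, hence $\codim_{\Lambda}\Sigma_h\le\ell_0$. The genuinely open direction is $\dim R_h\le\dim\Lambda-\ell_0$, i.e.\ $\codim_{\Lambda}\Sigma_h\ge\ell_0$: this is the real content of the non-abelian Leopoldt conjecture (Hida families are not ``too large''). Your proposed attack on the direction you label hard, via the $\bI$-rank of the dual adjoint Selmer group, is also off target: Theorem~\ref{cofreepi1} already shows unconditionally that this rank is exactly $\ell_0$, so that input is not what is missing.
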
  

\begin{rem}
Hida conjectured that $  \codim_\Lambda\bH^{\bullet}_{\m_h}\otimes\bQ_p=\ell_0  $ and Urban conjectured that $ \codim_\Lambda \Sigma_h= \ell_0$.But  by Lemma \ref{support-h-u}, we have $\Sigma_h=  \Supp_{\Lambda}(\bH^\bullet_{\m_h})\cap Spec(\Lambda)(\overline\bQ_p)$,
therefore these two statements are equivalent.
\end{rem}

\begin{lem} Assume that Condition $(INTDEG)_\lambda$ holds for some arithmetic weight $\lambda$ and that Conjecture $(CNAL_h)$ holds. Then Conjecture (CinS) holds. 
\end{lem}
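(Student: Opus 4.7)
Via Theorem \ref{Hidasimpl}, we have $\pi_\bullet(\cR_h) \cong \Tor_\bullet^{S_\infty^\Lambda}(R_\infty^\Lambda, \Lambda)$, and via Theorem \ref{Hidastrcoh}(2) the graded module $\H^\bullet_\m$ is determined by $\H^{q_s}_\m$ and this graded Tor-algebra. Both equivalent formulations of $(CinS)$ therefore reduce to the single Tor vanishing
$$\Tor_i^{S_\infty^\Lambda}(R_\infty^\Lambda, \Lambda) = 0 \quad \text{for } i \geq 1,$$
which is what the plan aims to establish.

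The plan is to compute these Tor groups as Koszul homology. Since $S_1, \ldots, S_{Nr}$ form a regular sequence on $S_\infty^\Lambda = \Lambda[[S_1, \ldots, S_{Nr}]]$ with quotient $\Lambda$, the associated Koszul complex is a finite free resolution of $\Lambda$ over $S_\infty^\Lambda$. Tensoring along $\alpha_\infty \colon S_\infty^\Lambda \to R_\infty^\Lambda$ identifies $\Tor_\bullet^{S_\infty^\Lambda}(R_\infty^\Lambda, \Lambda)$ with the Koszul homology of $\alpha_\infty(S_1), \ldots, \alpha_\infty(S_{Nr})$ acting on $R_\infty^\Lambda$; this vanishes in positive degrees if and only if the sequence is itself regular on $R_\infty^\Lambda$.

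I next use $(CNAL_h)$ to verify regularity. Recall $R_\infty^\Lambda = \Lambda[[X_1, \ldots, X_s]]$ with $s = Nr - \ell_0$, so $\dim R_\infty^\Lambda = \dim \Lambda + Nr - \ell_0$. By construction $R_h = R_\infty^\Lambda/(\alpha_\infty(S_1), \ldots, \alpha_\infty(S_{Nr}))$. Combining $(CNAL_h)$ with the isomorphism $R_h \cong \T_h$ from Theorem \ref{Hidastrcoh}(1), together with the faithful action of $\T_h$ on $\H^{q_s}_\m$ identifying $\Spec R_h$ with the image of $\Supp_\Lambda \H^\bullet_\m$ in $\Spec \Lambda$, yields $\dim R_h = \dim \Lambda - \ell_0$. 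Hence the ideal $(\alpha_\infty(S_i))$ has height exactly $Nr$ in $R_\infty^\Lambda$. Since $R_\infty^\Lambda$ is a regular (hence Cohen--Macaulay) local ring, any system of $Nr$ elements generating an ideal of height $Nr$ forms a regular sequence, yielding the Koszul vanishing. Thence $\H^j_\m = 0$ for $j < q_s$ (via Theorem \ref{Hidastrcoh}(2)) and $\pi_i(\cR_h) = 0$ for $i > 0$ (via Theorem \ref{Hidasimpl}), so $\cR_h$ is discrete; the identity $\Tor_\bullet^\Lambda(R_h, \Lambda/P_{\lambda'}) = \pi_\bullet(\cR_{\lambda'})$ then follows from the weak equivalence $\cR_h \underline{\otimes}_\Lambda \Lambda/P_{\lambda'} \sim \cR_{\lambda'}$ stated before Conjecture \ref{conjcins} applied to the now-discrete $\cR_h = R_h$.

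The hypothesis $(INTDEG)_\lambda$ at a single arithmetic $\lambda$ enters as the input needed to invoke Theorem \ref{Hidastrcoh}(2) in the absence of a full $(Van_\m)$ hypothesis: the $E_2$-collapse at one classical fibre, when propagated through the Calegari--Geraghty patched complex $C^\bullet_\infty$ of Section \ref{LambdaDef}, suffices to recover the tensor factorization of the Hida-theoretic cohomology in terms of $\H^{q_s}_\m$ and $\pi_\bullet(\cR_h)$. The principal obstacle is precisely this propagation: one must verify that weight-$\lambda$ spectral sequence collapse, combined with Hecke compatibility of patching, gives enough control on $C^\bullet_\infty$ to establish the graded-module identity of Theorem \ref{Hidastrcoh}(2); once that is granted, the remaining argument is the purely commutative-algebraic Cohen--Macaulay computation above.
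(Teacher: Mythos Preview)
Your argument does not genuinely use $(INTDEG)_\lambda$. Once you invoke Theorems \ref{Hidastrcoh} and \ref{Hidasimpl} (which carry their own standing hypotheses, independent of $(INTDEG)_\lambda$), the remainder of your deduction runs entirely on $(CNAL_h)$: from it you extract $\dim R_h = \dim\Lambda - \ell_0$, which is exactly Conjecture (DNAL), and your Koszul/regular-sequence computation on $R_\infty^\Lambda$ is then a repackaging of the preceding Proposition that (DNAL) implies (CinS). Your final paragraph tries to assign $(INTDEG)_\lambda$ the role of a substitute for $(Van_\m)$ in setting up Theorem \ref{Hidastrcoh}(2), but you yourself flag this propagation through the patched complex as ``the principal obstacle'' and do not carry it out. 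So as written, either $(INTDEG)_\lambda$ is superfluous and you have simply rediscovered the chain $(CNAL_h)\Rightarrow(DNAL)\Rightarrow(CinS)$ already available from the previous Proposition, or there is an unfilled gap at the one place where that hypothesis is meant to act.

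The paper's route is quite different: it avoids the patched rings altogether and uses both hypotheses directly. One argues by contradiction on the minimal $q<q_s$ with $\bH^q_\m\neq 0$. Condition $(INTDEG)_\lambda$ forces each $\Tor_i^\Lambda(\bH^q_\m,\Lambda/P_\lambda)$ to occur as a summand of the classical $\H^{q-i}(Y_0(p),V_\lambda(\cO))_\m$, which vanishes once $q-i<q_m$, i.e.\ for $i>\ell_0-(q_s-q)$; this yields a bound $\projdim_\Lambda\bH^q_\m\leq \ell_0-(q_s-q)<\ell_0$. On the other hand $(CNAL_h)$ gives $\codim_\Lambda\bH^q_\m\geq \ell_0$ (since $\Supp_\Lambda\bH^q_\m\subset\overline{\Sigma_h}$), and over the regular ring $\Lambda$ one has $\projdim\geq\codim$. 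The two bounds are incompatible. Thus in the paper's argument $(INTDEG)_\lambda$ controls projective dimension while $(CNAL_h)$ controls codimension, and neither hypothesis is idle.
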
 

\begin{proof} Let $q$ with $q_m\leq q<q_s$ be the minimal integer such that $\H^q_\m\neq 0$.
By (INTDEG) , we see that for all i's $i>\ell_0-(q_s-q)$, we have
$\Tor^\Lambda_i(\bH^{q_s}_{\m_h},\Lambda/P)\neq 0$ for all $P\in\Sigma_{coh}(\subset\Supp_\Lambda(\bH^{q_s}_{\m_h}))$.
This implies 
$\projdim\bH^q_{\m_h}\leq \ell_0-(q_s-q)<\ell_0$. Contradiction.

\end{proof}

 \section{The Galatius-Venkatesh homomorphism for Hida families}\label{GVhomHida}
 
 Let $\Spec\bI$ be an irreducible component of $\Spec\bbT_h$; let $\m_{\bI}$ be its maximal ideal. 
Note that $\bI$ is not necessary flat over $\Lambda$. 
We have a $\Lambda$-linear algebra homomomorphism $\theta\colon\bbT_h\to\bI$.
 Let $\Lambda_{\bI}$ be the image of $\Lambda$ in $\bI$. Let $\cR_{\bI}=\cR_h\underline{\otimes}_{\Lambda}\Lambda_{\bI}$ 
and $R_{\bI}=R_h\otimes_{\Lambda}\Lambda_{\bI}$. We have $\pi_0(\cR_{\bI})=R_{\bI}$ 
and there is a natural $R_h$-algebra homomorphism 
$\pi_{\bI}\colon \pi_1(\cR_h)\to \pi_1(\cR_{\bI})$. However, 
we conjecture that $\pi_1(\cR_h)=0$ (hence $\pi_{\bI}=0$) while $\pi_1(\cR_{\bI})\otimes_{R_{\bI}}\widehat{\bI}$ 
will be shown to interpolate
the classical $\pi_1(\cR_{\lambda^\prime})\otimes_{R_{\lambda^\prime},\theta_{\pi^\prime}} K/\cO$ 
for all arithmetic weights 
$\lambda^\prime$ congruent to $\lambda$ modulo $p$ and all classical Hecke eigensystems 
$\theta_{\pi^\prime}$ of weight $\lambda^\prime$.

 \begin{thm} \label{gvhida} There are natural $\bI$-linear injective homomorphisms
$$GV_h\colon \pi_1(\cR_h)\otimes_{R_h}\widehat{\bI}\hookrightarrow \H^1_\cL(F,\Ad^\ast\rho_\theta (1)\otimes_{\bI}\widehat{\bI}).$$
and
$$GV_{\bI}\colon \pi_1(\cR_{\bI})\otimes_{R_{\bI}}\widehat{\bI}\hookrightarrow \H^1_\cL(F,\Ad^\ast\rho_\theta (1)\otimes_{\bI}\widehat{\bI}).$$
and we have $GV_{\bI}\circ\pi_{\bI}=GV_{h}$.
 \end{thm}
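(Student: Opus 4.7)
The plan is to carry out a $\Lambda_{\bI}$-linear version of the construction of Section \ref{GVhom}, based on the augmentation $\phi\colon \cR_h \to R_h \cong \T_h \to \bI$ supplied by Theorem \ref{Hidastrcoh}. For each $n\geq 1$, set $A_n = \bI/\m_{\bI}^n$ with Pontryagin dual $M_n = \Hom_{\cO}(A_n, K/\cO)$ so that $\varinjlim_n M_n = \widehat{\bI}$, and form the simplicial square-zero extension $\Theta_n = A_n \oplus \DK(M_n[1])$. Let $\phi_n\colon \cR_h \to A_n$ be the reduction of $\phi$ modulo $\m_{\bI}^n$ and let $L_n(\cR_h)$ be the set of homotopy classes of simplicial ring homomorphisms $\Phi\colon \cR_h \to \Theta_n$ lifting $\phi_n$. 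As in Lemma 15.1 of \cite{GV18} and Section \ref{GVhom}, there is a natural loop map
$$\pi(n,\cR_h)\colon L_n(\cR_h) \to \Hom_{R_h}(\pi_1(\cR_h), M_n),$$
and the tangent complex computation of Theorem \ref{tangentcond}, applied to the minimal $\Lambda$-ordinary simplicial problem $\cD^s_h$, identifies $L_n(\cR_h)$ with $\H^2_{\cL}(\Gamma, \Ad\rho_n \otimes M_n)$, where $\rho_n = \rho_\theta \bmod \m_{\bI}^n$ and $\cL$ is the minimal $\Lambda$-ordinary local system of Section \ref{Hidadef}.

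The central step is the Hida analog of Proposition \ref{surj}, namely the surjectivity of $\pi(n,\cR_h)$. I would transpose the second proof of Proposition \ref{surj}, using a free cofibrant replacement of $R_\infty^{\Lambda}$ by polynomial $S_\infty^{\Lambda}$-algebras supplied by the Taylor--Wiles--Kisin patching underlying Theorem \ref{Hidastrcoh}, and base-changing along $S_\infty^{\Lambda}\to \Lambda$ to obtain a concrete cofibrant fibration representing $\cR_h$. The explicit polynomial-algebra bookkeeping of Section \ref{GVhom} carries over verbatim because only the underlying $\cO$-module structure of the polynomial rings intervenes. Granted the surjectivity, Pontryagin dualization yields
$$\Hom_{R_h}(\pi_1(\cR_h), M_n)^\vee \cong \pi_1(\cR_h) \otimes_{R_h} A_n,$$
which is finite over $\cO$ since $R_h$ is a finite $\bI$-module, while Poitou--Tate duality with respect to $\cL$ identifies $\H^2_{\cL}(\Gamma, \Ad\rho_n \otimes M_n)^\vee$ with $\H^1_{\cL}(\Gamma, \Ad^\ast\rho_n(1) \otimes A_n)$. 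Taking the direct limit in $n$ produces the desired injection
$$GV_h\colon \pi_1(\cR_h) \otimes_{R_h} \widehat{\bI} \hookrightarrow \H^1_{\cL}(\Gamma, \Ad^\ast\rho_\theta(1) \otimes_{\bI} \widehat{\bI}).$$

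The map $GV_\bI$ is defined identically by replacing the pair $(\cR_h, R_h)$ with $(\cR_\bI, R_\bI)$ and working with the $\Lambda_{\bI}$-algebra structure on $\cR_\bI = \cR_h \underline{\otimes}_\Lambda \Lambda_{\bI}$; the compatibility $GV_\bI \circ \pi_\bI = GV_h$ is then immediate from the functoriality of the sets $L_n$ under the canonical base-change morphism $\cR_h \to \cR_\bI$. The main technical obstacle is verifying the Hida-theoretic analog of the isomorphism (14.4) of \cite{GV18}, namely that compatible projections from $\cR_h$ to finite quotients of the polynomial cofibrant model induce an isomorphism on $\pi_1$; once this is in place, the surjectivity of $\pi(n,\cR_h)$, and hence the injectivity of $GV_h$ and $GV_\bI$, follow from the explicit lift constructed exactly as in Proposition \ref{surj}.
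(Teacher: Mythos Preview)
Your overall strategy is the same as the paper's, and most of the steps are right, but there is a concrete error in the choice of coefficient module that makes the final direct-limit step fail. You take the square-zero extension $\Theta_n = A_n \oplus \DK(M_n[1])$ with $A_n=\bI/\m_{\bI}^n$ and $M_n=A_n^\vee=\widehat{\bI}[\m_{\bI}^n]$. After Pontryagin and Poitou--Tate duality this produces the injection
\[
\pi_1(\cR_h)\otimes_{R_h} A_n \hookrightarrow \H^1_{\cL}(\Gamma,\Ad^\ast\rho_n(1)\otimes A_n),
\]
with coefficients in $A_n=\bI/\m_{\bI}^n$. But the $A_n$'s form an \emph{inverse} system with limit $\bI$, not a direct system with limit $\widehat{\bI}$; so ``taking the direct limit in $n$'' does not give $\pi_1(\cR_h)\otimes_{R_h}\widehat{\bI}$ on the left, nor $\H^1_{\cL}(\Gamma,\Ad^\ast\rho_\theta(1)\otimes_\bI\widehat{\bI})$ on the right. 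In Section~\ref{GVhom} this slip is invisible because $\cO/\varpi^n$ is self-Pontryagin-dual, but $\bI/\m_{\bI}^n$ is not self-dual as an $\bI$-module in general, so the two choices of coefficient really diverge here.

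The paper's proof makes the opposite choice: it takes $M=\bI_n$ itself (not its dual) in $\Theta_n=\bI_n\oplus M[1]$. Then the loop map lands in $\Hom_{\T_h}(\pi_1(\cR_\ast),\bI_n)$, whose Pontryagin dual is computed, via $\bI_n=\Hom_{\bI}(\widehat{\bI}[\m_{\bI}^n],\widehat{\bI})$ and tensor--hom adjunction, to be $\pi_1(\cR_\ast)_\theta\otimes_{\bI}\widehat{\bI}[\m_{\bI}^n]$; similarly the dual Selmer side has coefficients $\widehat{\bI}[\m_{\bI}^n]$. Since $\varinjlim_n\widehat{\bI}[\m_{\bI}^n]=\widehat{\bI}$, the direct limit now yields the stated map. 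So the fix is simply to swap the roles of $A_n$ and $M_n$ in your $\Theta_n$; everything else you wrote (the surjectivity via the argument of Proposition~\ref{surj}, the factorisation through $\cR_{\bI}$ giving $L_n(\cR_h)=L_n(\cR_{\bI})$ and hence the compatibility $GV_{\bI}\circ\pi_{\bI}=GV_h$) goes through exactly as in the paper.
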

 
 \begin{proof}
Let $n\geq 1$, let $\bI_n=\bI/\m_{\bI}^n$. For any finite  $\bI_n$-module $M$ (which can be viewed as a $\bbT_h$-module),
we consider the simplicial ring $\Theta_n=\bI_n\oplus M[1]$. 
Let $L_n(\cR_h)$ be the set of homotopy equivalence classes of simplicial ring homomorphisms $\Phi\colon \cR_h\to \Theta_n$ such that
$\pr_n\circ\Phi=\phi_n$. Note that any such $\Phi$ factors through $\cR_h\to \cR_{\bI}$ into a homomorphism 
$\Phi_{\bI}\colon \cR_{\bI}\to \Theta_n$. So, $L_n(\cR_h)$ can be reinterpreted as the set $L_n(\cR_{\bI})$
of homotopy equivalence classes of
$$\Hom_{\pr_n}(\cR_{\bI},\Theta_n).$$
There is a canonical bijection 
$$L_n(\cR_h) \cong \H^2_{\cL}(F,\Ad(\rho_\theta)\otimes_{\bI_n}M)$$
where $\cL$ is the minimal ordinary condition.
Moreover, as in \cite[Lemma 15.1]{GV18} and in Formula (\ref{GVdualn}), for each $n\geq 1$, we have a $\bbT_h$-linear homomorphism
\begin{displaymath}\label{GVHidadualn} \pi(n,\cR_\ast)\colon L_n(\cR_\ast)\to \Hom_{\bbT_h}(\pi_1(\cR_\ast),M)
\end{displaymath}
where $\ast=h,\bI$.
As in Proposition \ref{surj}, this homomorphism is surjective.
We take $M=\bI_n$. Its Pontryagin dual is $M^\vee=\Hom_\cO(\bI,K/\cO)[\m_{\bI}^n]$.
Let $\widehat{\bI}=\Hom_\cO(\bI,K/\cO)$. Let $\pi_1\cR_{\ast,\theta}=\pi_1\cR_\ast\otimes_{\bbT_h,\theta}\bI$.
We see that we can rewrite the Pontryagin dual of the right hand side of (\ref{GVHidadualn}) as
$$\Hom_\cO (\Hom_{\bbT_h}(\pi_1(\cR_\ast),M),K/\cO)=\Hom_{\bI} (\Hom_{\bI}(\pi_1(\cR_\ast)_\theta,\bI/\m_{\bI}^n),\widehat{\bI})$$
Since
$\Hom_{\bI}(\widehat{\bI},\widehat{\bI})=\bI$, we have
$\Hom_{\bI}(\widehat{\bI}[\m_{\bI}^n],\widehat{\bI})=\bI/\m_{\bI}^n$
hence
$$\Hom_{\bI}(\pi_1(\cR_\ast)_\theta,\bI/\m_{\bI}^n)=\Hom_{\bI}(\pi_1(\cR_\ast)_\theta,\Hom_{\bI}(\widehat{\bI}[\m_{\bI}^n],
\widehat{\bI})))=$$
$$\Hom_{\bI}(\pi_1(\cR_\ast)_\theta\otimes_{\bI}\widehat{\bI}[\m_{\bI}^n],\widehat{\bI})),$$
but for $S=\pi_1(\cR_\ast)_\theta\otimes_{\bI}\widehat{\bI}[\m_{\bI}^n]$, we have $\Hom_{\bI}(\Hom_{\bI}(S,\widehat{\bI}),\widehat{\bI})=S$,
so we conclude
$$\Hom_\cO (\Hom_{\bbT_h}(\pi_1(\cR_\ast),M),K/\cO)=
\pi_1\cR_{\ast,\theta}\otimes_{\bI}\widehat{\bI}[\m_{\bI}^n].$$
Similarly, the Pontryagin dual of the left hand side of (\ref{GVHidadualn}) is
$$\H^1_{\cL}(F,\Ad\rho_\theta^\ast(1)\otimes_\cO M^\vee)$$
which can be rewritten as
$$\H^1_{\cL}(F,\Ad\rho_\theta^\ast(1)\otimes_{\bI} \widehat{\bI}[\m_{\bI}^n])$$
Let $GV_{\ast,n}$ be the Pontryagin dual of $\pi(n,\cR_\ast)$. Let $GV_\ast=\varinjlim_n GV_{\ast,n}$, it provides the desired injection
$$GV_\ast\colon \pi_1(\cR_\ast)\otimes_{\bbT_h}\widehat{\bI}\hookrightarrow \H^1_{\cL}(F,\Ad^\ast\rho_\theta(1)\otimes_{\bI}\widehat{\bI}).$$
\end{proof}

\subsection{Interpolation properties}
 
 Let $P\in\Spec\bI$ be a codimension one prime; assume that $\cO_P=\bI/P$ is a dvr. Let $K_P=\Frac(\cO_P)$. 
We have $\Hom(\cO_P,K/\cO)=K_P/\cO_P)$.
The following exact control theorem holds for the minimal ordinary Selmer group.
We put $${\Sel}^\ast_{\bI}=\H^1_\cL(F,\Ad^\ast\rho_\theta(1)\otimes_{\bI}\widehat{\bI}),\quad {\Sel}^\ast_P=\H^1_\cL(F,\Ad^\ast\rho_P(1)\otimes_{\cO_P}K_P/\cO_P)$$
 
 \begin{pro} \label{cs} For any prime ideal $P\subset \bI$, 
$${\Sel}^\ast_{\bI}[P]={\Sel}^\ast_{P} .$$
\end{pro}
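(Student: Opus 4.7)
The plan is a Mazur-style control theorem argument, exploiting the co-freeness of $\widehat{\bI}$ over $\bI$.

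First I would identify the $P$-torsion of the coefficient module. Via Pontryagin duality over $\cO$,
\begin{equation*}
\widehat{\bI}[P] \;=\; \Hom_{\cO}(\bI/P,\,K/\cO) \;=\; \Hom_{\cO}(\cO_P,\,K/\cO) \;=\; K_P/\cO_P,
\end{equation*}
the last equality because $\cO_P$ is a DVR with fraction field $K_P$. Since $M := \Ad^{\ast}\rho_\theta(1)$ is $\bI$-free of finite rank, tensoring with $M$ yields an exact sequence of $\bI[\Gamma]$-modules
\begin{equation*}
0 \to M \otimes_{\bI} K_P/\cO_P \to M \otimes_{\bI} \widehat{\bI} \to M \otimes_{\bI} (\widehat{\bI}/\widehat{\bI}[P]) \to 0,
\end{equation*}
whose left-hand term is canonically $\Ad^{\ast}\rho_P(1) \otimes_{\cO_P} K_P/\cO_P$.

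Second, I would check the $\H^0$-vanishing needed to make the long exact cohomology sequence collapse usefully. Under $(RLI)$ together with $\zeta_p \notin F$ one has $\H^0(\Gamma,\Ad^{\ast}\bar{\rho}(1)) = 0$, and a d\'evissage along the $\m_{\bI}$-adic filtration — each graded piece being a sum of copies of $\Ad^{\ast}\bar{\rho}(1)$ over the residue field — propagates this to $\H^0(\Gamma, M\otimes_{\bI}\widehat{\bI})$ and to $\H^0$ of the cokernel in the sequence above. Combined with the fact that $\bI$ is a domain so that multiplication by any $0\neq f \in P$ is surjective on $\widehat{\bI}$ (using injectivity of $K/\cO$ as an $\cO$-module), the long exact cohomology sequence yields
\begin{equation*}
\H^1(\Gamma,\,M \otimes_{\bI} K_P/\cO_P) \;\cong\; \H^1(\Gamma,\,M \otimes_{\bI} \widehat{\bI})[P].
\end{equation*}

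Third, I would run the same argument locally at each $v \in S \cup S_p$ and verify compatibility of local conditions. At $v \in S$ the minimal (unramified) condition is transported verbatim under the identification. At $v \in S_p$ the ordinary local condition $L_v^{\bI}$ is the image of $\H^1(\Gamma_v,\Fil_v M \otimes_{\bI} \widehat{\bI})$ further cut out by unramifiedness on $\Fil_v/\Fil_v^+$; applying the long-exact-sequence argument of the previous step to the $\bI[\Gamma_v]$-submodules $\Fil_v M$ and $\Fil_v^+ M$ and using $(STDIST)$ to eliminate the resulting $\H^0(\Gamma_v,-)$ contributions identifies $L_v^{\bI}[P]$ with $L_v^P$ inside $\H^1(\Gamma_v, M\otimes_{\bI} K_P/\cO_P)$. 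Assembling the global and local identifications via the snake lemma on the defining diagram of the Selmer groups then gives $\Sel^{\ast}_{\bI}[P] = \Sel^{\ast}_{P}$. The main obstacle is this last step: propagating the $P$-torsion identification through the ordinary filtration at places above $p$, where $(STDIST)$ must be invoked at every filtered subquotient to kill the parasitic $\H^0$-contributions that would otherwise obstruct the identification of local conditions.
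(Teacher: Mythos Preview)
Your approach is correct but takes a different route from the paper. The paper's proof is a two-line citation: it invokes \cite[Proposition~3.2.8]{SU14}, which gives exact control $\Sel(T\otimes_{\bI}\widehat{\bI})[\a]=\Sel(T\otimes_{\bI}\widehat{\bI/\a})$ for any ideal $\a$ whenever $T\otimes_{\bI}\widehat{\bI}$ has no nontrivial Galois-trivial subquotient, and then checks that hypothesis for $T=\Ad\rho_{\bI}^\ast(1)$ by a Nakayama argument reducing to the residual representation. All of the local-condition bookkeeping you perform by hand is absorbed into the Skinner--Urban black box.

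What your direct argument buys is transparency: you see exactly where the $\H^0$-vanishing enters globally (via $(RLI)$ and $\zeta_p\notin F$) and where $(STDIST)$ enters locally at $v\mid p$ to control the filtered pieces of the ordinary condition. The paper's proof, by contrast, only explicitly checks the global no-trivial-subquotient condition and leaves the local analysis inside the cited result. One small point to tighten in your write-up: the single short exact sequence $0\to M\otimes\widehat{\bI}[P]\to M\otimes\widehat{\bI}\to M\otimes(\widehat{\bI}/\widehat{\bI}[P])\to 0$ does not by itself identify the image with the full $P$-torsion of $\H^1$; you should instead run the multiplication-by-$f$ sequence $0\to\widehat{\bI}[f]\to\widehat{\bI}\xrightarrow{f}\widehat{\bI}\to 0$ for each generator $f$ of $P$ and intersect, which is what your parenthetical remark about surjectivity of multiplication by $f$ is really pointing toward.
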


\begin{proof} We apply \cite[Proposition 3.2.8]{SU14}: Let $T$ be a $\bI[G_F]$-module, which is finite free over $\bI$.
Then, for any ideal $\a\subset \bI$, $\Sel(T\otimes_{\bI}\widehat{\bI})[\a]=\Sel(T\otimes_{\bI}\widehat{\bI/\a})$
under the assumption that
$T\otimes_{\bI}\widehat{\bI}$ has no non trivial subquotient with trivial Galois action.

Let us check this condition for $T=\Ad\rho_\bI^\ast(1)$. Let $M$ be a $\bI[G_F]$-submodule of
$T\otimes_{\bI}\widehat{\bI}$
and $N$ a $\bI[G_F]$-quotient of $M$.
By applying $\Hom_{\bI}(-,\widehat{\bI})$, one sees that $\widehat{M}$
is a $\bI[G_F]$-quotient of $\Ad{\rho}_{\bI}$. The reduction of this homomorphism modulo
$\m_{\bI}$ is null by absolute irreducibility of $\Ad\overline{\rho}$. By Nakayama's lemma,
this implies $\widehat{M}=0$.
The same holds for the $\bI[G_F]$-submodule $\widehat{N}\subset \widehat{M}$.
\end{proof}

Let $M_{\bI}$ be the Pontryagin dual of ${\Sel}^\ast_{\bI}$.
Let $\Phi_{\bI}$ be the Pontryagin dual of $\pi_1(\cR_{\bI})\otimes_{R_{\bI}}\widehat{\bI}$. We have
$$\Phi_{\bI}=\Hom_{R_{\bI}}(\pi_1(\cR_{\bI}),\bI)=\Hom_{{\bI}}(\pi_1(\cR_{\bI})_\theta,\bI).$$
It is a finitely generated torsion free $\bI$-module.
Let $N_{\bI}=\Ker(M_{\bI}\to\Phi_{\bI})$ be the Pontryagin dual of $\Coker\,GV_{\bI}$.
We have a short exact sequence
$$0\to N_{\bI}\to M_{\bI}\to\Phi_{\bI}\to 0$$

Let ${\Sel}_{\bI}=\H^1_{\cL}(\Gamma,\Ad\rho_{\bI}\otimes\widehat{\bI})$.

\begin{thm} \label{cofreepi1} The $\bI$-modules $M_{\bI}$ and $\Phi_{\bI}$ have rank $\ell_0$ and $\Phi_{\bI}$ is free. 

%(2) Let $T_{\m_{\bI}}{\Sel}_{\bI}=\varprojlim_n {\Sel}_{\bI}[\m_{\bI}^n]$. The Poitou-Tate-Pontryagin duality induces an isomorphism
%$$N_{\bI}\cong T_{\m_{\bI}}{\Sel}_{\bI}$$

\end{thm}

\begin{proof} Let $P$ be an arithmetic prime of weight $\lambda^\prime$ 
congruent to $\lambda$ modulo $p$; let $\cO_P=\bI/P$. 
Generically, it is a dvr. We compare 
$$GV_{\bI}\colon \pi_1(\cR_{\bI})\otimes_{R_{\bI}}\widehat{\bI}\hookrightarrow 
\H^1_\cL(\Gamma,\Ad^\ast\rho_\theta(1)\otimes_{\bI}\widehat{\bI})$$
to the "classical"  map 
$$GV_P\colon \pi_1(\cR_{\lambda^\prime})\otimes_{R_{\lambda^\prime}}K_P/\cO_P
\hookrightarrow \H^1_\cL(\Gamma,\Ad^\ast\rho_P(1)\otimes_{\cO_P}K_P/\cO_P)$$
We have a commutative diagram
$$\begin{array}{ccccc}GV_{\bI}&\colon& \pi_1(\cR_{\bI})\otimes_{R_{\bI}}\widehat{\bI}&
\hookrightarrow &\H^1_\cL(\Gamma,\Ad^\ast\rho_\theta(1)\otimes_{\bI}\widehat{\bI})\\
&&\uparrow&&\uparrow\\ GV_P&\colon& \pi_1(\cR_{\lambda^\prime})\otimes_{R_{\lambda^\prime}}K_P/\cO_P&\hookrightarrow &\H^1_\cL(\Gamma,\Ad^\ast\rho_P(1)\otimes_{\cO_P}K_P/\cO_P)
\end{array}
$$
Let $M_P$ be the Pontryagin dual of ${\Sel}^\ast_{P}$.
Similarly, let $\Phi_P$ denote the Pontryagin dual of 
$\pi_1(\cR_{\lambda^\prime})\otimes_{R_{\lambda^\prime}}K_P/\cO_P$.
By Pontryagin duality we obtain a diagram

$$\begin{array}{ccc} M_{\bI}&\to& \Phi_{\bI}\\\downarrow&&\downarrow\\M_P&\to& \Phi_P\end{array}
$$
where the two horizontal arrow are surjective and where the left downarrow 
induces an isomorphism by exact control of the dual Selmer groups:
$$M_{\bI}/PM_{\bI}\cong M_P.$$ 
It implies that the right downarrow
$$\Phi_{\bI}/P\Phi_{\bI}\rightarrow \Phi_P$$
is surjective. 
Let us prove it is an isomorphism.
Note that $\Phi_P$ is free of rank $\ell_0$ by Theorem \ref{gv}.
We have 
$$\Phi_{\bI}/P\Phi_{\bI}=\Hom_{\bI}(\pi_1(\cR_{\bI})_\theta,\bI/P)=\Hom_{\bI/P}(\pi_1(\cR_{\bI})_\theta\otimes_{\bI}\bI/P,\bI/P)$$
%
%=\Tor^{\Lambda}_1(R_h,\Lambda_{\bI})$$
Let $\widetilde{\bI}$ be the integral closure of $\bI$ and let $X=\Supp (\widetilde{\bI}/\bI)$.
For $P\notin X$, we have $\bI/P=\widetilde{\bI}/P\widetilde{\bI}$. Therefore, we can assume $\bI$ is integrally closed 
provided we consider only arithmetic primes $P\notin X$.
We know that $M_{\bI}$ and $\pi_1(\cR_{\bI})_\theta$ (hence $\Phi_{_bI}$) are finitely generated $\bI$-modules. 
By the structure theorem of modules over a Krull ring,
$M_{\bI}$, resp. $\Phi_{\bI}$, is pseudo-isomorphic to $\bI^{r_1}\oplus \Theta_1$, 
resp. $\bI^{r_2}\oplus \Theta_2$ where $\Theta_i$ ($i=1,2$) is a torsion $\bI$-module.
By reduction modulo $P\notin X$, we know by Lemma \ref{Sha-Selmer} that $r_1=\ell_0$ and $r_2\leq \ell_0$.
Note that
$$\Phi_{\bI}/P\Phi_{\bI}=\Hom_{\cO_P}(\pi_1(\cR_{\bI})_\theta\otimes_{\bI}\cO_P,\cO_P)$$
the right member is free of rank $r_2$ over $\cO_P$.
Hence, $\Phi_{\bI}/P\Phi_{\bI}\cong \cO_P^{r_2}$; since this $\cO_P$-module
maps surjectively to $\cO_P^{\ell_0}$, we conclude that $r_2=\ell_0$ and that 
$$\Phi_{\bI}/P\Phi_{\bI}\rightarrow \Phi_P$$ 
is an isomorphism.
%
%Let $\cR_P$ be the simplicial deformation ring for ordinary deformations with HT weight $\lambda^\prime+\rho$.
%We have a weak equivalence
%$$\cR_{\bI}\underline{\bI}\bI/P\cong \cR_P,$$
%hence the natural homomorphism 
%$\pi_1(\cR_{\bI})\otimes\bI/P\to\pi_1(\cR_{P})$$
%
Thus, by Nakayama's lemma, there exists a surjective $\bI$-linear homomorphism
$$j\colon \bI^{\ell_0}\to \Phi_{\bI}$$ 
Let $\cK=\ker j$.
For $P\notin X$, we have $\cK\subset P\cdot\bI^{\ell_0}$.
Note that $\bigcap_{P\notin X}P=0$ in $\bI$, hence $\cK=0$ and $j$ is an isomorphism
as desired. 

%(2) We use notations from the proof of Theorem \ref{gvhida}. By Lemma \ref{cs},
%$N_{\bI}/\m_{\bI}^nN_{\bI}$ is Pontryagin dual of $\Coker GV_{\bI,n}$.
%By Poitou-Tate duality (Lemma \ref{duality}),
%the group $\Coker GV_{\bI,n}$ is Pontryagin dual to  
%$${\Sel}_{\bI_n}=\H^1_{\cL}(\Gamma,\Ad\rho_n\otimes_{\bI_n}\widehat{\bI}_n)$$
%By taking projective limits over $n$, we find that
%$N_{\bI}$ is Pontryagin dual to $T_{\m_{\bI}}{\Sel}_{\bI}$.

\end{proof}

%=\Tor^{\Lambda}_1(R_h,\Lambda_{\bI})$$
Let $\widetilde{\bI}$ be the integral closure of $\bI$ and let $X=\Supp (\widetilde{\bI}/\bI)$.
For $P\notin X$, we have $\bI/P=\widetilde{\bI}/P\widetilde{\bI}$.
Let $L^{alg}_{\bI}$ be the Fitting ideal
of the torsion $\bI$-module $N_{\bI}$. It plays the role of the algebraic $p$-adic $L$ function 
for the family of motives $\Ad(\rho_{\bI})$.
For any arithmetic prime $P$ of $\bI$, let $N_P$ be the Pontryagin dual of $\Coker GV_P$. 
Let $L^{alg}_{\bI}(P)$ be the image of $L^{alg}_{\bI}$ in $\bI/P$.

\begin{cor}\label{int}

For any $P\notin X$,  

(1) the map $\Coker GV_P\to \Coker GV_{\bI}[P]$ is an isomorphism

(2) the specialization 
$L^{alg}_{\bI}(P)$ of $L^{alg}_{\bI}$ at $P$ is a generator of $\Fitt_{\cO_P} N_P= \Fitt_{\cO_P} \Coker GV_P$.
\end{cor}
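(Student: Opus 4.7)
The plan is to derive both statements from the short exact sequence
$$0 \to N_{\bI} \to M_{\bI} \to \Phi_{\bI} \to 0$$
by exploiting two inputs already established: the freeness of $\Phi_{\bI}$ of rank $\ell_0$ from Theorem \ref{cofreepi1}(1), and the exact control Proposition \ref{cs}. The freeness of $\Phi_{\bI}$ means $\Tor_1^{\bI}(\Phi_{\bI},\bI/P)=0$, so tensoring the displayed sequence with $\bI/P$ over $\bI$ keeps it exact:
$$0 \to N_{\bI}/PN_{\bI} \to M_{\bI}/PM_{\bI} \to \Phi_{\bI}/P\Phi_{\bI} \to 0.$$

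For part (1), I would identify the middle term with $M_P$ via Proposition \ref{cs}, and the right-hand term with $\Phi_P$ using the argument inside the proof of Theorem \ref{cofreepi1}(1) (which is precisely what required the restriction $P\notin X$). The classical exact sequence $0\to N_P\to M_P\to \Phi_P\to 0$ coming from the Pontryagin dual of $GV_P$ then forces $N_{\bI}/PN_{\bI}\cong N_P$ by a five-lemma comparison. Applying Pontryagin duality one last time — which, for a compact $\bI$-module $A$, converts $A\otimes_\bI \bI/P$ into $A^\vee[P]$ — yields the desired isomorphism $\Coker GV_P \cong \Coker GV_{\bI}[P]$.

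For part (2), the plan is to combine the identification $N_{\bI}/PN_{\bI}=N_P$ just obtained with the standard base-change property of Fitting ideals: for a commutative ring $R$, a finitely generated module $N$, and a homomorphism $R\to S$, one has $\Fitt_R(N)\cdot S = \Fitt_S(N\otimes_R S)$. Applied to $R=\bI$, $S=\cO_P=\bI/P$ and $N=N_{\bI}$, this reads
$$L^{alg}_{\bI}(P)\cdot\cO_P = \Fitt_{\cO_P}(N_{\bI}/PN_{\bI}) = \Fitt_{\cO_P}(N_P).$$
It then suffices to check $\Fitt_{\cO_P}(N_P)=\Fitt_{\cO_P}(\Coker GV_P)$, which for a finitely generated torsion module over the Gorenstein ring $\cO_P$ follows from the fact that Pontryagin duality preserves elementary divisors.

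The main technical point to handle carefully will be in the last step of part (2): one must verify that the Pontryagin dual (taken with respect to $K/\cO$) of a finite $\cO_P$-module — where $\cO_P=\bI/P$ is a finite $\cO$-algebra — shares the elementary divisors, and hence the $\cO_P$-Fitting ideal, of the original module. Under the hypothesis $P\notin X$ we have $\bI/P=\widetilde{\bI}/P\widetilde{\bI}$, and it is enough to restrict further to the generic locus where $\cO_P$ is a discrete valuation ring (the nongeneric case being accommodated by a localization-and-completion argument); there Pontryagin and Matlis duality coincide up to a twist by the different, which is invertible and so plays no role at the level of Fitting ideals.
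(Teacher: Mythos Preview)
Your proof is correct and follows essentially the same approach as the paper: reduce both statements to $N_{\bI}/PN_{\bI}\cong N_P$ via Pontryagin duality, and obtain this from the short exact sequence $0\to N_{\bI}\to M_{\bI}\to \Phi_{\bI}\to 0$ together with $\Tor_1^{\bI}(\Phi_{\bI},\bI/P)=0$ and the identifications $M_{\bI}/PM_{\bI}=M_P$, $\Phi_{\bI}/P\Phi_{\bI}=\Phi_P$ from Theorem~\ref{cofreepi1}. You supply more detail than the paper on why Pontryagin duality preserves $\cO_P$-Fitting ideals, a point the paper leaves implicit.
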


\begin{proof} By Pontryagin duality, both statements are equivalent to $N_{\bI}/PN_{\bI}=N_P$. This statement follows from 
the formulas $M_{\bI}/PM_{\bI}=M_P$, $\Phi_{\bI}/P\Phi_{\bI}=\Phi_P$ and
$\Tor_1^{\bI}(\Phi_{\bI},\bI/P)=0$ proven in Theorem \ref{cofreepi1}.
\end{proof}

%Let $\fP\in \Spec(\bI)$ be an arithmetic automorphic prime of $\T_h$. Let $\lambda^\prime$ the corresponding weight and $\pi^\prime$ be a cuspidal automorphic representation occuring in $\H^{q_s}\otimes_{T_h}\bI/\fP$.

%Note that for any Zariski dense $X$ subset of $\Sigma_{coh}$, we have $\bigcap_{ \fP\in X}\fP=\{0\}$.
 %Let 
%$\theta_{\pi^\prime}\colon \T_{\lambda^\prime}(Y_0(p),\cO)\to\cO$ be the Hecke eigensystem associated to $\pi^\prime$.
%Let $\cR$ be the simplicial deformation ring at bottom level $U_0(p)$ and weight $\lambda^\prime$, as 
%in Section \ref{ttGV}. 
%Note that for the prime ideal $P_{\lambda^\prime}$ of $\Lambda$ associated to the weight $\lambda^\prime$, we have 
%$$\cR^h/P_{\lambda^\prime}\cR^h\cong \cR.$$
%Let 
%$$\pi_1\cR_{\theta_{\pi^\prime}}=\pi_1\cR\otimes_{\T_{\lambda^\prime}(Y_0(p),\cO),\theta_{\pi^\prime}}\cO.$$
 %Then we see that 
 %$$(\pi_1\cR^h_\theta\otimes_{\bI}\widehat{\bI})[\fP]=
%\pi_1\cR_{\theta_{\pi^\prime}}\otimes K/\cO.$$

%We want to compare $GV_\infty[\fP]$ to 
%$$GV_{\pi^\prime}\colon \pi_1\cR_{\theta_{\pi^\prime}}\otimes K/\cO\hookrightarrow \H^1_{\cL}(,\Ad\rho_{\pi^\prime}^\ast(1)\otimesK/\cO)$$ 

{}
\medskip

\noindent
J. Tilouine, LAGA UMR 7539, Institut Galil\'ee, Universit\'e de Paris XIII. 

\noindent
E. Urban, Department of Mathematics, Columbia University.
\end{document}